\documentclass[11pt,letterpaper,twoside,reqno,nosumlimits]{amsart}

\usepackage{newclude}

\usepackage[usenames,dvipsnames]{xcolor}
\usepackage{fancyhdr}
\usepackage{amsmath,amsfonts,amsbsy,amsgen,amscd,amssymb,amsthm}
\usepackage{url}

\usepackage{mathtools}

\usepackage[font=small,margin=0.25in,labelfont={bf},labelsep={colon}]{caption}

\usepackage{subcaption}

\usepackage{tikz}
\usepackage{microtype}
\usepackage{enumitem}

\definecolor{dark-gray}{gray}{0.3}
\definecolor{dkgray}{rgb}{.4,.4,.4}
\definecolor{dkblue}{rgb}{0,0,.5}
\definecolor{medblue}{rgb}{0,0,.75}
\definecolor{rust}{rgb}{0.5,0.1,0.1}

\usepackage[colorlinks=true]{hyperref}

\hypersetup{urlcolor=rust}
\hypersetup{citecolor=dkblue}
\hypersetup{linkcolor=dkblue}

\usepackage{setspace}

\usepackage{graphicx}
\usepackage{booktabs,longtable,tabu} 
\usepackage{multirow} 

\usepackage{float}

\usepackage[full]{textcomp}

\usepackage[scaled=.98,sups,osf]{XCharter}
\usepackage[scaled=1.04,varqu,varl]{inconsolata}
\usepackage[type1]{cabin}
\usepackage[charter,vvarbb,scaled=1.07]{newtxmath}
\usepackage[cal=boondoxo]{mathalfa}
\usepackage[T1]{fontenc}

\usepackage{bm} 

\graphicspath{{figures/}}

\newtheorem{theorem}{Theorem}[section]
\newtheorem{lemma}[theorem]{Lemma}

\newtheorem{proposition}[theorem]{Proposition}

\newtheorem{corollary}[theorem]{Corollary}

\theoremstyle{definition}

\newtheorem{definition}[theorem]{Definition}

\numberwithin{equation}{section} 
\numberwithin{figure}{section}
\numberwithin{table}{section}

\floatstyle{plaintop}
\newfloat{recipe}{thp}{lor}
\floatname{recipe}{Recipe}
\numberwithin{recipe}{section}

\renewcommand{\phi}{\varphi}

\newcommand{\eps}{\varepsilon}

\newcommand{\iunit}{\mathrm{i}}

\providecommand{\mathbbm}{\mathbb} 

\newcommand{\sgn}[1]{\operatorname{sgn}{#1}}

\newcommand{\diff}{\mathrm{d}}
\newcommand{\idiff}{\, \diff}

\newcommand{\triplenorm}[1]{{\left\vert\kern-0.25ex\left\vert\kern-0.25ex\left\vert #1
    \right\vert\kern-0.25ex\right\vert\kern-0.25ex\right\vert}}

\newcommand{\ind}{\mathbbm{1}} 

\def\XXint#1#2#3{{%
\setbox0=\hbox{$#1{#2#3}{\int}$}
\vcenter{\hbox{$#2#3$}}\kern-.5\wd0}}

\renewcommand{\leq}{\leqslant}
\renewcommand{\geq}{\geqslant}
\renewcommand{\subset}{\subseteq}

\usepackage{placeins,needspace}

\allowdisplaybreaks

\usepackage[numbers]{natbib}
\mathtoolsset{showonlyrefs}

\newcommand{\notationanchor}[1]{\phantomsection\label{#1}}

\begin{document}
\title{Nonuniqueness of the $2$D Euler Evolutions\\with $L^1\cap L^p$ Vorticity}
\author{Thomas Y. Hou}
\address{Applied and Computational Mathematics,
California Institute of Technology, Pasadena, CA, USA}
\email{hou@cms.caltech.edu}

\author{Xiang Qin}
\address{Applied and Computational Mathematics,
California Institute of Technology, Pasadena, CA, USA}
\email{xqin2@caltech.edu}

\author{Peicong Song}
\address{Applied and Computational Mathematics,
California Institute of Technology, Pasadena, CA, USA}
\email{psong2@caltech.edu}

\author{Wenhan Zhang}
\address{Applied and Computational Mathematics,
California Institute of Technology, Pasadena, CA, USA}
\email{wzhang9@caltech.edu}

\begin{abstract}
For each prescribed $1<p<\infty$, we construct a compactly supported
initial vorticity in $L^1(\mathbb{R}^2)\cap L^p(\mathbb{R}^2)$ that
admits two distinct evolutions under the unforced two-dimensional
incompressible Euler equation on $[0,1]$. Both vorticities are strongly
continuous in $L^1\cap L^p$ and are transported exactly by their
respective almost-everywhere invertible, measure-preserving regular
Lagrangian flows. The construction couples unstable four-vortex
configurations across increasingly small spatial scales, realizing them
through concentrated signed vortex patches. At each finite cutoff, a
prepared reference evolution remains close to rotating configurations,
while a second evolution differs initially only at the finest scale.
In the second evolution, the finest configuration consists of separating
dipoles whose remote strain activates the instability of the next larger
configuration. This process propagates through successive scales with
summable transition times. As the cutoff tends to infinity, the two
families of initial data converge strongly, along a subsequence, to the
same datum, while a nonzero difference survives at a fixed larger scale
and is detected by a fixed smooth observable at a positive limiting
time. Quantitative concentration and relative comparison estimates
establish this cascade for exact Euler patch solutions. Strong
compactness through inverse-flow stability then preserves both exact
vorticity transport and the positive-time distinction between the two
limiting evolutions. The initial datum and construction parameters may
depend on $p$.
\end{abstract}

\maketitle

\tableofcontents
\section{Introduction}\label{sec:introduction}

\subsection{Background}

We study the two-dimensional incompressible Euler system
\begin{equation}\label{eq:euler-velocity-system}
 \left\{\begin{aligned}
  \partial_tu+(u\cdot\nabla)u+\nabla P&=0,\qquad &(t,x)&\in(0,1)\times\mathbb R^2,\\
  \operatorname{div}u&=0,\qquad &u(0,x)&=u_0(x).
 \end{aligned}\right.
\end{equation}
Here $u$ is the velocity and $P$ is the pressure. It has the vorticity form
\begin{equation}\label{eq:euler-veorticity-system}
 \left\{\begin{aligned}
  \partial_t\omega+(u\cdot\nabla)\omega&=0,\quad &(t,x)&\in(0,1)\times\mathbb R^2,\\
  u&=K*\omega,\quad & \omega(0,x)&=\omega_0(x),\quad K(x) = \frac{x^{\perp}}{2\pi|x|^2}.
 \end{aligned}\right.
\end{equation}
We study whether exact transport of vorticity determines a unique
solution of the unforced two-dimensional incompressible Euler equation
beyond the bounded-vorticity class. For each prescribed $1<p<\infty$,
we construct two different evolutions from one compactly supported
initial vorticity in $L^1\cap L^p$. Both evolutions retain this
regularity, are strongly continuous at the initial time, and transport
the datum exactly by their own measure-preserving flows. The mechanism
combines an unstable four-vortex configuration with interactions across
infinitely many scales. A perturbation confined to the finest scale
vanishes in the initial-data limit, while its dynamically amplified
effect remains detectable at a fixed larger scale.

Recent manuscripts
\cite{ChoiNonuniqueness2026,Coiculescu2026I,Hexagon2026} also address
nonuniqueness through multiscale constructions. In particular, Choi's
independently developed work \cite{ChoiNonuniqueness2026} uses
essentially the same core mechanism. We discuss the relationship
between the two constructions in Section~\ref{subsec:choi-comparison}
and the development of our work in Section~\ref{subsec:research-history}.

For smooth vorticity, the planar Euler equation has a unique global
evolution. The classical results of Wolibner and Kato
\cite{Wolibner1933,Kato1967}, and the accounts
\cite{MajdaBertozzi2002,MarchioroPulvirenti1994}, explain the central
role of vorticity transport. Yudovich's theorem
\cite{Yudovich1963} extends global uniqueness to bounded integrable
vorticity: the associated velocity has a logarithmic modulus of
continuity, sufficient to control differences of trajectories and
solutions. The later theorem \cite{Yudovich1995} also allows unbounded
vorticity with suitable quantitative control of the growth of its
$L^p$ norms. Thus an unbounded vorticity need not lead to nonuniqueness,
and membership in every finite $L^p$ is different from a prescribed
growth condition as $p\to\infty$. Localized versions of this theory are
developed in \cite{CrippaStefani2024}.

Weak existence extends well beyond bounded vorticity. DiPerna and
Majda \cite{DiPernaMajda1987} developed compactness methods for
two-dimensional incompressible flow; Delort \cite{Delort1991} obtained
existence for distinguished-sign vortex-sheet data. Existence in these
rough classes leaves open the separate issue of uniqueness. There is
also a distinction between uniqueness for linear transport with a
prescribed velocity and uniqueness for the coupled Euler evolution.
The theories of DiPerna--Lions and Ambrosio
\cite{DiPernaLions1989,Ambrosio2004} give transport and flow uniqueness
under Sobolev or bounded-variation hypotheses, but two Euler solutions
generate different velocities. In the present construction each
limiting velocity has a unique regular Lagrangian flow, and the two
coupled evolutions are nevertheless different.

We use the normalization
\begin{equation}\label{eq:euler-normalization}
 x^\perp=(-x_2,x_1),\qquad K(x)=\frac{x^\perp}{|x|^2},\qquad
 u=K*\omega,\qquad \partial_t\omega+\operatorname{div}(u\omega)=0.
\end{equation}
Accordingly $\operatorname{curl}u=2\pi\omega$: the physical vorticity
is $2\pi\omega$. Multiplying all vorticities by this fixed constant
recovers the usual Biot--Savart convention and does not change any
Lebesgue-space assertion. A canonical velocity means precisely the
convolution in \eqref{eq:euler-normalization}, with no additional
harmonic field. This convention gives a single interaction time
$\tau=d^2/\Gamma$ for a configuration of length $d$ and normalized
circulation magnitude $\Gamma$.

\subsection{Statement of the result}

\begin{definition}\label{def:class}
Fix $1<p<\infty$. A Lagrangian Euler evolution on $[0,1]$ is a real
vorticity $\omega\in C([0,1];L^1(\mathbb R^2)\cap L^p(\mathbb R^2))$,
its canonical velocity $u=K*\omega$, and an almost-everywhere invertible,
measure-preserving regular Lagrangian flow $X$. For almost every label
$x_0$, the path $X(\cdot,x_0)$ is absolutely continuous, starts at $x_0$,
and solves $\partial_tX(t,x_0)=u(t,X(t,x_0))$ almost everywhere in time.
At every time the identity
$\omega(t,X(t,x_0))=\omega(0,x_0)$ holds for almost every label.
The velocity satisfies the unforced Euler momentum equation in
distributions for a suitable pressure.
\end{definition}

\begin{theorem}\label{thm:main}
For every prescribed real $1<p<\infty$, there is a real, compactly
supported $\omega_0\in L^1(\mathbb R^2)\cap L^p(\mathbb R^2)$ and two
Lagrangian Euler evolutions $(\omega^\kappa,u^\kappa,X^\kappa)$,
$\kappa\in\{\mathrm{quiet},\mathrm{trig}\}$, on $[0,1]$, such that
\begin{equation}\label{eq:main-regularity}
 \begin{gathered}
  \omega^\kappa\in C([0,1];L^1\cap L^p),\qquad
  \omega^\kappa(0)=\omega_0,\\
  u^\kappa=K*\omega^\kappa\in C([0,1];L^2_{\rm loc}).
 \end{gathered}
\end{equation}
For suitable pressures $P^\kappa$,
\begin{equation}\label{eq:main-momentum}
 \begin{gathered}
  \partial_tu^\kappa+\operatorname{div}(u^\kappa\otimes u^\kappa)
       +\nabla P^\kappa=0,\\
  \operatorname{div}u^\kappa=0,\qquad u^\kappa(0)=K*\omega_0.
 \end{gathered}
\end{equation}
in distributions on $(0,1)\times\mathbb R^2$.
\notationanchor{nt:forward-flow}The flow starts at the identity and
transports the datum exactly:
\begin{equation}\label{eq:main-transport}
 \begin{gathered}
  X^\kappa(0,x_0)=x_0,\qquad
  \frac{\diff}{\diff t}X^\kappa(t,x_0)
       =u^\kappa(t,X^\kappa(t,x_0)),\\
  \omega^\kappa(t,X^\kappa(t,x_0))=\omega_0(x_0).
 \end{gathered}
\end{equation}
The trajectory equation holds almost everywhere in time for almost every
label; the transport identity holds at every time for almost every label.
Renormalized transport identities hold, and all initially finite
vorticity norms and absolutely integrable Casimirs are conserved.
There are $t_*\in(0,1]$, a real
$\varphi_*\in C_c^\infty(\mathbb R^2)$, and $\gamma_*>0$ such that
\begin{equation}\label{eq:theorem-separation}
 \int_{\mathbb R^2}\varphi_*(x)
  \bigl(\omega^{\rm trig}(t_*,x)-\omega^{\rm quiet}(t_*,x)\bigr)
        \idiff x\geq\gamma_*.
\end{equation}
\notationanchor{nt:target-exponent}The datum and the construction
parameters may depend on $p$, and are fixed before the approximation
cutoff tends to infinity.
\end{theorem}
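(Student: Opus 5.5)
The proof goes through finite-cutoff approximations followed by a compactness limit, following the mechanism described in the abstract.

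\textbf{Multiscale datum.} Fix $p$ and choose scales $d_n\to0$ and circulations $\Gamma_n$. At scale $n$, place a four-vortex configuration of length $d_n$, unstable and rotating rigidly under point-vortex dynamics. Realize each point vortex as a concentrated signed vortex patch of radius $r_n\ll d_n$. The $L^p$ norm of scale $n$ is about $\Gamma_n r_n^{2/p-2}$, so I choose $\Gamma_n$, $r_n$ with $\sum_n\Gamma_n r_n^{2/p-2}<\infty$; this makes $\omega_0\in L^1\cap L^p$. The interaction times $\tau_n=d_n^2/\Gamma_n$ must be summable, with $\sum_n\tau_n<1$.

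\textbf{Cutoff-$N$ evolutions.} For each cutoff $N$, let $\omega_0^{N,\mathrm{quiet}}$ contain scales $1,\dots,N$ in their rotating configurations. Let $\omega_0^{N,\mathrm{trig}}$ agree with it except at scale $N$, where the configuration is replaced by separating dipoles of the same total mass and $L^p$ size. Both data are bounded, so Yudovich gives unique patch solutions. The core estimates are then:
\begin{enumerate}[label=(\roman*)]
\item Concentration: each patch stays within $o(d_n)$ of its point-vortex centre, uniformly in $N$, on the relevant time windows.
\item Quiet stability: in the quiet evolution every scale stays close to its rotating orbit up to time $1$, because the strain from smaller scales is weak and the far field of larger scales is nearly uniform.
\item Trigger cascade: in the trig evolution the dipoles at scale $N$ separate within time $\tau_N$. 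Their remote strain then seeds the unstable mode of scale $N-1$, which departs at an $O(1)$ relative level after a time of order $\tau_{N-1}\log(\text{seed}^{-1})$. Inductively, scale $n_0$ (fixed) is macroscopically deformed by time $t_*\le\sum_n C\tau_n\log(\cdot)\le1$.
\end{enumerate}
This requires the logarithmic factors to be summable, which I would arrange by choosing the ratios $d_{n+1}/d_n$ and $\Gamma_{n+1}/\Gamma_n$ so that the seed amplitudes are polynomial in those ratios. The result is a fixed observable $\varphi_*$ supported near scale $n_0$ that detects a difference $\ge2\gamma_*$ at time $t_*$, uniformly in $N$.

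\textbf{Passage to the limit.} The two data families differ only at scale $N$, with norm tending to $0$. Both therefore converge strongly in $L^1\cap L^p$ to the same $\omega_0$: the trig data converge because the dipole pieces have vanishing norm at fine scales, possibly after a subsequence. Uniform $L^1\cap L^p$ bounds, equi-integrability from rearrangement invariance, and the equicontinuity in time supplied by the weak formulation together give convergence of $u^N$ in $C([0,1];L^2_{\rm loc})$ along a subsequence. The vorticity converges weak-$*$, and the momentum equation passes to the limit because $p>1$ gives compactness of $u$ in $L^2_{\rm loc}$.

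To upgrade to strong convergence and exact transport, I would use stability of measure-preserving flows, and of their inverses, in the spirit of Ambrosio/Crippa--De Lellis. The limit velocity has $\nabla u\in L^p$, so its regular Lagrangian flow $X$ is unique. The flows $X^N$ and their inverses converge in measure to $X$ and $X^{-1}$. Then $\omega^N(t)=\omega_0^N\circ(X^N_t)^{-1}\to\omega_0\circ X_t^{-1}$ strongly in $L^1\cap L^p$, which gives the transport identity, the renormalization, the Casimir conservation, and strong continuity in time. Strong convergence preserves the separation, so the limit satisfies $\int\varphi_*(\omega^{\rm trig}-\omega^{\rm quiet})\ge\gamma_*$ at time $t_*$.

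\textbf{Main obstacle.} I expect step (iii) to be hardest. The relative comparison between the trig and quiet patch evolutions has to be tracked across all scales with errors uniform in $N$, and it must show that the instability is genuinely activated in the right direction without being swamped by finite-patch deformation errors. I would first prove a linearized point-vortex estimate at one scale with an explicit remote-strain forcing. I would then close a bootstrap on centroid deviations using the concentration bounds, paying for the patch error with $r_n/d_n$ small enough to be dominated by the seed size.
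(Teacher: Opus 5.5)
\textbf{Main gap: step (ii) is false as stated.} Your limiting step (velocity compactness, stability of inverse flows in the spirit of Crippa--De Lellis, strong convergence by composition) is essentially the paper's. The construction cannot reach it, however, without replacing step (ii).

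The rotating quadrupole is a saddle, with expanding rate $\lambda=2\sqrt2$ in the normalized time $t/\tau_n$, and physical time one equals $1/\tau_n\to\infty$ interaction times at scale $n$. The strain of the other configurations and the finite-core corrections are small but nonzero; in the paper's scales they have normalized size up to $\tau_nh^\beta+\eta_n^2$. If every scale starts in its rigidly rotating shape, the unstable coordinate therefore grows like $\epsilon e^{\lambda t/\tau_n}$ unless its initial value is tuned. The configuration then leaves its chart after normalized time $O(\log(1/\epsilon))\ll1/\tau_n$, well before time one; weakness of the forcing only postpones the departure logarithmically. Your trig data share these patches at all $n<N$, so the same spontaneous departure occurs there too. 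The comparison of the two histories around a saddle, on which (iii) rests, is then lost.

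The paper instead \emph{prepares} the reference data. For each $N$ it varies only the initial unstable coordinates, in a box of half-widths $B_{\rm ref}(\tau_nh^\beta+\eta_n^2)$. It shows that the unstable faces are strictly outgoing while every other bootstrap bound strictly improves. Continuous dependence of the finite patch solutions on the disk centers, together with the no-retraction theorem, then gives data whose coupled evolution stays in the box on all of $[0,1]$. Consequently the quiet data depend on $N$, and the common datum arises only along a diagonal subsequence.

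\textbf{Two ingredients missing from step (iii).} First, the sign of the seed is not automatic. The exponentially weighted projection of the donor's growing quadrupole strain onto the receiver's expanding direction is $\varepsilon_n\mathcal H(\psi_n)+O(\varepsilon_nr_n)$, with $\mathcal H(\psi)=2(\sqrt2\cos2\psi+\sin2\psi)$. This is zero or negative for unfavourable relative phases $\psi_n$. The opposite branch $\mathsf U<0$ stays bounded and produces no separating dipoles to act as the next donor. Each $\psi_n$ is fixed by the dynamics, through the section-time and outgoing-angle laws. The paper therefore chooses a single rotation angle of the finest configuration by nested intervals, so that every $\psi_n$ lands where $\mathcal H\geq e^{-12}$. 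This works because the term $-\lambda^{-1}\log\mathcal H(\psi_n)$ in the section time produces a phase sweep exceeding $\pi$ plus the interval length plus the error allowance.

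Second, paying for patch errors by making $r_n/d_n$ small relative to the seed cannot be made uniform. At a larger configuration $k<n$, the absolute deformation and centroid-averaging errors are of size $\eta_k^2$, which is fixed in $n$, while $\varepsilon_n\to0$. Two separate absolute approximations of the two histories therefore swamp the signal. The paper exploits the fact that both histories start from identical disks at every $k\le n$. It proves a proportional estimate (Lemma~\ref{lem:response}): the difference of the two boundary graphs is bounded by the time integral of the difference of the exterior gradients. This applies from the original disks because the elapsed physical time up to the end of the step at index $n$ is $o(\tau_k)$ for every $k<n$.
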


In fact we carry out the construction with $q=\max(p,3)$ and obtain
strong $L^1\cap L^q$ continuity; interpolation gives the stated
$L^p$ conclusion. The two solutions preserve the same signed value
distribution. Their difference is therefore a difference of spatial
evolutions of the same vorticity, detected by the fixed observable in
\eqref{eq:theorem-separation}. For each prescribed exponent,
Section~\ref{sec:scales} specifies
a scale hierarchy satisfying the required integrability,
concentration, and cascade estimates. The superscripts
\emph{quiet} and \emph{trig} distinguish the
reference and triggered histories constructed below. The reference is
selected to remain close to rotating configurations; the terminology
does not assert stability under arbitrary perturbations.

\subsection{Dynamical generation of the second evolution}
\label{subsec:dynamical-mechanism}

The proof combines a local instability with an infinite family of
interacting configurations. The instability amplifies a small geometric
difference. The scale hierarchy makes the initial difference vanish
in the vorticity norm while its amplified effect survives at positive
time. We sketch these steps with Figures~\ref{fig:initial-array}--\ref{fig:material-histories};
Subsection~\ref{subsec:proof-roadmap} then explains the analytical
estimates that implement them for Euler solutions.

\paragraph{\textbf{The initial array and its scales.}}
At each location $R_ne_1$, place four circular patches, two positive
and two negative, with normalized circulation magnitudes $\Gamma_n$.
Their mutual distances are comparable to $d_n$, and each has radius
$a_n\ll d_n$. As $n$ increases, the configurations accumulate toward
the origin and their interaction times decrease.

\begin{figure}[!htbp]
 \centering
 \includegraphics[width=.70\linewidth]{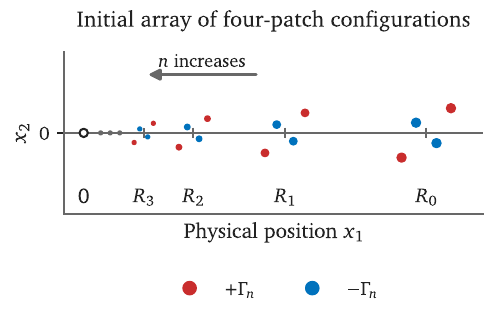}
 \caption{Initial array of four-patch configurations centered at
$R_ne_1$. Red and blue symbols represent positive and negative
patches with normalized circulations $+\Gamma_n$ and $-\Gamma_n$,
respectively; crosses mark configuration centers. Each initial
patch has radius $a_n$, much smaller than the internal
configuration scale $d_n$. Increasing indices approach the
origin, whereas the cascade proceeds outward, from $n+1$ to $n$.
Positions, separations, and patch radii are schematic.}
 \label{fig:initial-array}
\end{figure}

The scales of this array have distinct roles:
\[
 \begin{array}{c|l}
 R_n & \text{initial distance of the configuration center from the origin},\\
 d_n & \text{length scale within the four-patch configuration},\\
 a_n & \text{radius of each initial patch},\\
 \Gamma_n & \text{normalized circulation magnitude of each patch},\\
 \tau_n=d_n^2/\Gamma_n & \text{interaction time of the configuration}.
 \end{array}
\]
Sections~\ref{subsec:scale-choice} and~\ref{subsec:technical-inputs}
give the precise scales and initial disks. These choices make the
infinite array integrable in $L^1\cap L^q$ and are fixed before choosing
a finite cutoff $N$. The arrow toward increasing index in the figure
points toward smaller configurations; the instability will propagate
in the opposite direction.

\paragraph{\textbf{Two finite evolutions and the local instability.}}
The elementary point-vortex configuration consists of four vortices
arranged with central symmetry, two of each sign. At the reference
shape, the configuration rotates without changing shape. After removing
translation and rotation, one shape direction expands and another
contracts. Along the outgoing branch, the four vortices form two
separating neutral pairs, each containing one vortex of each sign.
Section~\ref{sec:cell} derives the
exact shape equations and the outgoing estimates. This reduction has
classical antecedents in four-vortex dynamics
\cite{BorisovPavlov1998,ArefStremler1999}.

For each cutoff $N$, we prepare a reference evolution
$\omega_N^{\rm quiet}$ whose configurations at indices $0,\ldots,N$
stay close to their rotating shapes on $[0,1]$. Superposing isolated
rotating quadrupoles does not account for the velocity induced by the other 
configurations. An exit-map argument then selects the initial expanding
coordinates so that the coupled evolution remains close to the rotating reference configurations. The triggered evolution $\omega_N^{\rm trig}$ starts with
exactly the same patches at every index $n<N$. Only the four centers
at index $N$ are changed, placing that configuration on a fixed
outgoing section. Its radii, circulations, and vorticity values remain
the same. Proposition~\ref{prop:finite} carries out this preparation.

Each finite bounded-vorticity datum has a unique Euler evolution.
The two finite evolutions start from different data, and the isolated
four-vortex instability alone does not give nonuniqueness. The common
initial datum will emerge only after passing to the infinite-scale limit.

\paragraph{\textbf{The signed interaction of adjacent configurations.}}
As the two neutral pairs at index $n+1$ separate, the anisotropic
part of their signed second-moment tensor grows. Its contribution
to the far field induces strain at the adjacent larger
configuration $n$, as illustrated in Figure~\ref{fig:donor-receiver}.
The projection of this strain onto the receiver's expanding
shape direction induces a small perturbation that the instability
amplifies to a fixed deformation. The receiver then enters its
own separating motion and can induce growth in the expanding
coordinate of the next larger configuration.

\begin{figure}[!htbp]
 \centering
 \includegraphics[width=.86\linewidth]{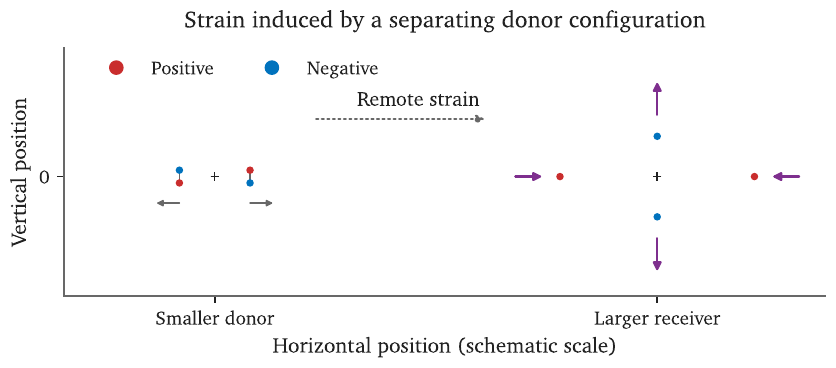}
 \caption{Schematic interaction between the smaller donor $n+1$
and the larger receiver $n$. Gray arrows indicate the motion of
the donor's two separating neutral pairs. The growing anisotropic
part of their signed second-moment tensor induces the strain
shown by the purple arrows near the receiver, after subtraction
of common translation. The dotted arrow denotes remote influence.
For the selected relative phases, the projection of the leading
strain onto the receiver's expanding shape direction has a positive
exponentially weighted integral, although its instantaneous
projection can change sign.}
 \label{fig:donor-receiver}
\end{figure}

The relative orientation matters. In coordinates rotating with the
receiver, the strain oscillates and its instantaneous expanding
component need not be positive. What determines growth is its
exponentially weighted accumulation: an earlier input has more time
to be amplified. The explicit calculation in
Section~\ref{subsec:signed-strain} identifies an interval of relative
phases with a positive accumulated input. Section~\ref{sec:release}
proves that the patch errors and the other interactions are smaller
than this input. The strain comes from the other patches in the same
Euler solution; the equation remains unforced.

\FloatBarrier

\paragraph{\textbf{An entire cascade in bounded time.}}
Repeating this transfer propagates the instability through
\[
 N\longrightarrow N-1\longrightarrow\cdots\longrightarrow1
 \longrightarrow0.
\]
Figure~\ref{fig:cascade-array} displays this order. The purple arrows
represent transmission of strain, not transfer of vorticity between
configurations. Each material patch remains in the full velocity field
throughout the evolution.

\begin{figure}[!htbp]
 \centering
 \includegraphics[width=.95\linewidth]{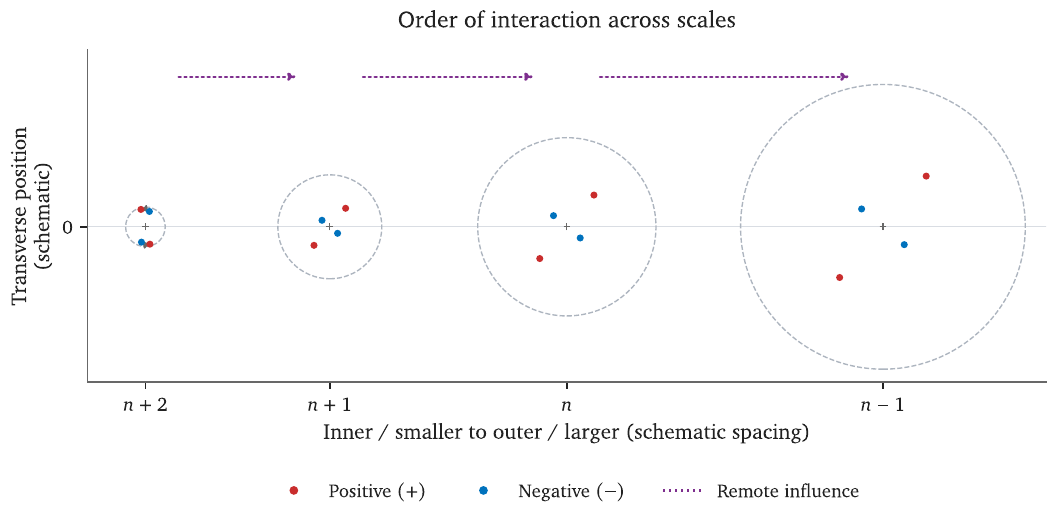}
 \caption{Schematic layout and interaction order
$n+2\to n+1\to n\to n-1$, from smaller, faster configurations
to larger, slower ones. Purple arrows identify the leading
strain interactions that drive the cascade; all configurations
remain coupled through the full Euler velocity. Dashed circles
serve only to group the four patches of each configuration.
The arrows indicate influence, not transport of vorticity
between configurations. The drawing illustrates the interaction
order rather than a simultaneous snapshot of the evolution.
Positions and sizes are schematic.}
 \label{fig:cascade-array}
\end{figure}

Let $\varepsilon_n$ denote the dimensionless input size at
configuration $n$. Amplification to a fixed deformation and
passage to the outgoing section require a physical duration
bounded by
\[
 \Delta t_n\lesssim
 \tau_n\bigl[1+\log(1/\varepsilon_n)\bigr].
\]
The interaction times $\tau_n$ decrease sufficiently rapidly
as $n$ increases that the scale choices in
Section~\ref{sec:scales} make the sum of the required physical
durations less than $1/4$, uniformly in the cutoff $N$.
These choices also control the initial vorticity norms and
the patch errors. Section~\ref{subsec:outgoing-laws} proves
the required passage-time and outgoing-angle estimates.

Favorable phases at successive receivers must also arise from the
initial data. Section~\ref{sec:finite} selects nested intervals of a
single rotation parameter of the finest configuration. The outgoing
time and angle laws ensure that one original parameter gives positive
transfer at every step. No phase is readjusted during the evolution.

\FloatBarrier

\Needspace{5\baselineskip}
\paragraph{\textbf{Why the mechanism persists for material patches.}}
The point-vortex picture must remain accurate while the patches deform
and all scales interact. Concentration estimates control the patches
through rotation, departure, and pair separation. At physically larger configurations, the two histories start from
identical disks, allowing
their difference to be estimated proportionally to the difference of
the exterior fields. Two separate absolute approximation errors would
not suffice to resolve the small transmitted signal. Sections~\ref{sec:geometry}
and~\ref{sec:release} control geometry and growth on the same stopped
interval, using the estimates stated in Section~\ref{sec:scales} and
proved in Section~\ref{sec:technical-proofs}. We explain these three
analytical ingredients in Subsection~\ref{subsec:proof-roadmap}.

\paragraph{\textbf{A common initial datum and a surviving later difference.}}
The infinite array has no finest configuration. At each finite
cutoff, the two initial data differ only at index $N$, whose
contribution to the $L^1\cap L^q$ norms tends to zero as
$N\to\infty$. Summable tails and convergence of the centers
at each fixed index yield a common strong initial limit
along a subsequence. Meanwhile, the cascade produces a fixed
shape difference at index zero. A permutation-invariant shape
statistic and patch concentration turn that difference into
a distinction between the signed vorticities themselves.
One smooth spatial test can be fixed along a subsequence,
and the observation times remain bounded away from zero.
The finite-data conclusions are
\begin{equation}\label{eq:strategy-target}
 \begin{gathered}
 \omega_{0,N}^{\rm quiet},\ \omega_{0,N}^{\rm trig}
       \longrightarrow\omega_0\quad\text{in }L^1\cap L^q,\\
 \int\varphi_*\bigl(\omega_N^{\rm trig}(t_N)
                     -\omega_N^{\rm quiet}(t_N)\bigr)\idiff x
       \geq\gamma>0,\qquad t_N\in[t_0,1/4],\quad t_0>0.
 \end{gathered}
\end{equation}
Here the scales, test, gap, and positive lower time bound are independent
of $N$. Proposition~\ref{prop:criterion} combines compactness of the
canonical velocities with stability of their inverse flows to obtain
strong convergence in $C([0,1];L^1\cap L^q)$. Exact transport and the
fixed observable gap both survive this limit.

\begin{figure}[!htbp]
 \centering
 \includegraphics[width=.80\linewidth]{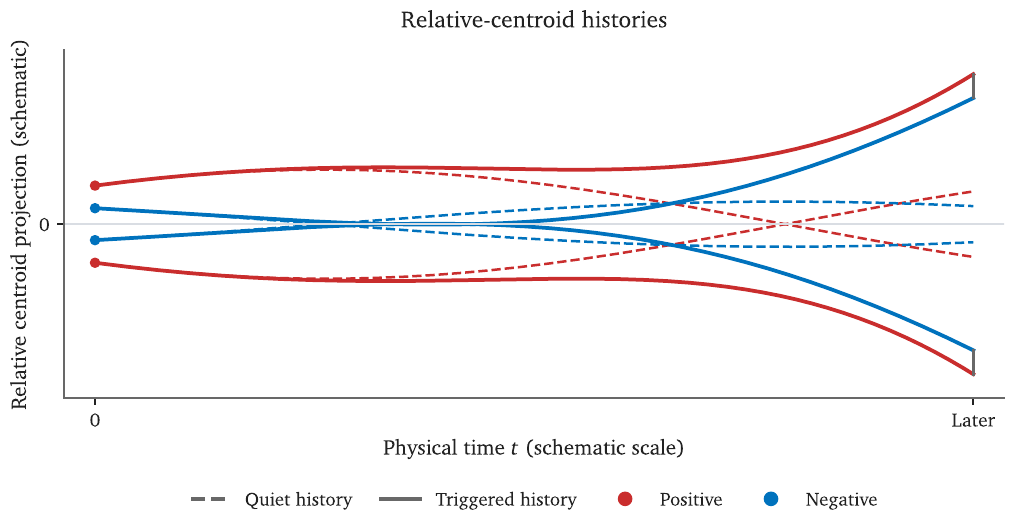}
 \caption{Schematic relative-centroid projections for a fixed
activated inner configuration in the two limiting Euler
evolutions from a common initial datum. Dashed curves represent
the prepared rotating reference; solid curves illustrate
triggered departure toward neutral-pair separation. Red and
blue indicate positive and negative components. Apparent
overlap at early times represents schematic closeness and
does not assert agreement on a positive-time interval.
The depicted separation is not the index-zero observation
used to establish nonuniqueness. The curves are illustrative,
not computed trajectories, and projected intersections do
not indicate collisions.}
 \label{fig:material-histories}
\end{figure}

Figure~\ref{fig:material-histories} illustrates the outgoing
motion of an activated inner configuration. For a fixed index
$n\geq1$, the plotted quantities are the component centroids
relative to their mean, projected onto a fixed unit direction
$\mathbf e$:
\[
 \mathbf e\cdot
 \bigl(b_{n,j}^{\kappa}(t)-\boldsymbol c_n^\kappa(t)\bigr),
 \qquad
 \boldsymbol c_n^\kappa(t)
 =\frac14\sum_{j=1}^4 b_{n,j}^\kappa(t).
\]
The same direction $\mathbf e$ is used for both histories.
The proof of nonuniqueness detects a fixed shape difference
at configuration $0$ while it remains in the small shape
chart; full neutral-pair separation at that configuration
is not required. Section~\ref{subsec:common-datum-observable}
constructs the common datum and the fixed observable.

\FloatBarrier

\subsection{Relation to Choi's independently developed construction}
\label{subsec:choi-comparison}
During the preparation of this manuscript, we became aware of Choi's
preprint \cite{ChoiNonuniqueness2026}, which appeared on arXiv on
October 27, 2026, shortly before our planned submission. Our work was
developed independently. The two works share the following core
mechanism: an unstable quadrupole, its transition into separating
dipoles, and an infinite cascade through which a vanishing finest-scale
perturbation produces a persistent difference at a larger scale.
Both constructions implement this mechanism with exact Euler
evolutions of concentrated vortex patches.

There are differences in scale choices, phase selection, and the
organization of the patch estimates and limiting argument. These are
differences in quantitative implementation and exposition rather than
fundamentally different sources of nonuniqueness. Choi's theorem gives
one compactly supported initial vorticity belonging simultaneously to
every finite $L^p$, with global-in-time evolutions and finite-energy
velocities. Theorem~\ref{thm:main}, as stated and proved here, treats
each prescribed finite exponent on $[0,1]$, with the datum and
parameters allowed to depend on that exponent. This comparison
concerns the stated results of the two manuscripts. Our scale choices
are flexible: letting patch areas shrink faster than every fixed
inverse power of their growing amplitudes suggests a route to the
same common-datum conclusion. Establishing this extension requires
new uniform concentration, interaction and comparison estimates
for the modified hierarchy.

Our presentation emphasizes explicit algebraic shape coordinates,
the signed interaction coefficient, and relative estimates for
patches transported from identical initial disks. The reference
preparation and nested-interval choice of a single trigger angle
appear in Section~\ref{sec:finite}; Section~\ref{sec:completion}
combines a fixed signed observable with inverse-flow stability.
In a companion preprint \cite{HouQinSongZhangDichotomy2026}, which
we post concurrently, we use a different method to establish
a sharp uniqueness--nonuniqueness criterion for arbitrary nondecreasing
vorticity $L^p$-norm envelopes within a class of stationary neutral
vortex arrays.
We describe our independent development in
Section~\ref{subsec:research-history}.

\subsection{Literature review}

\paragraph{\textbf{Instability and forced nonuniqueness.}}
Vishik's work \cite{Vishik2018I,Vishik2018II} established nonuniqueness
for forced planar Euler in finite-exponent vorticity classes. The
mechanism combines an unstable vortex with a similarity transformation
that brings an instability at logarithmic time $-\infty$ to the initial
time. The forcing makes the prescribed background compatible with the
equation. The monograph \cite{AlbrittonEtAl2024} develops this argument
in detail, including the construction of an unstable vortex. Castro,
Faraco, Mengual and Solera \cite{CastroFaracoMengualSolera2024} give a
simpler construction by first finding a piecewise constant unstable
vortex and then regularizing it. Albritton, Colombo and Mescolini
\cite{AlbrittonColomboMescolini2025} study nonunique viscosity limits
near this forced scenario, with a quantitative relation between the
viscosity and the size of initial perturbations.

A related instability method produced nonunique Leray solutions of the
\emph{forced} three-dimensional Navier--Stokes equation in the work of
Albritton, Bru\`e and Colombo \cite{AlbrittonBrueColombo2022}. This
connection is useful conceptually: a finite-dimensional unstable
direction can create nonuniqueness only after the surrounding flow,
initial trace, and nonlinear errors have been controlled. For the
unforced Navier--Stokes equation, Jia and \v Sver\'ak
\cite{JiaSverak2014,JiaSverak2015} developed the forward self-similar
framework and a route from instability to nonuniqueness; Guillod and
\v Sver\'ak \cite{GuillodSverak2023} investigated the bifurcation
numerically. Hou, Wang and Yang \cite{HouWangYang2026} present a
computer-assisted proof of unforced Leray--Hopf nonuniqueness based on
an unstable self-similar profile. Ionescu, Jia and Palasek
\cite{IonescuJiaPalasek2026} construct numerical unstable profiles in
the axisymmetric swirl-free class. The last work supplies approximate
profiles and their stability calculations, rather than a completed
nonlinear nonuniqueness theorem.

\paragraph{\textbf{Euler similarity profiles.}}
Constructing a nonradial self-similar Euler flow from a homogeneous
datum is a distinct problem from producing two flows from that datum.
Elling's algebraic spiral and mixed-sign constructions
\cite{Elling2013,Elling2016}, and the algebraic spiral solutions of
Shao, Wei and Zhang \cite{ShaoWeiZhang2025}, provide important examples
of similarity profiles. More recent existence results include Choi's
large-data construction \cite{ChoiLargeData2026}, the multi-sink
configurations of Choi and Coiculescu \cite{ChoiCoiculescu2026}, and
the Lagrangian construction of Hou, Qin, Song and Zhang
\cite{HouQinSongZhang2026}. The latter retains exact backward particle
transport and permits angular data without smallness or sign conditions
in its stated homogeneous class. These results clarify the available
backgrounds for a similarity approach. The present proof instead uses
spatially separated configurations with decreasing interaction times.

\paragraph{\textbf{Convex integration and vorticity regularity.}}
The weak nonuniqueness constructions of Scheffer and Shnirelman
\cite{Scheffer1993,Shnirelman1997}, followed by the differential-inclusion
formulation of De Lellis and Sz\'ekelyhidi \cite{DeLellisSzekelyhidi2009},
showed how flexible the Euler equation can be at low velocity
regularity. Controlling positive-time vorticity is an additional
restriction. Bru\`e and Colombo \cite{BrueColombo2023} obtained
nonuniqueness with vorticity in a Lorentz space; Bru\`e, Colombo and
Kumar \cite{BrueColomboKumar2024} crossed into $L^p$ for some $p>1$
using a new convex integration scheme with dipole building blocks.
Mengual \cite{Mengual2024} constructed nonunique admissible solutions
from truncated power-law vortices whose initial vorticity belongs to
$L^1\cap L^p$, including any prescribed finite $p>1$ in the extensions
treated there. This last initial-data assertion does not provide an
$L^\infty_tL^p_x$ bound for the vorticity of those solutions at positive
times. In Theorem~\ref{thm:main}, that regularity and exact transport
are retained on the entire closed time interval.

\paragraph{\textbf{Other October 2026 manuscripts.}}
The comparison with Choi is given in Subsection~\ref{subsec:choi-comparison}.
Coiculescu \cite{Coiculescu2026I} studies a radial nonnegative datum
and a threefold-symmetric perturbation. Part I establishes a compactness
criterion and develops the linear analysis of the multiscale building
blocks. Its instability criteria require interval-arithmetic certificates
discussed in the companion Part II, whose working draft is linked from
the repository record. The proposed second evolution departs from a stationary radial
reference, whereas here the reference is assembled from rotating
four-patch configurations. The Hexagon manuscript \cite{Hexagon2026}
claims, for every finite $p>2$, two strong vanishing-viscosity
accumulation paths from one compactly supported zero-circulation datum.
Its construction uses concentric unstable profiles and controls their
interacting forcing histories. Our conclusion is stated for canonical
Euler evolutions and does not assert that viscosity selects either one.
These comparisons concern the stated mechanisms and conclusions of
the cited versions.

\paragraph{\textbf{Cascades and concentrated vortices.}}
The use of successive scales has precedents in dyadic Euler models
\cite{KatzPavlovic2005} and in the averaged Navier--Stokes construction
of Tao \cite{Tao2016}. The September 2026 manuscripts of OpenAI
\cite{OpenAIEuler2026,OpenAINavierStokes2026} use dynamical
amplification in three-dimensional singularity constructions: the Euler
manuscript transfers amplification between localized oscillations, while
the Navier--Stokes manuscript combines a collapsing background with
oscillatory corrections and a smooth external force. The common
structural issue is to make an amplified component supply controlled
input to another component, with errors and initial increments that
remain summable. Here the transfer proceeds from the smaller, faster
configuration to its larger neighbor and ends in two different
Lagrangian limits.

Our passage from point dynamics to transported vorticity belongs to
the theory of concentrated vortices, including Turkington
\cite{Turkington1987}, Marchioro and Pulvirenti
\cite{MarchioroPulvirenti1993}, and the long-time concentration work
of Butt\`a and Marchioro \cite{ButtaMarchioro2018}. Classical patch
regularity results include Chemin and Bertozzi--Constantin
\cite{Chemin1993,BertozziConstantin1993}. The estimates needed here
are quantitative in the core radius and in differences between two
histories. Rearrangement \cite{BrascampLiebLuttinger1974} controls a
logarithmic deficit; harmonic estimates and singular-integral bounds
\cite{Stein1970} control exterior fields; the quantitative flow theory
of Crippa and De Lellis \cite{CrippaDeLellis2008} is used in the final
strong compactness argument. These inputs have separate roles, which
are made explicit in the technical lemmas and their applications.

\subsection{Analytical ingredients and organization of the proof}
\label{subsec:proof-roadmap}

The preceding dynamical picture has to hold for interacting material
patches, not merely for prescribed point-vortex trajectories. Three
analytical issues explain the structure of the proof.

\paragraph{\textbf{Concentration throughout the different stages.}}
Section~\ref{sec:scales} fixes the scales and states three patch
estimates, whose proofs are collected in
Section~\ref{sec:technical-proofs}. Lemma~\ref{lem:concentration}
uses a corrected energy to cancel rigid rotation and a high moment to
control the transported support. Lemma~\ref{lem:neutral-pair} uses
the cancellation of the common mutual translation in a neutral pair.
This permits control to continue after a quadrupole has entered its
separating stage. All estimates retain the full elapsed time and the
same original material patches when the description of a configuration
changes.

\paragraph{\textbf{Relative errors rather than two unrelated absolute errors.}}
An absolute approximation of each evolution by point vortices is not
sufficient to determine the small input that activates the receiver. For a spatially larger configuration at index $k<n$,
its individual approximation error need not be smaller than the
input $\varepsilon_n$ that activates index $n>k$. The two histories,
however, start from identical disks at these indices.
Lemma~\ref{lem:response} bounds the difference of their deformations
by the accumulated difference of centered exterior gradients. This
proportional estimate preserves the smallness of the relative signal.
Section~\ref{sec:release} combines it with signed moment estimates,
control of the impulse generated since time zero, and exponential
weighting to establish positive growth.

\paragraph{\textbf{Geometry and growth on a common interval.}}
Concentration requires control of the accumulated shape deformation,
while the growth estimate uses the current concentration and separation
bounds. Sections~\ref{sec:geometry} and~\ref{sec:release} therefore
work on the same stopped interval. Definition~\ref{def:common-bootstrap}
and Proposition~\ref{prop:bootstrap-closure} organize their simultaneous
strict improvement, without assuming a future crossing. A quantitative
transverse-crossing argument then continues the configuration to the
outgoing section, where the neutral-pair estimate applies.

Section~\ref{sec:completion} completes the construction. It first
establishes the compactness criterion and continuous dependence of each
fixed finite patch family, independently of the cascade selection.
These facts justify the exit map and the nested-interval choices.
The common strong initial limit and the fixed observable then yield
Theorem~\ref{thm:main}. This order separates the analytic inputs to
the finite selection from the selection itself.

\subsection{Development of the construction and use of AI}
\label{subsec:research-history}

Our investigation of nonuniqueness for the unforced two-dimensional
Euler equation began in January 2026 with numerical studies of vortex
sheets and related viscous models. During the summer, these
investigations revealed symmetry-breaking phenomena in nearly
self-similar regimes within families of $n$-fold symmetric
configurations. In particular, observations involving four-vortex
configurations led us to study their unstable shape dynamics as a
possible ingredient in a nonuniqueness construction. These observations
provided motivation and geometric intuition, but at that stage we did
not know how to turn them into a rigorous proof of two Euler evolutions
from the same initial datum.

Following OpenAI's September 8, 2026 announcement of its Euler and
Navier--Stokes analyses \cite{OpenAIEuler2026,OpenAINavierStokes2026},
we revisited the possibility of combining the four-vortex instability
with an infinite multiscale construction. In particular, the Euler
analysis \cite{OpenAIEuler2026} suggested organizing successive
amplification across scales while controlling the accumulated
interactions and the size of the initial contributions. With assistance from GPT-6 in developing
the argument, we obtained the result underlying the present manuscript in mid-September
2026, independently of Choi's work. Since then, we have developed the written presentation of the PDE
analysis and the geometric intuition. The authors
take responsibility for the mathematical statements and proofs in
this manuscript.

\subsection*{Conventions}
All unspecified spatial norms are on $\mathbb R^2$. We use
$q=\max(p,3)$ throughout. The branch index is
$\kappa\in\{\mathrm{quiet},\mathrm{trig}\}$; the nonnegative unit-mass
component densities are $\mu_{n,j}^\kappa$, with signs
$(\epsilon_1,\epsilon_2,\epsilon_3,\epsilon_4)=(1,1,-1,-1)$.
Spatial derivatives are denoted by $\nabla$; $D^+$ denotes only the
upper right Dini derivative. Primes refer to the time specified in the
surrounding statement. The imaginary unit is $\iunit$, and
$\operatorname{Rot}(\theta)$ denotes counterclockwise rotation.
Generic constants depend on the fixed exponent and the explicitly
declared geometric and coefficient bounds, and are independent of the
cutoff depth. The notation $o_L(1)$ means convergence to zero uniformly
in every admitted index and stopped history as $L\to\infty$.
\section{The isolated four-vortex model}\label{sec:cell}\label{app:point}

We construct shape coordinates in which the isolated equations are
$\mathsf U'=a\mathsf U$, $\mathsf V'=-a\mathsf V$, and $\mathsf A'=0$,
with $a(0,0,1)=\lambda=2\sqrt2$. We then compute the strain produced
along the invariant set $\mathsf V=0$, $\mathsf A=1$, $\mathsf U>0$.
At a distant rotating configuration its contribution to the
$\mathsf U$-equation has an exponentially weighted integral
$\varepsilon\mathcal H(\psi)+O(\varepsilon r)$, where
$\mathcal H(\psi)=2(\sqrt2\cos2\psi+\sin2\psi)$.
This formula supplies both the size and the sign required for the
comparison of the two Euler evolutions.

\subsection{The intrinsic shape coordinates}

We use throughout the kernel
\[
 K(x)=\frac{x^\perp}{|x|^2},\qquad
 x^\perp=(-x_2,x_1),\qquad K(z)=\frac{\iunit}{\bar z}.
\]
Consider two vortices of strength $+1$ at $z,-z$ and two of
strength $-1$ at $w,-w$. Central symmetry is preserved by the point
equations. At $z$, the remaining positive vortex contributes $K(2z)$,
whereas the negative vortices contribute $-K(z-w)-K(z+w)$.
Using
$(\bar z-\bar w)^{-1}+(\bar z+\bar w)^{-1}
=2\bar z/(\bar z^2-\bar w^2)$ gives
\begin{equation}\label{eq:complex-cell}
 z'=\frac{\iunit}{2\bar z}-\frac{2\iunit\bar z}{\bar z^2-\bar w^2},
 \qquad
 w'=-\frac{\iunit}{2\bar w}+\frac{2\iunit\bar w}{\bar w^2-\bar z^2}.
\end{equation}
For circulation magnitude $\Gamma$ and length $d$, the interaction time
and normalized time are
\begin{equation}\label{eq:single-clock}
 \tau=\frac{d^2}{\Gamma},\qquad s=\frac{t}{\tau}.
\end{equation}
Indeed, if $x_i=c+dy_i$ in an isolated configuration with fixed center $c$,
then homogeneity gives
\[
 \frac{\diff y_i}{\diff s}
 =\frac{\tau\Gamma}{d^2}\sum_{j\ne i}\epsilon_jK(y_i-y_j)
 =\sum_{j\ne i}\epsilon_jK(y_i-y_j),
 \qquad (\epsilon_1,\epsilon_2,\epsilon_3,\epsilon_4)=(1,1,-1,-1).
\]
Thus point dynamics and patch transport use the same clock.

For the later patch problem the four centroids need not be centrally
symmetric. We therefore separate their common translation, the offset
of the two same-sign midpoints, and their two same-sign half-separations.
With labels $1,2$ for the positive components and $3,4$ for the negative components, define
\begin{equation}\label{eq:cell-geometric-coordinates}
 \begin{gathered}
 \boldsymbol c=\tfrac14(b_1+b_2+b_3+b_4),\\
 \chi=\tfrac12(b_1+b_2-b_3-b_4),\\
 z=(b_1-b_2)/(2d),\qquad
 w=(b_3-b_4)/(2d),\\
 b_{1,2}=\boldsymbol c+\chi/2\pm dz,\qquad
 b_{3,4}=\boldsymbol c-\chi/2\pm dw.
 \end{gathered}
\end{equation}
The vectors $\boldsymbol c,\chi$ have physical length units; $z,w$ are
dimensionless. In particular the signed impulse of these four points
is $2\Gamma\chi$, so setting the mean centroid to zero does not set
the impulse to zero. Equation~\eqref{eq:complex-cell} concerns the
isolated subspace $\chi=0$ after translation and dilation.

We next distinguish deformation from common rotation: the shape
variables describe the relative radii and angle of the two same-sign
axes, while $\vartheta$ records their orientation. The offset $\chi$
remains a separate coordinate for departures from central symmetry.
Let $\alpha_z,\alpha_w$ be local continuous arguments of the labeled
vectors $z,w$. Their squared-radius difference, twice their radius
product, relative angle and mean angle are
\begin{equation}\label{eq:chart}
 \begin{gathered}
 \mathsf A=|z|^2-|w|^2,\qquad \mathsf R=2|z||w|,\qquad
 \phi=\arg z-\arg w,\qquad \vartheta=(\arg z+\arg w)/2,\\
 \mathsf X=\frac{\mathsf R-\mathsf A}{\sqrt{\mathsf A\mathsf R}},
 \qquad \mathsf Y=\sqrt{\mathsf R/\mathsf A}\sin(\phi-\pi/2),
 \qquad \mathsf U=\mathsf X-\mathsf Y,\quad
 \mathsf V=\mathsf X+\mathsf Y.
 \end{gathered}
\end{equation}
Here $\phi=\alpha_z-\alpha_w$ and
$\vartheta=(\alpha_z+\alpha_w)/2$. At the reference shape
$(\mathsf A,\mathsf R,\phi)=(1,1,\pi/2)$ the radii are
\begin{equation}\label{eq:cell-reference}
 |z|^2=(\sqrt2+1)/2,\qquad |w|^2=(\sqrt2-1)/2,
 \qquad \alpha_z=\vartheta+\pi/4,\quad
 \alpha_w=\vartheta-\pi/4.
\end{equation}
Thus the two same-sign axes are perpendicular, but their radii are
unequal. Choosing the positive axis horizontal means
$\vartheta=-\pi/4$ and places the labeled vector $w$ on the negative
vertical axis. A common rotation changes $\vartheta$ and leaves the
three shape variables unchanged.

The reduction of point-vortex dynamics to quantities invariant under
translations and rotations has a long history. In particular,
Borisov--Pavlov~\cite{BorisovPavlov1998} use internal distance and area
variables, and Aref--Stremler~\cite{ArefStremler1999} analyze the integrable
four-vortex problem with zero circulation and impulse. We use that
geometric reduction here, and derive the particular algebraic chart
needed for the expanding-mode calculation below.

The construction of $\mathsf X,\mathsf Y$ can be read backward from
\eqref{eq:cell-energy-factorization}: the conserved energy divided by
$\mathsf A$, minus its reference value, is a difference of two squares.
The first square measures relative radial expansion, and the second
measures departure from perpendicular axes. To first order at
$\mathsf R=\mathsf A$, $\phi=\pi/2$,
\[
 \mathsf X=\frac{\mathsf R-\mathsf A}{\mathsf A}
   +O((\mathsf R/\mathsf A-1)^2),\qquad
 \mathsf Y=\phi-\pi/2
   +O(|\mathsf R/\mathsf A-1||\phi-\pi/2|+|\phi-\pi/2|^3).
\]
Their combinations $\mathsf U,\mathsf V$ diagonalize the exact
saddle equations.
To verify that they really form a chart, put
$\mathsf X=(\mathsf U+\mathsf V)/2$ and
$\mathsf Y=(\mathsf V-\mathsf U)/2$. The full inverse is
\begin{equation}\label{eq:cell-chart-inverse}
 \begin{gathered}
 q_0=\frac{\mathsf X+\sqrt{\mathsf X^2+4}}2>0,\qquad
 \mathsf R=\mathsf A q_0^2,\qquad
 \phi=\frac\pi2+\arcsin(\mathsf Y/q_0),\\
 \mathsf S=\sqrt{\mathsf A^2+\mathsf R^2},\qquad
 z=\sqrt{(\mathsf S+\mathsf A)/2}\,
                  e^{\iunit(\vartheta+\phi/2)},\qquad
 w=\sqrt{(\mathsf S-\mathsf A)/2}\,
                  e^{\iunit(\vartheta-\phi/2)}.
 \end{gathered}
\end{equation}
We use the positive square roots and the branch
$\arcsin\in(-\pi/2,\pi/2)$. For example, all inverse formulas are smooth on the explicit chart
\begin{equation}\label{eq:explicit-shape-chart}
 |\mathsf U|\leq1/16,\quad |\mathsf V|\leq1/16,\quad
 |\mathsf A-1|\leq1/16,\quad |\chi|/d\leq1/16,
 \quad |\vartheta-\vartheta_0|<\pi/8.
\end{equation}
The arguments are lifted on this labeled chart. Put
$y_i=(b_i-\boldsymbol c)/d$ and $\xi=\chi/d$. On the centered
space $\sum_i y_i=0$, the coordinates
\[
 \mathcal X=(\mathsf U,\mathsf V,\vartheta,\mathsf A,\xi)
       =\Phi(y_1,y_2,y_3,y_4)
\]
have six real components. Directly from the inverse formulas,
$|y_i|\leq2$, $|y_i-y_j|\geq1/2$ for $i\ne j$ on
\eqref{eq:explicit-shape-chart}, and there is an absolute $C_{\rm ch}$ with
\begin{equation}\label{eq:chart-derivative-bounds}
 \|\nabla\Phi\|+\|\nabla^2\Phi\|
                  +\|\nabla\Phi^{-1}\|\leq C_{\rm ch}.
\end{equation}
Here the derivatives act on the finite-dimensional configuration
variables, restricted to their centered space; the bounds are
independent of $d$ and $\vartheta_0$.

\begin{proposition}[Exact saddle equations]\label{prop:intrinsic-saddle}
For the centrally symmetric isolated system, let the shape variables be
those of \eqref{eq:chart} and put
\[
 \mathsf E=\frac{\mathsf A^2+\mathsf R^2\sin^2\phi}{\mathsf R}.
\]
Then $\mathsf A'=\mathsf E'=0$ and
\begin{equation}\label{eq:exact-saddle}
 \begin{gathered}
 \mathsf U'=a\mathsf U,\qquad \mathsf V'=-a\mathsf V,
 \qquad \mathsf U\mathsf V=\mathsf E/\mathsf A-2,\\
 a=a(\mathsf U,\mathsf V,\mathsf A)
 =\frac{2(\mathsf R+\mathsf A)\sqrt{\mathsf A^2+\mathsf R^2}\sin\phi}
             {\mathsf R^2\mathsf E}.
 \end{gathered}
\end{equation}
The rate $a$ is smooth and positive on
\eqref{eq:explicit-shape-chart}, with
\[
 a(0,0,\mathsf A)=\frac{2\sqrt2}{\mathsf A},\qquad
 \vartheta'=-1\quad\hbox{at }(\mathsf U,\mathsf V,\mathsf A)=(0,0,1).
\]
\end{proposition}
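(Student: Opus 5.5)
The plan is to compute directly from \eqref{eq:complex-cell}. First I derive ODEs for the real quantities $|z|^2$, $|w|^2$, $\mathsf R$ and $\phi$, and then push them through the chart \eqref{eq:chart}.

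\textbf{Step 1: the radial and angular equations.} Write $\frac{\diff}{\diff s}|z|^2=2\operatorname{Re}(\bar z z')$. The self term gives $\bar z\cdot\iunit/(2\bar z)=\iunit/2$, which is purely imaginary. Hence only the cross term survives:
\[
 (|z|^2)'=2\operatorname{Re}\Bigl(\frac{-2\iunit\bar z^2}{\bar z^2-\bar w^2}\Bigr).
\]
The analogous formula for $|w|^2$ has the same value, because $-\bar w^2/(\bar w^2-\bar z^2)=\bar w^2/(\bar z^2-\bar w^2)$ and $\bar z^2-\bar w^2$ over the common denominator gives a real contribution $1$, whose $\iunit$-multiple has zero real part. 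Therefore $\mathsf A'=0$.

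For $\mathsf E$, I rely on conservation of the Hamiltonian. For the centrally symmetric system the Kirchhoff energy reduces, up to additive constants, to $\log|z|+\log|w|-\log|z^2-w^2|$. A short computation gives
\[
 |z^2-w^2|^2=|z|^4+|w|^4-2|z|^2|w|^2\cos2\phi=\mathsf A^2+\mathsf R^2\sin^2\phi.
\]
So the energy is $-\frac12\log(\mathsf E\cdot 2)$ plus a constant, and $\mathsf E'=0$. Conservation of the angular impulse $|z|^2-|w|^2$ gives an independent confirmation that $\mathsf A'=0$.

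Next I compute $\phi'=\operatorname{Im}(z'/z)-\operatorname{Im}(w'/w)$ and $\mathsf R'$ explicitly. Setting $\zeta=z^2/(z^2-w^2)$ (conjugated as needed), everything becomes an expression in $\mathsf A,\mathsf R,\phi$ with the denominator $\mathsf A^2+\mathsf R^2\sin^2\phi=\mathsf R\mathsf E$.

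\textbf{Step 2: the identity $\mathsf U\mathsf V=\mathsf E/\mathsf A-2$.} This is pure algebra:
\[
 \mathsf U\mathsf V=\mathsf X^2-\mathsf Y^2=\frac{(\mathsf R-\mathsf A)^2}{\mathsf A\mathsf R}-\frac{\mathsf R}{\mathsf A}\cos^2\phi=\frac{\mathsf A^2+\mathsf R^2\sin^2\phi}{\mathsf A\mathsf R}-2,
\]
which is the factorization mentioned before the statement. Since $\mathsf E$ and $\mathsf A$ are constant, $\mathsf U\mathsf V$ is conserved. It therefore suffices to prove $\mathsf U'=a\mathsf U$ for a single scalar $a$; then $\mathsf V'=-a\mathsf V$ follows wherever $\mathsf U\neq0$, and by continuity and smoothness everywhere. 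To be safe on the set $\mathsf U=0$, I would instead verify both equations directly.

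\textbf{Step 3: the main obstacle.} The hard part is showing that $\mathsf U'$ is literally divisible by $\mathsf U$, with the stated quotient $a$. My approach uses the inverse chart \eqref{eq:cell-chart-inverse} with $q_0$, and parametrizes $\mathsf R=\mathsf A q_0^2$ with $\mathsf X=q_0-1/q_0$. Then:

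- I express $\mathsf X'$ as $(1+q_0^{-2})q_0'$, with $q_0'=\mathsf R'/(2\mathsf A q_0)$.
- I express $\mathsf Y'$ through $(q_0\cos\phi)'$.
- I verify the Hamiltonian-type relations $\mathsf X'=-a\mathsf Y$ and $\mathsf Y'=-a\mathsf X$. These give exactly $\mathsf U'=a\mathsf U$ and $\mathsf V'=-a\mathsf V$.

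So I would compute $\mathsf R'$ from Step 1, show it is proportional to $\cos\phi$, and read off $a$ as the proportionality factor. The second relation is then obtained by differentiating the conserved identity $\mathsf X^2-\mathsf Y^2=\mathrm{const}$: from $\mathsf X\mathsf X'=\mathsf Y\mathsf Y'$, the relation $\mathsf X'=-a\mathsf Y$ forces $\mathsf Y'=-a\mathsf X$ whenever $\mathsf Y\ne0$, and continuity handles $\mathsf Y=0$. This reduces the main obstacle to the single identity $\mathsf R'=-(\text{explicit})\cos\phi$, which is a direct trigonometric computation.

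\textbf{Step 4: positivity and reference values.} Positivity and smoothness of $a$ on \eqref{eq:explicit-shape-chart} follow because there $\phi$ is near $\pi/2$, so $\sin\phi>0$, and $\mathsf R,\mathsf A,\mathsf E$ are near $1,1,1$ with positive denominators.

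At $(0,0,\mathsf A)$ we have $\mathsf R=\mathsf A$, $\phi=\pi/2$ and $\mathsf E=2\mathsf A$. Substituting,
\[
 a=\frac{2\cdot2\mathsf A\cdot\sqrt2\,\mathsf A}{\mathsf A^2\cdot2\mathsf A}=\frac{2\sqrt2}{\mathsf A}.
\]

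Finally, $\vartheta'=\tfrac12\bigl(\operatorname{Im}(z'/z)+\operatorname{Im}(w'/w)\bigr)$. I evaluate it at the reference shape \eqref{eq:cell-reference}, where $z^2=\frac{\sqrt2+1}{2}\iunit e^{2\iunit\vartheta}$ and $w^2=-\frac{\sqrt2-1}{2}\iunit e^{2\iunit\vartheta}$, so that $z^2-w^2=\sqrt2\,\iunit e^{2\iunit\vartheta}$. Substituting these values should yield $\vartheta'=-1$ by direct arithmetic.
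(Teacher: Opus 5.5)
Your outline follows the paper's proof closely: real and imaginary parts of $z'/z$ and $w'/w$, the factorization $\mathsf X^2-\mathsf Y^2=\mathsf E/\mathsf A-2$, and $\mathsf X'=-a\mathsf Y$ read off from $\mathsf R'=2\mathsf S\sin2\phi/\mathsf E$. There is, however, one genuine error, and it sits at the step your route leans on most: your reduced energy is miscounted.

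\textbf{The energy count.} The centrally symmetric configuration has four opposite-sign pairs: $(1,3)$ and $(2,4)$ at distance $|z-w|$, and $(1,4)$ and $(2,3)$ at distance $|z+w|$. So, up to sign and additive constants, the Kirchhoff energy is $\log|z|+\log|w|-2\log|z^2-w^2|=\log(\mathsf R/2)-\log\Delta=-\log(2\mathsf E)$, where $\Delta=\mathsf A^2+\mathsf R^2\sin^2\phi$.

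Your expression has coefficient $1$ on $\log|z^2-w^2|$. It equals $\log(\mathsf R/2)-\tfrac12\log\Delta=\tfrac12\log\mathsf R-\tfrac12\log\mathsf E-\log2$. This is not $-\tfrac12\log(2\mathsf E)$ plus a constant, as you assert. It is not even conserved, since $\mathsf R'\neq0$ away from $\phi=\pi/2$.

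This matters because you obtain both $\mathsf V'=-a\mathsf V$ (from $(\mathsf U\mathsf V)'=0$) and $\mathsf Y'=-a\mathsf X$ (from $(\mathsf X^2-\mathsf Y^2)'=0$) solely from $\mathsf E'=0$. Repair it in either of two ways. You can recount the pairs as above. Or, as the paper also does, check $\mathsf E'=\mathsf R'(\sin^2\phi-\mathsf A^2/\mathsf R^2)+\mathsf R\sin2\phi\,\phi'=0$ directly from the $\mathsf R'$ and $\phi'$ equations you already plan to derive in Step 1.

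\textbf{Comparison once repaired.} Your route is then valid and slightly more economical than the paper's. The paper substitutes $\phi'=2\mathsf S(\mathsf A^2-\mathsf R^2\sin^2\phi)/(\mathsf R^2\Delta)$ into $\mathsf Y'$. That gives $\mathsf Y'=-a\mathsf X$ pointwise with no division, so it covers the invariant planes $\mathsf U=0$ and $\mathsf V=0$ automatically.

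You instead deduce $\mathsf Y'=-a\mathsf X$ from $\mathsf Y(\mathsf Y'+a\mathsf X)=0$, and similarly for $\mathsf V$. That is fine if you read it as an identity between smooth functions on the open chart: it holds on the dense set where $\mathsf Y\neq0$ (respectively $\mathsf U\neq0$), hence everywhere by continuity. It cannot be read along individual orbits, since some orbits lie entirely in the invariant plane $\{\mathsf U=0\}$.

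\textbf{Two small slips.}
\begin{itemize}
\item $\mathsf R'=(4\mathsf S\sin\phi/\mathsf E)\cos\phi$ is a \emph{positive} multiple of $\cos\phi$. The minus sign in $\mathsf X'=-a\mathsf Y$ comes only from $\cos\phi=-\mathsf Y/q_0$.
\item $\mathsf E$ is near $2$, not $1$, at the reference shape.
\end{itemize}
Neither slip affects positivity or smoothness of $a$.
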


\begin{proof}
Put $\mathsf S=\sqrt{\mathsf A^2+\mathsf R^2}$ and
$\Delta=\mathsf A^2+\mathsf R^2\sin^2\phi$.
The four opposite-sign interaction distances in the Hamiltonian combine as
$|z-w|^2|z+w|^2=|z^2-w^2|^2=\Delta$. Consequently the Hamiltonian is
\[
 -\log(2|z|)-\log(2|w|)+2\log|z-w|+2\log|z+w|
       =\log(\mathsf E/2),\qquad \mathsf E=\Delta/\mathsf R.
\]
We derive the transformed equations to fix their signs. Put
$u_z=|z|^2$ and $u_w=|w|^2$, so
$u_z+u_w=\mathsf S$, $u_z-u_w=\mathsf A$ and
$4u_zu_w=\mathsf R^2$. Dividing \eqref{eq:complex-cell} by $z,w$
and multiplying the complex denominators by their conjugates gives
\begin{equation}\label{eq:cell-logarithmic-equations}
 \begin{aligned}
 \frac{z'}z&=\frac{2u_w\sin2\phi}{\Delta}
   +\iunit\left(\frac1{2u_z}
                   -\frac{2(u_z-u_w\cos2\phi)}{\Delta}\right),\\
 \frac{w'}w&=\frac{2u_z\sin2\phi}{\Delta}
   +\iunit\left(-\frac1{2u_w}
                   +\frac{2(u_w-u_z\cos2\phi)}{\Delta}\right).
 \end{aligned}
\end{equation}
The real parts imply
$u_z'=u_w'=4u_zu_w\sin2\phi/\Delta$. Their difference is zero,
and their logarithmic sum gives
$\mathsf R'/\mathsf R=2\mathsf S\sin2\phi/\Delta$.
Subtracting and averaging the imaginary parts gives, respectively,
\[
 \phi'=\frac{2\mathsf S}{\mathsf R^2}
               -\frac{4\mathsf S\sin^2\phi}{\Delta},\qquad
 \vartheta'=-\frac{\mathsf A}{\mathsf R^2}
               -\frac{2\mathsf A\cos^2\phi}{\Delta}.
\]
Using $\Delta=\mathsf E\mathsf R$, we obtain
\begin{align}\label{eq:cell-radial-phase}
 \mathsf A'&=0,\qquad
 \mathsf R'=\frac{2\mathsf S\sin(2\phi)}{\mathsf E},\notag\\
 \phi'&=\frac{2\mathsf S(\mathsf A^2-\mathsf R^2\sin^2\phi)}
                   {\mathsf R^2\Delta},\qquad
 \vartheta'=-\frac{\mathsf A}{\mathsf R^2}
       -\frac{2\mathsf A\cos^2\phi}{\mathsf E\mathsf R}.
\end{align}
The energy is also constant directly from these equations:
\[
 \mathsf E'
 =\mathsf R'(\sin^2\phi-\mathsf A^2/\mathsf R^2)
       +\mathsf R\sin2\phi\,\phi'=0.
\]
Its expression explains the choice of the algebraic coordinates:
\begin{equation}\label{eq:cell-energy-factorization}
 \mathsf X^2-\mathsf Y^2
 =\frac{\mathsf R}{\mathsf A}+\frac{\mathsf A}{\mathsf R}-2
                    -\frac{\mathsf R}{\mathsf A}\cos^2\phi
 =\frac{\mathsf E}{\mathsf A}-2.
\end{equation}
To obtain their equations without division by either coordinate, use
$\mathsf Y=-\sqrt{\mathsf R/\mathsf A}\cos\phi$ and
$\mathsf A'=0$. Substitution of \eqref{eq:cell-radial-phase} yields
\[
 \begin{aligned}
 \mathsf X'
 &=\frac{\mathsf R+\mathsf A}{2\sqrt{\mathsf A}\mathsf R^{3/2}}
                                                  \mathsf R'
 =\frac{2(\mathsf R+\mathsf A)\mathsf S\sin\phi\cos\phi}
               {\sqrt{\mathsf A}\mathsf R^{3/2}\mathsf E}
 =-a\mathsf Y,\\
 \mathsf Y'
 &=\sqrt{\mathsf R/\mathsf A}
       \left(-\frac{\mathsf R'}{2\mathsf R}\cos\phi
                                      +\phi'\sin\phi\right)
 =\frac{2\sqrt{\mathsf R/\mathsf A}\mathsf S\sin\phi}
                {\mathsf R^3\mathsf E}
                         (\mathsf A^2-\mathsf R^2)
 =-a\mathsf X.
 \end{aligned}
\]
Thus the matrix acting on $(\mathsf X,\mathsf Y)$ is
$\left(\begin{smallmatrix}0&-a\\-a&0\end{smallmatrix}\right)$.
Its combinations $\mathsf X-\mathsf Y$ and
$\mathsf X+\mathsf Y$ have rates $a$ and $-a$:
These identities give \eqref{eq:exact-saddle}.
The denominator stays positive on a fixed small chart, so $a$ is smooth
there and $a(0,0,\mathsf A)=2\sqrt2/\mathsf A$. Thus $\mathsf U$
grows and $\mathsf V$ decays while $\mathsf A$ stays fixed. At
$(\mathsf U,\mathsf V,\mathsf A)=(0,0,1)$ their rates are
$\lambda=2\sqrt2$ and $-\lambda$, and $\vartheta'=-1$.
The derivation also holds on the invariant coordinate planes
$\mathsf U=0$ and $\mathsf V=0$.
\end{proof}

\subsection{From the invariant set to two separated neutral pairs}

\begin{proposition}[The outgoing branch]\label{prop:isolated-outgoing}
Fix $0<h_*<1/16$ on the branch
$\mathsf V=0$, $\mathsf A=1$, $\mathsf U>0$.
The orbit through $\mathsf U=h_*$ is global forward in time, crosses
each section $\mathsf R=\mathsf R_{\rm out}>\mathsf R_h$ transversely, where
\[
 \mathsf R_h=\left(\frac{h_*+\sqrt{h_*^2+16}}4\right)^2,
\]
and separates into two neutral pairs. For every sufficiently large fixed
$K_{\rm out}$, choose $\mathsf R_{\rm out}=8K_{\rm out}^2$ and set
$s=0$ at that crossing. With
$\boldsymbol d=z-w$ and $\boldsymbol m=(z+w)/2$, the following
integrals converge:
\begin{equation}\label{eq:cell-outgoing-limits}
 \begin{gathered}
 \boldsymbol d_\infty
     =\boldsymbol d(0)+\int_0^\infty\boldsymbol d'(v)\idiff v,
 \qquad e_{\rm out}=-K(\boldsymbol d_\infty),\\
 m_\infty=\boldsymbol m(0)
              +\int_0^\infty(\boldsymbol m'(v)-e_{\rm out})\idiff v.
 \end{gathered}
\end{equation}
The limiting direction satisfies $|e_{\rm out}|=1$, and for every $s\geq0$,
\begin{equation}\label{eq:outgoing-asymptotics}
 \boldsymbol d(s)=e_{\rm out}^\perp+O((K_{\rm out}+s)^{-2}),\qquad
 \boldsymbol m(s)=s e_{\rm out}+m_\infty+O((K_{\rm out}+s)^{-1}).
\end{equation}
Let $y^*=(z,-z,w,-w)$ be the reference centered state at the section.
There is $\zeta_0>0$, depending only on this fixed section, such that the set
\begin{equation}\label{eq:outgoing-neighborhood}
 \mathcal O_{\rm out}(\zeta_0)
 =\left\{(y_i)_{i=1}^4:\ \sum_i y_i=0,\quad
                       \max_i|y_i-y_i^*|<\zeta_0\right\}
\end{equation}
consists of states whose isolated forward evolutions have a nonzero
limiting velocity $v_{1,\infty}$ for the pair $(1,3)$. The map
$\mathcal E(y)=v_{1,\infty}/|v_{1,\infty}|$ satisfies
\[
 \mathcal E(y^*)=e_{\rm out},\qquad
 |\mathcal E(y)-\mathcal E(\widetilde y)|
       \leq C\max_i|y_i-\widetilde y_i|
 \quad(y,\widetilde y\in\mathcal O_{\rm out}(\zeta_0)).
\]
\end{proposition}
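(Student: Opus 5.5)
On the invariant branch $\mathsf V=0$, $\mathsf A=1$, Proposition~\ref{prop:intrinsic-saddle} reduces the dynamics to a single scalar equation. The plan is to first make the orbit explicit in $(\mathsf R,\phi)$, then analyse the pair-separation regime, and finally use continuous dependence on the section.

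\textbf{Globality and transversality.} With $\mathsf V=0$ we have $\mathsf X=-\mathsf Y=\mathsf U/2$ and $\mathsf E/\mathsf A=2$, so the orbit lies on the level set $\mathsf E=2$, i.e.\ $1+\mathsf R^2\sin^2\phi=2\mathsf R$. From \eqref{eq:cell-chart-inverse}, $\mathsf R=q_0^2$ with $q_0=(\mathsf X+\sqrt{\mathsf X^2+4})/2$. Hence at $\mathsf U=h_*$ we get $\mathsf R=\mathsf R_h$, which matches the stated formula with $\mathsf X=h_*/2$. Along the branch $\mathsf R$ is a strictly increasing function of $\mathsf U$, and on this level set
\[
 \mathsf R'=\frac{2\mathsf S\sin 2\phi}{\mathsf E}=\mathsf S\sin2\phi .
\]
I will check that $\phi\in(\pi/2,\pi)$ along the branch, using $\mathsf Y<0$, i.e.\ $\cos\phi<0$, together with $\sin^2\phi=(2\mathsf R-1)/\mathsf R^2\in(0,1)$. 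Then $\sin2\phi<0$, so I would reorient time or the sign convention accordingly; more precisely, I will verify the sign of $\mathsf R'$ directly via $\mathsf X'=-a\mathsf Y>0$. The rate $a$ is positive wherever $\sin\phi>0$, which holds for all finite $\mathsf R>1$ on the level set. Therefore $\mathsf U$ increases monotonically, and $\mathsf R'\geq c>0$ on each compact range, which gives transversality at every section $\mathsf R=\mathsf R_{\rm out}$. Global forward existence follows because the opposite-sign distances satisfy $|z^2-w^2|^2=\Delta=2\mathsf R$, which stays bounded below, while the same-sign distances $2|z|,2|w|$ are bounded below by $|w|^2=(\mathsf S-1)/2\geq(\mathsf R-1)/2$. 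So no collision occurs, and the vector field is locally Lipschitz along the orbit.

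\textbf{Asymptotics.} For large $\mathsf R$ we have $\sin\phi\approx\sqrt{2/\mathsf R}$, so $\phi\to\pi$ and $|z|\approx|w|\approx\sqrt{\mathsf R/2}$. Writing $z=\boldsymbol m+\boldsymbol d/2$ and $w=\boldsymbol m-\boldsymbol d/2$, this gives $|\boldsymbol d|^2|2\boldsymbol m|^2\approx|z^2-w^2|^2=2\mathsf R$, so $|\boldsymbol d|\to1$ while $|\boldsymbol m|\sim\sqrt{\mathsf R/2}$. Substituting into \eqref{eq:complex-cell}, the velocity of $\boldsymbol m$ is $-K(\boldsymbol d)+O(|\boldsymbol m|^{-2})$ (the self pair plus interaction with the opposite pair at distance $\sim2|\boldsymbol m|$). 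The velocity of $\boldsymbol d$ is the difference of the far-field contributions, $O(|\boldsymbol m|^{-2})$. With $\mathsf R_{\rm out}=8K_{\rm out}^2$, the section has $|\boldsymbol m(0)|\approx K_{\rm out}$. A bootstrap then gives $|\boldsymbol m(s)|\geq(K_{\rm out}+s)/2$, hence
\[
 |\boldsymbol d'|\lesssim(K_{\rm out}+s)^{-2},\qquad
 |\boldsymbol m'-e_{\rm out}|\lesssim(K_{\rm out}+s)^{-2}.
\]
Integrating these bounds gives convergence of the integrals in \eqref{eq:cell-outgoing-limits} and the expansions \eqref{eq:outgoing-asymptotics}. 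The tail $\int_s^\infty(K_{\rm out}+v)^{-2}\,\diff v$ yields the $O((K_{\rm out}+s)^{-1})$ term. I will also check that $\boldsymbol d(0)=e_{\rm out}^\perp+O(K_{\rm out}^{-2})$ is consistent with $e_{\rm out}=-K(\boldsymbol d_\infty)$; this follows since $|\boldsymbol d_\infty|=1$ is forced by the limit of $|\boldsymbol d||2\boldsymbol m|=\sqrt{2\mathsf R}$ together with $|\boldsymbol m|\approx\sqrt{\mathsf R/2}$.

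\textbf{Perturbed states.} This is the main obstacle, because the states in $\mathcal O_{\rm out}(\zeta_0)$ are not centrally symmetric, so the explicit reduction is unavailable. I will argue by robustness of the same bootstrap. For $\zeta_0$ small, the pairs $(1,3)$ and $(2,4)$ each have internal separation close to $1$ and mutual distance at least $K_{\rm out}$. Their interaction is $O(\mathrm{dist}^{-2})$ in the relative motion, and so is the change of each internal vector. The same a priori estimates therefore hold uniformly over $\mathcal O_{\rm out}(\zeta_0)$, and $v_{1,\infty}=\lim(-K(\text{internal vector}))$ exists with $|v_{1,\infty}|\approx1$.

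For the Lipschitz bound I will differentiate the flow with respect to initial data. The linearized equation has coefficients whose $y$-derivatives are $O((K_{\rm out}+s)^{-2})$ in the relevant directions. One subtlety is that the separation grows linearly, so perturbations of the mutual positions grow at most linearly. Their effect on internal vectors, through coefficients of size $(K_{\rm out}+s)^{-3}$, is then integrable. A Gronwall argument on this weighted system gives a uniform bound on $\partial v_{1,\infty}/\partial y$. Since $|v_{1,\infty}|$ is bounded below, normalizing to $\mathcal E$ preserves the Lipschitz estimate, and $\mathcal E(y^*)=e_{\rm out}$ holds by the symmetric computation.
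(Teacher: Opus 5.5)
There is a genuine gap in your asymptotics step. You bound $\boldsymbol d'$ by $O(|\boldsymbol m|^{-2})$, hence $|\boldsymbol d'|\lesssim(K_{\rm out}+s)^{-2}$.

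Integrating this gives only $|\boldsymbol d(s)-\boldsymbol d_\infty|\lesssim(K_{\rm out}+s)^{-1}$, not the $O((K_{\rm out}+s)^{-2})$ required in \eqref{eq:outgoing-asymptotics}. Worse, $\boldsymbol m'-e_{\rm out}=K(\boldsymbol d_\infty)-K(\boldsymbol d)+O(|\boldsymbol m|^{-2})$, and $K$ is Lipschitz near the unit circle. So your bound yields only $|\boldsymbol m'-e_{\rm out}|\lesssim(K_{\rm out}+s)^{-1}$, which is not integrable. Your asserted $|\boldsymbol m'-e_{\rm out}|\lesssim(K_{\rm out}+s)^{-2}$ does not follow from what you proved. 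Consequently neither the convergence of the $m_\infty$ integral nor the $O((K_{\rm out}+s)^{-1})$ midpoint remainder is established.

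The missing idea is one more order of cancellation. The mutual velocity $-K(\boldsymbol d)$ acts identically on both vortices of the pair and drops out of $\boldsymbol d'$. The companion pair is itself neutral, so its field is already a dipole field, and its difference across the pair is a \emph{second} difference of $K$:
\[
 \boldsymbol d'=K(2\boldsymbol m+\boldsymbol d)-2K(2\boldsymbol m)+K(2\boldsymbol m-\boldsymbol d)=O\bigl(|\boldsymbol d|^2|\boldsymbol m|^{-3}\bigr),
\]
by Taylor's formula and $|\nabla^2K(x)|\lesssim|x|^{-3}$. With $|\boldsymbol d'|\lesssim(K_{\rm out}+s)^{-3}$, the tail gives $\boldsymbol d=\boldsymbol d_\infty+O((K_{\rm out}+s)^{-2})$. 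Then $|\boldsymbol m'-e_{\rm out}|\lesssim(K_{\rm out}+s)^{-2}$, and the rest goes through as you describe.

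Your perturbed-state Lipschitz argument does survive with your cruder coefficients. Midpoint perturbations grow at most linearly, and $s(K_{\rm out}+s)^{-3}$ is still integrable, so the Gronwall argument closes. That part is essentially the paper's comparison of two isolated solutions, done via the linearized flow instead of Dini derivatives of differences.

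Two smaller slips in your first step need fixing.

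\begin{itemize}
\item \textbf{Sign of $\cos\phi$.} Since $\mathsf Y=-\sqrt{\mathsf R}\cos\phi$, the condition $\mathsf Y<0$ means $\cos\phi>0$. In fact $\cos\phi=1-\mathsf R^{-1}$ on the branch, so $\phi\in(0,\pi/2)$ and $\mathsf R'=\mathsf S\sin2\phi>0$ directly. Also $\phi\to0$, not $\pi$. This is exactly why $z$ and $w$ become parallel, so that $\boldsymbol d$ stays bounded while $\boldsymbol m$ grows, as you subsequently (and correctly) use. Your fallback via $\mathsf X'=-a\mathsf Y$ does rescue the monotonicity.
\item \textbf{Collision bound.} The identity $|z-w|\,|z+w|=\sqrt{2\mathsf R}$ bounds only the product of the two opposite-sign distances from below. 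To exclude collisions you need each separately. Use $z\cdot w=(\mathsf R-1)/2$ and $|z|^2+|w|^2=\sqrt{1+\mathsf R^2}$ to get $|z\mp w|^2=\sqrt{1+\mathsf R^2}\mp(\mathsf R-1)\geq1$.
\end{itemize}
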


\begin{proof}
On $\mathsf V=0$, $\mathsf A=1$, $\mathsf U>0$, the Hamiltonian
identity gives $\mathsf E=2$. More precisely, $\mathsf Y=-\mathsf X$
and $\mathsf Y=-\sqrt{\mathsf R}\cos\phi$ imply
$\mathsf R\cos\phi=\mathsf R-1$ and
$\mathsf U=2\mathsf X>0$, hence $\mathsf R>1$.
We continue the branch with $0<\phi<\pi/2$, so
$\sin\phi=\sqrt{2\mathsf R-1}/\mathsf R$. Substitution gives
\begin{equation}\label{eq:positive-branch}
 \mathsf E=2,\quad \cos\phi=1-\mathsf R^{-1},\quad
 \mathsf U=\frac{2(\mathsf R-1)}{\sqrt{\mathsf R}},\quad
 \mathsf R'=\frac{2\sqrt{1+\mathsf R^2}(\mathsf R-1)
                       \sqrt{2\mathsf R-1}}{\mathsf R^2}>0.
\end{equation}
These formulas continue the branch outside the small saddle chart:
their denominators are nonzero for every $\mathsf R>1$.
For
$\mathsf R_h=((h_*+\sqrt{h_*^2+16})/4)^2$, the exact passage time is
\begin{equation}\label{eq:isolated-transverse-time}
 s_{\rm out}-s_h
 =\int_{\mathsf R_h}^{\mathsf R_{\rm out}}
 \frac{R^2\idiff R}{2\sqrt{1+R^2}(R-1)\sqrt{2R-1}}<\infty
 \qquad(\mathsf R_{\rm out}>\mathsf R_h).
\end{equation}
The positive derivative in \eqref{eq:positive-branch} makes the crossing
transverse. Moreover, on the entire branch $\mathsf R\geq1$,
\[
 |z-w|^2=\sqrt{1+\mathsf R^2}-\mathsf R+1\geq1,\quad
 |z+w|^2=\sqrt{1+\mathsf R^2}+\mathsf R-1\geq1,\quad
 4|w|^2\geq2(\sqrt2-1).
\]
The remaining same-sign distance is still larger. These inequalities
give a uniform positive lower bound for every interaction distance. Beyond the section it is useful to group
one positive and one negative vortex. Write
$\boldsymbol d=z-w$, $\boldsymbol m=(z+w)/2$.
The vector $\boldsymbol d$ points from the negative vortex to the
positive vortex in this pair. Since
$z\cdot w=\mathsf R\cos\phi/2=(\mathsf R-1)/2$ and
$\det(z,w)=-\mathsf R\sin\phi/2$, we obtain
\begin{equation}\label{eq:cell-pair-geometry}
 \begin{gathered}
 |\boldsymbol d|^2=\sqrt{1+\mathsf R^2}-\mathsf R+1,\qquad
 |\boldsymbol m|^2=\tfrac14(\sqrt{1+\mathsf R^2}+\mathsf R-1),\\
 \boldsymbol m\cdot\boldsymbol d=\tfrac12,
 \qquad \det(\boldsymbol m,\boldsymbol d)=\tfrac12\sqrt{2\mathsf R-1}.
 \end{gathered}
\end{equation}
The other neutral pair has midpoint $-\boldsymbol m$ and vector
$-\boldsymbol d$. The identities show that $|\boldsymbol d|\to1$
and $|\boldsymbol m|\to\infty$. Each pair also translates with a
nonzero limiting speed, as quantified below. Choose $K_{\rm out}\geq16$, to be increased below, and set
$\mathsf R_{\rm out}=8K_{\rm out}^2$. Formula
\eqref{eq:cell-pair-geometry} then gives
$3K_{\rm out}\leq2|\boldsymbol m(0)|\leq5K_{\rm out}$.
Set $s=0$ at this section. Dividing the last equation of \eqref{eq:positive-branch} by
$2\sqrt{\mathsf R}$ shows that
\[
 0<c\leq(\sqrt{\mathsf R})'
 =\frac{\sqrt{1+\mathsf R^2}(\mathsf R-1)\sqrt{2\mathsf R-1}}
                 {\mathsf R^{5/2}}\leq2
 \qquad(\mathsf R\geq\mathsf R_{\rm out}).
\]
Therefore $\sqrt{\mathsf R}\asymp K_{\rm out}+s$,
$|\boldsymbol m|\asymp K_{\rm out}+s$ and
$1\leq|\boldsymbol d|^2\leq1+C(K_{\rm out}+s)^{-2}$.
These estimates rule out both collisions and finite-time escape in
the isolated orbit.

The cancellation responsible for its asymptotics is visible in the
exact pair equations:
\begin{equation}\label{eq:cell-exact-pair-equations}
 \begin{aligned}
 \boldsymbol d'
   &=K(2\boldsymbol m+\boldsymbol d)-2K(2\boldsymbol m)
                                      +K(2\boldsymbol m-\boldsymbol d),\\
 \boldsymbol m'
   &=-K(\boldsymbol d)
       +\tfrac12\bigl(K(2\boldsymbol m+\boldsymbol d)
                                     -K(2\boldsymbol m-\boldsymbol d)\bigr).
 \end{aligned}
\end{equation}
Both vortices in the first pair have the same mutual velocity
$-K(\boldsymbol d)$, which cancels from $\boldsymbol d'$.
The companion pair contributes a second difference of $K$ to that
row and a first difference to the midpoint row.
Since $|\nabla^jK(x)|\leq C_j|x|^{-1-j}$, Taylor's formula on
the intervening segments gives
\begin{equation}\label{eq:cell-pair-tail-bounds}
 |\boldsymbol d'|\leq C(K_{\rm out}+s)^{-3},\qquad
 |\boldsymbol m'+K(\boldsymbol d)|\leq C(K_{\rm out}+s)^{-2}.
\end{equation}
More generally a neutral companion has velocity, gradient and Hessian
bounded by these first-difference estimates with powers $-2,-3,-4$.

The integrals defining the limits in \eqref{eq:cell-outgoing-limits} converge as follows.
The first tail is bounded by $C(K_{\rm out}+s)^{-2}$.
Equation~\eqref{eq:cell-pair-geometry} then gives
$|\boldsymbol d_\infty|=1$, hence $|e_{\rm out}|=1$ and
$\boldsymbol d_\infty=e_{\rm out}^\perp$.
On the fixed annulus containing the pair vectors, $K$ is Lipschitz.
Thus \eqref{eq:cell-pair-tail-bounds} gives
$|\boldsymbol m'-e_{\rm out}|\leq C(K_{\rm out}+s)^{-2}$,
which justifies the second integral. Integrating its tail proves \eqref{eq:outgoing-asymptotics}.
The bounds hold for every $s\geq0$. The section and $K_{\rm out}$ are
fixed before any scale ratios are chosen; constants involving
$m_\infty$, in particular, may depend on that fixed section.

We make the section neighborhood and the continuation argument precise.
Use the reference section state $y^*$ and its neighborhood
\eqref{eq:outgoing-neighborhood}.
For a state in this set define
$\boldsymbol d_1=y_1-y_3$, $\boldsymbol d_2=y_2-y_4$ and
$\boldsymbol m_1=(y_1+y_3)/2$, $\boldsymbol m_2=(y_2+y_4)/2$.
Let $e=e_{\rm out}$ be the reference limiting direction and
$q=\boldsymbol m_1-\boldsymbol m_2$. Start with the provisional inequalities
\begin{equation}\label{eq:isolated-pair-bootstrap}
 \tfrac12<|\boldsymbol d_j|<2,\qquad
 q\cdot e>K_{\rm out}+s,\qquad
 |q\cdot e^\perp|<\tfrac14(K_{\rm out}+s).
\end{equation}
For large $K_{\rm out}$ and sufficiently small $\zeta_0$, the initial
state satisfies these inequalities strictly. Before their first failure,
neutrality and Taylor expansion give
\[
 |\boldsymbol d_j'|\leq C(K_{\rm out}+s)^{-3},\qquad
 q'=-K(\boldsymbol d_1)+K(\boldsymbol d_2)
                                 +O((K_{\rm out}+s)^{-2}).
\]
The vector change in the first equation is at most $CK_{\rm out}^{-2}$.
The reference pair vectors at the section differ from their limits by
this amount. Increase $K_{\rm out}$ and decrease $\zeta_0$ so that
$3/4<|\boldsymbol d_j|<5/4$ and
\[
 q'\cdot e\geq3/2,\qquad |q'\cdot e^\perp|\leq1/8,\qquad
 q(0)\cdot e\geq2K_{\rm out},\qquad
 |q(0)\cdot e^\perp|\leq K_{\rm out}/8.
\]
Integration improves each inequality in
\eqref{eq:isolated-pair-bootstrap}. No failure occurs, and these isolated
solutions exist for every $s\geq0$.

Let $E_d(s)$ and $E_m(s)$ be the maximum vector and
midpoint differences of two such isolated solutions whose section states
differ by at most $\zeta$. Initially stop also if $E_d$ or
$E_m/(K_{\rm out}+s)$ reaches $1/64$. The segment
between the two configurations then stays separated. Differentiating
the pair fields along it gives
\begin{equation}\label{eq:cell-section-comparison}
 \begin{aligned}
 D^+E_d&\leq C(K_{\rm out}+s)^{-3}E_d
                      +C(K_{\rm out}+s)^{-4}E_m,\\
 D^+E_m&\leq CE_d+C(K_{\rm out}+s)^{-3}E_m .
 \end{aligned}
\end{equation}
The coefficients are uniform in the section neighborhood. The second
row and an integrating factor bounded above and below give
$E_m(s)\leq C\zeta+C\int_0^sE_d(v)\idiff v$.
Substitution into the first row, followed by a supremum up to any
finite endpoint $T$, gives
\[
 \sup_{s\leq T}E_d(s)
 \leq C\zeta+CK_{\rm out}^{-2}\sup_{s\leq T}E_d(s).
\]
Here we used
$\int_0^\infty(K_{\rm out}+s)^{-3}\idiff s=O(K_{\rm out}^{-2})$
and
$\int_0^\infty s(K_{\rm out}+s)^{-4}\idiff s
=O(K_{\rm out}^{-2})$.
Absorption yields $E_d\leq C\zeta$ and
$E_m\leq C\zeta(1+s)$, uniformly in $T$.
For sufficiently small $\zeta$ these inequalities improve the additional
stops, so the comparison holds for all $s\geq0$.
These are the $M_d=\eta=0$ cases of
\eqref{eq:donor-volterra}, proved here for the isolated equations.

Each nearby isolated pair has a limiting vector
$\boldsymbol d_{j,\infty}$ and velocity
$v_{j,\infty}=-K(\boldsymbol d_{j,\infty})\ne0$, by the same tail
integrals. Its speed need not equal one, and the two limiting vectors
need not be opposite. Define the direction of a section state to be
$v_{1,\infty}/|v_{1,\infty}|$. The comparison just proved makes it
Lipschitz in that state and equal to $e_{\rm out}$ on the reference
orbit. For patch-centroid section data, this direction coordinate is defined by
the auxiliary isolated point evolution. The patch comparison in
Section~\ref{sec:release} takes place on finite stopped intervals.
\end{proof}

\begin{lemma}[Phase primitive and offset errors]\label{lem:phase-offset}
On the positive branch, define $g_*$ and $P$ by
\[
 g_*(\mathsf U(\mathsf R))
    =\frac{(\mathsf R-1)(\mathsf R^2+1)}{\mathsf R^3},\qquad
 \mathsf U(\mathsf R)=\frac{2(\mathsf R-1)}{\sqrt{\mathsf R}},
\]
\begin{equation}\label{eq:phase-primitive}
 P(\mathsf U)=\int_0^{\mathsf U}
       \frac{g_*(v)}{a(v,0,1)v}\idiff v.
\end{equation}
Then $P$ is smooth through $\mathsf U=0$ and
\[
 (\vartheta+s-P(\mathsf U))'=0.
\]
On \eqref{eq:explicit-shape-chart}, allowing a nonzero midpoint offset
$\chi$ changes the isolated shape equations by at most
$C|\chi/d|^2$. The internal signed impulse is exactly conserved and
the internal mean-centroid velocity is bounded by $C|\chi|/d$ in
normalized velocity units.
\end{lemma}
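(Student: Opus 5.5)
The plan has two parts: the phase identity on the invariant branch, and the perturbative statements about the offset $\chi$.

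\textbf{Phase identity.} I will work on the positive branch $\mathsf V=0$, $\mathsf A=1$, $\mathsf E=2$. Along it, \eqref{eq:positive-branch} gives $\cos\phi=1-\mathsf R^{-1}$, hence $\cos^2\phi=(\mathsf R-1)^2/\mathsf R^2$.
\begin{itemize}
\item Substituting into the $\vartheta'$ formula of \eqref{eq:cell-radial-phase} gives
\[
 \vartheta'=-\frac1{\mathsf R^2}-\frac{(\mathsf R-1)^2}{\mathsf R^3}.
\]
\item Adding $1$ and simplifying, I expect
\[
 1+\vartheta'=\frac{\mathsf R^3-\mathsf R-(\mathsf R-1)^2}{\mathsf R^3}
 =\frac{(\mathsf R-1)(\mathsf R^2+\mathsf R-\mathsf R+1)}{\mathsf R^3}
 =\frac{(\mathsf R-1)(\mathsf R^2+1)}{\mathsf R^3}=g_*(\mathsf U).
\]
\item Since $\mathsf U'=a(\mathsf U,0,1)\mathsf U$ by Proposition~\ref{prop:intrinsic-saddle}, the chain rule gives $P(\mathsf U)'=g_*(\mathsf U)\,\mathsf U'/(a\mathsf U)=g_*(\mathsf U)$. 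Therefore $(\vartheta+s-P)'=\vartheta'+1-g_*=0$.
\end{itemize}
Smoothness of $P$ at $0$ reduces to the integrand being smooth.
\begin{itemize}
\item The map $\mathsf R\mapsto\mathsf U=2(\mathsf R-1)/\sqrt{\mathsf R}$ has derivative $2$ at $\mathsf R=1$, so it is smoothly invertible near $\mathsf U=0$ with $\mathsf R-1=\mathsf U/2+O(\mathsf U^2)$.
\item Hence $g_*(v)=v\,h(v)$ with $h$ smooth and $h(0)=1$.
\item Since $a(v,0,1)$ is smooth and positive by Proposition~\ref{prop:intrinsic-saddle}, the integrand $h(v)/a(v,0,1)$ is smooth, and $P$ is smooth through $0$.
\end{itemize}
For $\mathsf U$ beyond the saddle chart I use the global branch parametrization in $\mathsf R$: there $a$ is defined through the branch formulas, and $a\mathsf U=\mathsf U'$ stays positive.

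\textbf{Offset statements.} I write the full four-point equations in the variables $(\boldsymbol c,\chi,z,w)$ of \eqref{eq:cell-geometric-coordinates}.
\begin{itemize}
\item \emph{Impulse.} The signed impulse $\sum_i\epsilon_i b_i=2\chi$ is exactly conserved for any isolated point system, because $\sum_i\sum_{j\neq i}\epsilon_i\epsilon_jK(b_i-b_j)=0$ by oddness of $K$. So $\chi'=0$ exactly in the isolated system.
\item \emph{Shape equations.} Each pairwise velocity is $K$ evaluated at a difference such as $d(z\pm w)\pm\chi$ or $2dz$. The differences within a same-sign pair do not involve $\chi$. The cross differences are $\chi/2\cdot$(a signed combination) plus $d(\pm z\mp w)$.
\item I will show that setting $\xi=\chi/d$, the right-hand sides of the $z,w$ equations are smooth in $\xi$ on the chart, since distances stay at least $1/2$ by the chart bounds. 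I will then check that their $\xi$-derivative vanishes at $\xi=0$.
\item This vanishing follows from the symmetry $(z,w,\xi)\mapsto(z,w,-\xi)$ combined with the relabeling $1\leftrightarrow2$, $3\leftrightarrow4$ (i.e.\ $z\mapsto-z$, $w\mapsto-w$). Under these maps the equations for $(z,w)$ are even in $\xi$. It then follows that the shape variables $(\mathsf U,\mathsf V,\mathsf A,\vartheta)$, which are invariant under $z\mapsto-z$, $w\mapsto-w$, have velocities differing from the $\xi=0$ ones by $O(|\xi|^2)$.
\item Composing with $\Phi$ and using \eqref{eq:chart-derivative-bounds} gives the bound $C|\chi/d|^2$.
\item \emph{Mean centroid.} The velocity $\boldsymbol c'=\frac14\sum_i b_i'$ vanishes at $\xi=0$ by central symmetry. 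It is smooth in $\xi$, so it is $O(|\xi|)$ in normalized units.
\end{itemize}

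\textbf{Main obstacle.} The main obstacle is the parity argument. I must verify carefully that the relabeling symmetry acts on the shape coordinates trivially, up to the lift of $\vartheta$. The map $z\mapsto-z$ shifts $\alpha_z$ by $\pi$, so $\phi$ and $\vartheta$ must be checked mod the lift. If that fails, I would instead expand the cross-interaction terms directly to second order in $\xi$ and verify cancellation of the linear term by hand.
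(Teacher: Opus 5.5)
Your proposal is correct and follows the paper's proof step for step. It uses the identity $1+\vartheta'=g_*$ on the branch, combined with the chain rule through $\mathsf U'=a(\mathsf U,0,1)\mathsf U$. Smoothness of $P$ comes from $\mathrm{d}\mathsf U/\mathrm{d}\mathsf R=2$ at $\mathsf R=1$ and $a(0,0,1)=\lambda>0$. Evenness in $\chi/d$ gives the quadratic offset error, antisymmetric pair cancellation gives impulse conservation, and central symmetry plus smoothness gives the mean-velocity bound.

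The obstacle you flag is moot. Central inversion $y\mapsto-y$ (a symmetry by oddness of $K$) sends $(z,w,\xi)\mapsto(-z,-w,-\xi)$, and relabeling sends $(z,w,\xi)\mapsto(-z,-w,\xi)$. Together these show the $(z,w)$ vector field itself is even in $\xi$; the paper simply writes it out in \eqref{eq:cell-offset-equations} and takes a Taylor remainder along $[-\chi/d,\chi/d]$. Since the shape coordinates are functions of $(z,w)$ alone, no invariance of $\vartheta$ under relabeling is needed.
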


\begin{proof}
We also need the mean phase at a prescribed section crossing.
On $\mathsf V=0$, $\mathsf A=1$, the exact equation reads
\[
 \vartheta'=-\frac1{\mathsf R^2}
                -\frac{(\mathsf R-1)^2}{\mathsf R^3}
            =-1+g_*(\mathsf U),\qquad
 g_*(\mathsf U(\mathsf R))
       =(\mathsf R-1)(\mathsf R^2+1)/\mathsf R^3=O(\mathsf U).
\]
Consequently $P$ in \eqref{eq:phase-primitive} is smooth, and $(\vartheta+s-P(\mathsf U))'=0$. Indeed
$\mathsf U=2(\mathsf R-1)/\sqrt{\mathsf R}$ has derivative $2$ at
$\mathsf R=1$, so $\mathsf R$ is smooth in $\mathsf U$ there and
$g_*(\mathsf U)/\mathsf U$ extends smoothly through zero.
Since $a(0,0,1)=\lambda>0$, the integrand has a smooth extension.
Differentiating $P$ and using $\mathsf U'=a(\mathsf U,0,1)\mathsf U$
gives the asserted invariant. In particular $P'$ is bounded near zero:
small errors in the shape equation produce integrable phase errors
without division by the possibly very small $\mathsf U$.

The actual patch centroids need not have $\chi=0$. With normalized offset
$\chi/d$, the two same-sign half-separation equations become
\begin{equation}\label{eq:cell-offset-equations}
 \begin{aligned}
 z'=K(2z)-\frac12\sum_{\epsilon=\pm1}
    \bigl(K(z-w+\epsilon\chi/d)+K(z+w+\epsilon\chi/d)\bigr),\\
 w'=-K(2w)+\frac12\sum_{\epsilon=\pm1}
    \bigl(K(w-z+\epsilon\chi/d)+K(w+z+\epsilon\chi/d)\bigr).
 \end{aligned}
\end{equation}
These equations concern the internal point velocities; patch averaging
and exterior velocities will be added later.
Both rows are even in $\chi/d$. For any separated vector $v$,
\[
 \tfrac12\bigl(K(v+\chi/d)+K(v-\chi/d)\bigr)-K(v)
  =\tfrac12\int_{-1}^1(1-|a|)
          \nabla^2K(v+a\chi/d)[\chi/d,\chi/d]\idiff a.
\]
Thus the internal offset error in each smooth shape equation is
$O(|\chi/d|^2)$ on a fixed separated chart.
For the signed impulse, pair the terms with indices $i,j$:
$\epsilon_i\epsilon_j(K(y_i-y_j)+K(y_j-y_i))=0$.
Its internal derivative is therefore exactly zero, including when
$\chi\ne0$. The internal mean-centroid velocity instead vanishes by
central symmetry at $\chi=0$; smoothness bounds it by $C|\chi|/d$
in the normalized velocity units.
These different orders will be used separately in the comparison.
\end{proof}

\subsection{The signed remote strain}
\label{subsec:signed-strain}

The outgoing pair vectors have bounded length, while their midpoints
separate linearly. Their signed second moment therefore grows linearly.
Two cancellations remove the mass and impulse terms of the remote
field; the second moment is the first surviving term. The following
calculation retains its phase, since its sign determines which shape
direction is excited. Figure~\ref{fig:donor-receiver} illustrates this
donor--receiver interaction.

\begin{proposition}[Signed expanding-mode coefficient]\label{prop:signed-strain}
Let $\sigma>0$ and $\gamma>0$ be the source-to-receiver length and
circulation-magnitude ratios, and put
\[
 r=\frac{\sigma^2}{\gamma},\qquad \lambda=2\sqrt2.
\]
Let $\zeta_i$, labeled $(z,-z,w,-w)$ with signs
$(\epsilon_1,\epsilon_2,\epsilon_3,\epsilon_4)=(1,1,-1,-1)$,
be the isolated orbit starting on the fixed outgoing section of
Proposition~\ref{prop:isolated-outgoing}, rotated so that its limiting
direction is $e_1$. For $\theta\in\mathbb R$, set
\[
 y_i(s)=\sigma\operatorname{Rot}(\theta)\zeta_i(s/r),\qquad
 M(s)=\gamma\sum_i\epsilon_i y_i(s)\otimes y_i(s)
     =\gamma^2P_{\sigma,\theta}(s).
\]
For every $s\geq0$,
\begin{equation}\label{eq:cell-normalized-moment}
 \begin{gathered}
 P_{\sigma,\theta}(s)=r\sum_i\epsilon_i
      (\operatorname{Rot}(\theta)\zeta_i(s/r))
            \otimes(\operatorname{Rot}(\theta)\zeta_i(s/r)),\\
 P_{\sigma,\theta}(s)=2sS_\theta+O(r),\qquad
 S_\theta=e(\theta)\otimes e(\theta)^\perp+e(\theta)^\perp\otimes e(\theta),
 \quad e(\theta)=(\cos\theta,\sin\theta).
 \end{gathered}
\end{equation}
At a displacement $De_1$, assume $\max_i|y_i(s)|\leq D/2$. Then
\begin{equation}\label{eq:cell-remote-taylor}
 \begin{aligned}
 \gamma\sum_i\epsilon_i\nabla K(De_1-y_i)
   &=\tfrac12\nabla^3K(De_1):M+\mathcal T,\\
 |\mathcal T|&\leq C\gamma D^{-5}\sum_i|y_i|^3 .
 \end{aligned}
\end{equation}
The contraction is
$(\nabla^3K(x):M)_{ab}=\sum_{j,k}\partial_b\partial_j\partial_kK_a(x)M_{jk}$.
Writing the two source pairs as $m_j\pm d_j/2$, one also has
\begin{equation}\label{eq:cell-grouped-taylor}
 |\mathcal T|\leq C\gamma D^{-5}
           \sum_{j=1}^2|d_j|(|m_j|+|d_j|)^2.
\end{equation}
Thus the leading remote gradient is
$\varepsilon\mathcal B(P_{\sigma,\theta})$, where
\[
 \varepsilon=\gamma^2D^{-4},\qquad
 \mathcal B(T)=\tfrac12\nabla^3K(e_1):T.
\]
For the expanding projection in the receiver's rotating frame, define
\begin{equation}\label{eq:cell-rotated-forcing}
 \begin{gathered}
 \mathcal S_{\sigma,\psi}(s)=\operatorname{Rot}(s)
       \mathcal B(P_{\sigma,\psi}(s))\operatorname{Rot}(-s),\qquad
 f_{\sigma,\psi}(s)
       =-2\bigl(\sqrt2(\mathcal S_{\sigma,\psi})_{11}
                         +(\mathcal S_{\sigma,\psi})_{12}\bigr).
 \end{gathered}
\end{equation}
Then
\begin{equation}\label{eq:coefficient}
 \mathcal H_\sigma(\psi):=\int_0^\infty e^{-\lambda s}
                    f_{\sigma,\psi}(s)\idiff s
       =\mathcal H(\psi)+O(r),\qquad
 \mathcal H(\psi)=2(\sqrt2\cos2\psi+\sin2\psi).
\end{equation}
The remainder constants depend only on the chosen outgoing section.
\end{proposition}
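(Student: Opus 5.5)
The proof has four parts, carried out in order: the moment asymptotics \eqref{eq:cell-normalized-moment}, the remote Taylor expansions, the computation of the rotated forcing, and the Laplace integral \eqref{eq:coefficient}.

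\textbf{Moment asymptotics.} Write the isolated orbit as two neutral pairs, $\zeta_{1,3}=\pm\tfrac12\boldsymbol d+\boldsymbol m$ for the first pair and the negatives for the second. Use the labeling $z=\zeta_1$, $w=\zeta_3$, so that $z=\boldsymbol m+\boldsymbol d/2$ and $w=\boldsymbol m-\boldsymbol d/2$. Then
\[
 \sum_i\epsilon_i\zeta_i\otimes\zeta_i=2(z\otimes z-w\otimes w)
 =2(\boldsymbol m\otimes\boldsymbol d+\boldsymbol d\otimes\boldsymbol m).
\]
Next insert \eqref{eq:outgoing-asymptotics} with the limiting direction rotated to $e_1$, i.e. $\boldsymbol d=e_1^\perp+O((K_{\rm out}+v)^{-2})$ and $\boldsymbol m=ve_1+m_\infty+O(1)$, evaluated at $v=s/r$. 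This gives
\[
 2(\boldsymbol m\otimes\boldsymbol d+\boldsymbol d\otimes\boldsymbol m)
 =2(s/r)S_0+O(1),
\]
where the $O(1)$ error is uniform in $v\ge0$. Indeed, the cross term $v\cdot O(v^{-2})$ is bounded, and so are the $m_\infty$ term and the bounded correction. Multiplying by $r$ and conjugating by $\operatorname{Rot}(\theta)$ yields $P_{\sigma,\theta}=2sS_\theta+O(r)$.

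\textbf{Remote Taylor expansions.} For \eqref{eq:cell-remote-taylor}, Taylor expand $\nabla K(De_1-y)$ about $De_1$ to third order. The zeroth-order term cancels by $\sum\epsilon_i=0$, and the first-order term cancels by central symmetry, since $\sum\epsilon_iy_i=0$ for the centrally symmetric orbit. The second-order term is exactly $\tfrac12\nabla^3K(De_1):M/\gamma$ times $\gamma$. The third-order remainder is controlled by $|\nabla^4K|\lesssim|x|^{-5}$ on the segment, which stays at distance at least $D/2$ from the origin. For the grouped bound \eqref{eq:cell-grouped-taylor}, expand instead pair by pair: for each pair, write the difference of $\nabla K$ at $m_j\pm d_j/2$ as an integral of $\nabla^2K$ along $d_j$. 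Expanding that integral in $m_j$ shows that the cubic term is $O(|d_j|(|m_j|+|d_j|)^2D^{-5})$. Homogeneity of degree $-4$ of $\nabla^3K$ gives $\varepsilon\mathcal B(P)$ with $\varepsilon=\gamma^2D^{-4}$.

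\textbf{Rotated forcing.} Compute $\mathcal B(S_\psi)$ explicitly from $K(z)=\iunit/\bar z$. In complex notation $\nabla^3K(e_1)$ acting on a symmetric traceless tensor is a multiple of a fixed matrix rotated by twice the tensor's angle; note that $S_\psi$ is traceless with angle $2\psi+\pi/2$. Conjugating by $\operatorname{Rot}(s)$ then makes the combination $\sqrt2\mathcal S_{11}+\mathcal S_{12}$ a trigonometric expression in $2\psi$ and $2s$. Its shape should be
\[
 f_{\sigma,\psi}(s)=s\bigl(\alpha\cos(2\psi+2s)+\beta\sin(2\psi+2s)\bigr)+O(r),
\]
with a possibly different angular multiple of $s$; the derivative computation will tell.

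\textbf{Laplace integral.} Apply $\int_0^\infty se^{-\lambda s}e^{2\iunit s}\idiff s=(\lambda-2\iunit)^{-2}$ with $\lambda=2\sqrt2$, so that $(\lambda-2\iunit)^2=4-8\sqrt2\,\iunit$. The $O(r)$ error integrates against $e^{-\lambda s}$ to $O(r)$. The remaining step is to verify that the constants combine to $2(\sqrt2\cos2\psi+\sin2\psi)$.

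I expect the main obstacle to be this exact bookkeeping in the last two steps: the third derivative of $K$, the rotation conventions, and the signs. A sign error would flip the phase interval, so I would cross-check by evaluating at $\psi=0$ numerically.
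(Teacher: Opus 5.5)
Your first two steps are sound and match the paper. The pair identity $\sum_i\epsilon_i\zeta_i\otimes\zeta_i=2(\boldsymbol m\otimes\boldsymbol d+\boldsymbol d\otimes\boldsymbol m)$, combined with the outgoing asymptotics at $v=s/r$, gives $P_{\sigma,\theta}=2sS_\theta+O(r)$. The third-order Taylor expansion with zero mass and zero impulse gives \eqref{eq:cell-remote-taylor}. For \eqref{eq:cell-grouped-taylor}, two details need care: expand $\nabla^2K(De_1-m_j-ud_j)$ in the full variable $m_j+ud_j$, and note that the linear terms $-\gamma\nabla^2K(De_1)[d_j]$ cancel only after summing both pairs, using $d_1+d_2=0$.

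The gap is in your last two steps, which carry the actual content of the proposition. You never determine the forcing: $\alpha,\beta$ are left open. Worse, the template you write down, depending on $2\psi+2s$, has the wrong sense of rotation. Your heuristic misses that $\nabla^3K(e_1)$ reverses the tensor's angle rather than rotating it.

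What is missing is the explicit contraction. The cubic parts of $K_1,K_2$ at $e_1+(h,k)$ are $-3h^2k+k^3$ and $-h^3+3hk^2$. They give
\[
 \mathcal B(T)=\begin{pmatrix}-6T_{12}&3(T_{22}-T_{11})\\ 3(T_{22}-T_{11})&6T_{12}\end{pmatrix},\qquad
 \mathcal B(2sS_\psi)=12s\,M_\psi,\quad
 M_\beta=\begin{pmatrix}-\cos2\beta&\sin2\beta\\ \sin2\beta&\cos2\beta\end{pmatrix}.
\]
Identify a traceless symmetric $T$ with $T_{11}+\iunit T_{12}$. Then $\mathcal B$ is antilinear, $\tau\mapsto-6\iunit\bar\tau$. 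Conjugation by $\operatorname{Rot}(\alpha)$ multiplies the representative by $e^{2\iunit\alpha}$, so $\operatorname{Rot}(\alpha)M_\beta\operatorname{Rot}(-\alpha)=M_{\beta-\alpha}$. Hence $\mathcal S_{\sigma,\psi}(s)=12s\,M_{\psi-s}+O(r)$, and
\[
 f_{\sigma,\psi}(s)=24s\bigl(\sqrt2\cos(2\psi-2s)-\sin(2\psi-2s)\bigr)+O(r).
\]
Only now does your Laplace transform close the argument. The two needed integrals are $\int_0^\infty se^{-\lambda s}\cos2s\idiff s=1/36$ and $\int_0^\infty se^{-\lambda s}\sin2s\idiff s=\sqrt2/18$. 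The weighted integral is then
\[
 24\bigl[\cos2\psi(\sqrt2/36+\sqrt2/18)+\sin2\psi(2/18-1/36)\bigr]=2(\sqrt2\cos2\psi+\sin2\psi),
\]
and the $O(r)$ remainder contributes $O(r/\lambda)$.

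Now replace $2\psi-2s$ by $2\psi+2s$ in the display for $f$. The same computation conjugates the transform and yields $-\tfrac{2\sqrt2}{3}\cos2\psi-\tfrac{10}{3}\sin2\psi$. This has the same modulus but the wrong phase, so it would select the wrong interval $\mathcal I$ of positive input later. The orientation is therefore not bookkeeping to be spot-checked numerically at $\psi=0$; fixing the phase of $\mathcal H$ is exactly what the statement asserts.
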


\begin{proof}
Normalize the observed configuration to length and circulation one,
using the time variable $s=t/\tau$.
Let a second configuration have length $\sigma$ and circulation
magnitude $\gamma$ in these units. Its time scale divided by the
observed time scale is $r=\sigma^2/\gamma$, because
$(\sigma d)^2/(\gamma\Gamma)\cdot\Gamma/d^2=r$.
Choose the isolated orbit
$\zeta_i$, with labels $(z,-z,w,-w)$ and signs
$\epsilon_i=(1,1,-1,-1)$, whose direction in
\eqref{eq:outgoing-asymptotics} is $e_1$. After rotation through
$\theta$, its positions relative to its center are
$y_i(s)=\sigma\operatorname{Rot}(\theta)\zeta_i(s/r)$.
The signed mass, impulse and second moment are, componentwise,
\begin{equation}\label{eq:cell-signed-moments}
 \sum_i\gamma\epsilon_i=0,\qquad
 \sum_i\gamma\epsilon_i y_i=0,\qquad
 M_{ab}(s)=\gamma\sum_i\epsilon_i(y_i)_a(y_i)_b,\quad a,b\in\{1,2\}.
\end{equation}
Here $(x\otimes y)_{ab}=x_ay_b$, and all moments are taken about
the same configuration center. Zero signed mass makes the impulse
independent of that center; zero impulse also makes this second moment
independent of the choice of center. The corresponding patch moments
will include their component covariances and their nonzero generated
impulse in Section~\ref{sec:release}.

Write $M=\gamma^2P_{\sigma,\theta}$. Since
$\gamma\sigma^2=\gamma^2r$, the exact expression for $P_{\sigma,\theta}$ is the first row of
\eqref{eq:cell-normalized-moment}.
To justify the uniform remainder, the exact pair identity is
\[
 \sum_i\epsilon_i\zeta_i\otimes\zeta_i
  =2(z\otimes z-w\otimes w)
  =2(\boldsymbol m\otimes\boldsymbol d+
                         \boldsymbol d\otimes\boldsymbol m).
\]
Substitute \eqref{eq:outgoing-asymptotics} at $v=s/r$.
The leading term is $2v(e_1\otimes e_1^\perp+e_1^\perp\otimes e_1)$.
The remaining terms are bounded uniformly for $v\geq0$: the constant
$m_\infty$ is fixed, $v(K_{\rm out}+v)^{-2}$ is bounded, and the
midpoint tail is $O((K_{\rm out}+v)^{-1})$. Multiplication by $r$
gives the $O(r)$ term in \eqref{eq:cell-normalized-moment}.
Its constant is independent of $s,\sigma,\gamma,\theta$ after the
section has been fixed.

We next specify the spatial regime in which this moment gives the
remote velocity gradient. At displacement $De_1$, assume
$\max_i|y_i(s)|\leq D/2$. Taylor expansion of the normalized gradient
and \eqref{eq:cell-signed-moments} give \eqref{eq:cell-remote-taylor}.
The colon contracts the two moment indices, leaving the velocity and
gradient indices:
\[
 (\nabla^3K(x):M)_{ab}
       =\sum_{j,k=1}^2\partial_b\partial_j\partial_k K_a(x)M_{jk}.
\]
In \eqref{eq:cell-remote-taylor}, the constant term vanishes by zero
mass, the linear term by zero impulse, and the quadratic term has a
positive sign because it contains two factors of $-y_i$.
The fourth derivatives of $K$ are bounded by $CD^{-5}$ on all Taylor
segments, which proves the remainder estimate.

Grouping the two neutral pairs gives the sharper bound used later.
If their midpoint and vector in the observed units are $m_j,d_j$,
then the derivative with respect to $y$ of the third-order Taylor
remainder of $\nabla K(De_1-y)$ is bounded by
$CD^{-5}|y|^2$. Subtracting its values at $m_j+d_j/2$ and $m_j-d_j/2$ gives
\eqref{eq:cell-grouped-taylor}.
Since $|d_j|=O(\sigma)$ and
$|m_j|=O(\sigma+\gamma s/\sigma)$, each extra Taylor order has an
extra ratio of source diameter to distance. The moment expansion
\eqref{eq:cell-normalized-moment} holds for all $s\geq0$; the spatial
expansions hold only while the stated source-size condition holds.

By homogeneity $\nabla^3K(De_1)=D^{-4}\nabla^3K(e_1)$,
so the leading strain is
$\varepsilon\mathcal B(P_{\sigma,\theta})$, where
$\varepsilon=\gamma^2D^{-4}$ and
$\mathcal B(T)=\tfrac12\nabla^3K(e_1):T$.
The cubic terms of the kernel at $e_1+(h,k)$ are
\[
 \begin{aligned}
 K_1(e_1+(h,k))&=-k+2hk-3h^2k+k^3+O(|(h,k)|^4),\\
 K_2(e_1+(h,k))&=1-h+h^2-k^2-h^3+3hk^2+O(|(h,k)|^4).
 \end{aligned}
\]
They determine all entries of the contraction:
\begin{equation}\label{eq:cell-kernel-contraction}
 \mathcal B(T)=
 \begin{pmatrix}-6T_{12}&3(T_{22}-T_{11})\\
                 3(T_{22}-T_{11})&6T_{12}\end{pmatrix},
 \qquad
 \mathcal B(2sS_\theta)=12s
 \begin{pmatrix}-\cos2\theta&\sin2\theta\\
                 \sin2\theta&\cos2\theta\end{pmatrix}.
\end{equation}
The formula applies to symmetric $T$, as all second moments here are.
In particular scalar multiples of the identity map to zero, in
agreement with $\Delta K=0$ away from the origin.
At the reference shape of the observed configuration, take its positive
axis to be horizontal. An affine strain
$\left(\begin{smallmatrix}a&b\\b&-a\end{smallmatrix}\right)$ then
acts on $z=(|z|,0)$ and $w=(0,-|w|)$ from
\eqref{eq:cell-reference}. Its radial and angular contributions are
\[
 (|z|^2)'=2a|z|^2,\qquad (|w|^2)'=-2a|w|^2,\qquad
 \alpha_z'=b,\qquad\alpha_w'=-b.
\]
Thus $\mathsf R'=0$, $\mathsf A'=2\sqrt2a$ and $\phi'=2b$.
At $\mathsf A=\mathsf R=1$, $\phi=\pi/2$, the coordinate
differentials in \eqref{eq:chart} are
$\diff\mathsf X=\diff\mathsf R-\diff\mathsf A$ and
$\diff\mathsf Y=\diff\phi$. The resulting expanding contribution is
\begin{equation}\label{eq:cell-strain-projection}
 (\mathsf U')_{\rm strain}
       =\mathsf R'-\mathsf A'-\phi'=-2(\sqrt2a+b).
\end{equation}
The minus sign is determined by the choice
$\mathsf U=\mathsf X-\mathsf Y$ and by the labeled relative angle.

We record the actual moving coordinate map before rotating the strain.
Let $c(t)$ be a differentiable origin for the observed configuration,
let $\theta_{\rm rec}(s)$ be its reference positive-axis angle, and set
\begin{equation}\label{eq:cell-moving-frame}
 t=t_{\rm in}+\tau s,\qquad
 y(s)=d^{-1}\operatorname{Rot}(-\theta_{\rm rec}(s))(x(t)-c(t)).
\end{equation}
For a trajectory $x_t=u(t,x)$, differentiation gives
\begin{equation}\label{eq:cell-moving-equation}
 y'=-\theta_{\rm rec}'(s)y^\perp
   +\frac{\tau}{d}\operatorname{Rot}(-\theta_{\rm rec}(s))
       \bigl(u(t,c(t)+d\operatorname{Rot}(\theta_{\rm rec}(s))y)
                                                       -c_t(t)\bigr).
\end{equation}
A spatially constant term changes only translation and cancels from
the half-separation equations. An exterior affine gradient $B_{\rm phys}$
becomes
$\tau\operatorname{Rot}(-\theta_{\rm rec})
B_{\rm phys}\operatorname{Rot}(\theta_{\rm rec})$.
Thus the gradient gains a time factor but no length factor.
The additional term $-\theta_{\rm rec}'y^\perp$ is part of the full
evolution, not of this transformed strain.
For the reference orbit $\theta_{\rm rec}(s)=\theta_{\rm rec}(0)-s$,
so this term is $y^\perp$, cancelling its isolated rotation $-y^\perp$.

If $\theta_{\rm rec}$ is the observed initial positive-axis angle
and $\theta_{\rm line}$ the center-line angle, rotational covariance
leaves the effective phase
$\psi=\theta+\theta_{\rm rec}-2\theta_{\rm line}$.
Indeed, first rotating the line to $e_1$ replaces the source direction
by $\theta-\theta_{\rm line}$. Rotating the resulting gradient from
the line coordinates to the observed coordinates conjugates it by
$\operatorname{Rot}(\theta_{\rm line}-\theta_{\rm rec})$.
For the matrix
$M_\beta=\left(\begin{smallmatrix}-\cos2\beta&\sin2\beta\\
\sin2\beta&\cos2\beta\end{smallmatrix}\right)$ in
\eqref{eq:cell-kernel-contraction}, one has
$\operatorname{Rot}(\alpha)M_\beta\operatorname{Rot}(-\alpha)
=M_{\beta-\alpha}$, which gives precisely this $\psi$.
The same covariance follows from the first formula in
\eqref{eq:cell-kernel-contraction} for the complete moment, including
its bounded remainder.
The observed reference shape rotates with angular velocity $-1$.
In its rotating coordinates use \eqref{eq:cell-rotated-forcing}.
The notation retains the parameter $\sigma$ used later; this normalized
isolated orbit also depends on the time ratio $r=\sigma^2/\gamma$.
Equations~\eqref{eq:cell-normalized-moment},
\eqref{eq:cell-kernel-contraction} and
\eqref{eq:cell-strain-projection} now give
\begin{equation}\label{eq:cell-leading-forcing}
 f_{\sigma,\psi}(s)
 =24s\bigl(\sqrt2\cos(2\psi-2s)-\sin(2\psi-2s)\bigr)+O(r).
\end{equation}
The error is uniform in $s\geq0$ and $\psi$, and
$|f_{\sigma,\psi}(s)|\leq C(s+r)$.

The exponentially weighted coefficient is explicit. For $\lambda=2\sqrt2$,
\[
 \int_0^\infty s e^{-(\lambda-2\iunit)s}\idiff s
     =\frac1{(\lambda-2\iunit)^2}
     =\frac1{36}+\iunit\frac{\sqrt2}{18}.
\]
Taking real and imaginary parts gives
\[
 \int_0^\infty se^{-\lambda s}\cos2s\idiff s=\frac1{36},\qquad
 \int_0^\infty se^{-\lambda s}\sin2s\idiff s=\frac{\sqrt2}{18}.
\]
Expanding the trigonometric functions in
\eqref{eq:cell-leading-forcing}, its weighted integral equals
\[
 24\left[
   \cos2\psi\left(\frac{\sqrt2}{36}+\frac{\sqrt2}{18}\right)
  +\sin2\psi\left(\frac2{18}-\frac1{36}\right)\right]
       =2(\sqrt2\cos2\psi+\sin2\psi).
\]
The error integrates to at most $Cr/\lambda$, proving
\eqref{eq:coefficient}.
This integral uses the isolated leading-moment forcing, which is defined
for all $s\geq0$. The spatial Taylor approximation is used under its
source-size condition. On each stopped Euler interval $[0,S]$, the
omitted forcing tail satisfies
\[
 \left|\int_S^\infty e^{-\lambda s}f_{\sigma,\psi}(s)\idiff s\right|
       \leq C(1+S+r)e^{-\lambda S}.
\]
To see the consequence of its sign,
the scalar equation $u'=\lambda u+\varepsilon f_{\sigma,\psi}$,
$u(0)=0$, gives
\[
 e^{-\lambda s}u(s)
   =\varepsilon\int_0^s e^{-\lambda v}f_{\sigma,\psi}(v)\idiff v
   \longrightarrow\varepsilon\mathcal H_\sigma(\psi).
\]
Thus a positive lower bound for $\mathcal H$ produces a positive
expanding component of order $\varepsilon$ once $r$ is small. This
calculation concerns the isolated equations. Section~\ref{sec:release}
will prove the corresponding estimate for the Euler evolutions, with
a variable positive rate and all patch and exterior errors included.
Its inputs are the factorization \eqref{eq:exact-saddle}, the tensor
and projection \eqref{eq:outgoing-asymptotics}--\eqref{eq:coefficient},
and the smooth phase identity \eqref{eq:phase-primitive}.

\end{proof}
\section{The multiscale construction and three technical estimates}\label{sec:scales}\label{sec:datum}

The radii and circulations must meet three quantitative requirements.
First, $\sum_n\Gamma_n^q a_n^{-2(q-1)}<\infty$ makes the vorticity norm
summable. Second, $1/\tau_n\leq\eta_n^{-1/2}$ puts physical time one
within the support estimate. Third, the normalized variance error
$\eta_n^2$ must be smaller than the second-moment forcing
$\varepsilon_n$ after every fixed polynomial factor in
$\ell_n=\log(1/\varepsilon_n)$. We choose the exponents to satisfy these
three requirements and then verify the remaining time and distance sums.

\subsection{Integrability first, then spatial separation}
\label{subsec:scale-choice}

Fix the exponent $q=\max(p,3)$. A disk of radius $a$ and circulation
magnitude $\Gamma$ contributes $\pi^{1-q}\Gamma^q a^{-2(q-1)}$
to the $q$th power of the vorticity norm. We therefore choose the
circulation and radius exponents together:
\begin{equation}\label{eq:general-exponents}
 \delta=\frac1{10q},\qquad \beta=2-\delta,\qquad E=5\delta.
\end{equation}
If $\Gamma=d^\beta$ and $a=\eta_*d^{1+E}$, the resulting power of $d$ is
\begin{equation}\label{eq:norm-exponent}
 \kappa_q=q\beta-2(q-1)(1+E)=\frac9{10}+\frac1q>0.
\end{equation}
The strict positivity of $\kappa_q$ permits both small patch radii and
a summable $L^q$ norm.
Here is the compatibility problem behind these choices. Writing
$\beta=2-\delta$ makes the interaction time $\tau=d^\delta$ tend
to zero. Writing $\eta=\eta_*d^E$ gives
\[
 \frac{\eta^{1/2}}{\tau}=\eta_*^{1/2}d^{E/2-\delta}.
\]
Thus the support estimate through physical time one requires
$E>2\delta$. On the other hand, norm summability requires
\[
 q\delta+2(q-1)E<2.
\]
With $\delta=1/(10q)$ and $E=5\delta$, the left side is
$1/10+(q-1)/q<2$, and $E/2-\delta=3\delta/2>0$.
These two strict inequalities leave room both for concentration over a
long normalized interval and for an integrable infinite family.

The time at index $n+1$ must be short compared with that at index $n$.
We arrange $\tau_{n+1}/\tau_n
=(R_{n+1}/R_n)^\alpha$ with $\alpha>0$. Set
\begin{equation}\label{eq:radial-exponents}
 \rho=\frac1{1-2\delta},\qquad
 \alpha=\rho\delta=\frac1{10q-2},\qquad
 \nu_\Gamma=\rho\beta=2+3\alpha.
\end{equation}
In particular $0<\alpha\leq1/28$, $\rho=1+2\alpha$,
$\rho(\beta-1)=1+\alpha$, and $\rho E=5\alpha$.
These identities are the source of the powers in the remote-field sums.
Indeed, if $d=hR^\rho$, then $\tau=h^\delta R^\alpha$,
$\Gamma=h^\beta R^{\nu_\Gamma}$, and
$\Gamma/d=h^{\beta-1}R^{1+\alpha}$. The derivatives of the remote
kernel will introduce inverse powers of $R$; the useful residual powers
are
\[
 \nu_\Gamma-2=3\alpha,\qquad
 2\nu_\Gamma-4=6\alpha,\qquad
 \nu_\Gamma+(1+\alpha)-3=4\alpha.
\]
All are positive. This explains the choice of $\rho$, in addition
to the time ratio it produces. Finally $\rho E=5\alpha>4\alpha$
will leave a positive quadratic coefficient in the comparison of
$\eta_n^2$ with $\varepsilon_n$.

Let $L$ be a large finite number, to be fixed after the geometric and
analytic allowances. Put
\begin{equation}\label{eq:scales}
 \begin{gathered}
 h=e^{-L},\qquad R_n=e^{-Ln^2},\qquad d_n=hR_n^\rho,
 \qquad \Gamma_n=d_n^\beta,\\
 \tau_n=d_n^2/\Gamma_n=d_n^\delta,\\
 M=10+\nu_\Gamma+\frac{\nu_\Gamma^2}{\alpha},\qquad
 \eta_*=e^{-ML},\qquad \eta_n=\eta_*d_n^E,\qquad a_n=d_n\eta_n.
 \end{gathered}
\end{equation}
The configuration at index $n$ is initially centered at $R_ne_1$.
Its four disks have radius $a_n$, circulation magnitudes $\Gamma_n$,
signs $(1,1,-1,-1)$ and centroid distances comparable to $d_n$.
This is the array illustrated in Figure~\ref{fig:initial-array};
Figure~\ref{fig:cascade-array} shows the order of its subsequent activation.
Their shape coordinates and rotation angles will be chosen later.
The physical strengths and radii are now fixed independently
of the cutoff. Both patch transport and point dynamics use $t/\tau_n$
and the kernel $K(x)=x^\perp/|x|^2$.

Whenever these initial disks are disjoint, their norm sums are bounded by
the following finite quantities:
\begin{equation}\label{eq:norms}
 4\sum_{n\geq0}\Gamma_n<\infty,\qquad
 4\pi^{1-q}\eta_*^{-2(q-1)}\sum_{n\geq0}d_n^{\kappa_q}<\infty.
\end{equation}
Indeed, for any $r>0$ and $m\geq0$,
\begin{equation}\label{eq:norm-tail}
 \sum_{n\geq m}d_n^r
 \leq\frac{d_m^r}{1-e^{-\rho Lr}}.
\end{equation}
To verify the uniform tail, write $n=m+j$ and use
$(m+j)^2-m^2=2mj+j^2\geq j$. Then
$d_{m+j}^r\leq d_m^r e^{-\rho Lrj}$, whose geometric sum is
\eqref{eq:norm-tail}. For a fixed $L$, each cutoff truncates this one infinite sequence.
These tails will later reduce convergence of the whole datum to
translation continuity of finitely many disks.

\subsection{The forcing size and the comparison times}

To compute the effect of index $n+1$ on index $n$, measure lengths in
units of $d_n$ and time in units of $\tau_n$. The necessary
ratios are
\begin{equation}\label{eq:event-scales-general}
 \begin{gathered}
 q_n=\frac{R_{n+1}}{R_n}=e^{-L(2n+1)},\qquad
 \sigma_n=\frac{d_{n+1}}{d_n}=q_n^\rho,\qquad
 r_n=\frac{\tau_{n+1}}{\tau_n}=q_n^\alpha,\\
 p_n=\frac{d_n}{R_n}=hR_n^{2\alpha},\qquad
 D_n=\frac{R_n-R_{n+1}}{d_n}=\frac{1-q_n}{p_n},\\
 \eps_n=\sigma_n^{2\beta}D_n^{-4},\qquad
 \ell_n=\log(1/\eps_n).
 \end{gathered}
\end{equation}
The normalized circulation at index $n+1$ is
$\Gamma_{n+1}/\Gamma_n=\sigma_n^\beta$.
Its signed second moment therefore contributes
$\eps_n=\sigma_n^{2\beta}D_n^{-4}$ to the gradient at index $n$,
as computed in Sec.~\ref{sec:cell}. A quantity starting at size
$\eps_n$ and growing exponentially reaches a fixed value in time
$O(\ell_n)$. The radial ratio $q_n$ is distinct from the fixed
Lebesgue exponent $q$.

\begin{lemma}[Uniform scale margins]\label{lem:scale-margins}
For $L\geq\log8$ and $3\alpha L\geq\log2$,
\begin{align}
 \ell_n&=L\bigl(4+8\alpha n^2+2\nu_\Gamma(2n+1)\bigr)
                +4\log(1-q_n),\quad
 1+\ell_n\leq10L(n+1)^2,\label{eq:log-signal}\\
 \sum_{k>n}\frac{\tau_k(1+\ell_k)}{\tau_n}
 &\leq120L(n+2)^2e^{-\alpha L(2n+1)},\label{eq:tailclock}\\
 \sum_{n\geq0}\tau_n(1+\ell_n)
 &\leq490Le^{-\delta L},\label{eq:totalclock}\\
 \frac{\eta_n^2}{\eps_n}
 &=(1-q_n)^4\exp\!\left[-L\left(16+2E+
       2\alpha\left(n-\frac{\nu_\Gamma}{\alpha}\right)^2\right)\right].
 \label{eq:coreratio}
\end{align}
For each fixed nonnegative integer $m$,
\begin{equation}\label{eq:polynomial-margin}
 \sup_{n\geq0}(1+\ell_n)^m\frac{\eta_n^2}{\eps_n}
 \leq C_{q,m}L^m e^{-(16+2E)L}\longrightarrow0.
\end{equation}
Moreover $\sup_n(1+\ell_n)/\sqrt{D_n}\to0$, and the elapsed time
$1/\tau_n$ is no larger than $\eta_n^{-1/2}$:
\begin{equation}\label{eq:core-lifetimes}
 \frac{\eta_n^2}{\tau_n}=\eta_*^2d_n^{9\delta},\qquad
 \frac{\eta_n^{1/2}}{\tau_n}
       =\eta_*^{1/2}d_n^{3\delta/2}\leq1.
\end{equation}
All limits are uniform in $n$ for the fixed exponent $q$.
\end{lemma}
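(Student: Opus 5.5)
The plan is to express every quantity in \eqref{eq:event-scales-general} as an explicit exponential in $L$ and $n$. The statement is then elementary bookkeeping with the identities $\rho=1+2\alpha$, $\rho\delta=\alpha$, $\rho\beta=\nu_\Gamma$ and $\rho E=5\alpha$.

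\textbf{Step 1: the logarithmic signal.} We have $\log R_n=-Ln^2$ and $\log d_n=-L(1+\rho n^2)$. Also $\log q_n=-L(2n+1)$, and $p_n=hR_n^{2\alpha}$ gives $\log p_n=-L(1+2\alpha n^2)$. Substituting into $\eps_n=\sigma_n^{2\beta}D_n^{-4}$ with $\sigma_n^{2\beta}=q_n^{2\nu_\Gamma}$ and $D_n=(1-q_n)/p_n$ yields
\[
\ell_n=2\nu_\Gamma L(2n+1)+4L(1+2\alpha n^2)-4\log(1-q_n).
\]
This is the stated formula up to the sign convention on the logarithmic term, which I will recheck against the definition. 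Since $q_n\le e^{-L}\le1/8$, we get $|\log(1-q_n)|\le\log(8/7)$. Then $\alpha\le1/28$ and $\nu_\Gamma\le 2+3/28$ give a crude bound $1+\ell_n\le 10L(n+1)^2$, once $L\ge\log 8$ is used to absorb the constants.

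\textbf{Step 2: the time sums.} First, $\tau_k/\tau_n=d_k^\delta/d_n^\delta=e^{-\alpha L(k^2-n^2)}$. Write $k=n+j$, so that $k^2-n^2\ge (2n+1)+(j-1)$. Using Step 1 for $1+\ell_k$, the tail sum becomes $10L e^{-\alpha L(2n+1)}\sum_{j\ge1}(n+j+1)^2e^{-\alpha L(j-1)(2n+1)}$. The hypothesis $3\alpha L\ge\log2$ makes $e^{-\alpha L}\le 2^{-1/3}$. The resulting polynomial-weighted geometric series is then bounded by a constant times $(n+2)^2$. I will tune the crude numerical constant to $120$, or accept whatever absolute constant comes out; only uniformity matters downstream. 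The total sum \eqref{eq:totalclock} works the same way: $\tau_n=e^{-\delta L}e^{-\alpha Ln^2}$, multiplied by $10L(n+1)^2$ and summed, using the same bound $e^{-\alpha L}\le 2^{-1/3}$. \textbf{I expect this step to be the main obstacle.} The issue is not conceptual: it is getting the stated explicit constants from the weak hypothesis $3\alpha L\ge\log 2$, since $\alpha$ may be small. If needed, I will first bound the series by a comparison integral.

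\textbf{Step 3: the core ratio and margins.} We have $\eta_n^2=\eta_*^2d_n^{2E}=e^{-2ML}e^{-2EL(1+\rho n^2)}$, and $\rho E=5\alpha$ gives an exponent $-L(2M+2E+10\alpha n^2)$. Dividing by $\eps_n=e^{-\ell_n}$ adds $+L(4+8\alpha n^2+2\nu_\Gamma(2n+1))$ together with the $(1-q_n)$ factor. The $n$-dependent part collects into $-L(2\alpha n^2-4\nu_\Gamma n)$. Completing the square gives $-2\alpha L(n-\nu_\Gamma/\alpha)^2+2L\nu_\Gamma^2/\alpha$. The definition $M=10+\nu_\Gamma+\nu_\Gamma^2/\alpha$ makes the constants collapse to $-L(16+2E)$, which proves \eqref{eq:coreratio}.

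For \eqref{eq:polynomial-margin}, multiply by $(10L(n+1)^2)^m$. The Gaussian factor $e^{-2\alpha L(n-\nu_\Gamma/\alpha)^2}$ dominates $(n+1)^{2m}$ uniformly: the supremum over $n$ is bounded in terms of $q$ and $m$, because $\nu_\Gamma/\alpha\lesssim q$ and $\alpha L\ge(\log2)/3$. This leaves $C_{q,m}L^me^{-(16+2E)L}$. Likewise, $\sqrt{D_n}\ge c\,e^{L(1+2\alpha n^2)/2}$ beats $10L(n+1)^2$ uniformly as $L\to\infty$.

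Finally, \eqref{eq:core-lifetimes} is immediate. We have $\eta_n^2/\tau_n=\eta_*^2d_n^{2E-\delta}=\eta_*^2d_n^{9\delta}$ and $\eta_n^{1/2}/\tau_n=\eta_*^{1/2}d_n^{E/2-\delta}=\eta_*^{1/2}d_n^{3\delta/2}$. The latter is at most $1$ because $d_n,\eta_*\le1$.
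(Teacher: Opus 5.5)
Your route is the paper's own: write everything as explicit exponentials in $L,n$, read off $\ell_n$, complete the square using the definition of $M$, and use $L\ge\log8$ and $3\alpha L\ge\log2$ for the numerical bounds. Your square completion, the Gaussian supremum for \eqref{eq:polynomial-margin}, the $\sqrt{D_n}$ margin and \eqref{eq:core-lifetimes} are fine.

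The sign in Step 1 is a genuine slip, not a matter of convention. Since $D_n=(1-q_n)/p_n$, one has $4\log D_n=4L(1+2\alpha n^2)+4\log(1-q_n)$, so $\ell_n$ carries $+4\log(1-q_n)$, as stated. With your sign, $\eta_n^2/\eps_n=\eta_n^2e^{\ell_n}$ would contain $(1-q_n)^{-4}$, contradicting \eqref{eq:coreratio}. With the correct sign, $\log(1-q_n)\le0$ is simply dropped in the upper bound for $1+\ell_n$.

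The real gap is in Step 2. The constants $120$ and $490$ are part of the statement, so accepting whatever constant comes out does not prove the lemma, and your planned bound does not reach $120$. With $k=n+j$ you use only $k^2-n^2\ge j(2n+1)$ together with the ratio $e^{-\alpha L}\le2^{-1/3}$. For $n\ge1$ the factor $2n+1\ge3$ would rescue you. At $n=0$, however, your bound reads $k^2\ge k$ and throws away the quadratic growth. The resulting series $\sum_{j\ge1}j^2 2^{-(j-1)/3}=(1+2^{-1/3})/(1-2^{-1/3})^3\approx204$ gives a constant near $2000$.

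The missing observation is that the exponent gap grows by at least $3$ per step, uniformly in $n\ge0$. With $k=n+1+j$ and $j\ge0$,
\[
 k^2-n^2=(j+1)(2n+1+j)=2n+1+j(2n+2+j)\ge 2n+1+3j .
\]
The hypothesis $3\alpha L\ge\log2$ is calibrated exactly to this: it gives $e^{-3\alpha Lj}\le2^{-j}$. Combine this with $(k+1)^2\le(n+2)^2(j+1)^2$ and $1+\ell_k\le10L(k+1)^2$. The tail is then at most $10L(n+2)^2e^{-\alpha L(2n+1)}\sum_{j\ge0}(j+1)^22^{-j}$. Since $\sum_{j\ge0}(j+1)^2x^j=(1+x)/(1-x)^3=12$ at $x=1/2$, this gives \eqref{eq:tailclock}.

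For \eqref{eq:totalclock}, take $n=0$ in the tail bound, which contributes at most $480Le^{-\alpha L}\tau_0\le480L\tau_0$. Add $\tau_0(1+\ell_0)\le10L\tau_0$ and use $\tau_0=h^\delta=e^{-\delta L}$. If you prefer to sum $\tau_n(1+\ell_n)=e^{-\delta L}e^{-\alpha Ln^2}(1+\ell_n)$ directly, you must likewise keep the exponent $e^{-\alpha Ln^2}$ quadratic. Linearizing it to the ratio $2^{-1/3}$ again gives roughly $2000L$. No comparison integral is needed.
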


\begin{proof}
We first compute the logarithmic time scale without dropping the
factor $1-q_n$:
\[
 \log D_n=L+2\alpha Ln^2+\log(1-q_n),\qquad
 \log\sigma_n=-\rho L(2n+1).
\]
Consequently
$\ell_n=-2\beta\log\sigma_n+4\log D_n$, giving the first
identity in \eqref{eq:log-signal}. For its upper bound use
$\log(1-q_n)\leq0$, $\alpha\leq1/28$, and
$\nu_\Gamma=2+3\alpha\leq59/28$. The constant, linear and
quadratic coefficients in
$4+8\alpha n^2+2\nu_\Gamma(2n+1)$ are bounded above by those
in $9(n+1)^2$. Since $L\geq\log8>1$, adding one is bounded
by the remaining $L(n+1)^2$.

For the tail write $k=n+1+j$. The exact time ratio is
$\tau_k/\tau_n=e^{-\alpha L(k^2-n^2)}$. The inequalities
$(k+1)^2\leq(n+2)^2(j+1)^2$ and
$k^2-n^2\geq2n+1+3j$ give
\[
 \begin{split}
 \sum_{k>n}\frac{\tau_k(1+\ell_k)}{\tau_n}
 &\leq10L(n+2)^2e^{-\alpha L(2n+1)}
       \sum_{j\geq0}(j+1)^2e^{-3\alpha Lj}\\
 &\leq120L(n+2)^2e^{-\alpha L(2n+1)}.
 \end{split}
\]
The last sum is at most
$\sum_{j\geq0}(j+1)^2 2^{-j}=12$; this follows by differentiating
the geometric series twice, or from
$\sum_{j\geq0}(j+1)^2x^j=(1+x)/(1-x)^3$.
Apply the tail bound at $n=0$, add $1+\ell_0\leq10L$,
and use $\tau_0=e^{-\delta L}$ to obtain
$\sum_n\tau_n(1+\ell_n)\leq(10+480)L e^{-\delta L}$.

The error comparison requires keeping the terms linear in $n$ as well
as the Gaussian term. The definitions give
\begin{equation}\label{eq:scale-quadratic}
 \begin{split}
 &\log\frac{\eta_n^2}{\eps_n}
 =4\log(1-q_n)
 -L\bigl[2M+2E-4-2\nu_\Gamma
                   +2\alpha n^2-4\nu_\Gamma n\bigr],\\
 &2M+2E-4-2\nu_\Gamma+2\alpha n^2-4\nu_\Gamma n
 =16+2E+2\alpha(n-\nu_\Gamma/\alpha)^2.
 \end{split}
\end{equation}
The second equality is exactly the reason for including
$\nu_\Gamma^2/\alpha$ in $M$. It proves \eqref{eq:coreratio}.
For fixed $q$ the center $c_q=\nu_\Gamma/\alpha=20q-1$
is fixed, although it need not be small. For $L\geq1$,
\[
 \sup_{n\geq0}(n+1)^{2m}e^{-2\alpha L(n-c_q)^2}
 \leq\sup_{x\geq0}(x+1)^{2m}e^{-2\alpha(x-c_q)^2}
 =:C_{q,m}'<\infty.
\]
Combining this bound with $1+\ell_n\leq10L(n+1)^2$
and $(1-q_n)^4\leq1$ proves \eqref{eq:polynomial-margin}.
This uniform supremum permits the approximation cutoff to tend to infinity.

Similarly, since $q_n\leq1/8$,
\[
 \frac{1+\ell_n}{\sqrt{D_n}}
 \leq C L e^{-L/2}(n+1)^2e^{-\alpha Ln^2}
 \leq C_q L e^{-L/2}.
\]
This proves the uniform distance margin. Finally the two expressions
in \eqref{eq:core-lifetimes} follow by subtracting $\delta$ from
$2E$ and $E/2$, respectively. In particular the second one is
equivalent to $1/\tau_n\leq\eta_n^{-1/2}$.
\end{proof}

These bounds now verify each time comparison explicitly. If the already
completed intervals at indices $k>n$ have lengths
$C\tau_k(1+\ell_k)$, then \eqref{eq:tailclock} makes their sum
$o_L(\tau_n)$. For $k<n$, the interval of length
$C\tau_n(1+\ell_n)$ is also $o_L(\tau_k)$.
Thus the difference estimate for initially identical disks is used within
the fixed small interaction time $T_*$ of Lemma~\ref{lem:response},
within the prescribed interval of available exterior bounds and starting
from physical time zero.
Equation~\eqref{eq:totalclock} makes the sum of all required physical
intervals smaller than $1/4$ after a single choice of $L$.
Finally \eqref{eq:polynomial-margin} makes every error
$C(1+\ell_n)^m\eta_n^2$ negligible relative to $\eps_n$, uniformly in
$n$. These conditions and the summable norm bound are fixed before $N$
increases.

\subsection{Component densities and the three technical inputs}
\label{subsec:technical-inputs}

At a finite cutoff $N$, the two evolutions are indexed by
$\kappa\in\{\mathrm{quiet},\mathrm{trig}\}$. Their vorticities are
\begin{equation}\label{eq:component-density-convention}
 \omega_N^\kappa(t,x)
 =\sum_{n=0}^N\sum_{j=1}^4\epsilon_j\Gamma_n\mu_{n,j}^\kappa(t,x),
 \qquad (\epsilon_1,\epsilon_2,\epsilon_3,\epsilon_4)=(1,1,-1,-1),
 \qquad \mu_{n,j}^\kappa\geq0,\quad\int\mu_{n,j}^\kappa=1.
\end{equation}
The cutoff index on a component density is suppressed. The disks in
Figure~\ref{fig:initial-array} are specified initially by
\begin{equation}\label{eq:initial-component-disks}
 \mu_{n,j}^\kappa(0,x)
  =\frac{\ind_{B_{a_n}(b_{n,j}^\kappa(0))}(x)}{\pi a_n^2},
 \qquad
 b_{n,j}^\kappa(0)=R_ne_1+d_n y_{n,j}^\kappa,
 \qquad \sum_{j=1}^4 y_{n,j}^\kappa=0.
\end{equation}
The shapes and phases of $y_{n,j}^\kappa$ will be selected in
Section~\ref{sec:finite}; their scales have already been fixed. For any prescribed $C_{\rm geom}\geq2$ and $0<c_{\rm geom}\leq1/2$,
the inequalities
\[
 |y_{n,j}^\kappa|\leq C_{\rm geom},\qquad
 |y_{n,i}^\kappa-y_{n,j}^\kappa|\geq c_{\rm geom}\quad(i\ne j)
\]
give disjoint initial disks once
$e^{-L}\leq[8(C_{\rm geom}+1)]^{-1}$ and
$\sup_n\eta_n\leq c_{\rm geom}/4$. Within one configuration this follows
from $2a_n\leq c_{\rm geom}d_n/2$. For $m>n$, its two enclosing balls
have sum of radii at most
$(C_{\rm geom}+1)e^{-L}(R_n+R_m)$, whereas their centers are separated
by $R_n-R_m\geq(1-e^{-L})R_n$. The constants allow both the rotating
reference shapes and the fixed outgoing section.

All three estimates below are local in the configuration and uniform in
the small core radius. We state their precise hypotheses now; their
proofs occupy Section~\ref{sec:technical-proofs}. The first estimate
removes rigid rotation from the concentration cost. The second uses the
cancellation of the common mutual velocity of an outgoing neutral pair.
The third compares two patch shapes starting from disks, so that its
error is proportional to the difference of the fields acting on them.
This proportional estimate is needed for configurations larger than the
receiver: an absolute error of size $O(\eta_k^2)$ need not be smaller
than the forcing $\varepsilon_n$ at a different index. At the receiver
itself, by contrast, \eqref{eq:polynomial-margin} makes absolute errors
of size $\eta_n^2$ negligible relative to $\varepsilon_n$, even after
the fixed polynomial losses in the event duration.

For length $d$, circulation magnitude $\Gamma$ and starting time
$t_{\rm in}$, the normalization used in the first two lemmas is
\begin{equation}\label{eq:material-exterior-scaling}
 s=\frac{\Gamma(t-t_{\rm in})}{d^2},\qquad
 \mu_i(s,y)=d^2\mu_i^{\rm phys}(t,dy),\qquad
 V(s,y)=\frac d\Gamma V^{\rm phys}
       \left(t_{\rm in}+\frac{d^2}{\Gamma}s,dy\right).
\end{equation}
In particular, $b_i=b_i^{\rm phys}/d$ and
$I_i=I_i^{\rm phys}/d^2$. Component densities are always probability
densities; their signed circulations are the separate factors
$\epsilon_i\Gamma$.

\begin{lemma}[Concentration with a quantified rotation cost]
\label{lem:concentration}
Fix $m\in\{2,4\}$, signs $\epsilon_i\in\{-1,1\}$ with
$\sum_{i=1}^m\epsilon_i=0$, and numbers
\[
 0<\ell_-\leq\ell_+<\infty,\qquad
 0<r\leq\ell_-/16,\qquad C_0,C_{\rm init}\geq1.
\]
There are $C\geq1$ and $0<\eta_0<1$, depending only on these data,
with the following property. Let $0<\eta<\eta_0$ and $s_{\rm pre},S\geq0$
satisfy $s_{\rm pre}+S\leq\eta^{-1/2}$. On $[0,S]$, let the
nonnegative unit-mass densities $\mu_i$ be transported by the same
area-preserving bounded-vorticity flow with velocity
\[
 u(s,y)=\sum_{j=1}^m\epsilon_j(K*\mu_j)(s,y)+V(s,y).
\]
Assume each $\mu_i(s)$ is a rearrangement of
$f_\eta=(\pi\eta^2)^{-1}\ind_{B_\eta}$, and define
\[
 \begin{gathered}
 b_i=\int y\mu_i(y)\idiff y,\qquad
 I_i=\int|y-b_i|^2\mu_i(y)\idiff y,\qquad
 \boldsymbol c=m^{-1}\sum_i b_i,\\
 \mathcal S(\mu)=-\iint\log|x-y|\mu(x)\mu(y)\idiff x\idiff y,
 \qquad \mathcal D_i=\mathcal S(f_\eta)-\mathcal S(\mu_i).
 \end{gathered}
\]
For every $s\in[0,S]$ suppose
\begin{equation}\label{eq:concentration-geometry}
 |b_i(s)-b_j(s)|\geq\ell_-\quad(i\ne j),\qquad
 |b_i(s)-\boldsymbol c(s)|\leq\ell_+.
\end{equation}
Set $\mathcal N(s)=\bigcup_i B_{2r}(b_i(s))$. The field $V$ is smooth,
divergence-free and curl-free on an open neighborhood of the closure
of these tubes. With spatial norms on $\mathcal N(s)$, set
\[
 \begin{gathered}
 V_i^{\rm pt}=\sum_{j\ne i}\epsilon_jK(b_i-b_j)+V(s,b_i),\qquad
 A=\|\nabla V\|_{L^\infty(\mathcal N(s))},\qquad
 H=\|\nabla^2V\|_{L^\infty(\mathcal N(s))},\\
 B=\max_i\| (\partial_s+V_i^{\rm pt}\cdot\nabla)\nabla V
                                      \|_{L^\infty(\mathcal N(s))}.
 \end{gathered}
\]
Here $V_i^{\rm pt}(s)$ is held constant in the spatial derivative.
There is an absolutely continuous real $\Omega$ such that, for
$W_i=V_i^{\rm pt}-\boldsymbol c'-\Omega(b_i-\boldsymbol c)^\perp$,
\begin{equation}\label{eq:cost}
 \sup_{0\leq s\leq S}(|\Omega(s)|+A(s))
 +\int_0^S\left(A+B+H+H^2+|\Omega'|+\max_i|W_i|\right)\idiff s
 \leq C_0.
\end{equation}
At the beginning of the interval assume, for each $i$,
\[
 \begin{gathered}
 I_i(0)+\mathcal D_i(0)\leq C_{\rm init}\eta^2,\qquad
 \int|y-b_i(0)|^{24}\mu_i(0,y)\idiff y
        \leq C_{\rm init}[\eta(1+s_{\rm pre})]^8,\\
 \operatorname{supp}\mu_i(0)\subset
 B\bigl(b_i(0),C_{\rm init}\eta^{2/9}(1+s_{\rm pre})^{2/5}\bigr).
 \end{gathered}
\]
Then, for every $s\in[0,S]$,
\begin{equation}\label{eq:concentration}
 \begin{gathered}
 I_i(s)+\mathcal D_i(s)\leq C\eta^2,\qquad
 \int|y-b_i(s)|^{24}\mu_i(s,y)\idiff y
             \leq C[\eta(1+s_{\rm pre}+s)]^8,\\
 \operatorname{supp}\mu_i(s)\subset
 B\bigl(b_i(s),C\eta^{2/9}(1+s_{\rm pre}+s)^{2/5}\bigr)
 \subset B(b_i(s),C2^{2/5}\eta^{1/45})\subset B_r(b_i(s)).
 \end{gathered}
\end{equation}
Moreover,
\begin{equation}\label{eq:concentration-centroid}
 |b_i'-V_i^{\rm pt}|\leq C\eta^2(1+H).
\end{equation}
The constants do not depend on $S$, $s_{\rm pre}$ or $\eta$.
\end{lemma}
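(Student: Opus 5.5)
The plan is a bootstrap on $[0,S]$ with three coupled quantities: the second moments $I_i$ together with the logarithmic deficits $\mathcal D_i$, the $24$th moments, and the support radius. The backbone is a corrected interaction energy. Set
\[
 \mathcal E(s)=\sum_i\mathcal S(\mu_i)
 +\sum_{i\neq j}\epsilon_i\epsilon_j\iint\log|x-y|\,\mu_i\mu_j,
\]
and add the work of $V$, written through a local stream function on $\mathcal N(s)$, which exists because $V$ is curl- and divergence-free there. Differentiating along the common flow, the mutual terms cancel by antisymmetry of $K$. What remains is the interaction of $\mu_i$ with the exterior field $V$ and with the other components. Expanding that field about $b_i$, the constant part moves only the centroid, and the part of the linear term that is a rigid rotation does not change $\mathcal S(\mu_i)$. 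This is the role of $\Omega$: subtracting $\Omega(b_i-\boldsymbol c)^\perp$ and $\boldsymbol c'$ leaves the residual $W_i$ plus genuine strain.

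First I would estimate the self-energy deficits. By Riesz rearrangement (Brascamp--Lieb--Luttinger), $\mathcal D_i\geq0$, and a quantitative form gives $I_i-\int|y|^2f_\eta\lesssim\mathcal D_i+\eta^2$. The time derivative of $\sum_i\mathcal D_i$ is then controlled by the strain acting on the second-moment tensor of $\mu_i$, which is of size $(A+H\ell_-^{-1}+\dots)\,\eta^2$. I would pair this with an evolution equation for the traceless covariance. In that equation strain enters linearly and the rotation $\Omega$ drives it through a term that cancels, which is the quantified rotation cost. Integrating against $A+B+H+H^2+|\Omega'|$ and using \eqref{eq:cost} gives $I_i+\mathcal D_i\leq C\eta^2$ through Gronwall. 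I expect $B$ to enter when $\nabla V$ is integrated by parts in time along the moving centroid.

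Next I would bound the centroid. Using $\sum_j\epsilon_j=0$ and that $\mu_j$ has its mass near $b_j$, Taylor expansion to second order gives $b_i'-V_i^{\rm pt}=O\big((1+H)\max_jI_j\big)$; the first-order terms vanish at the centroid, and the self-interaction has zero mean. This yields \eqref{eq:concentration-centroid}.

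Then I would treat the high moment and the support. For $J=\int|y-b_i|^{24}\mu_i$, the standard Marchioro--Pulvirenti argument applies. The self-field contributes nothing, by symmetry of the kernel for the $|y-b_i|^{2}$-weight after subtracting the centroid velocity. The far part of the self-field, coming from mass at distance $\gg\eta$, is controlled by the small tail mass, which follows from $I_i+\mathcal D_i\leq C\eta^2$. Together with the external strain $A\,J$, this gives $J'\lesssim J^{1-1/24}\eta^{?}$ growth and hence the bound $[\eta(1+s_{\rm pre}+s)]^8$. For the support I would follow an extremal particle at radius $\rho(s)$. Its radial velocity relative to $b_i$ is bounded by the field of the mass outside $\rho/2$, which by Chebyshev from the $24$th moment is $\lesssim J\rho^{-24}$, plus the self-field bounded by $\rho^{-1}$ times that mass, plus $A\rho$ and the residuals. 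This gives $\rho\lesssim\eta^{2/9}(1+s)^{2/5}$. The final inclusion into $B_r$ uses $s_{\rm pre}+S\leq\eta^{-1/2}$ and $\eta<\eta_0$, and this closes the bootstrap, since the support lying in $B_r\subset\mathcal N$ is what justifies using the norms of $V$ on $\mathcal N$.

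The main obstacle is the first step. The energy-dissipation estimate has to be uniform over the long interval $\eta^{-1/2}$ with a constant depending only on $C_0$, so rotation has to be removed exactly and not estimated by $|\Omega|S$. I would first try a conserved-up-to-error quantity $\mathcal S(\mu_i)+\Omega\cdot(\text{angular moment})$, and check that its derivative involves only $\Omega'$, $W_i$ and strain.
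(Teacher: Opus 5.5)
There is a genuine gap: your proposal never removes the order-one interaction between the components, and \eqref{eq:cost} does not control it.

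\textbf{Why the mutual strain breaks each step.} Near $b_i$ the field $\sum_{j\ne i}\epsilon_jK*\mu_j+V$ is curl-free, so its linear part is a pure strain with no rotational part. The contribution of the other patches, roughly $\sum_{j\ne i}\epsilon_j\nabla K(b_i-b_j)$, is of order one at every time; it is bounded only by $C\ell_-^{-2}$ and does not decay. Hypothesis \eqref{eq:cost} makes only $A$, $B$, $H$, $H^2$, $|\Omega'|$ and the rigid-frame residual $W_i$ integrable, while $S$ may be as large as $\eta^{-1/2}$. Each of your steps then fails as written.
\begin{itemize}
\item In the energy step, a bound $\bigl|\bigl(\sum_i\mathcal D_i\bigr)'\bigr|\lesssim(\text{strain})\,\eta^2$ integrates to $C\eta^2S$, which can be as large as $\eta^{3/2}$.
\item The plain moment $\int|y-b_i|^{24}\mu_i$ and the extremal radius $|y-b_i|$ see the order-one strain linearly. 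Gronwall then gives $e^{CS}$, and a term $C\rho$ in $\rho'$ cannot be dominated by the derivative of $\eta^{2/9}(1+s)^{2/5}$.
\item In these steps you list only $A$, which is precisely the harmless exterior part.
\end{itemize}

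\textbf{The role of $\Omega$ is misidentified.} $\Omega$ is not there to remove a rotational part of the local linear field, since there is none. It removes the rigid rotation of the centroid configuration. Your correction $\mathcal S(\mu_i)+\Omega\cdot(\text{angular moment})$ on a single self-energy does not touch the strain. Also, with $\mathcal S$ defined through $-\log$, the conserved combination carries a minus sign in front of the mutual sum, not the plus sign you wrote.

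\textbf{What the paper does instead: the variance and deficit.} The mutual field is removed exactly rather than estimated. From the conserved quantity $E_{\rm cl}-\sum_i\epsilon_i\int\Psi\,\mathrm d\mu_i$ the paper subtracts the point Hamiltonian $H_{\rm pt}$ and adds back $\sum_i\epsilon_i\Psi(b_i)$. The resulting $\mathcal Q$ satisfies $\mathcal Q=-\frac12\sum_i\mathcal D_i+O(\sum_iI_i)$, so the work of the mutual strain on the self-energies becomes Taylor remainders evaluated at the current time. The derivative $\mathcal Q'$ has two parts:
\begin{itemize}
\item the centered remainder of $(\partial_s+V_i^{\rm pt}\cdot\nabla)\Psi$, which is where $B$ enters;
\item the term $-\sum_i\epsilon_i(V_i^{\rm pt})^\perp\cdot\delta_i^{\rm int}$, in which $V_i^{\rm pt}$ is of order one.
\end{itemize}
Internal linear impulse conservation gives $\sum_i\epsilon_i\delta_i^{\rm int}=0$, which removes $\boldsymbol c'$. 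Angular impulse conservation gives $\sum_i\epsilon_ib_i\cdot\delta_i^{\rm int}=-\frac12\sum_i\epsilon_i(I_i')_{\rm int}$. This identity forces the correction $\widetilde{\mathcal Q}=\mathcal Q+\frac\Omega2\sum_i\epsilon_iI_i$, for which $|\widetilde{\mathcal Q}'|\leq C(B+\max_i|W_i|+|\Omega'|+|\Omega|A)\sum_iI_i$. Closing also requires coercivity with a small constant: $\sum_iI_i\leq C\eta^2+\epsilon_\eta\sum_i\mathcal D_i$ with $\epsilon_\eta\to0$. This comes from the rearrangement layer-cake bound under a small-support stop. Your version $I_i\lesssim\mathcal D_i+\eta^2$, with an order-one constant, cannot absorb the $O(\sum_iI_i)$ remainder in $\mathcal Q$.

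\textbf{What the paper does instead: moments and support.} The paper replaces $|y-b_i|$ by an adapted radius $\rho_i$, taken in the frame rotating with $\theta'=\Omega$. Its level sets are those of the frozen point stream: the self vortex, the Taylor remainder of the mutual and exterior streams, and $-\frac\Omega2|y|^2$. The frozen field, including the mutual strain, is tangent to these level sets and drops out exactly. What remains is the symmetrized self-interaction error $C\eta\,m_{i,\nu}^{1-3/\nu}$, which does not vanish for $\nu=24$, contrary to your ``contributes nothing''. The other remainder is $\partial_s\log\rho_i=O(K_0|y|^2)$, where $K_0$ is integrable by \eqref{eq:cost}.

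Your centroid step, giving \eqref{eq:concentration-centroid}, is fine.
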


The deficit $\mathcal D_i$ is nonnegative by rearrangement. On the
stopped support balls, the proof gives
\[
 \sum_i I_i\leq C\eta^2+o_\eta(1)\sum_i\mathcal D_i.
\]
A corrected energy then absorbs the second term. The remaining moment
and support bounds are proved with level sets of the frozen stream,
whose normal velocity vanishes. This removes the cumulative effect of
many rigid rotations. When the lemma is restarted, $s_{\rm pre}$ records
all preceding time measured in the same units $d^2/\Gamma$.

\begin{lemma}[Concentration and separation of a neutral pair]
\label{lem:neutral-pair}\label{lem:neutral-pair-control}
\label{lem:neutral pair control}
Fix $0<\ell_-\leq\ell_+<\infty$, $0<r\leq\ell_-/32$ and
$A_0,C_{\rm init}\geq1$. There are
$\delta_0>0$, $C\geq1$ and $0<\eta_0<1$, depending only on these
numbers, with the following property. Let
$0<\eta<\eta_0$, $0<\delta\leq\delta_0$, and
$s_{\rm pre},S\geq0$ satisfy $s_{\rm pre}+S\leq\eta^{-1/2}$.
Let $\mu_+,\mu_-$ be unit-mass rearrangements of
$f_\eta=(\pi\eta^2)^{-1}\ind_{B_\eta}$, transported on $[0,S]$ by
the same area-preserving bounded-vorticity flow with velocity
\[
 u=K*(\mu_+-\mu_-)+V.
\]
Write
\[
 \begin{gathered}
 b_\pm=\int y\mu_\pm(y)\idiff y,\qquad
 \boldsymbol d=b_+-b_-,\qquad
 I_\pm=\int|y-b_\pm|^2\mu_\pm(y)\idiff y,\\
 \mathcal D_\pm=-\iint\log|x-y|f_\eta(x)f_\eta(y)\idiff x\idiff y
             +\iint\log|x-y|\mu_\pm(x)\mu_\pm(y)\idiff x\idiff y.
 \end{gathered}
\]
Assume initially
\[
 \begin{gathered}
 \ell_-\leq|\boldsymbol d(0)|\leq\ell_+,\qquad
 I_\pm(0)+\mathcal D_\pm(0)\leq C_{\rm init}\eta^2,\\
 \int|y-b_\pm(0)|^{24}\mu_\pm(0,y)\idiff y
                       \leq C_{\rm init}[\eta(1+s_{\rm pre})]^8,\\
 \operatorname{supp}\mu_\pm(0)\subset
 B\bigl(b_\pm(0),C_{\rm init}\eta^{2/9}(1+s_{\rm pre})^{2/5}\bigr).
 \end{gathered}
\]
On $\mathcal N(s)=B_{2r}(b_+(s))\cup B_{2r}(b_-(s))$ the field $V$
is smooth, divergence-free and curl-free on an open neighborhood of the
closure. For $\boldsymbol d\ne0$ set
\[
 \begin{gathered}
 V_\pm^{\rm pt}=-K(\boldsymbol d)+V(s,b_\pm),\\
 A=\max\left\{\|\nabla V\|_{L^\infty(\mathcal N(s))},
       \frac{|V(s,b_+)-V(s,b_-)|}{|\boldsymbol d|}\right\},\qquad
 H=\|\nabla^2V\|_{L^\infty(\mathcal N(s))},\\
 B=\max_{i=\pm}
 \|(\partial_s+V_i^{\rm pt}\cdot\nabla)\nabla V
                                      \|_{L^\infty(\mathcal N(s))}.
 \end{gathered}
\]
Assume
\begin{equation}\label{eq:neutral-pair-cost}
 \sup_{0\leq s\leq S}A(s)\leq A_0,\qquad
 \int_0^S(A+B+H+H^2)\idiff s\leq\delta.
\end{equation}
These inequalities may be checked on the interval preceding the first
failure of $\ell_-/2<|\boldsymbol d|<2\ell_+$.
Then, throughout $[0,S]$,
\begin{equation}\label{eq:neutral-pair-concentration}
 \begin{gathered}
 I_\pm+\mathcal D_\pm\leq C\eta^2,\qquad
 \int|y-b_\pm(s)|^{24}\mu_\pm(s,y)\idiff y
                         \leq C[\eta(1+s_{\rm pre}+s)]^8,\\
 \operatorname{supp}\mu_\pm(s)\subset
 B\bigl(b_\pm(s),C\eta^{2/9}(1+s_{\rm pre}+s)^{2/5}\bigr)
 \subset B_r(b_\pm(s)),\\
 |\boldsymbol d(s)-\boldsymbol d(0)|
             \leq C(\ell_++\eta^2)\delta\leq\ell_-/4,\qquad
 \ell_-/2<|\boldsymbol d(s)|<2\ell_+,\\
 |b_\pm'+K(\boldsymbol d)-V(s,b_\pm)|\leq C\eta^2(1+H).
 \end{gathered}
\end{equation}
Here $\delta_0$ and $\eta_0$ are chosen so that
$C(\ell_++\eta_0^2)\delta_0\leq\ell_-/4$ and the displayed support
radius is less than $r$.
\end{lemma}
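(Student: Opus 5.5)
The plan is to run Lemma~\ref{lem:concentration} with $m=2$, and then add a separate argument for the pair vector $\boldsymbol d$, which is no longer rotating rigidly. The key structural fact is that both densities move with the common mutual velocity $-K(\boldsymbol d)$. In a frame translating with the pair midpoint this term is spatially constant, so it drops out of every relative quantity. What remains is only the self-interaction of each patch, the cross-interaction correction, and the exterior field $V$.

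\textbf{Step 1: a bootstrap on the geometry.} We work on the maximal subinterval on which $\ell_-/2<|\boldsymbol d|<2\ell_+$ holds and each support lies in $B_r(b_\pm)$. This is exactly the interval on which \eqref{eq:neutral-pair-cost} is checked. There the two tubes $B_{2r}(b_\pm)$ are disjoint and separated by at least $\ell_-/4$.

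\textbf{Step 2: concentration.} We verify the hypotheses of Lemma~\ref{lem:concentration} with $m=2$ and $\Omega\equiv0$. The choice $\Omega=0$ is natural because a neutral pair translates rather than rotates. Then $\boldsymbol c'=\tfrac12(V(b_+)+V(b_-))-K(\boldsymbol d)$, and
\[
W_\pm=V(b_\pm)-\tfrac12\bigl(V(b_+)+V(b_-)\bigr)=\pm\tfrac12\bigl(V(b_+)-V(b_-)\bigr),
\]
so $|W_\pm|\leq A|\boldsymbol d|/2\leq\ell_+A$. Hence the cost \eqref{eq:cost} is bounded by $C(\ell_+)(A_0+\delta)$. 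This is the place where the second entry in the definition of $A$ is used. The hypothesis \eqref{eq:concentration-geometry} holds on the bootstrap interval with $\ell_-/2$ and $\ell_+$ in place of $\ell_\pm$. Lemma~\ref{lem:concentration} then gives the moment and support bounds, together with the centroid law $|b_\pm'-V_\pm^{\rm pt}|\leq C\eta^2(1+H)$. Since $V_\pm^{\rm pt}=-K(\boldsymbol d)+V(s,b_\pm)$, this is the last line of \eqref{eq:neutral-pair-concentration}.

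One caveat: the constant $C_0$ in Lemma~\ref{lem:concentration} must not depend on $S$. The bound $\sup_s A\le A_0$ is uniform, but the term $\int_0^S\max_i|W_i|$ is controlled only if $\int A\le\delta$, which is assumed. So $C_0=C(\ell_+)(A_0+1)$ works.

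\textbf{Step 3: the separation bound.} Subtracting the two centroid laws, the mutual term cancels and
\[
|\boldsymbol d'|\leq|V(b_+)-V(b_-)|+C\eta^2(1+H)\leq 2\ell_+A+C\eta^2(1+H).
\]
Integrating gives $|\boldsymbol d(s)-\boldsymbol d(0)|\leq 2\ell_+\delta+C\eta^2(S+\delta)$. The extra term $\eta^2S$ is at most $\eta^{3/2}$, because $S\leq\eta^{-1/2}$.

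This term is the main obstacle, since the statement asserts the bound $C(\ell_++\eta^2)\delta$ with no $\eta^2 S$ contribution. I would first try to sharpen the centroid law for a neutral pair. Each centroid is moved only by the exterior field of the other patch and by $V$; self-induced motion is zero. The far field of a concentrated patch equals that of its point vortex up to quadrupole corrections of size $I/\ell_-^3\sim\eta^2$. If that correction is not integrable, I would exploit the exact conservation of the impulse $b_+-b_-$ in the absence of $V$. The pair impulse is exactly $\boldsymbol d$, and its derivative equals $\int V(\mu_+-\mu_-)$ exactly, because the internal Biot--Savart contributions cancel by antisymmetry. Thus
\[
\boldsymbol d'=\int V\,\diff(\mu_+-\mu_-),
\]
which is bounded by $A|\boldsymbol d|+CH\eta^2$. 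Integrating gives precisely $C(\ell_++\eta^2)\delta$, using $\int_0^S(A+H)\,\diff s\leq\delta$.

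\textbf{Step 4: closing the bootstrap.} Choosing $\delta_0$ and $\eta_0$ as stated, we get $|\boldsymbol d-\boldsymbol d(0)|\leq\ell_-/4$. Since $\ell_-\leq|\boldsymbol d(0)|\leq\ell_+$, this gives the strict inequalities $\ell_-/2<|\boldsymbol d|<2\ell_+$. The support radius stays below $r$, so the bootstrap never fails and the conclusions hold on all of $[0,S]$.
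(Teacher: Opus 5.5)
Your proposal is correct and follows essentially the paper's route. The paper's proof is the two-patch, $\Omega=0$ specialization of the concentration argument, in which the quotient entry of $A$ controls the only surviving cross term $(V(b_+)-V(b_-))^\perp\cdot\delta v$; it then uses the exact identity $\boldsymbol d'=V(b_+)-V(b_-)+\delta V_+-\delta V_-$, which is what your corrected Step 3 uses. One small point: the actual $\boldsymbol c'$ differs from your formula by the averaging errors $\delta v+\tfrac12(\delta V_++\delta V_-)$. To invoke Lemma~\ref{lem:concentration} as a black box, you should therefore either add a provisional variance bound to your bootstrap, or note that only $W_+-W_-=V(b_+)-V(b_-)$ enters its proof.
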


The extra quotient in $A$ is essential: bounds on disjoint centroid
balls do not control the difference of the values of $V$ at their
centers. The mutual velocities of a neutral pair agree exactly, even
for deformed patches. More precisely, the same vector
\[
 \delta v=-\iint K(x-y)\mu_+(x)\mu_-(y)\idiff x\idiff y
                                             +K(\boldsymbol d)
\]
occurs in both centroid equations. Consequently
\begin{equation}\label{eq:neutral-pair-vector-equation}
 \boldsymbol d'=V(b_+)-V(b_-)+\delta V_+-\delta V_-,\qquad
 \delta V_\pm=\int V(y)\mu_\pm(y)\idiff y-V(b_\pm),\qquad
 |\delta V_\pm|\leq\tfrac12H I_\pm.
\end{equation}
Thus a small integrated exterior cost preserves the pair length without
requiring a bound on the total translation of its midpoint. After the
four-component configuration separates, the companion neutral pair is
included in $V$ for this application.

In physical variables the centroid conclusions of the two concentration
lemmas take the common form
\begin{equation}\label{eq:physical-centroid-error}
 \left|\frac{\diff b_i^{\rm phys}}{\diff t}
  -\sum_{j\ne i}\epsilon_j\Gamma K(b_i^{\rm phys}-b_j^{\rm phys})
  -V^{\rm phys}(t,b_i^{\rm phys})\right|
 \leq C\frac\Gamma d\eta^2
       +Ca^2\|\nabla^2V^{\rm phys}\|,\qquad a=d\eta.
\end{equation}
The Hessian norm is taken on the corresponding physical balls of radius
$2rd$. In the rotating reference configuration the appropriate angular
velocity is $\Omega=-1/\mathsf A$, in the same normalized time $s$.

\subsection{A proportional comparison of two transported disks}
\label{subsec:proportional-comparison}

For a single component, include every other component of the same
configuration in its exterior field. A physical patch of circulation
$\epsilon\Gamma$, initial radius $a=\eta d$ and centroid $db(s)$ has
exterior normalization
\begin{equation}\label{eq:packet-exterior-scaling}
 s=t/\tau,\qquad \tau=d^2/\Gamma,\qquad
 V_{\rm ext}(s,y)=\frac d\Gamma V^{\rm phys}(\tau s,dy).
\end{equation}
The core time is $s/\eta^2=\Gamma t/a^2$. Thus a fixed interval in $s$
contains $O(\eta^{-2})$ core rotations. A direct estimate integrating the
size of the self gradient would not be uniform in $\eta$.

Use fixed spatial axes and core coordinates
$\xi=(x-db(s))/a$. Describe the boundary by its squared radial graph,
\begin{equation}\label{eq:centered-analytic-graph}
 Z_{\mathfrak h}(\theta)=\sqrt{1+2\mathfrak h(\theta)}
              (\cos\theta,\sin\theta),\qquad
 \mathfrak h_0=0,\qquad
 \int_0^{2\pi}(\cos\theta,\sin\theta)
                 (1+2\mathfrak h(\theta))^{3/2}\idiff\theta=0.
\end{equation}
The area constraint is linear in $\mathfrak h$. The centroid constraint
uniquely determines its modes $\mathfrak h_{\pm1}$ as quadratic analytic
functions of $g=\Pi_{\geq2}\mathfrak h$ near zero. For these higher modes
write
\begin{equation}\label{eq:analytic-wiener-norm}
 \|g\|_{\varrho,k}
  =\sum_{|m|\geq2}(1+|m|)^k e^{\varrho|m|}|g_m|,
 \qquad k=1,2.
\end{equation}
For a full Fourier series the same norm sums over all integers.

\begin{lemma}[Difference estimate for transported disks]
\label{lem:response}\label{lem:shape estimate}
Fix $r_{\rm ext}>0$, $M_{\rm ext}\geq1$ and $0<\varrho_*\leq1/4$.
There exist $0<\eta_0<r_{\rm ext}/16$, $T_*>0$, $\Lambda>0$ and
$C\geq1$, depending only on these three numbers, with
$\Lambda T_*\leq\varrho_*/4$, such that the following holds.
Let $T_0>0$, $0<\eta<\eta_0$ and $\epsilon\in\{-1,1\}$. On
$[0,T_0]$ let a unit-mass density $\mu$ be transported by
$\epsilon K*\mu+V_{\rm ext}$ from
\[
 \mu(0,y)=(\pi\eta^2)^{-1}\ind_{B_\eta(b(0))}(y),\qquad
 b(s)=\int y\mu(s,y)\idiff y.
\]
Assume it is the bounded-vorticity area-preserving transport and,
provisionally, that
$\operatorname{supp}\mu(s)\subset B_{r_{\rm ext}/4}(b(s))$.
The field $V_{\rm ext}$ is smooth, divergence-free and curl-free on
an open neighborhood of the closure of $B_{r_{\rm ext}}(b(s))$, and
\begin{equation}\label{eq:response-exterior-hypothesis}
 \sup_{0\leq s\leq T_0}
 \left(
 \|\nabla V_{\rm ext}\|_{L^\infty(B_{r_{\rm ext}}(b(s)))}
 +\|(\partial_s+b'(s)\cdot\nabla)\nabla V_{\rm ext}
                         \|_{L^\infty(B_{r_{\rm ext}}(b(s)))}
 \right)\leq M_{\rm ext}.
\end{equation}
Set
\[
 T=\min\{T_0,T_*\},\qquad
 \varrho_{\rm b}(s)=\varrho_*-\Lambda s,\qquad
 \varrho(s)=\varrho_*/2-\Lambda s.
\]
Then the actual patch has a centered analytic graph
\eqref{eq:centered-analytic-graph} on $[0,T]$, with
\begin{equation}\label{eq:shape-absolute}
 \|g(s)\|_{\varrho_{\rm b}(s),1}
       +\|g(s)\|_{\varrho(s),2}\leq C\eta^2,
 \qquad \|\mathfrak h(s)\|_{\varrho(s),1}\leq1/16.
\end{equation}
In particular its support lies in $B_{2\eta}(b(s))$, improving the
provisional support condition.

For two such histories with the same $\eta$, sign $\epsilon$ and
initial centered disk, identify their centroid balls by translation with
$\mathcal B=B_{r_{\rm ext}}(0)$ and define
\[
 \begin{aligned}
 \widetilde V^{[j]}(s,y)
 &=V_{\rm ext}^{[j]}(s,b^{[j]}(s)+y)
                         -V_{\rm ext}^{[j]}(s,b^{[j]}(s)),\\
 \Delta_A(s)
 &=\|\nabla\widetilde V^{[1]}(s,\cdot)
                -\nabla\widetilde V^{[2]}(s,\cdot)\|_{L^\infty(\mathcal B)},\\
 \mathcal E_1(s)
 &=\|g^{[1]}(s)-g^{[2]}(s)\|_{\varrho(s),1}.
 \end{aligned}
\]
Their graph difference satisfies
\begin{equation}\label{eq:response}
 D^+\mathcal E_1(s)\leq C\mathcal E_1(s)+C\Delta_A(s),\qquad
 \mathcal E_1(s)\leq C\int_0^s\Delta_A(v)\idiff v
 \quad(0\leq s\leq T).
\end{equation}
Here $D^+$ is the upper right Dini derivative. After translating the two
physical patches to a common centroid $c_{\rm phys}$, their self-velocity
difference obeys, for every integer $\nu\geq0$,
\begin{equation}\label{eq:remotegraph}
 |\nabla^\nu\delta u(x)|
 \leq C_\nu\Gamma a^2|x-c_{\rm phys}|^{-\nu-3}\mathcal E_1(s),
 \qquad |x-c_{\rm phys}|\geq4a,\qquad a=d\eta.
\end{equation}
For the normalized exterior averaging errors
\[
 \delta_{\rm obs}^{[j]}(s)
   =\int V_{\rm ext}^{[j]}(s,y)\mu^{[j]}(s,y)\idiff y
                    -V_{\rm ext}^{[j]}(s,b^{[j]}(s)),
\]
one has
\begin{equation}\label{eq:observer-response}
 |\delta_{\rm obs}^{[1]}-\delta_{\rm obs}^{[2]}|
 \leq C\eta^2\left(M_{\rm ext}\mathcal E_1+\eta^2\Delta_A\right).
\end{equation}
The constants are independent of $\eta,d,\Gamma$; $C_\nu$ may also
depend on $\nu$. No derivative of $\Delta_A$ is assumed.
\end{lemma}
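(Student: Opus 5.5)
We work in the core coordinates $\xi=(x-db(s))/a$, so the patch is a near-unit disk that rotates once per core time $s/\eta^2$. In these coordinates it is governed by the classical contour dynamics of a vortex patch, with a weak external strain added. The plan is a Cauchy--Kovalevskaya-type argument in a shrinking analytic Wiener scale, in the spirit of analytic vortex-patch perturbation theory. The rotation is what creates the non-uniformity in $\eta$, so we separate it exactly. For the self-induced evolution of the disk, the linearization of the squared-radius graph $\mathfrak h$ at $\mathfrak h=0$ acts on the $m$th Fourier mode as multiplication by $\iunit\omega_m\eta^{-2}$ with real $\omega_m$ (the Kelvin modes, $\omega_m=\pm(|m|-1)/2$ up to normalization). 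This is a skew operator, and it is diagonal in Fourier. Every weighted $\ell^1$ norm $\|\cdot\|_{\varrho,k}$ is invariant under it. Hence the $O(\eta^{-2})$ term contributes nothing to $D^+$ of these norms. This is the key structural point, and it explains why no estimate can integrate the size of the self gradient.

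\textbf{Step 1: absolute estimate.} We write the evolution of $g=\Pi_{\geq2}\mathfrak h$ as
\[
 \partial_s g=\eta^{-2}\mathcal L g+\eta^{-2}\mathcal N(\mathfrak h)+\mathcal F(V_{\rm ext},\mathfrak h),
\]
with $\mathfrak h_{\pm1}$ determined quadratically from $g$ by the centroid constraint, and $\mathfrak h_0$ by area. Three facts drive the estimate.
\begin{itemize}
\item The nonlinear self term $\mathcal N$ is at least quadratic. It loses one derivative, which the Wiener-algebra structure controls as $\|\mathcal N\|_{\varrho,k}\leq C\|g\|_{\varrho,k}\|g\|_{\varrho,k+1}$.
\item The exterior term comes from Taylor-expanding $\widetilde V$ at the centroid. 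The constant term is removed by the moving frame, and the linear term $a\nabla V_{\rm ext}\xi$, rescaled by the time factor, contributes an $O(\eta^2 M_{\rm ext})$ forcing in mode $\pm2$. Higher Taylor terms carry extra factors of $\eta$.
\item The loss of derivative is absorbed by the decreasing radius $\varrho_{\rm b}(s)=\varrho_*-\Lambda s$, since $\frac{\diff}{\diff s}\|g\|_{\varrho(s),1}$ contains the term $-\Lambda\|g\|_{\varrho,2}$.
\end{itemize}
The real danger is the $\eta^{-2}$ prefactor in front of $\mathcal N$. It is harmless only because $g=O(\eta^2)$: then $\eta^{-2}\|g\|\|g\|_{\cdot,2}\leq C\|g\|_{\cdot,2}$, which $\Lambda$ absorbs. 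So we run a bootstrap on $\|g\|_{\varrho_{\rm b},1}\leq 2C\eta^2$ over $[0,T_*]$, choosing $\Lambda$ large relative to the bootstrap constant and then $T_*=\varrho_*/(4\Lambda)$. Gronwall closes the bootstrap. The estimate in the $\|\cdot\|_{\varrho,2}$ norm then follows from the $\varrho_{\rm b}$ estimate by the radius gap $\varrho_*/2$, since $(1+|m|)e^{-\varrho_*|m|/2}$ is bounded. The support inclusion $B_{2\eta}(b(s))$ follows from $\|\mathfrak h\|\leq1/16$.

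\textbf{Step 2: difference estimate \eqref{eq:response}.} We subtract the two equations. The term $\eta^{-2}\mathcal L(g^{[1]}-g^{[2]})$ again drops out of the norm. The quadratic term is bilinear, so it gives
\[
 \eta^{-2}\bigl(\|g^{[1]}\|_{\varrho_{\rm b},2}+\|g^{[2]}\|_{\varrho_{\rm b},2}\bigr)\|g^{[1]}-g^{[2]}\|_{\varrho,2}+\cdots.
\]
Using the absolute bound $O(\eta^2)$ in the larger radius $\varrho_{\rm b}$, where the norm is bounded uniformly, leaves $C\|g^{[1]}-g^{[2]}\|_{\varrho,2}$, which the $-\Lambda$ term absorbs. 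The exterior difference is controlled by the centered gradients. Because the $\widetilde V^{[j]}$ are already centered, only $\nabla\widetilde V$ enters, and it is paired with the disk, whose unperturbed response is $O(1)$ per unit strain in normalized time. This gives the forcing $C\Delta_A$. Coupling terms such as $(\nabla V^{[1]}_{\rm ext})\cdot(g^{[1]}-g^{[2]})$ give $CM_{\rm ext}\mathcal E_1$. Because both histories start from the same disk, $\mathcal E_1(0)=0$, and Gronwall yields the integral bound. We never differentiate $\Delta_A$, since it enters only as a source term.

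\textbf{Step 3: far field and observer.} For \eqref{eq:remotegraph}, the velocity difference of two unit patches with a common centroid and equal area has vanishing monopole and dipole. Its multipole expansion therefore starts at the quadrupole, whose coefficients are bounded by Fourier modes of $\mathfrak h^{[1]}-\mathfrak h^{[2]}$ weighted by $a^2$. The modes $\pm1$ and $0$ are quadratic in $g$ and Lipschitz with small constant. Summing the series for $|x-c|\geq4a$ uses the analytic weight and gives $\Gamma a^2|x-c|^{-\nu-3}\mathcal E_1$. For \eqref{eq:observer-response}, we Taylor-expand $V_{\rm ext}$ about $b$ inside the patch. The first-order term vanishes by the centroid definition. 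The second-order term is $\nabla^2V:\mathrm{Cov}$, where the covariance is $\eta^2(\tfrac14\mathrm I+O(g))$. The isotropic part cancels because $V$ is harmonic, so the difference is $\eta^2 M_{\rm ext}\mathcal E_1$ plus a term $\eta^4\Delta_A$ coming from the difference of the fields.

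We expect the main obstacle to be Step 1: proving the derivative-loss bound for the contour-dynamics nonlinearity in the Wiener norm in a form that exposes exactly one power of $\eta^{-2}$ against $\|g\|^2$, with the skew linear part commuting with the weights. We would first try writing the Biot--Savart boundary velocity via the explicit Fourier expansion of $\log|Z(\theta)-Z(\theta')|$ for near-circular graphs.
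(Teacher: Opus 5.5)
Your Step 1 contains a scaling slip that hides the central difficulty, and as written the bootstrap does not close. In the interaction time $s$, where you correctly put $\eta^{-2}$ in front of $\mathcal L$ and $\mathcal N$, the exterior strain acts on core coordinates as $\partial_s\xi\approx\nabla V_{\rm ext}(s,b)\,\xi$. The forcing $\mathcal F$ on the graph therefore has size $M_{\rm ext}$, not $\eta^2M_{\rm ext}$. The latter is its size in the core time $\zeta=s/\eta^2$ (the paper's $\|f\|_{\varrho,1}\le C\eta^2$), and there it must be integrated over $T/\eta^2$ units. Discarding the skew part and applying Gronwall thus gives only $\|g\|_{\varrho_{\rm b},1}\lesssim M_{\rm ext}T_*$, which is of order one, because $T_*$ may not depend on $\eta$. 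Once $\|g\|$ is not $O(\eta^2)$, the term $\eta^{-2}\|g\|_{\varrho,1}\|g\|_{\varrho,2}$ can no longer be absorbed by a fixed $\Lambda$ either.

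The $O(\eta^2)$ deformation is an averaging effect of the fast rotation. You must use that the eigenvalues of $\mathcal L$ on $|m|\geq2$ are nonzero, of modulus $|m|-1\geq1$, not merely that they are imaginary. The paper does this with the normal-form change $z=g+\mathcal L^{-1}f$, i.e.\ one integration by parts in core time. This replaces the $O(\eta^2)$ forcing by $\mathcal L^{-1}f_\zeta$, and Cauchy estimates applied to $(\partial_s+b'\cdot\nabla)\nabla V_{\rm ext}$ give $\|f_\zeta\|_{\varrho,1}\leq C\eta^4M_{\rm ext}$. Over $\zeta\le T/\eta^2$ this accumulates only $O(\eta^2T_*)$. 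The nonlinear and strain-modification terms are then handled as you intend, after writing $g=z-\mathcal L^{-1}f$ so that $\|g\|_{\varrho,k}\leq\|z\|_{\varrho,k}+C\eta^2$. Your proposal never uses the material-derivative half of \eqref{eq:response-exterior-hypothesis}; that hypothesis exists precisely for this step.

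With the absolute bound repaired, your Step 2 is the right argument and agrees with the paper. You subtract the original $g$-equations, not the normal-form ones, so the field-difference forcing is $C\Delta_A$ in $s$ and no time derivative of $\Delta_A$ enters. The graph-difference terms lose one derivative with coefficients $\eta^{-2}\cdot O(\eta^2)=O(1)$, and $\Lambda$ absorbs them. Pair the norms as in the tame estimate: $\|g^{[j]}\|_{\varrho,1}$ against $\|g^{[1]}-g^{[2]}\|_{\varrho,2}$, and $\|g^{[j]}\|_{\varrho,2}$ against $\|g^{[1]}-g^{[2]}\|_{\varrho,1}$. The quantity $\|g^{[j]}\|_{\varrho_{\rm b},2}$ that you invoke is not controlled.

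A second, smaller gap is that analytic a priori bounds do not by themselves show the transported patch has an analytic graph. The paper first builds an analytic contour solution on a short interval, using mode truncation, a decreasing strip and compactness. It then identifies this solution with the actual patch through an Osgood-type uniqueness argument for the active patch equation with the given exterior field. Only after that does it use the material-derivative hypothesis along the actual centroid.

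Step 3 is fine in substance. The paper uses the exact disk-mean identity for harmonic fields in area coordinates rather than a term-by-term Taylor expansion, but either works.
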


The separate bound \eqref{eq:shape-absolute} uses the nonzero circular
frequencies of all modes $|m|\geq2$ and one integration by parts in
core time. The uniform bound on the material derivative in
\eqref{eq:response-exterior-hypothesis} controls the resulting forcing
variation. After that absolute bound is established, one subtracts the
two original graph equations. Their common fast circular part is
skew on Fourier modes; a decreasing analytic radius absorbs the one
angular derivative lost by the nonlinear part. This proves the
proportional estimate \eqref{eq:response} using only $\Delta_A$.
The fixed strip gap gives the second norm in
\eqref{eq:shape-absolute}.

Estimate \eqref{eq:remotegraph} is a physical source-field bound, whereas
\eqref{eq:observer-response} is in normalized velocity units and acquires
a factor $\Gamma/d$ when returned to physical coordinates. Centroid
translations are estimated separately. Harmonicity is used here twice:
it gives uniform derivative estimates on the smaller centroid balls,
and the average of the exterior over an exact centered disk equals
its value at the center. The latter cancellation explains the extra
$\eta^2$ multiplying $\Delta_A$ in \eqref{eq:observer-response}.
\section{Separation and patch bounds on a common time interval}\label{sec:geometry}\label{app:geometry}

We prove that each component at index $n$ has physical variance
$I_{n,i}\leq Ca_n^2$ and support radius at most
$C\eta_n^{1/45}d_n$, and that all four components stay within
$B(Z(t)+R_ne_1,\delta_{\rm loc}R_n/2)$. Here $Z$ is the actual
trajectory starting at zero and $\delta_{\rm loc}>0$ is fixed and small.
These estimates keep distinct indices a distance comparable to
$\max(R_n,R_k)$ apart. They are proved on the same stopped interval
as the inequality for $\Delta\mathsf U_n$ in Section~\ref{sec:release},
so the finite construction can continue the estimates together.

Write $\epsilon=(1,1,-1,-1)$. In each of the two finite evolutions,
\[
 \omega^\kappa=\sum_{n=0}^N\sum_{j=1}^4
      \epsilon_j\Gamma_n\mu_{n,j}^\kappa,
 \qquad \kappa\in\{\mathrm{quiet},\mathrm{trig}\},
 \qquad \int\mu_{n,j}^\kappa=1.
\]
Every $\mu_{n,j}^\kappa$ is the push-forward of its original constant-density
disk by the actual Euler flow. In particular, regrouping the four
components into the pairs $(1,3)$ and $(2,4)$ does not change the
velocity. Put $b_{n,j}=\int x\mu_{n,j}(x)\,\mathrm dx$,
$\boldsymbol c_n=\tfrac14\sum_jb_{n,j}$, and
$\chi_n=\tfrac12(b_{n,1}+b_{n,2}-b_{n,3}-b_{n,4})$.
The superscript $\kappa$ is suppressed when an estimate applies to
both histories. Let $Z^\kappa$ be the trajectory starting at zero in
the corresponding velocity.

A triggered configuration passes through three stages: the small
shape chart, a bounded outgoing arc, and two separated neutral pairs.
We specify the bounds in each stage on one common stopped interval,
so that concentration and growth can be improved together. The section
crossings change the applicable stage; a failure of any other bound
would stop the argument. This fixes the interval used in
Section~\ref{sec:release}.
Choose $0<\delta_{\rm loc}<1/64$. Choose a closed small chart
\[
 \mathcal C=\{|\mathsf U|\leq2h_*,\ |\mathsf V|\leq\delta_*,
                  |\mathsf A-1|\leq\delta_*,\ |\chi|/d\leq\delta_*\}
\]
on which $a\geq\lambda/2$, where $\delta_*>0$ and $h_*>0$ are
fixed sufficiently small. Its centered, normalized centroid states
have distances between different components at least $4c_*>0$ and
diameter at most $C_*$; decrease $c_*$ if necessary. The constants
here depend only on the explicit chart of Section~\ref{app:point}.
Let $Y^{\rm arc}(u)$, $0\leq u\leq T_+$, be the isolated orbit from
$\mathsf U=h_*$, extended a fixed positive time beyond its first
$\mathsf R=\mathsf R_{\rm out}$ crossing. Write $T_{\rm arc}$ for
that crossing time. Choose $\delta_{\rm arc}>0$ so that every state
within $2\delta_{\rm arc}$ of
\[
 \{\operatorname{Rot}(\theta)Y^{\rm arc}(u):
                      0\leq u\leq T_+,\ 0\leq\theta\leq2\pi\}
\]
has component distances at least $2c_*$ and diameter at most $2C_*$.
The extension and tube can also be chosen so that
$\nabla\mathsf R\cdot F\geq g_*/2>0$ when
$|\mathsf R-\mathsf R_{\rm out}|\leq\delta_{\rm arc}$ in this tube;
$F$ is the centered four-point field. These choices are possible by
the explicit isolated orbit and its transverse outgoing crossing.

For later use define the dimensionless patch quantities, with their
full elapsed age $t/\tau_n$, by
\[
 \begin{split}
 P_{n,j}(t)=\max\bigg\{\frac{I_{n,j}^{\rm phys}}{a_n^{\,2}},\;
 \frac{\mathcal D_{n,j}}{\eta_n^2},\;
 \frac{\int|x-b_{n,j}|^{24}\mu_{n,j}(x)\,\mathrm dx}
     {d_n^{24}[\eta_n(1+t/\tau_n)]^8},\;
 \frac{\sup_{x\in\operatorname{supp}\mu_{n,j}}|x-b_{n,j}|}
 {d_n\eta_n^{2/9}(1+t/\tau_n)^{2/5}}\bigg\}.
 \end{split}
\]
Here $a_n=d_n\eta_n$ is the physical core radius. The deficit is computed after dilation
by $d_n$ and is independent of the choice of center.

\begin{definition}[Common bootstrap regime]\label{def:common-bootstrap}
Let $J=[0,T]$, $T\leq1$. Fix positive constants
$C_{\rm rot},C_{\rm arc},C_{\rm pair},C_V,C_A,C_\chi,C_W,C_E,C_\vartheta$
and fixed $0<c_w<C_w$. The regime $\mathcal B(J)$ consists of the
following conditions, imposed for every $0\leq n\leq N$, both
$\kappa\in\{\mathrm{quiet},\mathrm{trig}\}$, and every current time
in the indicated stage. Section times are introduced only when the
corresponding first crossing has actually occurred.
The finest triggered configuration is the prescribed initial outgoing
state in the finite construction: set $t_N^{\rm out}=0$ for that
configuration and apply the pair regime from physical time zero.
It has no preceding chart or active-comparison interval.
\begin{enumerate}
\item In every stage,
\[
 \operatorname{supp}\mu_{n,j}^\kappa(t)
  \subset B(Z^\kappa(t)+R_ne_1,\delta_{\rm loc}R_n),\qquad
 t/\tau_n\leq\eta_n^{-1/2}.
\]
The second bound follows from the scales. The patch bound is $P_{n,j}\leq2C_{\rm rot}$,
$2C_{\rm arc}$ or $2C_{\rm pair}$ in, respectively, the small chart,
the arc, or the separated-pair stage. In the first two stages use
$r=c_*/32$ in Lemma~\ref{lem:concentration}. In the pair stage use
$r=(64d_n)^{-1}$ times the smaller of the two entry pair lengths in
Lemma~\ref{lem:neutral-pair}. The provisional component supports lie in
$B_{rd_n}(b_{n,j})$; the coefficient norms are taken on
$\bigcup_j B_{2rd_n}(b_{n,j})$ and the connecting segments required
in the technical lemmas. These choices satisfy $r\leq\ell_-/16$
in the compact stages and $r\leq\ell_-/32$ at pair entry.
\item While a configuration is in $\mathcal C$, put
$b_n=\tau_nh^\beta+\eta_n^2$. Require
\[
 |\mathsf V_n|\leq2C_Vb_n,\quad
 |\mathsf A_n-1|\leq2C_A(h^\beta+\eta_n^2/\tau_n)t,\quad
 |\chi_n|/d_n\leq2C_\chi h^{\beta+1}R_n^{5\alpha}.
\]
For a quiet configuration also require
$|\mathsf U_n^{\rm quiet}|\leq B_{\rm ref}b_n$.
For a triggered configuration awaiting its adjacent source crossing,
require membership in $\mathcal C$. On an active step $n+1\to n$,
require $|\Delta\mathsf U_n|\leq h_*$, until its positive crossing.
The time integral of the rotation defect from physical time zero
over each current chart interval $[0,t]$ is at most $2C_W$.
\item Between the positive crossing $t_n^*$ and the outgoing section,
the centered normalized state remains in the $2\delta_{\rm arc}$
tube just defined, with matched entry rotation and
$0\leq(t-t_n^*)/\tau_n\leq T_+$. On that interval
$|\chi_n|/d_n\leq2C_\chi h^{\beta+1}R_n^{5\alpha}$; changing
the fixed constant accommodates the bounded arc diameter.
\item After $t_n^{\rm out}$, set
\[
 \begin{gathered}
 d_{n,j}^{\rm pair}=b_{n,j}-b_{n,j+2},\qquad
 m_{n,j}^{\rm pair}=\tfrac12(b_{n,j}+b_{n,j+2}),\\
 y_n=m_{n,1}^{\rm pair}-m_{n,2}^{\rm pair},\qquad
 v_n=\Gamma_n/d_n,\\
 L_n(t)=K_{\rm out}d_n+v_n(t-t_n^{\rm out}).
 \end{gathered}
\]
Let $e_n$ be the unit separation direction of the matched isolated
outgoing state. Fix $c_0,C_0>0$ and $\epsilon_0>0$ from that state
so that its entry separation and relative pair velocity have
$e_n$ projections at least $4c_0K_{\rm out}$ and $4c_0$,
respectively; the entry separation divided by $K_{\rm out}$ and
the relative velocity have norms at most $C_0/4$. Require
\[
 \begin{gathered}
 c_0L_n(t)\leq e_n\cdot y_n(t),\quad |y_n(t)|\leq C_0L_n(t),\\
 |e_n^\perp\cdot y_n(t)|\leq\epsilon_0 e_n\cdot y_n(t),\\
 |d_{n,j}^{\rm pair}(t)-d_{n,j}^{\rm pair}(t_n^{\rm out})|
                \leq\epsilon_0d_n,\\
 |\chi_n|/d_n\leq2C_\chi h^{2\beta-1}R_n^{4\alpha}.
 \end{gathered}
\]
Choose $K_{\rm out}$ so large that the isolated entry and velocity
directions satisfy the same cone with $\epsilon_0/4$, and choose
$\epsilon_0$ smaller than one eighth of both entry pair lengths
divided by $d_n$. Thus the pair lengths are explicitly bounded
above and below by their entry lengths plus or minus $\epsilon_0d_n$.
\item On an active comparison interval put
$s=(t-t_{n+1}^{\rm out})/\tau_n$ and
$\mathfrak w(s)=\exp(-\int_0^s\lambda_{\rm eff})$ as in
\eqref{eq:trigger}. With the norms defined in
Section~\ref{sec:release}, require
\[
 \begin{gathered}
 E_U^*\leq2C_w,\quad |\xi|\leq1,\quad
 E_{\rm aux}^*\leq2C_E(\varepsilon_n+\eta_n^2)(1+E_U^*),\\
 E_\vartheta^*\leq2C_\vartheta(\varepsilon_n+\eta_n^2)(1+E_U^*),\\
 \mathfrak w(s)\Delta\mathsf U_n(s)\geq\tfrac12c_w\varepsilon_n
          \quad(s\geq s_{\rm ent}).
 \end{gathered}
\]
The lower bound is imposed only after its initialization on the
fixed interval $[0,s_{\rm ent}]$ in
Proposition~\ref{prop:weighted-crossing}. Before that time only the
upper and auxiliary bounds are imposed. At the larger indices
$k<n$ the centered analytic graphs are required to satisfy
$\|g_{k,j}(t)\|_{\varrho_{\rm b}(t/\tau_k),1}\leq2C_g\eta_k^2$ on the larger strip
of Lemma~\ref{lem:response}; the compared graphs use its smaller
strip. Here $C_g$ is that lemma's absolute conclusion constant.
The event deadline is $s\leq S=C_{\rm sched}(1+\ell_n)$, and the
elapsed-time assumption is $s_0\leq CL(n+2)^2r_n$.
\end{enumerate}
\end{definition}

The quiet unstable faces are intentional exit faces for the shooting
argument, and $\Delta\mathsf U_n=h_*$ and
$\mathsf R_n=\mathsf R_{\rm out}$ are intentional changes of stage.
Every other adjustable bound in $\mathcal B$ has a strict margin.
The weighted lower bound improves from $c_w/2$ to $c_w$.
On each application of a concentration lemma starting at $t_{\rm in}$,
we retain $s_{\rm pre}=t_{\rm in}/\tau_n$, so that
$s_{\rm pre}+(t-t_{\rm in})/\tau_n=t/\tau_n$.

\subsection{Spatial derivatives, material derivatives and their time integrals}

\begin{lemma}[Exterior estimates in every stage]\label{lem:exterior-gradient-controls}
Fix $J=[0,T]$, $T\leq1$, satisfying
Definition~\ref{def:common-bootstrap}. Let $V_n^\kappa$ be the field of all
indices different from $n$ and put
 \begin{align}
 \mathcal N_n^\kappa(t)&=B(Z^\kappa(t)+R_ne_1,2\delta_{\rm loc}R_n),\\
 M_{\nu,n}^{\rm rem}
   &=\max_\kappa\sup_{0\leq t\leq T}
       \|\nabla^\nu V_n^\kappa(t)\|_{L^\infty(\mathcal N_n^\kappa(t))}
       \quad(\nu=1,2),\\
 \ell_{nk}&=\max(R_n,R_k),\qquad v_n=\Gamma_n/d_n.
 \end{align}
To cover both the four-component and pair applications, let
$\mathscr P=\{\{1,2,3,4\},\{1,3\},\{2,4\}\}$. For $P\in\mathscr P$
and $i\in P$, define the physical frozen point velocity
\[
 \begin{aligned}
 \mathcal V_{n,i}^{\kappa,P}(t)
   ={}&\Gamma_n\sum_{j\in P\setminus\{i\}}\epsilon_j
          K(b_{n,i}^\kappa(t)-b_{n,j}^\kappa(t))\\
      &+\Gamma_n\sum_{j\notin P}\epsilon_j
          (K*\mu_{n,j}^\kappa)(t,b_{n,i}^\kappa(t))
          +V_n^\kappa(t,b_{n,i}^\kappa(t)).
 \end{aligned}
\]
Thus the companion pair retains its transported patch field in a pair
application. Set
\[
 B_n^{\rm rem}=\max_{\kappa,P\in\mathscr P,i\in P}\sup_{0\leq t\leq T}
 \| (\partial_t+\mathcal V_{n,i}^{\kappa,P}(t)\cdot\nabla)
                   \nabla V_n^\kappa(t)\|_{L^\infty(\mathcal N_n^\kappa(t))}.
\]
Then
\begin{equation}\label{eq:remote-spatial}
 M_{1,n}^{\rm rem}\leq Ch^{\beta},\qquad
 M_{2,n}^{\rm rem}\leq Ch^{\beta}R_n^{\nu_\Gamma-3},
\end{equation}
and, for $n\geq1$,
\begin{equation}\label{eq:sharp-hessian}
 M_{2,n}^{\rm rem}\leq Ch^{\beta}
   \left(R_{n-1}^{\nu_\Gamma-3}+R_n^{\nu_\Gamma-3}q_n^{\nu_\Gamma}\right)
 \leq Ch^{\beta}R_{n-1}^{\nu_\Gamma-3}.
\end{equation}
The material-gradient norm satisfies
\begin{align}\label{eq:remote-material}
 B_n^{\rm rem}&\leq C\sum_{k\ne n}
 \left(\frac{\Gamma_k^2}{\ell_{nk}^4}
       +\frac{\Gamma_k(v_n+v_k)}{\ell_{nk}^3}
       +\frac{h^{\beta}\Gamma_k}{\ell_{nk}^2}\right)
 \leq Ch^{2\beta-1}.
\end{align}
The four integrated concentration costs obey
\begin{equation}\label{eq:physical-cost}
 \int_0^T\bigl(M_{1,n}^{\rm rem}+\tau_n B_n^{\rm rem}+d_nM_{2,n}^{\rm rem}
                     +\tau_nd_n^2\bigl(M_{2,n}^{\rm rem}\bigr)^2\bigr)\idiff t\leq Ch^\beta.
\end{equation}
For the exterior $V$ of a single component at index $k$, including its
other same-index components, one has for every integer $\nu\geq1$,
\begin{equation}\label{eq:analytic-canonical}
 |\nabla^{\nu}V|\leq\frac{CB_{\rm an}^{\nu} {\nu}!}{\tau_kd_k^{{\nu}-1}},\qquad
 |(\partial_t+b_{k,i}'\cdot\nabla)\nabla^{\nu}V|
        \leq\frac{CB_{\rm an}^{\nu}({\nu}+2)!}{\tau_k^2d_k^{{\nu}-1}}
\end{equation}
on the concentric centroid balls of radii $c_*d_k/8$ and $c_*d_k/4$,
or on the corresponding smaller pair balls specified in
Definition~\ref{def:common-bootstrap}. Here $B_{\rm an}$ is a fixed
kernel-derivative allowance. All constants are independent of $N$, $n$
and $T$.
\end{lemma}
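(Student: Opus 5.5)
The plan is to estimate the field of each other index $k\neq n$ separately on $\mathcal N_n^\kappa(t)$ and then sum over $k$. Two ingredients come from Definition~\ref{def:common-bootstrap}: the support condition, which places every component at index $k$ inside $B(Z^\kappa(t)+R_ke_1,\delta_{\rm loc}R_k)$, and $\delta_{\rm loc}<1/64$. Together they give $\operatorname{dist}(\mathcal N_n^\kappa,\operatorname{supp}\mu_{k,j})\geq c\,\ell_{nk}$. Thus the field of index $k$ is harmonic on a neighborhood of $\mathcal N_n^\kappa$. Because each configuration has zero signed mass, its field is a dipole-type field: Taylor expansion of $K$ about the index-$k$ center, using $\sum_j\epsilon_j=0$, gives
\[
 |\nabla^\nu(K*\textstyle\sum_j\epsilon_j\Gamma_k\mu_{k,j})|
   \leq C_\nu\Gamma_k\,\mathrm{diam}_k\,\ell_{nk}^{-\nu-2}.
\]
Here $\mathrm{diam}_k$ is the current size of configuration $k$. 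It is $O(d_k)$ in the chart and arc stages. In the pair stage it is at most $C L_k(t)\leq C(d_k+v_kt)$, by item~(4) and $t\leq1$.

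The spatial bounds \eqref{eq:remote-spatial} and \eqref{eq:sharp-hessian} follow by summing this estimate. Write $\Gamma_k\,\mathrm{diam}_k\leq C(\Gamma_kd_k+\Gamma_k^2/d_k)$. Inserting the scales \eqref{eq:scales}, with $d_k=hR_k^\rho$ and $\Gamma_k=h^\beta R_k^{\nu_\Gamma}$, the sum over $k$ splits into $k<n$ (where $\ell_{nk}=R_k$) and $k>n$ (where $\ell_{nk}=R_n$). The positive residual powers $\nu_\Gamma-2=3\alpha$ and $2\nu_\Gamma-4=6\alpha$ make both pieces geometric, by the same argument as \eqref{eq:norm-tail}; this yields $M_1\leq Ch^\beta$. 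For the Hessian, the $k>n$ part is bounded by $CR_n^{-4}\sum_{k>n}\Gamma_k\,\mathrm{diam}_k$. This is dominated by $k=n+1$, which gives the $R_n^{\nu_\Gamma-3}q_n^{\nu_\Gamma}$ term. The $k<n$ part is dominated by $k=n-1$, which gives $R_{n-1}^{\nu_\Gamma-3}$. Since $\nu_\Gamma-3<0$ and $R_{n-1}>R_n$, the second inequality in \eqref{eq:sharp-hessian} follows.

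For the material derivative, differentiate $\nabla V_n$ along $\partial_t+\mathcal V\cdot\nabla$ and write $\partial_t\nabla(K*\omega_k)=-\nabla K*\operatorname{div}(u\,\omega_k)$. This produces three contributions:
\begin{itemize}
\item[(a)] transport of index $k$ by its own self-field, which contributes velocity $\sim v_k$ and has zero mean;
\item[(b)] the advection velocity $\mathcal V$ at index $n$, of size $\lesssim v_n+h^\beta$, acting on $\nabla^2(\text{field}_k)$;
\item[(c)] remote velocities of size $h^{\beta}$, plus the relative translation, acting on the source.
\end{itemize}
Combining these with the multipole decay gives the three terms of \eqref{eq:remote-material}. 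The $\Gamma_k^2/\ell_{nk}^4$ term comes from the quadratic self-interaction, which is the centroid-velocity cancellation of the mutual velocity. Summing as before gives $\leq Ch^{2\beta-1}$, the worst factor being $v_n/\ell\sim h^{\beta-1}R^{\alpha}$. Then \eqref{eq:physical-cost} is immediate from $T\leq1$, $\tau_n\leq h^\delta$, $d_n\leq h$, and the residual powers $4\alpha$ and $3\alpha$. The main difficulty is the $\tau_nB_n$ term: a crude bound on $v_n$ loses $h^{\beta-1}$, so one must check that $\tau_nv_n=d_n$ restores the factor.

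Finally, \eqref{eq:analytic-canonical} concerns a single component whose exterior includes the other same-index components. On a centroid ball of radius $c_*d_k/4$, these components are harmonic sources at distance $\geq c_*d_k$ with strength $\Gamma_k$. Cauchy-type estimates for harmonic functions give $|\nabla^\nu|\leq CB^\nu\nu!\,\Gamma_k/d_k^{\nu+1}=CB^\nu\nu!/(\tau_kd_k^{\nu-1})$. The remote part is smaller by the previous estimates. The material derivative costs one more factor $1/\tau_k$, since velocities are $\lesssim\Gamma_k/d_k$, and costs two more derivative orders through the Cauchy estimates. The constants depend only on the chart and tube constants, so they are uniform in $N$, $n$ and $T$.
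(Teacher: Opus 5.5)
Your spatial estimates are sound. Using the zero signed mass of each configuration to get the dipole decay $\Gamma_k\,\mathrm{diam}_k\,\ell_{nk}^{-\nu-2}$ is a legitimate alternative to the paper's absolute-mass sums $\sum_k\Gamma_k\ell_{nk}^{-\nu-1}$, and is in fact sharper; it yields \eqref{eq:remote-spatial} and \eqref{eq:sharp-hessian}.

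The genuine gap is in the material derivative. You bound the observer advection (b) and the transport of the source (c) separately, by absolute speeds $v_n+h^\beta$ and $h^\beta$. This produces terms like $h^\beta|\nabla^2(\text{field}_k)|$, i.e.\ $h^\beta\Gamma_k\ell_{nk}^{-3}$, rather than the third term $h^\beta\Gamma_k\ell_{nk}^{-2}$ of \eqref{eq:remote-material}. Their sum over $k<n$ is of order $h^{2\beta}R_{n-1}^{\nu_\Gamma-3}$, or a comparable negative power of $R_{n-1}$ with your dipole bound. Since $\nu_\Gamma-3<0$, this is unbounded as $n\to\infty$. So neither $B_n^{\rm rem}\le Ch^{2\beta-1}$ nor the $\tau_nB_n^{\rm rem}$ part of \eqref{eq:physical-cost} follows uniformly in $n$. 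The phrase ``plus the relative translation'' in (c) does not repair this.

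The missing idea is to keep the two pieces together,
\[
 (\partial_t+\mathcal V\cdot\nabla)\nabla(K*\omega_k)(x)
 =\int\nabla^2K(x-y)\bigl(\mathcal V-u(y)\bigr)\omega_k(y)\idiff y,
\]
so that the common translation cancels. You then need the relative bound $|\mathcal V-u_{\rm nonself}(y)|\le C(v_n+v_k+h^\beta\ell_{nk})$. This is not a mean-value estimate: the segment from index $n$ to index $k$ runs near the axis and can pass through the supports of intermediate configurations, where no gradient bound is available. The paper therefore splits each third source $j$ into three groups:
\begin{itemize}
\item $R_j$ larger than both radii, handled by a gradient bound along the segment;
\item $R_j$ between the two radii, handled by separate endpoint bounds;
\item $R_j$ smaller than both radii, also handled by separate endpoint bounds.
\end{itemize}
Summing the three groups gives $Ch^\beta\ell_{nk}$.

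A second problem is in (a). The self velocity of a source patch is of size $\Gamma_k/a_k$, not $v_k$, so ``zero mean'' combined with a speed bound loses a factor $\eta_k^{-1}$. The $\Gamma_k^2\ell_{nk}^{-4}$ term comes from antisymmetrizing: write the self term as $\tfrac12\iint(\nabla^2K(x-y)-\nabla^2K(x-z))K(y-z)\,\omega_k(y)\omega_k(z)\idiff y\idiff z$. Then $|\nabla^2K(x-y)-\nabla^2K(x-z)|\le C\ell_{nk}^{-4}|y-z|$ cancels the singularity of $K(y-z)$.

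Both issues recur in the material half of \eqref{eq:analytic-canonical}. Your claim that ``velocities are $\lesssim\Gamma_k/d_k$'' holds only for relative, non-self velocities. The common translation is an $n$-independent power of $h$, while $v_k=h^{\beta-1}R_k^{1+\alpha}\to0$. The same-index source patches also again require the antisymmetrized self term.
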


\begin{proof}

Put $v_n=\Gamma_n/d_n$. Require configuration $n$ to lie in
$B(Z+R_ne_1,\delta_{\rm loc} R_n)$, with fixed small $\delta_{\rm loc}$.
This margin and $R_{n+1}\leq R_n/8$
give separation comparable to $\ell_{nk}=\max(R_n,R_k)$ between
distinct configurations, also on fixed enlargements of the disks.
In the small shape chart and on the fixed segment leading to
$\mathsf R=\mathsf R_{\rm out}$, all component separations are bounded
below by a fixed multiple of $d_n$. After that section the two neutral
pair lengths are comparable to $d_n$, and their midpoint separation is
at least a fixed multiple of $K_{\rm out}d_n+v_n(t-t_n^{\rm out})$.
Thus the velocity at one centroid from any other component of the
same configuration is $O(v_n)$.

For every fixed $b>0$, the ratio bound gives
$\sum_{k>n}R_k^b\leq C_bR_n^b$,
$\sum_{k<n}R_k^b\leq C_b$ and
$\sum_{k<n}R_k^{-b}\leq C_bR_n^{-b}$. Since
$\Gamma_k=h^{\beta}R_k^{\nu_\Gamma}$, the absolute-mass kernel sums give
\[
 \begin{split}
 \sum_{k\ne n}\frac{\Gamma_k}{\ell_{nk}^{j+1}}
 &=h^\beta\left(\sum_{k<n}R_k^{\nu_\Gamma-j-1}
       +R_n^{-j-1}\sum_{k>n}R_k^{\nu_\Gamma}\right),\qquad j=1,2.
 \end{split}
\]
For $j=1$ the outer exponent is $3\alpha>0$, so the first
sum is bounded independently of $n$; the second is at most
$CR_n^{3\alpha}$. For $j=2$ the exponent is
$\nu_\Gamma-3=-1+3\alpha<0$. Its outer sum is bounded by
$CR_{n-1}^{\nu_\Gamma-3}$ when $n\geq1$, since the last outer
radius gives the largest summand. The inner sum is bounded by
$CR_n^{\nu_\Gamma-3}q_n^{\nu_\Gamma}$.
The same estimates hold for each of the four patches, since its
total variation is $\Gamma_k$ and its distance from the observed
neighborhood is at least $c\ell_{nk}$. This proves \eqref{eq:remote-spatial}.
The neighborhood $\mathcal N_n^\kappa(t)$ contains the component balls
and the intervening Taylor segments. The norms in the statement are
taken over $[0,T]$ and both histories. Companion coefficients retain
their pointwise outgoing decay. For $n\geq1$, the Gaussian scales give the sharper
second-derivative bound \eqref{eq:sharp-hessian}.
The ratio of the inner term to the outer term is
$q_{n-1}^{\nu_\Gamma-3}q_n^{\nu_\Gamma}=\exp(-L((4\nu_\Gamma-6)n+3))\leq1$. This sharper bound
will bound the Hessian at index $n+1$ in the units of index $n$.

The segment between two indices can pass through the support at a third
index. To estimate the difference of velocities, group each third source by
whether its radius is larger than both radii, between them, or smaller
than both. Only the first group is estimated by a gradient along the
segment. More explicitly, suppose $R_n\geq R_k$ and write $x_n,x_k$
for the two observed points. If $R_j>R_n$, the whole segment stays
a distance $cR_j$ from source $j$, and
\[
 |u_j(x_n)-u_j(x_k)|\leq C\Gamma_jR_n/R_j^2.
\]
If $R_k<R_j<R_n$, the separate endpoint bounds are
$C\Gamma_j/R_n$ and $C\Gamma_j/R_j$; their sum is at most
$C\Gamma_j/R_j$. If $R_j<R_k$, the sum is at most
$C\Gamma_j/R_k$. Summing these three estimates gives
\[
 \begin{split}
 &CR_nh^\beta\sum_{R_j>R_n}R_j^{\nu_\Gamma-2}
       \leq Ch^\beta R_n,\\
 &Ch^\beta\sum_{R_k<R_j<R_n}R_j^{\nu_\Gamma-1}
       \leq Ch^\beta R_n^{\nu_\Gamma-1},\\
 &Ch^\beta R_k^{-1}\sum_{R_j<R_k}R_j^{\nu_\Gamma}
       \leq Ch^\beta R_k^{\nu_\Gamma-1}.
 \end{split}
\]
The costs are
$Ch^{\beta}\ell_{nk}$, $Ch^{\beta}\ell_{nk}^{\nu_\Gamma-1}$ and
$Ch^{\beta}\min(R_n,R_k)^{\nu_\Gamma-1}$, respectively. Hence the total
cost is $Ch^{\beta}\ell_{nk}$. No gradient is assigned to a segment
crossing a smaller configuration.

The derivative of an exterior gradient along the observed motion also
contains the source's self velocity, of size $\Gamma_k/a_k$ near its
boundary. Estimating that velocity alone would lose a factor
$\eta_k^{-1}$. Its antisymmetry removes this factor. Use the normalized
frozen point velocity $V_i^{\rm pt}$ of the observed
cluster or pair; its physical value is $v_nV_i^{\rm pt}$. For source component $i_{\rm src}$ in configuration $k$, write
$\omega_{k,i_{\rm src}}$ for its signed transported vorticity density.
Exact transport gives
\[
 (\partial_t+v_nV_i^{\rm pt}\cdot\nabla)\nabla(K*\omega_{k,i_{\rm src}})(x)
 =\int \nabla^2K(x-y)(v_nV_i^{\rm pt}-u(y))\omega_{k,i_{\rm src}}(y)\idiff y.
\]
Before estimating its self term, antisymmetrize it. The resulting double
integral, up to its harmless overall sign, is
\[
 \frac12\iint\bigl(\nabla^2K(x-y)-\nabla^2K(x-z)\bigr)
      K(y-z)\omega_{k,i_{\rm src}}(y)
                 \omega_{k,i_{\rm src}}(z)\idiff y\idiff z.
\]
Indeed swapping $y,z$ changes the sign of $K(y-z)$. The segment
between them stays inside the source neighborhood, away from $x$;
the mean-value formula gives
$|\nabla^2K(x-y)-\nabla^2K(x-z)|
\leq C\ell_{nk}^{-4}|y-z|$. Multiplication by
$|K(y-z)|\leq C|y-z|^{-1}$ removes the singularity, and integration
costs $C\Gamma_k^2\ell_{nk}^{-4}$. In particular, the density
$\Gamma_k/a_k^2$ and the boundary self speed never enter this bound.
The remaining speed difference is bounded by
$C(v_n+v_k+h^{\beta}\ell_{nk})$, by the preceding endpoint split.
Taking the supremum over $[0,T]$, the observed neighborhoods, both
histories and the observed components' frozen velocities gives the
first inequality in \eqref{eq:remote-material}.
For clarity, $v_k=h^{\beta-1}R_k^{1+\alpha}$. The source-speed
sum has outer exponent $\nu_\Gamma+1+\alpha-3=4\alpha$
and inner bound $CR_n^{4\alpha}$. For the observer-speed sum use
the Hessian estimate before multiplying by $v_n$:
\[
 v_n\sum_{k\ne n}\Gamma_k\ell_{nk}^{-3}
 \leq Ch^{2\beta-1}R_n^{1+\alpha+\nu_\Gamma-3}
 =Ch^{2\beta-1}R_n^{4\alpha}.
\]
The other two sums have outer exponents
$2\nu_\Gamma-4=6\alpha$ and $\nu_\Gamma-2=3\alpha$,
and inner bounds with these same positive powers. Their prefactors
$h^{2\beta}$ are smaller than $h^{2\beta-1}$. This proves the
last inequality uniformly in the number of sources. This argument is valid in every phase and counts every source
component.

The concentration estimate uses the integral of a gradient, its material
derivative, a Hessian and its square. In physical time, their sum for
the field of other indices is the integral on the left-hand side of
\eqref{eq:physical-cost}.
Indeed \eqref{eq:material-exterior-scaling} multiplies the first and second
spatial derivative norms by $\tau_n$ and $\tau_nd_n$, and the material-gradient
norm by $\tau_n^2$. Since $\diff s_{\rm mat}=\diff t/\tau_n$, these give exactly
the four terms displayed. Their integrands are bounded, respectively, by
\[
 Ch^{\beta},\quad Ch^{\beta+1}R_n^{\alpha},\quad
 Ch^{\beta+1}R_n^{5\alpha},\quad Ch^{\beta+4}R_n^{11\alpha}.
\]
Also $\tau_nM_{1,n}^{\rm rem}\leq Ch^2$ and
$\tau_nd_nM_{2,n}^{\rm rem}\leq Ch^3R_n^{6\alpha}$. Thus
\eqref{eq:physical-cost} is $O(h^{\beta})$ on every physical subinterval
of $[0,1]$. This bound remains small even when the interval has large
length in $t/\tau_n$.

For the analytic comparison of a component at index $k$, its exterior
field includes the other three components at that same index.
Their distances are $\ell_{\rm src}\geq c d_k$, also after the two
neutral pairs separate.
Their nonself speed difference is bounded by
$C(v_k+h^{\beta}\ell_{\rm src})$. The latter contribution to a derivative of
order ${\nu}$ is bounded by $C\Gamma_kh^{\beta}\ell_{\rm src}^{-{\nu}-1}$, which is
at most $C\Gamma_k^2d_k^{-{\nu}-3}$ because
$\tau_kh^{\beta}\leq Ch^2$. Remote sources are farther away and are
summed by the preceding bounds.
Fix an analytic kernel allowance $B_{\rm an}>0$ so that the homogeneous derivatives satisfy
$|\nabla^{\nu}K(x)|\leq CB_{\rm an}^{\nu} {\nu}!|x|^{-{\nu}-1}$. Centered self averages vanish;
the differentiated self-source kernel again gains one spatial
difference. Therefore, at the actual core centroid $b_{k,i}$, with $V$ excluding
that core, one obtains \eqref{eq:analytic-canonical} for every $\nu\geq1$.
The centroid speed contains no fast self speed. Replacing the frozen
velocity by its actual centroid average adds only the Taylor error
controlled in \eqref{eq:centroid-taylor}. Equations
\eqref{eq:remote-spatial}--\eqref{eq:remote-material} bound its
remote contribution. For a derivative of order $\nu$, the same
antisymmetrization uses
$\nabla^{\nu+1}K(x-y)-\nabla^{\nu+1}K(x-z)$ and costs
$CB_{\rm an}^{\nu}(\nu+2)!\ell_{kj}^{-\nu-3}\Gamma_j^2$.
Every additional derivative beyond the gradient costs at most
$C/R_k$ for remote sources and $C/d_k$ for the other components
at index $k$. In particular, the remote estimates are bounded by
\[
 CB_{\rm an}^{\nu}(\nu+2)!
   \left(h^\beta R_k^{1-\nu}
              +h^{2\beta-1}R_k^{1-\nu}\right)
\]
for the respective spatial and material sums. To compare with
\eqref{eq:analytic-canonical}, multiply the first by
$\tau_kd_k^{\nu-1}$ and the second by
$\tau_k^2d_k^{\nu-1}$. The resulting scalar factors are bounded by
$Ch^2(d_k/R_k)^{\nu-1}$ and
$Ch^3R_k^{2\alpha}(d_k/R_k)^{\nu-1}$, respectively.
Both are uniformly bounded. The same-index sums give precisely
$\Gamma_kd_k^{-\nu-1}$ and $\Gamma_k^2d_k^{-\nu-3}$, which
equal the two dimensional factors in \eqref{eq:analytic-canonical}.
Under \eqref{eq:packet-exterior-scaling}, the gradient gains $\tau_k$
and its derivative along the actual centroid gains $\tau_k^2$.
The same estimates hold on two fixed concentric neighborhoods of the
observed centroid with a positive gap, since all sources are separated
from their closures. Thus \eqref{eq:analytic-canonical} verifies
the analytic hypotheses of Section~\ref{app:response}, after the
normalization by $\tau_k$, with the same fixed analytic allowances.
\end{proof}

\subsection{The integral of the rotation defect in the small shape chart}

\begin{proposition}[Concentration before the positive crossing]
\label{prop:geometry-concentration}
Let $[0,T]$ be a current stopped interval on which index $n$ remains
in the small chart and the common bootstrap conditions hold. For the
quiet history, stop also at its first unstable shooting face; for an
active triggered index, stop at its first positive crossing. Then, for
$0\leq t\leq T$,
\[
 \begin{gathered}
 P_{n,j}(t)\leq C_{\rm rot},\qquad
 |\mathsf V_n(t)|\leq C_V(\tau_nh^\beta+\eta_n^2),\\
 |\mathsf A_n(t)-1|\leq C_A(h^\beta+\eta_n^2/\tau_n)t.
 \end{gathered}
\]
The midpoint offset obeys
\begin{equation}\label{eq:compact-impulse}
 |\chi_n|\leq C_\chi h^{\beta+2}R_n^{1+7\alpha},\quad
 |\chi_n|/d_n\leq C_\chi h^{\beta+1}R_n^{5\alpha},\quad
 \tau_n^{-1}(|\chi_n|/d_n)^2\leq C_\chi h^{3\beta}R_n^{9\alpha}.
\end{equation}
Use normalized time $s_{\rm mat}=t/\tau_n$, angular velocity
$\Omega=-1/\mathsf A_n$ and the actual mean-centroid translation in
the frame of Lemma~\ref{lem:concentration}. Its rotation defect
$W_{\rm def}=\max_i|W_i|$ satisfies
\[
 \int_0^{T/\tau_n}W_{\rm def}(s_{\rm mat})\,\diff s_{\rm mat}\leq C_W.
\]
For the active triggered index, these bounds hold on the same stopped
interval as Proposition~\ref{prop:weighted-crossing}.
\end{proposition}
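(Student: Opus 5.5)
The plan is to apply Lemma~\ref{lem:concentration} with $m=4$ in the normalized variables \eqref{eq:material-exterior-scaling} at index $n$. The exterior field is $V=V_n^\kappa$, and the starting time is physical time zero, so $s_{\rm pre}=0$. The lemma's hypotheses come from the bootstrap regime $\mathcal B(J)$ and Lemma~\ref{lem:exterior-gradient-controls}. The chart $\mathcal C$ supplies the geometry \eqref{eq:concentration-geometry} with $\ell_-=4c_*$ and $\ell_+=C_*$. The initial disks satisfy the initial moment, deficit and support conditions with $C_{\rm init}$ absolute. The time bound $t/\tau_n\leq\eta_n^{-1/2}$ is \eqref{eq:core-lifetimes}. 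The cost \eqref{eq:cost} splits into two parts. The exterior part $\int(A+B+H+H^2)$ is exactly \eqref{eq:physical-cost}, hence $O(h^\beta)$. The remaining part, $\sup|\Omega|+\int(|\Omega'|+W_{\rm def})$, is the quantity to be bounded. Choose $\Omega=-1/\mathsf A_n$. Then $|\Omega|\leq2$, and $\int|\Omega'|\,\diff s_{\rm mat}$ is controlled by the $\mathsf A$ bound, since $\mathsf A'$ is small by the estimate below. Once the cost is bounded by a constant $C_0$ fixed in advance, the lemma gives $P_{n,j}\leq C_{\rm rot}$. With the centroid law \eqref{eq:concentration-centroid}, it also gives $|b_i'-V_i^{\rm pt}|\leq C\eta_n^2(1+H)$ in normalized units.

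The shape bounds come from the chart equations perturbed by three errors: the centroid error $O(\eta_n^2)$, the exterior strain of normalized size $\tau_nM_{1,n}^{\rm rem}\leq C\tau_nh^\beta$, and the offset error $O(|\chi_n/d_n|^2)$ from Lemma~\ref{lem:phase-offset}. For $\mathsf A$, Proposition~\ref{prop:intrinsic-saddle} gives $\mathsf A'=0$ in the isolated dynamics. Each error therefore contributes directly, giving $|\mathsf A_n'|\leq C(\tau_nh^\beta+\eta_n^2+|\chi/d|^2)$ per unit of $s_{\rm mat}$. Integrating in physical time gives the stated $C_A(h^\beta+\eta_n^2/\tau_n)t$, once \eqref{eq:compact-impulse} shows the $\chi$ term is dominated.

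For $\mathsf V$, use $\mathsf V'=-a\mathsf V+O(\tau_nh^\beta+\eta_n^2)$ with $a\geq\lambda/2$. A Duhamel estimate gives the stable bound $C_V(\tau_nh^\beta+\eta_n^2)$. It is uniform in the length of the interval, provided the initial $\mathsf V$ is within this size, which the prepared data satisfy.

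For $\chi_n$, write $\chi_n=(2\Gamma_n)^{-1}\times$(signed impulse). By Lemma~\ref{lem:phase-offset}, the internal impulse is exactly conserved. Its derivative therefore comes only from the exterior field acting on a neutral charge distribution. Since zero signed mass cancels the uniform part, this is the gradient times the configuration size: $|\chi_n'|\leq Cd_nM_{1,n}^{\rm rem}+C\Gamma_n a_n^2M_{2,n}^{\rm rem}/\Gamma_n$. I expect this \emph{direct} bound to be too crude, because it gives $|\chi|\lesssim d_nh^\beta t$ rather than the claimed power of $R_n$. The fix is to use the centroid equations: the dipole term is $\sum\epsilon_iV(b_i)\approx\nabla V\cdot\chi$-type plus $\nabla V\,(b_1-b_2)$-type terms, and the latter cancel because $b_1-b_2$ and $b_3-b_4$ enter with a sign and rotate, so that an integration by parts against the rotation frequency (one $\tau_n$ gained, using $B_n^{\rm rem}$ from \eqref{eq:remote-material}) yields $|\chi_n|\lesssim d_n\tau_n(M_1^{\rm rem}+\tau_nB_n^{\rm rem})$. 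With $\tau_n=h^\delta R_n^{\alpha}$ and $d_n=hR_n^\rho$, this should produce the powers $h^{\beta+2}R_n^{1+7\alpha}$ after inserting \eqref{eq:remote-material}. The other two forms in \eqref{eq:compact-impulse} then follow by arithmetic.

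The main obstacle is the bound on $\int W_{\rm def}$ over up to $\eta_n^{-1/2}$ normalized time units. Here $W_i=V_i^{\rm pt}-\boldsymbol c'-\Omega(b_i-\boldsymbol c)^\perp$. For the exact rotating reference with $\mathsf U=\mathsf V=0$, we have $W_i=0$. In general, $W_i$ is a smooth function of the shape, vanishing at the reference, plus exterior terms. Since $\boldsymbol c'$ removes the uniform translation, the exterior terms are of size $\tau_nM_1^{\rm rem}$, up to offset corrections $O(|\chi/d|)$. Hence
\[
 W_{\rm def}\leq C(|\mathsf U_n|+|\mathsf V_n|+|\mathsf A_n-1|+|\chi_n|/d_n)+C\tau_nM_{1,n}^{\rm rem}.
\]
Every term except $\mathsf U$ integrates to $O(1)$ over $[0,T/\tau_n]$. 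The $\mathsf V$ term is bounded by $b_nT/\tau_n$, which is $O(h^\beta+\eta_n^2/\tau_n)=O(1)$ by \eqref{eq:core-lifetimes}. The $\mathsf A$ term is similar, and the $\chi$ term uses \eqref{eq:compact-impulse} together with $T\leq1$.

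The $\mathsf U$ term needs separate handling in the two histories. In the quiet history, $|\mathsf U|\leq B_{\rm ref}b_n$ up to the shooting face, which is again $O(1)$ when integrated. In the triggered active history, $\mathsf U$ grows like $e^{\lambda s}$ up to the crossing $|\mathsf U|\approx h_*$, so its integral is dominated geometrically by the final stretch and is $O(h_*/\lambda)$. Before activation, $\mathsf U$ obeys the quiet-type bound of Definition~\ref{def:common-bootstrap}.

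Since $C_0$ depends only on these fixed constants, and not on $C_{\rm rot}$, the constants are consistent. Every bound is then obtained with constant $C$ rather than $2C$, which strictly improves the bootstrap on the same stopped interval as Proposition~\ref{prop:weighted-crossing}.
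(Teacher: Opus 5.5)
Your argument has a genuine gap in the offset estimate. Your ``direct bound'' $|\chi_n'|\le Cd_nM_{1,n}^{\rm rem}+\dots$ misses an exact algebraic cancellation. Expand $\sum_i\epsilon_i\int V_n\,\diff\mu_{n,i}$ about the mean centroid $\boldsymbol c_n$. The constant term vanishes by neutrality. The linear term is $\nabla V_n(\boldsymbol c_n)\sum_i\epsilon_i(b_{n,i}-\boldsymbol c_n)=2\nabla V_n(\boldsymbol c_n)\chi_n$, because by \eqref{eq:cell-geometric-coordinates} the same-sign half-separations $\pm d_nz$, $\pm d_nw$ cancel identically in the signed sum. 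So the gradient term is homogeneous in $\chi_n$, and the only source is the quadratic remainder. Using $\int|x-\boldsymbol c_n|^2\,\diff\mu_{n,i}=|b_{n,i}-\boldsymbol c_n|^2+I_{n,i}$, this gives $|\chi_n'|\le M_{1,n}^{\rm rem}|\chi_n|+CM_{2,n}^{\rm rem}(d_n^2+\sum_iI_{n,i})$ with $\chi_n(0)=0$. Gronwall over physical time at most one then gives $|\chi_n|\le Cd_n^2M_{2,n}^{\rm rem}\le Ch^{\beta+2}R_n^{\nu_\Gamma-3+2\rho}=Ch^{\beta+2}R_n^{1+7\alpha}$, and the other two forms in \eqref{eq:compact-impulse} follow by arithmetic.

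Your proposed repair, an oscillatory cancellation of ``$\nabla V\,(b_1-b_2)$-type'' terms via integration by parts against the rotation frequency, addresses terms that do not occur. Even granted, its majorant $d_n\tau_n(M_{1,n}^{\rm rem}+\tau_nB_n^{\rm rem})$ is, with $M_{1,n}^{\rm rem}\le Ch^\beta$, of size $h^3R_n^{1+3\alpha}$. That exceeds the claimed bound by the unbounded factor $h^{\delta-1}R_n^{-4\alpha}$, so \eqref{eq:compact-impulse} is not proved. The gap matters downstream. With only $|\chi_n|/d_n\lesssim h^\beta t$, two quantities on $[0,1]$ (the quiet history) exceed their allowances by a factor of order $h^\beta/\tau_n=h^{\beta-\delta}R_n^{-\alpha}$, which is unbounded in $n$: the offset error $\tau_n^{-1}|\chi_n/d_n|^2$ in the physical-time shape equations, and the defect contribution $\int|\chi_n|/d_n\,\diff s_{\rm mat}$. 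Neither the $\mathsf A$ and $\mathsf V$ bounds nor the cost bound would then close.

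There is a second error in your defect bound, which carries a term $|\mathsf A_n-1|$ whose $s_{\rm mat}$-integral you treat like the $\mathsf V$ term. The available bound only gives $\int_0^{T/\tau_n}|\mathsf A_n-1|\,\diff s_{\rm mat}\le C(h^\beta/\tau_n+\eta_n^2/\tau_n^2)T^2$, again unbounded in $n$ when $T=1$. The term should not appear at all. With $\Omega=-1/\mathsf A_n$, the state $\mathsf U=\mathsf V=0$, $\chi=0$ is, for every value of $\mathsf A$, a rigid rotation at exactly the rate $\Omega$: at $\mathsf R=\mathsf A$, $\phi=\pi/2$, \eqref{eq:cell-radial-phase} gives $\mathsf R'=\phi'=0$ and $\vartheta'=-1/\mathsf A$. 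That is why this $\Omega$ is used rather than $-1$. Hence $W_{\rm def}\le C(|\mathsf U_n|+|\mathsf V_n|+|\chi_n|/d_n+\tau_nM_{1,n}^{\rm rem}+\eta_n^2(1+\tau_nd_nM_{2,n}^{\rm rem}))$, and $\mathsf A$ enters the cost only through the total variation $\int|\Omega'|$.

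Your treatment of the triggered $\mathsf U$ integral has the right idea, but it should rest only on the provisional bootstrap inequalities. Write $\mathsf U_n=\mathsf U_n^{\rm quiet}+\Delta\mathsf U_n$ and bound the fixed entrance interval by $2h_*s_{\rm ent}$. For $s_{\rm ent}\le v\le s$, use the two-sided weighted bounds (lower constant $c_w/2$, upper $2C_w$) together with $\Delta\mathsf U_n(s)\le h_*$ to get $\Delta\mathsf U_n(v)\le(4C_w/c_w)e^{-\lambda(s-v)/2}h_*$. This yields the allowance \eqref{eq:rotation-cost-allowance}, fixed before the concentration constants, without assuming the improved certificate or a future crossing.
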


\begin{proof}

In a compact configuration, internal signed impulse cancels exactly. Its center
offset therefore obeys
the following equation. With the centroid convention in
\eqref{eq:chart}, the signed impulse is
\[
 I_n^{\rm imp}=\Gamma_n(b_{n,1}+b_{n,2}-b_{n,3}-b_{n,4})
             =2\Gamma_n\chi_n.
\]
In its derivative, each ordered internal pair cancels the reversed
pair because $K(x-y)=-K(y-x)$. For the remote field $V_n$, expand
about the mean centroid $\boldsymbol c_n$:
\[
 \sum_{i=1}^4\epsilon_i\int V_n(x)\idiff\mu_{n,i}
 =2\nabla V_n(\boldsymbol c_n)\chi_n
  +\sum_{i=1}^4\epsilon_i\int_0^1(1-u)\int
     \nabla^2V_n(\boldsymbol c_n+u(x-\boldsymbol c_n))
          [x-\boldsymbol c_n,x-\boldsymbol c_n]\idiff\mu_{n,i}\idiff u.
\]
The constant term is zero by neutrality, and
$\int|x-\boldsymbol c_n|^2\idiff\mu_{n,i}
=|b_{n,i}-\boldsymbol c_n|^2+I_{n,i}$. Dividing by $2$ gives
\[
 |\chi_n'|\leq M_{1,n}^{\rm rem}|\chi_n|
       +CM_{2,n}^{\rm rem}\left(d_n^2+\sum_{i=1}^4I_{n,i}\right),\qquad\chi_n(0)=0.
\]
Here $I_{n,i}=d_n^2I_i\leq Cd_n^2\eta_n^2=Ca_n^2$ is the physical variance.
On these preliminary bounds and physical duration at most one,
the integrating factor gives
$|\chi_n(t)|\leq Ce^{Ch^\beta t}tM_{2,n}^{\rm rem}d_n^2$.
Here $d_n^2M_{2,n}^{\rm rem}
\leq Ch^{\beta+2}R_n^{\nu_\Gamma-3+2\rho}
=Ch^{\beta+2}R_n^{1+7\alpha}$, giving the first two estimates in
\eqref{eq:compact-impulse}.
Squaring the second and dividing by $\tau_n$ gives its third estimate.
To pass from centroids to shape coordinates, multiply their physical
equations by $\tau_n/d_n$. Their internal point part is then
exactly the $K$-system in Section~\ref{app:point}. Subtracting their
mean removes the constant remote velocity. The remaining remote term
has size $C\tau_nM_{1,n}^{\rm rem}$, the averaging term has
size $C\eta_n^2(1+\tau_nd_nM_{2,n}^{\rm rem})$, and the offset
term has size $C|\chi_n/d_n|^2$. The derivatives of the chart map
are bounded on the fixed separated chart. Consequently the error in
each shape equation, after returning to physical time, is bounded by
\[
 C\left(M_{1,n}^{\rm rem}+\frac{\eta_n^2}{\tau_n}
                +\eta_n^2d_nM_{2,n}^{\rm rem}
                +\frac1{\tau_n}|\chi_n/d_n|^2\right)
 \leq C_{\rm pre}(h^\beta+\eta_n^2/\tau_n).
\]
This proves the following equations, with primes in physical time $t$:
\begin{align}\label{eq:actual-quiet-equations}
 \mathsf U_n'&=\tau_n^{-1}a(\mathsf U_n,\mathsf V_n,\mathsf A_n)\mathsf U_n+F_{U,n},\notag\\
 \mathsf V_n'&=-\tau_n^{-1}a(\mathsf U_n,\mathsf V_n,\mathsf A_n)
                     \mathsf V_n+F_{V,n},\notag\\
 |F_{U,n}|+|F_{V,n}|+|\mathsf A_n'|
       &\leq C_{\rm pre}(h^{\beta}+\eta_n^2/\tau_n).
\end{align}
In particular the action variation on $[0,1]$ is at most
$C_{\rm pre}(h^{\beta}+\eta_*^2h^{9\delta})$.
The stable coordinate starts at zero, so variation of constants bounds
it as follows, using $a\geq\lambda/2$:
\[
 |\mathsf V_n(t)|\leq C_{\rm pre}(h^\beta+\eta_n^2/\tau_n)
        \int_0^t e^{-\lambda(t-v)/(2\tau_n)}\idiff v
 \leq C(\tau_nh^\beta+\eta_n^2).
\]
This bound holds as long as the
configuration stays in the small shape chart.

Use $s_{\rm mat}=t/\tau_n$ and the angular velocity
$\Omega=-1/\mathsf A_n$, translating by the actual mean
centroid velocity. Let $W_i$ be the difference between the frozen point
velocity and this translation and rotation, as defined in
Lemma~\ref{lem:concentration}, and set $W_{\rm def}=\max_i|W_i|$.
The shape equations and the centroid Taylor estimate give
\begin{equation}\label{eq:actual-rotation-defect}
 W_{\rm def}\leq C\left(|\mathsf U_n|+|\mathsf V_n|+|\chi_n|/d_n
       +\tau_n M_{1,n}^{\rm rem}
       +C_{\rm pre}\eta_n^2(1+\tau_nd_nM_{2,n}^{\rm rem})\right).
\end{equation}
To see this estimate in the chosen units, at $\mathsf U=\mathsf V=0$
and $\chi=0$ the internal velocity is rotation at
$\Omega=-1/\mathsf A_n$ in $s_{\rm mat}$. Smoothness of the
point field bounds its difference from this rotation by
$C(|\mathsf U|+|\mathsf V|+|\chi|/d_n)$. The remote field minus
its mean is $O(\tau_nM_{1,n}^{\rm rem})$. Centering at the
actual mean velocity contributes the normalized
centroid averaging error
$C\eta_n^2(1+\tau_nd_nM_{2,n}^{\rm rem})$.
For a candidate reference evolution satisfying
$|\mathsf U_n|\le B_{\rm ref}(\tau_nh^\beta+\eta_n^2)$,
the integral in $s_{\rm mat}=t/\tau_n$ is at most
$C(1+B_{\rm ref})(h^\beta+\eta_*^2h^{9\delta})$.
The exterior cost and action variation have the same allowance.
The total allowance in \eqref{eq:cost} must include the instantaneous
rotation bound. Fix $\lambda_-=\lambda/2$, and let $C_{\rm fr}\geq1$
bound the chart coefficients in \eqref{eq:actual-rotation-defect}.
The entrance time $s_{\rm ent}$ and weighted constants $c_w,C_w$ are
fixed from the leading point-vortex forcing on the phase interval,
before the concentration constants; all patch-dependent terms enter
only the vanishing remainders. Set
\begin{equation}\label{eq:rotation-cost-allowance}
 C_0=10+C_{\rm fr}\left(2+2h_*s_{\rm ent}
                       +\frac{4C_w h_*}{c_w\lambda_-}\right).
\end{equation}
Indeed $|\Omega|\leq2$, the pre-entrance chart contribution is at most
$1+2h_*s_{\rm ent}$, and the reference contribution can be made at
most one by the final choice of $L$. On the provisional weighted
bounds the remaining expanding-coordinate integral is at most
$4C_wh_*/(c_w\lambda_-)$, as shown below. All other coefficient
integrals are smaller than one after that choice of $L$.
Apply the concentration theorem with this fixed $C_0$ and put the
provisional patch constants twice its conclusion constant. Next choose
$B_{\rm ref}$ so that the actual unstable faces point outward and
the stable faces inward in \eqref{eq:actual-quiet-equations}. Finally
increase $L$ so that the total cost is strictly below $C_0$. This improves
the patch, action and impulse bounds in the stated order.

For the mean centroid in the reference evolution, the internal point velocity is
$O(|\chi_n|/\tau_n)$ and its averaging error is $O(v_n\eta_n^2)$.
The endpoint split around $Z$ consequently gives
\[
 |\boldsymbol c_n'-Z'|\leq Ch^{\beta}R_n+Cv_n\eta_n^2,\qquad
 v_n\eta_n^2/R_n=\eta_*^2h^{\beta-1+2E}R_n^{11\alpha}.
\]
Integrating after division by $R_n$ also improves its enclosing-disk
bound. Thus only an unstable face can be the first exit for a candidate
reference evolution. This is the geometric input to
the shooting argument in Section~\ref{sec:finite}.

In the perturbed evolution the event interval has normalized length
of order $\ell_n$, but concentration requires the integral of
$|\mathsf U_n|$, rather than this length times a fixed shape bound.
Write $\mathsf U_n=\mathsf U_n^{\rm quiet}+\Delta\mathsf U_n$ and
use event time $s=(t-t_{n+1}^{\rm out})/\tau_n$. The provisional
weighted bounds in Definition~\ref{def:common-bootstrap}, with lower
constant $c_w/2$ and upper constant $2C_w$, give, for
$s_{\rm ent}\leq v\leq s\leq s_{\rm end}$,
\[
 0<\Delta\mathsf U_n(v)
 \leq\frac{4C_w}{c_w}\frac{\mathfrak w(s)}{\mathfrak w(v)}
          \Delta\mathsf U_n(s)
 \leq\frac{4C_w}{c_w}e^{-\lambda_-(s-v)}\Delta\mathsf U_n(s).
\]
Before the positive crossing $\Delta\mathsf U_n(s)\leq h_*$, so
\[
 \int_{s_{\rm ent}}^{s_{\rm end}}|\Delta\mathsf U_n(v)|\idiff v
       \leq\frac{4C_wh_*}{c_w\lambda_-}
 \qquad(s_{\rm end}\geq s_{\rm ent}).
\]
The reference contribution is bounded by the preceding reference
estimate. Before $t_{n+1}^{\rm out}$, \eqref{eq:tailclock} gives
$t_{n+1}^{\rm out}/\tau_n=o_L(1)$, and the bounded chart controls
that contribution. The entrance interval $[0,s_{\rm ent}]$ has fixed
length and contributes at most $2h_*s_{\rm ent}$; no positive lower
bound is imposed there. Thus, with uniform bounds $M_{\rm quiet}$
and $M_{\rm pre}$ for these separate contributions,
\[
 \int|\mathsf U_n|\idiff s
 \leq M_{\rm quiet}+M_{\rm pre}+\frac{4C_wh_*}{c_w\lambda_-}.
\]
This is the allowance in \eqref{eq:rotation-cost-allowance}. It is
independent of $\ell_n$ and uses only the provisional inequalities,
without assuming either the improved certificate or a future crossing.
Stable-coordinate and action bounds follow from
\eqref{eq:actual-quiet-equations}, which do not require an absolute
reference bound for $\mathsf U$. Equations \eqref{eq:compact-impulse} and
\eqref{eq:actual-rotation-defect} then give the bounded concentration
cost on every current perturbed interval, before its final crossing is known.

We record explicitly the remaining hypotheses of
Lemma~\ref{lem:concentration}. The transported densities have mass
one and remain rearrangements of the initial disks. The chart gives
component distance at least $2c_*d_n$ and bounded normalized
diameter. The supports lie in the stated balls by the provisional
support bound and $t/\tau_n\leq\eta_n^{-1/2}$, since
$\eta_n^{2/9}(1+t/\tau_n)^{2/5}\leq C\eta_n^{1/45}$.
The remote field has zero divergence and curl on their closures.
Its instantaneous normalized gradient is at most $Ch^2$, and its
integrated coefficients are bounded by \eqref{eq:physical-cost}.
The frame has $\Omega=-1/\mathsf A_n$, so $|\Omega|\leq2$ and
$\int|\Omega'|\leq C\int|\mathsf A_n'|$ in normalized time.
The defect integral was just bounded. At physical time zero the
variance, deficit, moment and support assumptions hold for disks;
on a restart they are inherited with their actual elapsed age.
Thus the lemma applies with fixed allowances and gives the claimed
strict patch bounds. Before activation, a larger triggered index
has $t/\tau_k=o_L(1)$ by \eqref{eq:tailclock}; integrating
\eqref{eq:actual-quiet-equations} over this short interval gives a
bounded defect integral directly for every unprocessed index.
\end{proof}

\subsection{Continuation through the separated section}

\begin{proposition}[Continuation from crossing to outgoing pairs]
\label{prop:geometry-continuation}
Suppose the active triggered index reaches
$\Delta\mathsf U_n=h_*$ at $t_n^*$ with the absolute entry bounds
of Lemma~\ref{lem:section-shape}. On a stopped arc interval
$[t_n^*,T]$, the common bootstrap assumptions imply
\[
 P_{n,j}(t)\leq C_{\rm arc},\qquad
 \operatorname{dist}\!\left(Y_n(t),
 \{\operatorname{Rot}(\theta)Y^{\rm arc}(u):
              0\leq u\leq T_+,\ 0\leq\theta\leq2\pi\}\right)
            \leq\delta_{\rm arc},
\]
where $Y_n=((b_{n,j}-\boldsymbol c_n)/d_n)_{j=1}^4$ and the distance
uses the maximum norm over the four components. If the final physical
endpoint allows continuation to $t_n^*+\tau_nT_+$, the outgoing section
is crossed transversely at some $t_n^{\rm out}<t_n^*+\tau_nT_+$, with
\[
 \frac{t_n^{\rm out}-t_n^*}{\tau_n}=T_{\rm arc}+o_L(1).
\]
On every subsequent stopped interval $[t_n^{\rm out},T]$ satisfying
the pair regime, one has
\[
 \begin{gathered}
 P_{n,j}(t)\leq C_{\rm pair},\qquad
 |d_{n,j}^{\rm pair}(t)-d_{n,j}^{\rm pair}(t_n^{\rm out})|
                       \leq\tfrac12\epsilon_0d_n,\\
 2c_0L_n(t)\leq e_n\cdot y_n(t),\qquad
 |y_n(t)|\leq\tfrac12 C_0L_n(t),\\
 |e_n^\perp\cdot y_n(t)|
             \leq\tfrac12\epsilon_0\,e_n\cdot y_n(t).
 \end{gathered}
\]
The constants and $o_L(1)$ are uniform in the stopped endpoint and
the outgoing angle.
\end{proposition}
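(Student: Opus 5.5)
The proof splits into two stages, the bounded arc $[t_n^*,t_n^{\rm out}]$ and the pair regime after $t_n^{\rm out}$. In each stage I would apply one of the concentration lemmas with $s_{\rm pre}=t_{\rm in}/\tau_n$, and close the geometric bounds by a Gronwall comparison with the isolated point dynamics of Section~\ref{sec:cell}.

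\emph{Arc stage.} On $[t_n^*,T]$ the centered normalized state lies in the $2\delta_{\rm arc}$ tube. All component distances are then at least $2c_*$ and the diameter is at most $2C_*$, so Lemma~\ref{lem:concentration} applies with $r=c_*/32$ and $m=4$. Its cost \eqref{eq:cost} is controlled on three fronts.
\begin{itemize}
\item The remote coefficients are controlled by \eqref{eq:physical-cost}, which contributes $O(h^\beta)$.
\item The interval has normalized length at most $T_+$, which is fixed. The rotation defect $W_i$ and $\Omega$ (taken constant, say $\Omega=-1$) are therefore bounded pointwise by a constant depending only on the tube, and their integral is $O(T_+)$.
\item The entry data at $t_n^*$ are the patch bounds inherited from Proposition~\ref{prop:geometry-concentration}.
\end{itemize}
This gives $P_{n,j}\leq C_{\rm arc}$.

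For the tube distance, I would multiply the centroid equations \eqref{eq:physical-centroid-error} by $\tau_n/d_n$ and subtract the mean. Writing $F$ for the centered four-point field, this gives $Y_n'=F(Y_n)+\mathcal R$ with
\[
|\mathcal R|\leq C\bigl(\tau_nM_{1,n}^{\rm rem}+\eta_n^2(1+\tau_nd_nM_{2,n}^{\rm rem})+|\chi_n|/d_n\bigr)=o_L(1).
\]
The absolute entry bounds of Lemma~\ref{lem:section-shape} put $Y_n(t_n^*)$ within $o_L(1)$ of some $\operatorname{Rot}(\theta)Y^{\rm arc}(0)$. Gronwall over the fixed time $T_+$, using the Lipschitz constant of $F$ on the tube, keeps $Y_n$ within $o_L(1)\leq\delta_{\rm arc}$ of $\operatorname{Rot}(\theta)Y^{\rm arc}((t-t_n^*)/\tau_n)$. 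This strictly improves the tube condition.

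The crossing follows from transversality. Near $\mathsf R=\mathsf R_{\rm out}$ in the tube we have $\nabla\mathsf R\cdot F\geq g_*/2$, so $\mathsf R(Y_n)$ is strictly increasing near the section and crosses it exactly once. The crossing time is the isolated time $T_{\rm arc}$ up to $o_L(1)/(g_*/2)$. The same argument, with a Lipschitz estimate, gives the $\chi_n$ offset bound via the impulse equation from the proof of Proposition~\ref{prop:geometry-concentration}.

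\emph{Pair stage.} After $t_n^{\rm out}$ I would regroup the configuration into the pairs $(1,3)$ and $(2,4)$. Lemma~\ref{lem:neutral-pair} is applied to each pair, with the companion pair's transported field included in $V$. The hypotheses \eqref{eq:neutral-pair-cost} need the integrated companion cost to be $\leq\delta$. This is the main obstacle: the interval may be long in normalized time, up to $\eta_n^{-1/2}$, so a constant pointwise bound is useless. I would use the pair regime instead:
\begin{itemize}
\item The midpoint separation is $|y_n|\geq c_0L_n(t)$, with $L_n/d_n\asymp K_{\rm out}+s$.
\item The companion is neutral, so its gradient is $O((K_{\rm out}+s)^{-3})$ and its Hessian is $O((K_{\rm out}+s)^{-4})$. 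The extra quotient in $A$ is also $O((K_{\rm out}+s)^{-3})$.
\item The material derivative is handled by the antisymmetrization in the proof of Lemma~\ref{lem:exterior-gradient-controls}, and is $O((K_{\rm out}+s)^{-3})$ as well.
\item Integrating over $s\geq0$ gives $O(K_{\rm out}^{-2})+O(h^\beta)\leq\delta_0$ once $K_{\rm out}$ and $L$ are large.
\end{itemize}
The lemma then yields the patch bound $P_{n,j}\leq C_{\rm pair}$. It also yields $|\boldsymbol d(s)-\boldsymbol d(0)|\leq C\delta$, which gives the pair-length bound $\tfrac12\epsilon_0d_n$.

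For the midpoints, I would use $y_n'=-\Gamma_nK(d_{n,1}^{\rm pair})+\Gamma_nK(d_{n,2}^{\rm pair})+O(v_n(K_{\rm out}+s)^{-2})+O(\text{remote})$. The nearly constant pair vectors make the relative velocity lie within $O(\epsilon_0+K_{\rm out}^{-2})$ of its isolated entry value. That entry value has $e_n$-projection at least $4c_0$ in units of $v_n$ and lies in the $\epsilon_0/4$ cone. Integrating from the entry separation, which has projection at least $4c_0K_{\rm out}d_n$, gives the following, uniformly in the stopped endpoint and the outgoing angle:
\begin{itemize}
\item $e_n\cdot y_n\geq2c_0L_n$;
\item $|y_n|\leq\tfrac12C_0L_n$;
\item the halved cone bound on $e_n^\perp\cdot y_n$.
\end{itemize}
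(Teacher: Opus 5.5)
Your proposal follows the paper's proof essentially step for step: concentration on the stopped tube from Lemma~\ref{lem:concentration} using only the bounded duration $T_+$ (your $\Omega=-1$ versus the paper's $\Omega=0$ makes no difference there), then Gr\"onwall against the matched isolated arc plus transversality of $\mathsf R$ for the crossing and its time (the paper packages this as Lemma~\ref{lem:transverse-section}), and finally Lemma~\ref{lem:neutral-pair} with the companion's neutral-pair decay integrated along the linear separation. One bookkeeping correction: the halved transverse cone needs the error between $y_n'$ and $v_n$ times the entry relative pair velocity to be $o(\epsilon_0)\,v_n$, not $O(\epsilon_0)\,v_n$ (the paper takes it below $c_0\epsilon_0v_n/8$), so you should use your own pair-vector bound $C\delta=O(K_{\rm out}^{-2}+h^\beta)$ instead of the provisional $\epsilon_0d_n$, bound the remote difference by $Ch^\beta|y_n|\leq C(K_{\rm out}h^2+h^\beta)v_n$ through the provisional upper cone, and fix $\epsilon_0$ before choosing $K_{\rm out}$ and $L$.
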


\begin{proof}

Stop first at the boundary of the $2\delta_{\rm arc}$ tube or
at normalized time $T_+$. On this stopped interval the duration
is bounded, the component distances are at least $2c_*d_n$, and
the diameter is at most $2C_*d_n$. Set $\Omega=0$. The frozen
point speeds, after subtracting the mean, are bounded by a fixed
constant, so their defect has bounded integral. The exterior costs
are bounded by Lemma~\ref{lem:exterior-gradient-controls}. Apply
Lemma~\ref{lem:concentration} with
$s_{\rm pre}=t_n^*/\tau_n$ and the inherited variance, deficit,
moment and support data. All its hypotheses are thus verified on
the stopped tube; this gives $P_{n,j}\leq C_{\rm arc}$ without
assuming the outgoing crossing. The full centroid equation and
the quantitative transverse-section argument in
Lemma~\ref{lem:transverse-section} now improve the tube and supply
that crossing. It uses the concentration estimate on the stopped arc interval.
At $t_n^{\rm out}$ this gives
$I_{n,i}/d_n^2+\mathcal D_{n,i}=O(\eta_n^2)$, where
$\mathcal D_{n,i}$ is the logarithmic deficit of that component
after dilation by $d_n$. The higher-moment and support bounds use
the total elapsed time $t_n^{\rm out}/\tau_n$.
For a newly grouped neutral pair, the mutual logarithmic remainder is
bounded by the two variances; the exterior quadratic stream remainder
is bounded by the gradient times those variances. Its modulated energy
is therefore $O(\eta_n^2)$ as well. These are precisely the initial
bounds required by the pair version of the concentration estimate.
For the interval after $t_n^{\rm out}$ we use
$s_{\rm pre}=t_n^{\rm out}/\tau_n$ with the same transported densities.
Fix a small cone tolerance $\epsilon_0>0$ before choosing
$K_{\rm out}$ and $L$.

Let $A_{\rm comp},H_{\rm comp},B_{\rm comp}$ be the companion contributions
to the gradient, Hessian and material-gradient bounds.
Let ${\boldsymbol y_{\rm sep}}$ be the physical difference of pair midpoints, $\ell=|{\boldsymbol y_{\rm sep}}|$, and
${t_{\rm el}}=t-t_n^{\rm out}$. On the preliminary cone
$\ell\geq c(K_{\rm out}d_n+v_nt_{\rm el})$, the neutral companion gives
\[
 A_{\rm comp}\leq C\Gamma_nd_n\ell^{-3},\quad
 H_{\rm comp}\leq C\Gamma_nd_n\ell^{-4},\quad
 B_{\rm comp}\leq C\left(\Gamma_n^2\ell^{-4}
       +\Gamma_nv_n\ell^{-3}+h^{\beta}\Gamma_n\ell^{-2}\right).
\]
For $b>1$, the integral of $(K_{\rm out}d_n+v_nt_{\rm el})^{-b}$ is at
most $(K_{\rm out}d_n)^{1-b}/((b-1)v_n)$. Its contributions to
the four physical costs are, respectively,
\[
 O(K_{\rm out}^{-2}),\quad
 O(K_{\rm out}^{-3}+K_{\rm out}^{-2}
                       +\tau_nh^{\beta}/K_{\rm out}),\quad
 O(K_{\rm out}^{-3}),\quad O(K_{\rm out}^{-7}).
\]
For example, the gradient cost is
$C\Gamma_nd_n(K_{\rm out}d_n)^{-2}/v_n=CK_{\rm out}^{-2}$,
and the squared-Hessian cost is
\[
 C\tau_nd_n^2\Gamma_n^2d_n^2
       \frac{(K_{\rm out}d_n)^{-7}}{v_n}
 =CK_{\rm out}^{-7},
 \qquad \tau_n=d_n^2/\Gamma_n,\quad v_n=\Gamma_n/d_n.
\]
The three terms in the material-gradient cost similarly give
$\tau_n\Gamma_n^2(K_{\rm out}d_n)^{-3}/v_n$,
$\tau_n\Gamma_n(K_{\rm out}d_n)^{-2}$, and
$\tau_nh^\beta\Gamma_n(K_{\rm out}d_n)^{-1}/v_n$.
They equal, respectively, the three terms on its displayed line.
Each term tends to zero as $K_{\rm out}$ and then $L$ increase.
Together with \eqref{eq:physical-cost}, they meet the small integral
assumption for a neutral pair. Its concentration estimate, with the
elapsed time just specified, therefore holds on the current cone.

Let $C_{\rm conc}$ be the fixed concentration conclusion bound just obtained.
Write $\boldsymbol d_{\ell_{\rm pair}}$ for either positive-minus-negative pair vector. Its mutual
centroid error cancels, so its exact equation gives
\[
 \frac{|\boldsymbol d_{\ell_{\rm pair}}(t)-\boldsymbol d_{\ell_{\rm pair}}(t_n^{\rm out})|}{d_n}
 \leq C\left(K_{\rm out}^{-2}+h^{\beta}
       +C_{\rm conc}\eta_n^2(K_{\rm out}^{-3}+h^{\beta+1})\right).
\]
Indeed, its derivative is the difference of the exterior velocities
averaged over the positive and negative components. On the segment
joining their centroids the companion gradient is at most
$C\Gamma_nd_n\ell^{-3}$ and the remote gradient is at most
$Ch^\beta$. Their Taylor averaging errors are bounded by
$Ca_n^2(H_{\rm comp}+M_{2,n}^{\rm rem})$.
Divide the equation by $d_n$, insert the preliminary upper bound
$|\boldsymbol d_{\ell_{\rm pair}}|\leq Cd_n$, and integrate.
The companion terms cost $CK_{\rm out}^{-2}$ and
$CC_{\rm conc}\eta_n^2K_{\rm out}^{-3}$; the remote terms cost
$Ch^\beta$ and
$CC_{\rm conc}\eta_n^2d_nM_{2,n}^{\rm rem}$.
The latter is at most $CC_{\rm conc}\eta_n^2h^{\beta+1}$.
This preserves the pair-length and direction margins. The self-interaction
contribution to the pair midpoint velocity is
$-\Gamma_nK(\boldsymbol d_{\ell_{\rm pair}})+O(C_{\rm conc}v_n\eta_n^2)$.
The remaining contribution is the arithmetic mean of the companion and
remote fields at the two centroids, up to
$CC_{\rm conc}(H_{\rm comp}+M_{2,n}^{\rm rem})a_n^2$.
The companion mean has size at most $Cv_nK_{\rm out}^{-2}$.
Cross-pair centroid distances are bounded by
$|\boldsymbol y_{\rm sep}|+Cd_n\leq C|\boldsymbol y_{\rm sep}|$ on the cone,
so the difference of the two remote means is at most
$Ch^{\beta}|\boldsymbol y_{\rm sep}|$. Their variance error satisfies
$M_{2,n}^{\rm rem}a_n^2/v_n\leq C\eta_n^2h^3R_n^{6\alpha}$.
If $w_0=-K(\boldsymbol d_1(t_n^{\rm out})/d_n)+K(\boldsymbol d_2(t_n^{\rm out})/d_n)$, it follows that
\[
 |{\boldsymbol y_{\rm sep}}'-v_nw_0|\leq\epsilon_0v_n+Ch^{\beta}|{\boldsymbol y_{\rm sep}}|.
\]
First Gronwall gives $|{\boldsymbol y_{\rm sep}}|\leq C(K_{\rm out}d_n+v_nt_{\rm el})$.
Then $h^{\beta}|{\boldsymbol y_{\rm sep}}|/v_n\leq C(K_{\rm out}h^2+h^{\beta})$.
Projecting onto the entry direction, whose dot product with $w_0$ has
a fixed positive lower bound, strictly improves the lower outgoing cone.
More explicitly, if $e_0\cdot w_0\geq c_1>0$, first choose
$\epsilon_0<c_1/4$ and then $L$ so large that the last displayed
bound is at most $c_1/4$. It follows that
$e_0\cdot\boldsymbol y_{\rm sep}'\geq c_1v_n/2$.
Integration from the section yields the lower bound
$e_0\cdot\boldsymbol y_{\rm sep}(t)
\geq e_0\cdot\boldsymbol y_{\rm sep}(t_n^{\rm out})
+c_1v_nt_{\rm el}/2$ on this same provisional interval.
The perpendicular component stays in a fixed cone by the same estimate.
Thus concentration and the cone close simultaneously.
For the transverse cone, choose the isolated entry and velocity
directions within $\epsilon_0/4$ of $e_n$. In the estimate for
$y_n'-v_nw_0$, take the absolute error smaller than
$c_0\epsilon_0v_n/8$, by first increasing $K_{\rm out}$ and then
$L$. Projecting onto $e_n^\perp$ and integrating gives
\[
 |e_n^\perp\cdot y_n(t)|
 \leq\tfrac12\epsilon_0 e_n\cdot y_n(t).
\]
The entry margins in Definition~\ref{def:common-bootstrap} and
the same projections give $2c_0L_n\leq e_n\cdot y_n$ and
$|y_n|\leq C_0L_n/2$. The pair-vector estimate is smaller than
$\epsilon_0/2$ after the same ordered choices. In applying the
neutral-pair lemma, the exterior consists of the companion pair
and all other indices; it is smooth, curl-free and divergence-free
on the pair balls. The gradient quotient between the two
centroids is bounded by the same companion and remote estimates.
The inherited pair energy is $O(\eta_n^2)$, the pointwise gradient
is bounded, the four integrated coefficients are as small as
required, and the full age is retained. These verify each of the
hypotheses of Lemma~\ref{lem:neutral-pair-control}.
\end{proof}

\subsection{Signed impulse and spatial confinement for separated pairs}

\begin{proposition}[Impulse and confinement]\label{prop:geometry-impulse-confinement}
In the pair stage,
\[
 |\chi_n|/d_n\leq C_\chi h^{2\beta-1}R_n^{4\alpha}.
\]
In every stage $|Z'|\leq Ch^\beta$ and
\[
 \bigcup_{j=1}^4\operatorname{supp}\mu_{n,j}(t)
 \subset B(Z(t)+R_ne_1,\delta_{\rm loc}R_n/2).
\]
Thus the impulse and enclosing-disk conditions strictly improve
their bootstrap versions in both histories.
\end{proposition}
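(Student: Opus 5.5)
We prove three things, in this order: the impulse bound in the pair stage, the bound $|Z'|\leq Ch^\beta$, and the confinement of the supports around $Z+R_ne_1$. Each is obtained on the current stopped interval and uses only the provisional bounds of Definition~\ref{def:common-bootstrap} and the exterior estimates of Lemma~\ref{lem:exterior-gradient-controls}.

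\textbf{Impulse.} In the pair stage I redo the impulse computation from Proposition~\ref{prop:geometry-concentration}. Internal interactions cancel exactly in $\frac{\diff}{\diff t}(2\Gamma_n\chi_n)$, by the antisymmetry $K(x-y)=-K(y-x)$. This holds for all four components, so the companion pair drops out too. What remains is the remote field $V_n$ integrated against the signed components. Here the expansion about $\boldsymbol c_n$ is no longer useful, because the pair midpoints are at distance $O(L_n(t))$ from $\boldsymbol c_n$. Instead I group by neutral pairs $(1,3)$ and $(2,4)$. For each pair the signed integral of $V_n$ is
\[
V_n(b_{n,j})-V_n(b_{n,j+2})+O\bigl(M_{2,n}^{\rm rem}a_n^2\bigr).
\]
The difference term is bounded by $M_{1,n}^{\rm rem}|d_{n,j}^{\rm pair}|\leq Ch^\beta d_n$. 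This uses the segment-split argument of Lemma~\ref{lem:exterior-gradient-controls} to handle segments passing near other indices, but such segments stay within the pair balls of size $O(d_n)$, so the gradient bound applies directly.

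Integrating this bound crudely over time up to one gives
\[
|\chi_n|\lesssim \frac{h^\beta d_n}{\Gamma_n}\cdot\Gamma_n,
\]
which is too weak by itself. The pair stage needs $|\chi_n|/d_n\leq Ch^{2\beta-1}R_n^{4\alpha}$. To reach it I split off the leading part $\nabla V_n(b_{n,j})d_{n,j}^{\rm pair}$. Its sum over the two pairs is
\[
\nabla V_n\,(d_1^{\rm pair}+d_2^{\rm pair}) + O\bigl(M_{2,n}^{\rm rem}|y_n|\,d_n\bigr).
\]
The first piece is $\nabla V_n\cdot 2\chi_n$, up to the offset of the pair midpoints, so it can be absorbed by Gronwall with factor $e^{Ch^\beta}$. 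The Hessian term I integrate using $|y_n|\leq C_0L_n(t)\leq C(K_{\rm out}d_n+v_nt)$. This gives
\[
|\chi_n|\lesssim M_{2,n}^{\rm rem}d_n\,v_n \lesssim h^\beta R_n^{\nu_\Gamma-3}\,d_n\,h^{\beta-1}R_n^{1+\alpha}.
\]
Dividing by $d_n$ leaves $h^{2\beta-1}R_n^{\nu_\Gamma-2+\alpha}=h^{2\beta-1}R_n^{4\alpha}$. The entry value at $t_n^{\rm out}$ comes from the chart and arc bounds, which are smaller. The constant $C_\chi$ is then fixed, and the bootstrap's factor $2$ gives the strict improvement.

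\textbf{$Z'$ and confinement.} The velocity at $Z$ (near the origin) is the sum over all indices of the field of a neutral configuration at distance $\gtrsim R_k$. By zero circulation, each index contributes $O(\Gamma_k\,\mathrm{diam}_k/R_k^2)$, where $\mathrm{diam}_k$ is at most $\delta_{\rm loc}R_k$. The sum $\sum_k h^\beta R_k^{\nu_\Gamma-1}$ is bounded by $Ch^\beta$, so $|Z'|\leq Ch^\beta$.

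For confinement I compare each component centroid's motion with $Z'$, again using the endpoint split, which gave $|\boldsymbol c_n'-Z'|\leq Ch^\beta R_n+Cv_n\eta_n^2$ in the chart stage. In the pair stage the midpoints of the two pairs additionally drift by $|y_n|\leq C(K_{\rm out}d_n+v_n)$. The key check is that this is $o_L(R_n)$. It is, since $v_n/R_n=h^{\beta-1}R_n^{\alpha}$ and $d_n/R_n=hR_n^{2\alpha}$ are both small. Integrating $\boldsymbol c_n'-Z'$ over $t\leq1$ gives a displacement $\leq\delta_{\rm loc}R_n/8$. Supports lie within $C\eta_n^{1/45}d_n$ of the centroids. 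Together these give the ball of radius $\delta_{\rm loc}R_n/2$ once $L$ is large.

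\textbf{Main obstacle.} The main obstacle is the impulse estimate: extracting the $R_n^{4\alpha}$ gain requires treating the Hessian term with the linearly growing separation $|y_n|$ rather than a bounded diameter. The crude gradient bound is insufficient here.
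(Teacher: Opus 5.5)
Your proposal is correct and follows essentially the paper's argument. For the impulse, you group the neutral pairs before Taylor expansion, use Gronwall on the $M_{1,n}^{\rm rem}|\chi_n|$ term, and integrate $M_{2,n}^{\rm rem}d_n(K_{\rm out}d_n+v_nt)$ to reach $h^{2\beta-1}R_n^{4\alpha}$; the bound $|Z'|\le Ch^\beta$ comes from the kernel sum, and confinement comes from the endpoint split together with the smallness of $v_n/R_n=h^{\beta-1}R_n^\alpha$. Two small points: $d^{\rm pair}_{n,1}+d^{\rm pair}_{n,2}=2\chi_n$ holds exactly, and for confinement the paper bounds each component directly by $|b_{n,i}'-Z'|\le C(v_n+h^\beta R_n)$ in every stage, which covers the arc and pair stages without relying on the mean-centroid bound you quoted only for the chart stage.
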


\begin{proof}

After $t_n^{\rm out}$, the signed moment has size proportional to
the distance $\ell$ between the two pair midpoints. Grouping each
neutral pair before Taylor expansion therefore gives
\[
 |\chi_n'|\leq M_{1,n}^{\rm rem}|\chi_n|
      +CM_{2,n}^{\rm rem}\left(d_n(\ell+d_n)+\sum_{i=1}^4I_{n,i}\right).
\]
Consequently $|\chi_n|/d_n\leq C_{K_{\rm out}}h^{2\beta-1}R_n^{4\alpha}$
on $[0,1]$. The earlier bound \eqref{eq:compact-impulse} is used
only before $t_n^{\rm out}$, when all four centroids are within
distance $Cd_n$ of their mean.
For this Gronwall estimate, the cone gives
$\ell\leq C(K_{\rm out}d_n+v_n(t-t_n^{\rm out}))$.
Divide the impulse inequality by $d_n$ and integrate for at most
one physical time unit. Its inhomogeneous term is bounded by
$CM_{2,n}^{\rm rem}(K_{\rm out}d_n+v_n)$.
The first product is at most
$C K_{\rm out}h^{\beta+1}R_n^{5\alpha}$ and the second is
$Ch^{2\beta-1}R_n^{4\alpha}$. Since $\beta<2$ and $R_n\leq1$,
the first is smaller than a constant times the second; the initial
compact impulse and the covariance term are smaller as well.

At $Z(t)$ every index is at distance comparable to $R_k$, so
$|Z'(t)|\leq C\sum_k\Gamma_k/R_k\leq Ch^\beta$.
Finally the self centroid average is zero in every phase. All other
same-configuration components cost $Cv_n$, and the remote endpoint split gives
\[
 |b_{n,i}'-Z'|\leq C(v_n+h^{\beta}R_n),\qquad
 |Z'|\leq Ch^{\beta}.
\]
Since $d_n/R_n\leq h$ and
$v_n/R_n=h^{\beta-1}R_n^{\alpha}\leq h^{\beta-1}$, initial offsets and
the support estimate give
\[
 \sup_{x\text{ in configuration }n}\frac{|x-Z(t)-R_ne_1|}{R_n}
 \leq C_{K_{\rm out}}\left(K_{\rm out}h+h^{\beta-1}+h^{\beta}
                                      +h\eta_*^{1/45}\right)<\delta_{\rm loc}/2.
\]
This improves precisely the disks assumed at the beginning.
\end{proof}

We have obtained $I_{n,i}\leq Ca_n^2$, the support radius
$C\eta_n^{1/45}d_n$, the enclosing-disk bound
$\delta_{\rm loc}R_n/2$, and linear lower and upper bounds for the
separation of the two neutral pairs. The rotation-defect integral is
bounded on the small chart, and the four exterior costs are integrable
on every later interval. These are the quantitative inputs used in
Sections~\ref{sec:release} and \ref{sec:finite}.

\begin{proposition}[Joint continuation of geometry and growth]
\label{prop:bootstrap-closure}
The constants in Definition~\ref{def:common-bootstrap} can be fixed,
and then $L$ chosen independently of $n,N$, so that all its
adjustable bounds have the strict improvements stated above and
in Proposition~\ref{prop:weighted-crossing}. Consequently the first
stop on an active finite step is a quiet unstable shooting face,
a prescribed stage crossing, or the final physical endpoint.
After a stage crossing the next regime starts with strict margins.
In particular, if the quiet history stays in its reference box,
the positive crossing and then the outgoing crossing occur before
their deadlines.
\end{proposition}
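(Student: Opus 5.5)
This is a continuity (bootstrap) argument on the stopped interval $J=[0,T]$. The approach is to fix the constants in Definition~\ref{def:common-bootstrap} in a definite order, so that each conclusion constant is known before the corresponding provisional bound (set to twice it) is imposed. Only then is $L$ increased, to make every $L$-dependent remainder smaller than its margin.

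\textbf{Step 1: fix the constants in order.}
\begin{enumerate}
\item The point-vortex data first: $h_*,\delta_*,\delta_{\rm arc},T_+,K_{\rm out},c_0,C_0,\epsilon_0$. These come from Propositions~\ref{prop:intrinsic-saddle} and~\ref{prop:isolated-outgoing}.
\item Then the weighted constants $s_{\rm ent},c_w,C_w$. These are taken from the leading forcing $\mathcal H$ of Proposition~\ref{prop:signed-strain} on the chosen phase interval.
\item Then the allowance $C_0$ of \eqref{eq:rotation-cost-allowance}.
\item Then the concentration outputs $C_{\rm rot},C_{\rm arc},C_{\rm pair}$. These come from Lemmas~\ref{lem:concentration} and~\ref{lem:neutral-pair} with that allowance.
\item Then $C_V,C_A,C_\chi$, from the variation-of-constants and impulse estimates. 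Also $B_{\rm ref}$, chosen so the unstable faces point outward.
\item Finally $C_E,C_\vartheta$, and the graph constant $C_g$ of Lemma~\ref{lem:response}.
\end{enumerate}
None of these depends on $L$, $n$ or $N$. Every error term appearing in the improvements is then of the form $o_L(1)$ uniformly. The sources are \eqref{eq:physical-cost}, \eqref{eq:tailclock}, \eqref{eq:polynomial-margin} and \eqref{eq:core-lifetimes}. So one choice of $L$ makes all margins strict.

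\textbf{Step 2: strict improvement of each condition, stage by stage.}
\begin{enumerate}
\item Small-chart stage: Proposition~\ref{prop:geometry-concentration} improves the patch, $\mathsf V$, $\mathsf A$, $\chi$ and rotation-defect bounds.
\item Arc and pair stages: Proposition~\ref{prop:geometry-continuation} improves the patch, tube and cone bounds.
\item All stages: Proposition~\ref{prop:geometry-impulse-confinement} improves the impulse and enclosing-disk bounds.
\item Active comparison (item (5) of the definition): Proposition~\ref{prop:weighted-crossing} improves $E_U^*,E^*_{\rm aux},E^*_\vartheta$ and the weighted lower bound from $c_w/2$ to $c_w$.
\end{enumerate}
The key point is that these propositions are all proved under the provisional bounds on the same $J$. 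The cross-dependencies are only through provisional inputs: growth uses current concentration, and concentration uses the provisional integral of $|\Delta\mathsf U_n|$. Hence there is no circularity.

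\textbf{Step 3: identify the first stop.} Let $T$ be the supremum of times for which $\mathcal B([0,T])$ holds. By continuity of every monitored quantity in time (the patch flows are bounded-vorticity, so centroids, moments and graphs are continuous), a failure at $T$ would mean equality in some bound. The bounds with strict margins cannot be equalities, by Step 2. So equality can occur only at one of three places:
\begin{enumerate}
\item an intentional face ($|\mathsf U^{\rm quiet}_n|=B_{\rm ref}b_n$);
\item a stage crossing ($\Delta\mathsf U_n=h_*$ or $\mathsf R_n=\mathsf R_{\rm out}$);
\item the endpoint.
\end{enumerate}
At a stage crossing, the entry bounds of the next stage hold with margin. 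For the arc, these are the section estimates of Lemma~\ref{lem:section-shape}. For pair entry, they are the inherited energies in Proposition~\ref{prop:geometry-continuation}. The regime therefore restarts with $s_{\rm pre}$ equal to the full elapsed age.

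\textbf{Step 4: crossings before their deadlines.} If the quiet history never hits a face, the weighted lower bound $\mathfrak w\Delta\mathsf U_n\ge c_w\varepsilon_n$ together with $\lambda_{\rm eff}\ge\lambda/2$ forces $\Delta\mathsf U_n=h_*$ within $s\le C(1+\ell_n)\le S$. The transverse-crossing estimate then gives the outgoing crossing after $T_{\rm arc}+o_L(1)$ further normalized time. Summing over indices with \eqref{eq:totalclock}, all these times lie below $1/4$.

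\textbf{Main obstacle.} I expect the hardest part to be ensuring the ordering in Step 1 is acyclic. The concern is $C_0$: it contains $C_w h_*/c_w$ and must be fixed before the concentration constants. Meanwhile $C_E,C_\vartheta$ depend on the concentration constants only through remainders multiplied by $o_L(1)$. This must be checked carefully term by term in Proposition~\ref{prop:weighted-crossing}.
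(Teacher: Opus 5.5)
Your proposal is correct and is essentially the paper's argument. It uses the same ordering of constants: the weighted constants, then the allowance \eqref{eq:rotation-cost-allowance} with its provisional factor $4C_w/c_w$, then the concentration constants in stage order, with provisional bounds set at twice their conclusions and $L$ chosen last. It makes the same observation that the patch and weighted estimates each use only current provisional inputs of the other on one common stopped interval. It also uses the same exclusions: $s=S$ is ruled out by the weighted lower bound, and the tube boundary and $T_+$ by the transverse argument.

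The one difference is that you fix $K_{\rm out}$ before $s_{\rm ent},c_w,C_w$, whereas the paper fixes it after. This is harmless, because those constants come from the section-independent leading forcing $24s(\sqrt2\cos(2\psi-2s)-\sin(2\psi-2s))$, and the section enters only through $O(r_n)$ remainders absorbed by the final choice of $L$.
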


\begin{proof}
Choose the constants on the common stopped intervals as follows.
First fix the shape chart, the leading weighted constants $c_w,C_w$,
the entrance time $s_{\rm ent}$, and the direction margins; then choose
$K_{\rm out}$ and its fixed section. Use the provisional factor
$4C_w/c_w$ in \eqref{eq:rotation-cost-allowance} to fix the small-chart
concentration allowance. Obtain the concentration constants for the
small chart, the compact orbit segment and the two separated pairs in
that order.
Set the provisional bounds above twice their maximum and then choose
$L$. Every term still carrying a provisional constant has a positive
power of $h$ or $\eta_*$. Each component uses at most these three
intervals, with its actual elapsed time retained throughout.
Consequently the first failure of any geometric or patch bound would
contradict its strict improvement on the same interval. This is the
continuation implication needed alongside \eqref{eq:certificate}.
The event comparison constants are chosen after the geometric
allowances; its vanishing errors in \eqref{eq:full-error-ledger}
are then made small by the same final choice of $L$. During
$[0,s_{\rm ent}]$ the comparison bounds initialize the lower
weighted bound. After entrance, its provisional lower constant
$c_w/2$ yields a uniform defect allowance, while the weighted
argument improves it to $c_w$ without using that allowance beyond
the common patch conclusions. The patch estimate therefore uses
only a current weighted inequality, and the weighted argument
uses only current geometric estimates. Taking the minimum of
their stopping times gives one valid stopped interval for both.
At a finite first stop, continuity and strict improvement rule
out every accidental boundary simultaneously. At $s=S$ the
lower weighted bound would exceed $h_*$, so that deadline is
also impossible before the positive crossing. On the arc the
transverse argument rules out the tube boundary and $T_+$.
The three successive concentration applications use their own
initial constants and inherit all previous ages; the disk-comparison lemma is applied from the initial disks at time zero.
\end{proof}

\section{Growth forced by the adjacent smaller configuration}\label{sec:release}\label{app:event}

We prove that the actual Euler evolutions satisfy
$c\varepsilon_n\leq\mathfrak w\Delta\mathsf U_n\leq C\varepsilon_n$
after a fixed initial interval, where
$\Delta\mathsf U_n=\mathsf U_n^{\mathrm{trig}}-
\mathsf U_n^{\mathrm{quiet}}$ and
$\mathfrak w(s)=\exp(-\int_0^s\lambda_{\rm eff}(v)\idiff v)$
with $\lambda_{\rm eff}\geq\lambda/2$. This inequality forces
$\Delta\mathsf U_n$ to reach a fixed positive value in time
$O(1+\log\varepsilon_n^{-1})$. The forcing coefficient is the one
computed in Section~\ref{sec:cell}; this section proves that the
integrated remainder is smaller than $\varepsilon_n$.

Two cancellations are essential. The reference and perturbed disks
at indices $k\leq n$ agree initially, so the errors at $k<n$ can be
estimated proportionally to the difference of exterior fields.
Separately, each configuration has zero initial signed impulse. A
centroid error of size $\zeta$ alone would allow impulse
$\zeta\Gamma_{n+1}d_{n+1}$, whose dipole strain divided by the required
quadrupole strain is of order
$\zeta\sigma_nD_n/(\Gamma_{n+1}/\Gamma_n)$. This ratio need not be
small. We instead integrate the exact impulse equation from physical
time zero and retain the resulting dipole term.

Choose a small fixed positive exit $h_*$ in the saddle chart. Fix the
outgoing section and its arc as in Section~\ref{sec:cell}. Let
$m_*=e^{-12}$ and let $\mathcal I=[\psi_-,\psi_+]$ be the increasing
interval on which $\mathcal H$ runs from $m_*$ to $1$. Its length is
less than $1/6$. The choices below fix $\zeta_0>0$ and then one large $L$.

\begin{proposition}[Positive crossing on a common stopped interval]\label{prop:release}
Fix $n<N$ in two finite canonical Euler evolutions. Assume their data
at indices $0,\ldots,n$ agree at physical time zero, all initial
components are the prescribed disks, and every initial signed impulse
is zero. At every index $k\leq n$ assume initially, in both histories,
\[
 \mathsf V_k^\kappa(0)=0,\qquad \mathsf A_k^\kappa(0)=1,\qquad
 \chi_k^\kappa(0)=0,\qquad \vartheta_k^\kappa(0)=\vartheta_k^0.
\]
Their unstable coordinates lie in the reference box.
The reference evolution remains in that box on the interval considered.

Suppose the perturbed configuration at index $n+1$ reaches the section
$\mathsf R=\mathsf R_{\rm out}$ at
$t_{n+1}^{\rm out}$ with normalized centroid discrepancy
$0\le\zeta\le\zeta_0$ from the isolated section state with the same
outgoing direction. Writing $k=n+1$, its transported components obey
\[
 \begin{gathered}
 I_{k,j}^{\rm phys}(t)\leq Ca_k^2,\qquad
 \mathcal D_{k,j}(t)\leq C\eta_k^2,\\
 \int|x-b_{k,j}(t)|^{24}\mu_{k,j}(t,x)\idiff x
       \leq Cd_k^{24}[\eta_k(1+t/\tau_k)]^8,\\
 \operatorname{supp}\mu_{k,j}(t)\subset
 B\bigl(b_{k,j}(t),Cd_k\eta_k^{2/9}(1+t/\tau_k)^{2/5}\bigr).
 \end{gathered}
\]
In the perturbed evolution every index $k>n+1$
remains in the field and satisfies the separated-pair estimates after
its already completed crossing.
Both evolutions satisfy the common bootstrap regime
$\mathcal B$ of Definition~\ref{def:common-bootstrap}. Put
\[
 s=\frac{t-t_{n+1}^{\rm out}}{\tau_n},\qquad
 s_0=\frac{t_{n+1}^{\rm out}}{\tau_n},\qquad
 S=C_{\rm sched}(1+\ell_n),\qquad
 r_n^{\rm out}=\begin{cases}r_{n-1},&n\ge1,\\0,&n=0.\end{cases}
\]
Assume
\[
 s_0\leq CL(n+2)^2r_n,\qquad
 \frac t{\tau_k}\leq r_n^{\rm out}(s_0+S)=o_L(1)\quad(k<n).
\]
The second inequality follows from the first and the schedule bound.
Thus the proportional estimate starts from the original circular patch
and applies throughout the interval required here.

At $t_{n+1}^{\rm out}$ define the relative angle using the actual
reference phase at index $n$ and the actual line of centers:
\[
 \psi_n=\Theta_{n+1}+\vartheta_n^{\rm quiet}(t_{n+1}^{\rm out})+\pi/4
       -2\arg(\boldsymbol c_n^{\rm trig}(t_{n+1}^{\rm out})
                         -\boldsymbol c_{n+1}^{\rm trig}(t_{n+1}^{\rm out})).
\]
Here $\Theta_{n+1}$ is the outgoing-direction coordinate of the
section state at index $n+1$. If $\psi_n\in\mathcal I$ modulo $\pi$,
then on each stopped interval before the first positive crossing, the
coordinate satisfies
\begin{equation}\label{eq:trigger}
 (\Delta\mathsf U_n)'=\lambda_{\rm eff}(s)\Delta\mathsf U_n
             +\varepsilon_n f_{\sigma_n,\psi_n}(s)+\mathcal R(s),
 \qquad\lambda_{\rm eff}\ge\lambda/2,
\end{equation}
and, after one fixed entrance allowance,
\begin{equation}\label{eq:certificate}
 c\varepsilon_n\le
 \exp\!\left(-\int_0^s\lambda_{\rm eff}(v)\idiff v\right)
                    \Delta\mathsf U_n(s)
 \le C\varepsilon_n.
\end{equation}
Both weighted bounds and the auxiliary comparison bounds strictly improve
their provisional versions on the same interval as the bounds of
Section~\ref{sec:geometry}. If no other stop occurs, the crossing
$\Delta\mathsf U_n=h_*$ is forced before time $S$, with a fixed
allowance for continuation along the separated scattering arc.

At the resulting outgoing section, the normalized centroid discrepancy
from the rotated isolated state obeys
\[
 \mathsf R_n(t_n^{\rm out})=\mathsf R_{\rm out},\qquad
 \max_j\left|\frac{b_{n,j}^{\rm trig}(t_n^{\rm out})
                    -\boldsymbol c_n^{\rm trig}(t_n^{\rm out})}{d_n}
                -y_j^{\rm iso,out}(\Theta_n)\right|
       \leq\zeta_{\rm out}(L)\longrightarrow0,
\]
uniformly for $\zeta\leq\zeta_0$. Here $y_j^{\rm iso,out}(\Theta_n)$
denotes the centered normalized isolated section state with direction
$\Theta_n$. With the
absolute normalized section times $s_k^{\rm out}=t_k^{\rm out}/\tau_k$,
the time and direction satisfy
\begin{align}
 s_n^{\rm out}&=r_ns_{n+1}^{\rm out}+\ell_n/\lambda+C_{\rm time}
       -\lambda^{-1}\log\mathcal H(\psi_n)
       +O_{m_*}(\zeta)+o_L(1),\nonumber\\
 \Theta_n&=\vartheta_n^0-s_n^{\rm out}+C_{\rm out}+o_L(1).
       \nonumber
\end{align}
All constants and vanishing errors are uniform in $n,N$ and the
admitted evolutions. The higher-moment and support estimates at this
section use the full elapsed time $t_n^{\rm out}/\tau_n$.
\end{proposition}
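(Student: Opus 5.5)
The plan is to derive the linear equation \eqref{eq:trigger} for $\Delta\mathsf U_n$ by subtracting the two shape equations \eqref{eq:actual-quiet-equations}, then to close the weighted bounds \eqref{eq:certificate} by variation of constants, and finally to pass the crossing through the arc using Proposition~\ref{prop:geometry-continuation}.

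\textbf{Step 1: the difference equation.} Write both histories' $\mathsf U_n$-equations in event time $s=(t-t_{n+1}^{\rm out})/\tau_n$ and subtract them. The mean-value theorem applied to $a(\mathsf U,\mathsf V,\mathsf A)\mathsf U$ gives $\lambda_{\rm eff}=\partial_{\mathsf U}(a\mathsf U)$ at an intermediate point. This coefficient is at least $\lambda/2$ on $\mathcal C$, and the $\mathsf V,\mathsf A$ differences contribute only to $\mathcal R$. The forcing difference splits into three parts.
\begin{enumerate}
\item[(i)] The field of index $n+1$. In the quiet history this configuration is rotating with bounded moment $O(\varepsilon_n r_n)$. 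In the triggered history it is an outgoing pair with signed moment $\gamma^2P_{\sigma_n,\theta}(s)$. Expand via \eqref{eq:cell-remote-taylor}, \eqref{eq:cell-grouped-taylor} and project with \eqref{eq:cell-strain-projection} in the rotating frame \eqref{eq:cell-moving-frame}. This produces $\varepsilon_nf_{\sigma_n,\psi_n}$, with the phase $\psi_n$ fixed by the covariance computation of Proposition~\ref{prop:signed-strain}. The patch covariances contribute $O(\eta_{n+1}^2)$ corrections. The nonzero generated impulse $\chi_{n+1}$ gives a dipole term, which I would bound by integrating the exact impulse equation from time zero, as in Proposition~\ref{prop:geometry-impulse-confinement}. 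This yields $|\chi_{n+1}|/d_{n+1}\le Ch^{2\beta-1}R_{n+1}^{4\alpha}$, and the resulting dipole strain is compared with $\varepsilon_n$ using the scale identities.
\item[(ii)] Indices $k<n$. Their data agree initially, and $t/\tau_k=o_L(1)$ on the interval. Lemma~\ref{lem:response}, through \eqref{eq:response} and \eqref{eq:remotegraph}, bounds their field difference at index $n$ by $\int\Delta_A$, and $\Delta_A$ is itself driven by differences at index $n$ and index $n+1$. This gives a Volterra loop. I would close it with the auxiliary norms $E_{\rm aux}^*,E_\vartheta^*$, which carry bounds of size $(\varepsilon_n+\eta_n^2)(1+E_U^*)$ multiplied by $o_L(1)$.
\item[(iii)] Indices $k>n+1$ and the absolute patch errors at index $n$. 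These are bounded using Lemma~\ref{lem:exterior-gradient-controls}, \eqref{eq:tailclock}, and the margin \eqref{eq:polynomial-margin}. The last gives $(1+\ell_n)^m\eta_n^2=o_L(\varepsilon_n)$.
\end{enumerate}
Together these give $\int_0^S\mathfrak w|\mathcal R|\le o_L(1)\varepsilon_n(1+E_U^*)$.

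\textbf{Step 2: the weighted bounds.} Multiply \eqref{eq:trigger} by $\mathfrak w$ and integrate. This gives
\[
 \mathfrak w\Delta\mathsf U_n(s)=\Delta\mathsf U_n(0)+\varepsilon_n\int_0^s\mathfrak w f\,\diff v+\int_0^s\mathfrak w\mathcal R\,\diff v .
\]
The initial value $\Delta\mathsf U_n(0)$ is $o_L(\varepsilon_n)$, since the data at index $n$ agree and the elapsed time $s_0$ is small. The leading integral is handled by the explicit computation \eqref{eq:coefficient}. Since $\lambda_{\rm eff}$ differs from $\lambda$ only by $O(|\mathsf U|+|\mathsf V|+|\mathsf A-1|)$, replacing $\mathfrak w$ by $e^{-\lambda s}$ costs a controlled amount on the bounded entrance window. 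On $\mathcal I$ we have $\mathcal H\ge m_*$, so after $s_{\rm ent}$ the weighted integral is at least $m_*/2$. This improves $c_w/2$ to $c_w$ and $2C_w$ to $C_w$. Since $\mathfrak w\le e^{-\lambda s/2}$, the lower bound forces $\Delta\mathsf U_n$ to reach $h_*$ before $S=C_{\rm sched}(1+\ell_n)$. Inverting $\mathfrak w\Delta\mathsf U_n\approx\varepsilon_n\mathcal H(\psi_n)$ at the crossing, and using the phase invariant of Lemma~\ref{lem:phase-offset}, yields the stated time law (through the term $\ell_n/\lambda-\lambda^{-1}\log\mathcal H$) and the angle law $\Theta_n=\vartheta^0_n-s_n^{\rm out}+C_{\rm out}$.

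\textbf{Step 3: the section state.} At the positive crossing the quiet coordinate and $\mathsf V,\mathsf A-1,\chi$ are all $o_L(1)$. Hence the triggered state is $o_L(1)$-close to the isolated branch point $\mathsf U=h_*$. Proposition~\ref{prop:geometry-continuation}, together with Lipschitz dependence of the isolated flow on a bounded time $T_{\rm arc}$, gives $\zeta_{\rm out}(L)\to0$.

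\textbf{Main obstacle.} The main obstacle is Step 1(ii): closing the relative comparison at the larger indices so that its feedback is genuinely $o_L(\varepsilon_n)$ and not merely $O(\eta_k^2)$. My first attempt would be a joint Gronwall argument on $(\mathcal E_1^{(k)},E_U^*)$ with the small elapsed time $t/\tau_k=o_L(1)$. The impulse dipole in Step 1(i) is the other delicate term.
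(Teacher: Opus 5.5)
Your outline matches the paper's architecture: subtract the two $\mathsf U_n$ equations, integrate against $\mathfrak w$, read off the crossing, then follow the arc. But Step~1 is where the proof actually lives, and several of its estimates fail as stated. Every error must be small relative to $\varepsilon_n$, which is exponentially smaller in $L$ than the generic exterior scales ($\varepsilon_n/A_n\leq Ch^2$). So only the index-$n$ patch errors can be handled by absolute bounds.

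(a) The impulse bound you quote, $|\chi_{n+1}|/d_{n+1}\leq Ch^{2\beta-1}R_{n+1}^{4\alpha}$, is the crude pair-stage bound over $[0,1]$, built on $M_{2,n+1}^{\rm rem}\leq Ch^\beta R_{n+1}^{\nu_\Gamma-3}$. Its dipole strain divided by $\varepsilon_n$ is of order $h^{2\beta-2}R_n^{2\alpha}q_n^{3\alpha-1}$, which is about $e^{L(1+2\delta-11\alpha)}\gg1$ already at $n=1$. What works is to integrate the impulse equation from zero only up to $t=\tau_n(s_0+s)$, and to use the sharp Hessian \eqref{eq:sharp-hessian}: the Hessian at index $n+1$ is dominated by the receiver at distance $R_n$. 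This gives \eqref{eq:packet-impulse-event} and a ratio $CB_n(r(s_0+s)+(s_0+s)^2)$.

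(b) For the same reason your item (iii) fails for the sources $k\geq n+2$, which in the triggered history are separating pairs. The absolute-mass kernel sums of Lemma~\ref{lem:exterior-gradient-controls} are not small relative to $\varepsilon_n$ uniformly in $n$; the single source $k=n+2$ already gives a ratio of order $e^{L(4\alpha n^2-2-6\alpha)}$. These sources must be expanded as neutral multipoles (zero mass, impulse generated from zero, second moment at most $C(\Gamma_kd_k^2+\Gamma_k^2t)$). That expansion yields the factors $\mathfrak q_{\rm deep}$ and $B_nI_{\rm deep}$ in \eqref{eq:inner-gradient-remainder}.

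(c) You take the actual index-$(n+1)$ moment to be $\gamma^2P_{\sigma_n,\theta}(s)$, but proving this is a key step. It must hold over the donor time $s_d=s/r_n\gg1$, where a naive Gronwall comparison of the donor state is useless. Lemma~\ref{lem:donor-moment} uses the neutral-pair structure. The pair vectors stay within $C(\zeta+\eta_{n+1}^2+M_ds_d)$ of the isolated ones because the companion coupling decays integrably, and the midpoint errors grow only linearly. The factor $r$ converting the moment to index-$n$ units absorbs that growth, giving \eqref{eq:material-donor} with no inverse power of $r_n$. This also leaves a term $C\zeta$ in the weighted remainder that does not vanish as $L\to\infty$. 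Your claimed bound $o_L(1)\varepsilon_n(1+E_U^*)$ omits it, yet this term is what forces $\zeta_0$ to be small relative to $m_*$ and produces the $O_{m_*}(\zeta)$ term in the time law.

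Your item (ii) misreads Lemma~\ref{lem:response}. Estimates \eqref{eq:response}--\eqref{eq:remotegraph} compare only the \emph{shapes} of corresponding patches after both are translated to a common centroid, hence the factor $a_k^2$. The dominant part of the larger configurations' field difference at index $n$ instead comes from their centroid displacements relative to index $n$. The paper therefore closes a coupled system $\mathcal E_{<n}$ of graph, shape, phase, offset and relative-center rows, together with the finely normalized index-$n$ impulse difference $\xi=\Delta\chi_n/(d_np_n)$ (Lemma~\ref{lem:head-graph}). The graph rows serve only to keep the averaging and source-shape corrections at $k<n$ proportional to $\mathcal E_{<n}$, rather than of absolute size $\eta_k^2$.

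It is also not these auxiliary quantities that are $o_L(1)$ relative to $\varepsilon_n$. $\mathcal E_{<n}$ receives a direct $O(\varepsilon_nF_{\rm env})$ source, and $\Delta\vartheta_n$ has an order-one coupling to $\Delta\mathsf U_n$, so both are only $O((\varepsilon_n+\eta_n^2)(1+E_U^*))$ in the weighted norm. The smallness sits in the coefficients through which they feed back into the $\Delta\mathsf U_n$ row: $A_n$, $B_n$, powers of $p_n$ and $\eta_n^2$, times polynomials in $1+s_0+s$.

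Finally, inverting $\mathfrak w\Delta\mathsf U_n\approx\varepsilon_n\mathcal H(\psi_n)$ does not by itself give the time law. Since $-\log\mathfrak w(s_{\rm exit})=\lambda s_{\rm exit}+\int_0^{s_{\rm exit}}(\lambda_{\rm eff}-\lambda)$, and the last integral does not vanish with $L$, one must identify it. Lemma~\ref{lem:section-clock} shows it equals $C_{\rm delay}+o_L(1)$ by proving that the late dynamics follows $u'=a(u,0,1)u$. Similarly, replacing $\mathfrak w$ by $e^{-\lambda s}$ only on a bounded entrance window leaves a fixed, non-vanishing error. That suffices for positivity, but not for identifying the coefficient up to $C\zeta+o_L(1)$; the paper instead splits at $s_{\rm cut}=c_{\rm cut}\log D_n\to\infty$.
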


We first estimate the signed second moment and impulse of the
configuration at index $n+1$. We then subtract the equations at indices
$k\leq n$ and integrate their coupled inequalities with the weight
$\mathfrak w$. This gives the positive crossing. Finally, absolute
estimates for $\mathsf V_n$, $\mathsf A_n-1$ and $\chi_n/d_n$
give the section shape, time and direction.
Throughout the section suppress $n$ on $\sigma_n,D_n,\varepsilon_n$ and
write $r=r_n$, $\gamma=\Gamma_{n+1}/\Gamma_n=\sigma^\beta$,
$v=\gamma/\sigma$. All estimates concern the current common stopped interval.

\subsection{The signed second moment and the impulse generated since time zero}

The two relevant normalized times are related by
\[
 s_d=\frac{t-t_{n+1}^{\rm out}}{\tau_{n+1}}=\frac{s}{r_n},
 \qquad t=\tau_n(s_0+s).
\]
We first specify the exterior coefficients used in the moment estimate.
Choose $C_{\rm evt}$ after the geometric, concentration and reference-box
constants, and before the final choice of $L$, and put
$A_n=C_{\rm evt}h^2R_n^\alpha$ and
$B_n=C_{\rm evt}h^2R_n^{4\alpha}$.
The remote estimates imply, for all $k\leq n+1$,
\[
 \tau_nM_{1,k}^{\rm rem}\leq A_n,\qquad
 \tau_nR_nM_{2,k}^{\rm rem}\leq B_n.
\]
The same constants bound the reference coordinates and the coefficients
on the whole earlier interval in both evolutions. In particular,
\begin{equation}\label{eq:event-small-coefficients}
 A_n\leq Ch^2R_n^\alpha,\qquad
 B_n\leq Ch^2R_n^{4\alpha}=Cp_n^2.
\end{equation}
For the Hessian, use the sharper estimate
\eqref{eq:sharp-hessian}: if $1\leq k\leq n+1$, then
$R_{k-1}\geq R_n$ and $\nu_\Gamma-3<0$, so
$\tau_nR_nM_{2,k}^{\rm rem}\leq Ch^2R_n^{4\alpha}$.
For $k=0$, direct summation over the inner indices gives
$M_{2,0}^{\rm rem}\leq Ch^\beta$ and the same conclusion.
Thus this comparison uses the positive majorants of the derivatives,
independently of any cancellation between signed fields.

In the coordinates and time for index $n+1$, these bounds become
\[
 \tau_{n+1}M_{1,n+1}^{\rm rem}\leq r_nA_n,\qquad
 \tau_{n+1}d_{n+1}M_{2,n+1}^{\rm rem}
                         \leq r_n\sigma_np_nB_n.
\]
Since $\sigma_np_nB_n/A_n\leq C\sigma_nhR_n^{5\alpha}$, a common
bound for these two normalized derivatives is $M_d\leq Cr_nA_n$.
Its integrated cost up to $s_d=s/r_n$ is $M_ds_d\leq CA_ns=o_L(1)$.

\begin{proposition}[The gradient generated by all smaller indices]
\label{prop:inner-gradient}
Use the assumptions and current stopped interval of
Proposition~\ref{prop:release}. Put
$V_{>n}^\kappa=K*\sum_{k>n}\sum_{j=1}^4
\epsilon_j\Gamma_k\mu_{k,j}^\kappa$ and
\[
 \begin{split}
 \mathscr G_{>n}(s)&=\tau_n\bigl[
 \nabla V_{>n}^{\rm trig}(t,\boldsymbol c_n^{\rm trig})
 -\nabla V_{>n}^{\rm quiet}(t,\boldsymbol c_n^{\rm quiet})\bigr],\\
 X(s)&=(\boldsymbol c_n^{\rm trig}(t)
                -\boldsymbol c_{n+1}^{\rm trig}(t))/d_n,\\
 \mathfrak q_{\rm deep}&=q_{n+1}^{2\nu_\Gamma},\qquad
 I_{\rm deep}=q_n^{\nu_\Gamma-3}q_{n+1}^{2\nu_\Gamma}.
 \end{split}
\]
Then, with $P_{\sigma,\Theta_{n+1}}$ the explicit signed moment
of the rotated isolated orbit from Section~\ref{sec:cell},
\begin{equation}\label{eq:inner-gradient-decomposition}
 \mathscr G_{>n}(s)=\frac{\gamma^2}{2}
       \nabla^3K(X(s)):P_{\sigma,\Theta_{n+1}}(s)
               +\mathscr R_{>n}(s).
\end{equation}
In the common bootstrap regime,
\[
 cD\leq|X(s)|\leq CD,
\]
and
\begin{equation}\label{eq:inner-gradient-remainder}
 \begin{split}
 |\mathscr R_{>n}(s)|\leq C\varepsilon_n(1+s_0+s)^6
 \bigg(\zeta+\eta_{n+1}^2+r+A_n+B_n
       +\mathfrak q_{\rm deep}+B_nI_{\rm deep}
       +\frac{1+\sigma+vs}{D}\bigg).
 \end{split}
\end{equation}
The same estimate holds at any receiver point within $C_*d_n$
of its mean, after adding the already displayed $D^{-1}$ error.
For every source configuration let
\[
 I_k^{\rm imp}(t)=\Gamma_k\sum_{i=1}^4\epsilon_i
                               \int x\mu_{k,i}(t,x)\idiff x.
\]
With $M_{2,k}^{\rm rem}$ the uniform physical exterior-Hessian
majorant on the elapsed interval, its generated impulse satisfies
\begin{equation}\label{eq:packet-impulse-event}
 |I_k^{\rm imp}(t)|\leq CM_{2,k}^{\rm rem}
                 (\Gamma_kd_k^2t+\Gamma_k^2t^2).
\end{equation}
In particular the adjacent-source dipole is included in the remainder.
\end{proposition}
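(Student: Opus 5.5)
We split $\mathscr G_{>n}$ into three parts and treat them separately: the adjacent source $k=n+1$ in the triggered history, the deep sources $k\geq n+2$ in both histories, and the difference caused by evaluating at the two distinct centroids $\boldsymbol c_n^{\rm trig}$ and $\boldsymbol c_n^{\rm quiet}$. We prove the impulse bound \eqref{eq:packet-impulse-event} first, since it controls the dipole terms in every part.

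\textbf{Impulse bound.} Internal interactions cancel pairwise by antisymmetry of $K$, as in Proposition~\ref{prop:geometry-concentration}. Hence only the exterior field contributes:
\[
 \tfrac{\diff}{\diff t}I_k^{\rm imp}=\Gamma_k\sum_i\epsilon_i\int V_k\,\diff\mu_{k,i}.
\]
We Taylor expand $V_k$ about $\boldsymbol c_k$. The constant term vanishes by neutrality. The linear term gives $\nabla V_k(\boldsymbol c_k)I_k^{\rm imp}$, which is absorbed by Gronwall since $\int M_{1,k}^{\rm rem}\leq Ch^\beta$. The quadratic term is bounded by $M_{2,k}^{\rm rem}\Gamma_k\sum_i(|b_{k,i}-\boldsymbol c_k|^2+I_{k,i})$. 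Before the outgoing section this is $O(M_{2,k}\Gamma_kd_k^2)$. After the section we group each neutral pair before expanding, as in Proposition~\ref{prop:geometry-impulse-confinement}, which gives $O(M_{2,k}\Gamma_kd_k(d_k+v_kt))$. Integrating from $I_k^{\rm imp}(0)=0$ gives $\Gamma_kd_k^2t+\Gamma_k^2t^2$, using $d_kv_k=\Gamma_k$.

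\textbf{Adjacent source.} We expand $\nabla(K*\omega_{n+1})$ about $\boldsymbol c_{n+1}^{\rm trig}$ at distance $d_n|X|$, using \eqref{eq:cell-remote-taylor} and \eqref{eq:cell-grouped-taylor} and the bootstrap confinement. The estimate $cD\leq|X|\leq CD$ follows from the enclosing-disk bounds of Proposition~\ref{prop:geometry-impulse-confinement}, together with $R_n-R_{n+1}=Dd_n$ and $d_n/R_n\ll1$.

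The expansion produces four contributions:
\begin{itemize}
\item a zeroth-order term, which vanishes by neutrality;
\item a dipole term $\nabla^2K\cdot I^{\rm imp}_{n+1}$, bounded via \eqref{eq:packet-impulse-event} together with $B_n$;
\item a quadrupole term with the actual patch moment $\sum_i\epsilon_i\int x\otimes x\,\diff\mu$;
\item a cubic remainder of relative size $(1+\sigma+vs)/D$.
\end{itemize}

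The actual quadrupole moment differs from $\gamma^2P_{\sigma,\Theta_{n+1}}$ in three ways:
\begin{itemize}
\item by the covariances, which are $O(\eta_{n+1}^2)$;
\item by the centroid discrepancy from the isolated orbit, which is $O(\zeta(1+s))$ by the section comparison \eqref{eq:cell-section-comparison}, where the exterior gradient cost $M_ds_d=o_L(1)$ enters as $A_n$;
\item by the neutral-pair centroid errors \eqref{eq:physical-centroid-error}, which are $O(\eta^2_{n+1})$.
\end{itemize}
The time rescaling $s_d=s/r$ turns the $O(r)$ in \eqref{eq:cell-normalized-moment} into the $r$ term. Powers of $(1+s_0+s)$ absorb the linear growth of the moment errors and the conversion between $s$ and $s_d$.

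\textbf{Deep sources.} Each index $k\geq n+2$ is neutral, so its field at index $n$ starts at dipole order. The dipole is bounded by \eqref{eq:packet-impulse-event} and gives the $B_nI_{\rm deep}$ term. The quadrupole term is bounded by $\Gamma_k^2t/R_n^4$ in both histories, without cancellation. Normalizing by $\tau_n$ and by $\varepsilon_n\sim\gamma^2D^{-4}$, and summing geometrically via $R_{n+2}=q_{n+1}R_{n+1}$, produces $\mathfrak q_{\rm deep}$ times a polynomial in $s_0+s$.

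\textbf{Different evaluation points.} Because the two receivers are evaluated at different centroids, we take the Hessian of $V_{>n}$ times $|\boldsymbol c_n^{\rm trig}-\boldsymbol c_n^{\rm quiet}|$. The centroid difference is bounded by integrating the centroid equations. Its size relative to $\varepsilon_n$ is controlled by $A_n+B_n$, once the Hessian of the inner field is bounded by its moment expansion.

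\textbf{Main obstacle.} The difficulty is this last comparison together with the quiet inner sources. The quiet configuration at index $n+1$ is rotating, not separating. Its quadrupole moment is bounded but oscillating, and it has the same size $\gamma^2$ as the triggered source at $s=O(1)$. It is therefore not small compared with $\varepsilon_n$ at early times. Our first attempt is to show that the rotating quiet quadrupole averages out: it rotates on the fast time scale $\tau_{n+1}=r\tau_n$, so integrating by parts in $s_d$ gains a factor $r$. If that is insufficient, we would move this contribution into the leading term, since it is bounded times $\varepsilon_n$ without growth in $s$, and check that it is absorbed into the $O(r)$ part of $P_{\sigma,\Theta}$.
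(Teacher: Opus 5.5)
Your treatment of the impulse bound \eqref{eq:packet-impulse-event} (pair-grouped Taylor remainder, affine term absorbed by Gronwall from $I_k^{\rm imp}(0)=0$), of the triggered adjacent source through its signed second moment, and of the deep sources through the Gaussian sums is essentially the paper's route. The gap is the step you single out as the main obstacle: it rests on a scaling error.

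In the length and circulation units of index $n$, every centroid of the quiet configuration at index $n+1$ stays within $O(\sigma)$ of its center. Its signed second moment is therefore $O(\gamma\sigma^2)=O(\gamma^2 r)$, since $r=\sigma^2/\gamma$. Only the triggered moment reaches size $\gamma^2$ at $s=O(1)$. Its pair midpoints separate to distance of order $vs=\gamma s/\sigma$, so that moment is of order $\gamma\sigma\cdot vs=\gamma^2 s$; this is the content of $P_{\sigma,\theta}(s)=2sS_\theta+O(r)$ in \eqref{eq:cell-normalized-moment}. Hence the quiet quadrupole strain is pointwise $O(\varepsilon_n r)$ and already belongs to the $r$ term of \eqref{eq:inner-gradient-remainder}. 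This one-line observation is exactly what the paper uses. The quiet dipole is bounded by \eqref{eq:packet-impulse-event} just as the triggered one is, and the quiet deep sources by the same sums. So $\tau_n\nabla V_{>n}^{\rm quiet}(t,\boldsymbol c_n^{\rm quiet})$ is estimated absolutely at its own centroid. The leading term in \eqref{eq:inner-gradient-decomposition} is built from triggered quantities only, evaluated at $\boldsymbol c_n^{\rm trig}$ through $X(s)$. No cancellation between the two histories is needed, and none is available, because the two index-$(n+1)$ configurations are different objects (outgoing pairs versus a rotating quadrupole). Your step bounding the inner Hessian times $|\boldsymbol c_n^{\rm trig}-\boldsymbol c_n^{\rm quiet}|$ is therefore superfluous, and its claimed $O((A_n+B_n)\varepsilon_n)$ bound is unproved as written.

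The averaging remedy could not have closed the proof in any case. Integration by parts in $s_d$ controls only time integrals of the rotating term, whereas \eqref{eq:inner-gradient-remainder} is a pointwise bound in $s$. Indeed, if the quiet quadrupole really were of size $\varepsilon_n$ at $s=O(1)$, the pointwise statement would simply be false. Your fallback, absorbing the term into the $O(r)$ part, reaches the right conclusion. But the reason you give, that the term is a bounded multiple of $\varepsilon_n$ without growth in $s$, produces no factor $r$; that factor comes only from $\gamma\sigma^2=\gamma^2 r$.

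A smaller remark on the triggered pairs: the isolated comparison \eqref{eq:cell-section-comparison} must be replaced by its forced patch version, Lemma~\ref{lem:donor-moment}, so that the remote field enters only through $M_d\leq Cr_nA_n$. There the midpoint error grows like $s_d=s/r$. Only the conversion factor $r$ turns $(\zeta+\eta_{n+1}^2)(1+s_d)+M_d(s_d+s_d^2)$ into $(\zeta+\eta_{n+1}^2)(r+s)+A_n(rs+s^2)$.
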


Since $r_n\ll1$, midpoint errors need not remain bounded in $s_d$.
The signed second moment, however, is the product of a midpoint and
a pair vector. We prove a bounded estimate for the vector difference
and an estimate growing linearly for the midpoint difference; their
product has precisely the growth needed for the strain calculation.
Let $\boldsymbol c_{\rm ext}$ follow the actual field of all indices
other than $n+1$, starting at
$\boldsymbol c_{n+1}^{\mathrm{trig}}(t_{n+1}^{\rm out})$.
For either neutral pair at index $n+1$, let $\mu_\pm$ and $b_\pm$
denote its component probabilities and centroids after dilation by
$d_{n+1}$. In the translated coordinates define
\[
 \boldsymbol d=b_+-b_-,\qquad
 \boldsymbol m=\frac{b_++b_-}{2}-\frac{\boldsymbol c_{\rm ext}(t)}{d_{n+1}}.
\]
Let $e_{\rm vec},e_{\rm mid}$ be the maximum discrepancies of these
pair vectors and midpoints from the isolated orbit with the same outgoing direction.
Define the initial section discrepancy $\zeta$ as the maximum of the initial
pair-vector and midpoint discrepancies in these normalized units.
\begin{lemma}[Comparison of the two outgoing neutral pairs]\label{lem:donor-moment}
For the two outgoing pairs $j=1,2$, write
$\ell_j=|\boldsymbol d_j(0)|$. Suppose that in the smaller length units
\[
 \begin{gathered}
 \int|y-b_{j,\pm}|^2\mu_{j,\pm}(y)\idiff y\leq C\eta_{n+1}^2,
 \qquad 0<\ell_j-\epsilon_0\leq|\boldsymbol d_j(s_d)|
                                           \leq\ell_j+\epsilon_0,\\
 c(K_{\rm out}+s_d)\leq|\boldsymbol m_1(s_d)-\boldsymbol m_2(s_d)|
                                    \leq C(K_{\rm out}+s_d),\\
 \|\nabla V_{\rm ext}\|+\|\nabla^2V_{\rm ext}\|\leq M_d.
 \end{gathered}
\]
The exterior norms are taken on the component neighborhoods and the
connecting segments used in the pair equations.
Then, on every such stopped interval, the
upper Dini derivatives of the vector and midpoint discrepancies satisfy
\begin{align}\label{eq:donor-volterra}
 \mathrm D^+e_{\rm vec}&\leq C(K_{\rm out}+{s_d})^{-3}e_{\rm vec}
      +C(K_{\rm out}+{s_d})^{-4}e_{\rm mid}
      +C\eta_{n+1}^2(K_{\rm out}+{s_d})^{-4}
      +CM_d(1+e_{\rm vec}),\notag\\
 \mathrm D^+e_{\rm mid}&\leq Ce_{\rm vec}
      +C(K_{\rm out}+{s_d})^{-3}e_{\rm mid}+C\eta_{n+1}^2
      +CM_d(K_{\rm out}+{s_d}+e_{\rm mid}).
\end{align}
If $M_ds_d\leq c$ for a fixed sufficiently small $c$, and the
initial vector and midpoint discrepancies are at most $\zeta$, then
\begin{equation}\label{eq:donor-comparison-integrated}
 \begin{aligned}
 e_{\rm vec}(s_d)&\leq C(\zeta+\eta_{n+1}^2+M_ds_d),\\
 e_{\rm mid}(s_d)&\leq C\bigl((\zeta+\eta_{n+1}^2)(1+s_d)
                                      +M_d(s_d+s_d^2)\bigr).
 \end{aligned}
\end{equation}
\end{lemma}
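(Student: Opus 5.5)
The plan is to redo the isolated comparison from the proof of Proposition~\ref{prop:isolated-outgoing}, which produced \eqref{eq:cell-section-comparison}, with three extra sources added. These are the patch averaging errors, the exterior field, and the moving origin $\boldsymbol c_{\rm ext}$.

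\textbf{Exact patch equations.} We write the exact centroid equations for the four transported components in the normalized units of index $n+1$ and group them into the pairs $(1,3)$ and $(2,4)$. Within a pair, the mutual velocity term is the same vector $\delta v$ for both members, so it cancels exactly from $\boldsymbol d_j'$; this is the identity \eqref{eq:neutral-pair-vector-equation}. The companion pair acts through its transported patch field. We replace that field by the point field of its two centroids: the signed mass of the companion is zero, and its centered first moments vanish, so the error is controlled by the variances. This gives an $O(\eta_{n+1}^2(K_{\rm out}+s_d)^{-4})$ error in the vector row, since there is a second difference of $K$ at distance $\asymp K_{\rm out}+s_d$. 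In the midpoint row the self term $-K(\boldsymbol d_j)$ is replaced by its patch version, which costs $O(\eta_{n+1}^2)$. The point-field parts are exactly the right-hand sides of \eqref{eq:cell-exact-pair-equations}, now written for two possibly nonsymmetric pairs.

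\textbf{Exterior contributions.} The midpoint $\boldsymbol m$ is measured relative to $\boldsymbol c_{\rm ext}$, and $\boldsymbol c_{\rm ext}$ moves with $V_{\rm ext}$ evaluated at its own position. So the exterior contribution to $\boldsymbol m'$ is a difference $V_{\rm ext}(b_\pm)-V_{\rm ext}(\boldsymbol c_{\rm ext})$. It is bounded by $M_d$ times the distance, and the distance is at most $C(K_{\rm out}+s_d+e_{\rm mid})$. For $\boldsymbol d'$ the exterior contribution is a difference over the pair segment, hence bounded by $M_d|\boldsymbol d|\le CM_d(1+e_{\rm vec})$, plus Hessian-times-variance averaging errors absorbed into the same $M_d$ term.

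\textbf{Comparison with the isolated orbit.} We subtract the isolated point equations with the same outgoing direction. The lower bound on pair lengths and the cone bound on midpoint separation keep the segments joining actual and isolated configurations a distance $\gtrsim K_{\rm out}+s_d$ away from the singular set of the cross-pair kernels. The differences are then Lipschitz with the decay rates already used in \eqref{eq:cell-section-comparison}: $(K_{\rm out}+s_d)^{-3}$ and $(K_{\rm out}+s_d)^{-4}$ in the vector row, and $C$ together with $(K_{\rm out}+s_d)^{-3}$ in the midpoint row, the constant $C$ coming from $K$ being Lipschitz on the annulus of pair vectors. Taking maxima over $j$ and upper Dini derivatives of the maxima yields \eqref{eq:donor-volterra}.

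\textbf{Integration.} We integrate as in the isolated case. From the midpoint row, using an integrating factor bounded above and below,
\[
 e_{\rm mid}\le C\Bigl(\zeta+\eta_{n+1}^2s_d+M_d(s_d+s_d^2)+\int_0^{s_d}e_{\rm vec}\Bigr)+CM_d\int_0^{s_d}e_{\rm mid},
\]
and the hypothesis $M_ds_d\le c$ lets Gronwall absorb the last term. Substituting into the vector row, the term $\int(K_{\rm out}+v)^{-4}(\zeta+\cdots)(1+v)$ is $O(K_{\rm out}^{-2})$ times the data. Taking the supremum of $e_{\rm vec}$ over $[0,T]$, the self-referential term $\int(K_{\rm out}+v)^{-4}\int e_{\rm vec}$ is bounded by $CK_{\rm out}^{-2}\sup e_{\rm vec}$ and is absorbed, as before. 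Meanwhile $CM_d(1+e_{\rm vec})$ integrates to $CM_ds_d(1+\sup e_{\rm vec})$, which is absorbed since $M_ds_d\le c$. This gives $e_{\rm vec}\le C(\zeta+\eta_{n+1}^2+M_ds_d)$. Feeding this back into the midpoint bound gives \eqref{eq:donor-comparison-integrated}.

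\textbf{Main obstacle.} The main obstacle is the exterior term in the midpoint row: a naive bound $M_d\cdot s_d$ per unit time would give a quadratic $M_ds_d^2$ only after the distance-to-$\boldsymbol c_{\rm ext}$ bookkeeping is done correctly. This is why $\boldsymbol c_{\rm ext}$ is chosen to follow the exterior field, so that no constant exterior velocity survives.
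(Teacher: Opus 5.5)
Your proposal is correct and follows the paper's proof. The mutual velocity cancels from $\boldsymbol d_j'$, and the companion enters as a first or second difference of $K$ with decay rates $(K_{\rm out}+s_d)^{-3}$ and $(K_{\rm out}+s_d)^{-4}$. The averaging errors are paid for by the variances, which leaves the $O(\eta_{n+1}^2)$ mutual error only in the midpoint row, and the exterior field enters through differences measured from $\boldsymbol c_{\rm ext}$. As in the paper, the midpoint row is integrated first and substituted into the vector row.

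The only difference is the closing step. You absorb $CK_{\rm out}^{-2}\sup e_{\rm vec}$ as in the isolated argument of Proposition~\ref{prop:isolated-outgoing}, which requires the feedback constants to be independent of $K_{\rm out}$ and $K_{\rm out}$ to be large. The paper instead uses Fubini, $\int_0^T(K_{\rm out}+u)^{-4}\int_0^u e_{\rm vec}\,\mathrm{d}v\,\mathrm{d}u\le\frac13\int_0^T(K_{\rm out}+v)^{-3}e_{\rm vec}\,\mathrm{d}v$, followed by Gronwall with a bounded coefficient integral; this needs no smallness and lets the constants depend on $K_{\rm out}$.
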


\begin{proof}
The mutual averaging error cancels in the first row: exchange the two
integration variables and use the antisymmetry of $K$ to see that the
mutual centroid velocities of a neutral pair are identical. A companion pair has gradient of order
$(K_{\rm out}+{s_d})^{-3}$; moving its midpoint adds another inverse
power. Here the pair equations use the normalized kernel $K$ in the time $s_d$. For a companion with
vector $d_c$ and midpoint $m_c$, its point field at $x$ is
\[
 K(x-m_c-d_c/2)-K(x-m_c+d_c/2)
 =-\int_{-1/2}^{1/2}\nabla K(x-m_c-u d_c)d_c\idiff u.
\]
The pair lengths are bounded above and below. This identity gives
velocity, gradient and Hessian orders $(K_{\rm out}+s_d)^{-2}$,
$-3$ and $-4$. Differentiating in $d_c$ and $m_c$ gives the two
coefficients multiplying $e_{\rm vec}$ and $e_{\rm mid}$ in
\eqref{eq:donor-volterra}. For the observer averaging error use
the Hessian of this grouped field, at cost
$C\eta_{n+1}^2(K_{\rm out}+s_d)^{-4}$.
For a source averaging error, Taylor expansion of each component
costs $C\eta_{n+1}^2\nabla^2K$; subtracting its values at the
two observing centroids costs one more derivative and gives the same
order $-4$. Thus the estimate does not assume that the two observer
covariances agree. The midpoint equation retains the common mutual
averaging error $O(\eta_{n+1}^2)$ and the remote velocity difference
from the exterior trajectory, bounded by
$CM_d(K_{\rm out}+s_d+e_{\rm mid})$.

More explicitly, if $F_c$ is the companion point field, then
\[
 F_c(m+d/2)-F_c(m-d/2)
 =-\int_{-1/2}^{1/2}\int_{-1/2}^{1/2}
 \nabla^2K(m-m_c+u d-vd_c)[d,d_c]\,\mathrm du\,\mathrm dv.
\]
The derivative with respect to either pair vector is
$O((K_{\rm out}+s_d)^{-3})$, and that with respect to either
midpoint is $O((K_{\rm out}+s_d)^{-4})$. In the midpoint equation,
the mutual field $-K(d)$ has a bounded derivative because the pair
length stays away from zero; this gives the term $Ce_{\rm vec}$.
The constant $O(\eta_{n+1}^2)$ mutual averaging error survives only
in the midpoint equation.
To integrate these inequalities, write $T=s_d$, $x=e_{\rm vec}$,
$y=e_{\rm mid}$ and $a_0=\zeta+\eta_{n+1}^2$.
The integrating factor for the second row is bounded above and below
because $\int_0^T(K_{\rm out}+u)^{-3}\idiff u+M_dT\leq C$.
Consequently
\[
 y(T)\leq C\left(a_0(1+T)+\int_0^T x(u)\idiff u
                                  +M_d(T+T^2)\right).
\]
Constants may depend on the fixed $K_{\rm out}$, but not on $T$
or $r_n$. Substitution in the first row is justified by nonnegativity
of the majorants. For its iterated integral, Fubini gives
\[
 \int_0^T(K_{\rm out}+u)^{-4}\int_0^u x(v)\idiff v\idiff u
 \leq\frac13\int_0^T(K_{\rm out}+v)^{-3}x(v)\idiff v.
\]
The remaining feedback integrals obey
\[
 \int_0^{s_d}(K_{\rm out}+s)^{-3}\idiff s\leq CK_{\rm out}^{-2},
 \quad
 \int_0^{s_d}(K_{\rm out}+s)^{-4}(1+s)\idiff s
                                      \leq CK_{\rm out}^{-2},
\]
and $\int_0^T(K_{\rm out}+u)^{-4}(u+u^2)\idiff u
\leq CTK_{\rm out}^{-2}$.
The resulting scalar inequality is
\[
 x(T)\leq C(a_0+M_dT)
   +C\int_0^T\bigl((K_{\rm out}+u)^{-3}+M_d\bigr)x(u)\idiff u.
\]
Its coefficient integral is bounded uniformly when $M_dT$ is small.
Gronwall, followed by substitution into the bound for $y$, proves
\eqref{eq:donor-comparison-integrated}.
\end{proof}

\begin{proof}[Proof of Proposition~\ref{prop:inner-gradient}]
Write $T=s_d$ and $a_0=\zeta+\eta_{n+1}^2$ as in the pair comparison.
The derivative bound $M_d\leq Cr_nA_n$ gives
$M_dT\leq CA_ns=o_L(1)$, so
\eqref{eq:donor-comparison-integrated} applies on the current interval.
For each sign, put
$\operatorname{Cov}_\pm=\int(y-b_\pm)\otimes(y-b_\pm)\idiff\mu_\pm(y)$.
These are physical covariances divided by $d_{n+1}^2$, of size $O(\eta_{n+1}^2)$.
The exact signed moment about the common exterior trajectory is
\[
 \begin{split}
 &\int\left(y-\frac{\boldsymbol c_{\rm ext}(t)}{d_{n+1}}\right)
       \otimes\left(y-\frac{\boldsymbol c_{\rm ext}(t)}{d_{n+1}}\right)\idiff(\mu_+-\mu_-)(y)\\
 &\qquad=\boldsymbol m\otimes\boldsymbol d
      +\boldsymbol d\otimes\boldsymbol m
      +\operatorname{Cov}_+-\operatorname{Cov}_-.
 \end{split}
\]
Componentwise, if $a,b\in\{1,2\}$, the point part is
$(m_a+d_a/2)(m_b+d_b/2)
 -(m_a-d_a/2)(m_b-d_b/2)=m_ad_b+d_am_b$.
The centered first moments vanish, leaving the two covariance matrices.
Their operator norms are bounded by their traces, which are the
variances $C\eta_{n+1}^2$. For actual and isolated pair moments,
with superscript $0$ on the latter, we have
\[
 \begin{split}
 |m\otimes d+d\otimes m-m^0\otimes d^0-d^0\otimes m^0|
 &\leq C\bigl(e_{\rm mid}+(1+T)e_{\rm vec}
                                      +e_{\rm mid}e_{\rm vec}\bigr)\\
 &\leq C\bigl(a_0(1+T)+M_d(T+T^2)\bigr).
 \end{split}
\]
The last inequality uses $a_0+M_dT\leq C$, so the product term has
the same bound. Sum this estimate over both pairs and add their
covariance differences.

We now express the moment of index $n+1$ in the length and circulation
units of index $n$:
\[
 M_{n+1}^{\mathrm{trig}}(s)=\frac{\Gamma_{n+1}}{\Gamma_n d_n^2}
  \sum_{i=1}^4\epsilon_i\int (x-\boldsymbol c_{\rm ext}(t))\otimes
               (x-\boldsymbol c_{\rm ext}(t))\idiff\mu_{n+1,i}^{\mathrm{trig}}(t,x),
 \qquad t=t_{n+1}^{\rm out}+\tau_ns.
\]
Changing units, using
$\gamma\sigma^2=\gamma^2r$ and $\gamma\sigma(\gamma/\sigma)=\gamma^2$, yields
\begin{equation}\label{eq:material-donor}
 \begin{split}
 \frac{|M_{n+1}^{\mathrm{trig}}(s)-\gamma^2P_{\sigma,\theta_d}(s)|}{\gamma^2}
 \leq C\bigl((\zeta+\eta_{n+1}^2)(r+s)
+A_n(rs+s^2)\bigr).
 \end{split}
\end{equation}
Here $\theta_d$ is the outgoing direction of the isolated comparison
orbit at index $n+1$. The identity $\gamma\sigma^2=\gamma^2r_n$
also converts $M_d(s_d+s_d^2)$ into $A_n(r_ns+s^2)$ after
division by $\gamma^2$; no inverse factor $r_n$ remains.
Explicitly, the factor converting a moment in the smaller length units
to $M_{n+1}/\gamma^2$ is $r$, and
\[
 r(1+T)=r+s,\qquad
 r(rA_n)(T+T^2)=A_n(rs+s^2),\qquad T=s/r.
\]
This is why the growing midpoint discrepancy is compatible with an
estimate uniform in the smaller time ratio $r$.
The effective phase
$\psi=\theta_d+\theta_{\rm rec}-2\theta_{\rm line}$ appears after
contracting the kernel and rotating to the coordinates of index $n$.
Here $\theta_{\rm rec}$ is the reference positive-axis angle $\vartheta_n^{\mathrm{quiet}}(t_{n+1}^{\rm out})+\pi/4$,
and $\theta_{\rm line}$ is the line-of-centers angle. The actual reference
positive axis differs from this reference by
$\tfrac12(\phi^{\mathrm{quiet}}-\pi/2)=O(|\mathsf U_n^{\mathrm{quiet}}|+|\mathsf V_n^{\mathrm{quiet}}|)$.
Its projection error is therefore included in the existing
$A_n+\eta_n^2$ allowance for the comparison error.
The estimate just obtained already includes the section impulse in
$\zeta$, because $\zeta$ controls all four centroid coordinates.
The lower-order dipole field requires a sharper impulse estimate,
obtained by integrating its equation from physical time zero.

The physical signed impulse at index $k$ is
$I_k^{\rm imp}(t)=\sum_{i=1}^4\epsilon_i\Gamma_k\int x\idiff\mu_{k,i}(t,x)$.
Let
\[
 V_k=K*\sum_{j\ne k}\sum_{i=1}^4\epsilon_i\Gamma_{j}\mu_{j,i}
\]
be the velocity of all other indices in the same finite evolution.
The impulse has zero initial value at physical time zero and exact derivative
$\sum_{i=1}^4\epsilon_i\Gamma_k\int V_k\idiff \mu_{k,i}$.
Internal interactions cancel. Grouping each neutral pair before Taylor
expansion bounds its second signed moment by
$C(\Gamma_kd_k^2+\Gamma_k^2t)$, including both covariances. Therefore
the required remainder can be estimated without taking the absolute
second moment of the whole configuration. In fact, for a scalar
component $F$ of $V_k$, its affine-subtracted remainder
$R_F(y)=F(c+y)-F(c)-\nabla F(c)y$ satisfies
$|\nabla R_F(y)|\leq M_{2,k}^{\rm rem}|y|$.
For a pair midpoint $m$ and vector $d$, the fundamental theorem of
calculus gives
\[
 |R_F(m+d/2)-R_F(m-d/2)|
 \leq CM_{2,k}^{\rm rem}|d|(|m|+|d|).
\]
The component averaging remainders add at most
$CM_{2,k}^{\rm rem}(I_++I_-)$.
Since $|d|\leq Cd_k$ and
$|m|\leq C(d_k+v_kt)$ in the common moving frame, summing the pairs
gives $CM_{2,k}^{\rm rem}(\Gamma_kd_k^2+\Gamma_k^2t)$.
The constant term cancels by neutrality and the affine term is
$\nabla V_k(c)I_k^{\rm imp}$. Consequently
\[
 \mathrm D^+|I_k^{\rm imp}|
 \leq M_{1,k}^{\rm rem}|I_k^{\rm imp}|
       +CM_{2,k}^{\rm rem}(\Gamma_kd_k^2+\Gamma_k^2t).
\]
Since $I_k^{\rm imp}(0)=0$ and
$\int_0^1M_{1,k}^{\rm rem}\idiff t\leq Ch^\beta$, Gronwall gives
the estimate \eqref{eq:packet-impulse-event}.
Its dipole strain at index $n$, divided by $\varepsilon_n$, is at most
$CB_n(r(s_0+s)+(s_0+s)^2)$. This term is included in the remainder.
Indeed the dimensionless dipole is
$I_{n+1}^{\rm imp}/(\Gamma_nd_n)$, and its strain is bounded by
that norm times $CD^{-3}$. Dividing by $\varepsilon_n=\gamma^2D^{-4}$
and putting $t=\tau_n(s_0+s)$ gives
\[
 \frac{D|I_{n+1}^{\rm imp}|}{\Gamma_nd_n\gamma^2}
 \leq C(\tau_nR_nM_{2,n+1}^{\rm rem})
       \bigl(r(s_0+s)+(s_0+s)^2\bigr),
\]
where $Dd_n=R_n-R_{n+1}\leq R_n$ and
$\Gamma_n\tau_n/d_n^2=1$.
In the reference evolution the second moment at index $n+1$ is
$O(\gamma\sigma^2)$, so its
quadrupole strain costs $O(\varepsilon_n r)$.

The moment in \eqref{eq:material-donor} is first taken about the actual
common exterior trajectory ${\boldsymbol c_{\rm ext}}$. Passing to the actual mean centroid
$\boldsymbol c_{n+1}$ changes it exactly by
\[
 -\frac{(\boldsymbol c_{n+1}-\boldsymbol c_{\rm ext})\otimes I_{n+1}^{\rm imp}
       +I_{n+1}^{\rm imp}\otimes(\boldsymbol c_{n+1}-\boldsymbol c_{\rm ext})}
        {\Gamma_n d_n^2}.
\]
The centroid comparison gives $|\boldsymbol c_{n+1}-{\boldsymbol c_{\rm ext}}|/d_n\leq C(\sigma+vs)$ in
the normalized coordinates of index $n$, where $v=\gamma/\sigma$. Equation
\eqref{eq:packet-impulse-event} therefore bounds this change, divided
by $\gamma^2$, by a small coefficient times a fixed polynomial in $s,{s_0}$.
More precisely that bound is
\[
 CB_n\frac{\sigma+vs}{D}
            \bigl(r(s_0+s)+(s_0+s)^2\bigr).
\]
The center-change identity follows by expanding
$(x-c_{n+1})^{\otimes2}=(x-c_{\rm ext})^{\otimes2}
 -(c_{n+1}-c_{\rm ext})\otimes(x-c_{\rm ext})
 -(x-c_{\rm ext})\otimes(c_{n+1}-c_{\rm ext})
 +(c_{n+1}-c_{\rm ext})^{\otimes2}$.
Its final term integrates to zero by neutrality. Thus changing the
origin does not remove the impulse and does not create a constant-mass
term.
The change of the kernel expansion point costs the further factor
$(\sigma+vs)/D$.

We make explicit where a far-field expansion is used. In the length
units of index $n$, the source centroid diameter is at most
$C(\sigma+vs)$, and its patch radius is at most
$C\sigma\eta_{n+1}^{1/45}$. The observed configuration has bounded
diameter in these units. The actual center separation is comparable
to $D$. The scale margins imply
\[
 \sup_{0\leq s\leq S}\frac{1+\sigma+vs}{D}=o_L(1).
\]
Thus every Taylor segment lies at distance at least $cD$ from zero.
For the gradient of the source velocity, neutrality eliminates the
constant kernel term, the first moment gives the dipole term already
bounded, and the second moment gives
$\tfrac12\nabla^3K(De_1):M_{n+1}$ in center-line coordinates.
For one point pair, subtract the second-order Taylor polynomial of
$\nabla K(X-y)$ and call its remainder $R_2(y)$. Taylor's
formula for its derivative gives
$|\nabla_yR_2(y)|\leq CD^{-5}|y|^2$ on the source ball.
The signed difference at the endpoints of the pair is therefore at most
$CD^{-5}|d_{\rm pair}|(|m_{\rm pair}|+|d_{\rm pair}|)^2$.
The source covariance correction after the quadratic term costs
$CD^{-5}a_{\rm core}^2(|m_{\rm pair}|+|d_{\rm pair}|+a_{\rm sup})$,
where $a_{\rm core}^2=C\sigma^2\eta_{n+1}^2$ bounds each
variance and $a_{\rm sup}\leq C\sigma\eta_{n+1}^{1/45}$ is
the actual support radius. Since $a_{\rm sup}\leq C|d_{\rm pair}|$,
it is absorbed by the pair length.
Using $|d_{\rm pair}|\leq C\sigma$ and the variance
$C\sigma^2\eta_{n+1}^2$ gives, after division by $\varepsilon_n$,
\[
 C\frac{\sigma+vs}{D}(r+s)
    +C\frac{\sigma+vs}{D}r\eta_{n+1}^2.
\]
Here $\gamma\sigma(\sigma+vs)/\gamma^2=r+s$.
The same expansion across the observed diameter adds $CD^{-1}(r+s)$.
These estimates are needed only for $s\leq S$. The integral defining
$\mathcal H_\sigma$ uses the algebraic moment expression extended to
all $s\geq0$, whose exponential tail is estimated separately below.

For $k>n+1$, the sum of the quadrupole contributions divided by the
contribution of index $n+1$ is bounded by
$C\mathfrak q_{\rm deep}$, where $\mathfrak q_{\rm deep}=q_{n+1}^{2\nu_\Gamma}$. The sharp Hessian in
\eqref{eq:sharp-hessian} gives the additional impulse factor
$I_{\rm deep}=q_n^{\nu_\Gamma-3}q_{n+1}^{2\nu_\Gamma}$.
To see these ratios, for $k>n+1$ the dimensional second-moment bound
$C(\Gamma_kd_k^2+\Gamma_k^2t)$, divided by
$\Gamma_{n+1}^2\tau_n$, is at most
\[
 C\left(\frac{R_k}{R_{n+1}}\right)^{2\nu_\Gamma}
       \left(\frac{\tau_k}{\tau_n}+s_0+s\right).
\]
The Gaussian ratios sum from their first value
$q_{n+1}^{2\nu_\Gamma}$ with a uniform geometric bound. For the
impulse an additional factor
$R_{k-1}^{\nu_\Gamma-3}/R_n^{\nu_\Gamma-3}$ occurs in the
Hessian bound. At $k=n+2$ this is $q_n^{\nu_\Gamma-3}$;
each subsequent term still decreases geometrically, because
$2\nu_\Gamma+(\nu_\Gamma-3)=3\nu_\Gamma-3>0$.
Thus the same summation gives $I_{\rm deep}$.
Higher multipoles are also controlled after grouping pairs. At order
$\nu\geq2$ their size is at most
\[
 C\Gamma_kd_k(d_k+v_kt)^{\nu-1}
             +C\Gamma_ka_k^2(d_k+v_kt)^{\nu-2}.
\]
Each extra kernel derivative is accompanied by another ratio of
the configuration diameter to its distance from index $n$,
and
\[
 v_k\tau_n S/R_k
                 \leq ChR_n^{\alpha}R_k^{\alpha}S=o_L(1).
\]
Thus the contribution of every index $k>n+1$ is retained and
summed, both before and after its separated-section crossing.
The kernel contraction is linear in the signed moment, so the
preceding estimates yield exactly \eqref{eq:inner-gradient-decomposition}
and \eqref{eq:inner-gradient-remainder}.
\end{proof}

\subsection{Difference equations for the initially identical configurations}

For $k<n$, an absolute covariance error $O(\eta_k^2)$ need not be
small compared with $\varepsilon_n$. The two patches at each such index,
however, start from the same disk. The proportional estimate of
Section~\ref{app:response} applies from time zero because
\[
 \frac{t}{\tau_k}
 \leq r_n^{\rm out}(s_0+S)=o_L(1),
\]
and \eqref{eq:analytic-canonical} supplies its exterior hypotheses.
Each reduced boundary graph has norm $O(\eta_k^2)$ on a larger
analytic strip. On the common smaller strip of Lemma~\ref{lem:response}, at its
actual elapsed time $t/\tau_k$, let
$e_{k,i}(t)=\|g_{k,i}^{\mathrm{trig}}(t)-g_{k,i}^{\mathrm{quiet}}(t)\|_{\varrho(t/\tau_k),1}$.
In physical time its comparison inequality is
\begin{equation}\label{eq:head-graph-differential}
 \mathrm D^+e_{k,i}\leq C\tau_k^{-1}e_{k,i}
                                      +C\Delta A_{k,i},\qquad e_{k,i}(0)=0.
\end{equation}
Here $\Delta A_{k,i}$ is the supremum norm of the difference of the
physical exterior gradients in matched centered coordinates. The estimate uses
this difference itself, with no time derivative. Indeed, the normalized
velocity is $(\tau_k/d_k)V^{\rm phys}$, so its gradient difference is
$\Delta_A^{\rm int}(t/\tau_k)=\tau_k\Delta A_{k,i}(t)$.
Converting the interaction-time inequality to physical time divides
both terms by $\tau_k$, giving \eqref{eq:head-graph-differential}.
The comparison starts from the original disks and uses their full
elapsed interaction time $t/\tau_k\leq T_*$, without restarting from
a subsequently deformed patch.

We next put the centroid and graph differences in one norm. Write
$\Delta F_k=F_k^{\mathrm{trig}}-F_k^{\mathrm{quiet}}$ for each shape,
phase or offset coordinate, using continuous lifts of the phases.
Define the relative-center difference and the maximum over larger scales by
\[
 \begin{aligned}
 c_{k,n}^{\Delta}
  &=\frac{(\boldsymbol c_k^{\mathrm{trig}}-\boldsymbol c_n^{\mathrm{trig}})
        -(\boldsymbol c_k^{\mathrm{quiet}}-\boldsymbol c_n^{\mathrm{quiet}})}
        {R_k},\\
 \mathcal E_{<n}
   &=\max_{\substack{k<n\\i}}\!
       \left\{e_{k,i},|\Delta\mathsf U_k|,|\Delta\mathsf V_k|,
       |\Delta\mathsf A_k|,|\Delta\vartheta_k|,
       \frac{|\Delta\chi_k|}{d_k},|c_{k,n}^{\Delta}|\right\}.
 \end{aligned}
\]
Set $\mathcal E_{<0}=0$. Comparing centers relative to index $n$
removes their common translation. At index $n$ use the finer
normalization dictated by the impulse equation:
\[
 \xi=\frac{\Delta\chi_n}{d_np_n},\qquad
 \mathcal E_n=\max\{|\Delta\mathsf U_n|,|\Delta\mathsf V_n|,
              |\Delta\mathsf A_n|,|\Delta\vartheta_n|,|\xi|\}.
\]
Since $d_np_n=d_n^2/R_n$, the exterior Hessian in that equation will
have coefficient $\tau_nR_nM_{2,n}^{\rm rem}\leq B_n$.

\begin{lemma}[Coupled comparison with all larger configurations]\label{lem:head-graph}
On a common stopped interval satisfying the bootstrap regime, for the norms just defined, choose
a fixed $C_{\rm env}$ large enough to bound the polynomial
source terms and put $F_{\rm env}(s)=C_{\rm env}(1+s_0+s)^4$.
In the variable $s=(t-t_{n+1}^{\rm out})/\tau_n$, the
centroid and graph comparisons give
\begin{align}\label{eq:head-impulse-system}
 \mathrm D^+\mathcal E_{<n}
  &\leq C(r_n^{\rm out}+A_n)\mathcal E_{<n}
      +C(p_n^3\mathcal E_n+p_n^2|\xi|)
      +C(\varepsilon_n F_{\rm env}+\eta_n^2),\notag\\
 |\xi'|&\leq CA_n|\xi|+CB_n(\mathcal E_n+\mathcal E_{<n})
                  +C\varepsilon_n F_{\rm env}+CB_n\eta_n^2.
\end{align}
\end{lemma}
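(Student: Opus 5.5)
We derive the two differential inequalities in \eqref{eq:head-impulse-system} separately, by subtracting the quiet equations from the triggered ones. The first concerns the indices $k<n$, whose patches start from identical disks; the second concerns the impulse offset at index $n$. Throughout, we use only the positive majorants \eqref{eq:event-small-coefficients}, \eqref{eq:sharp-hessian} and Proposition~\ref{prop:inner-gradient}, on the current stopped interval. In the variable $s$, time derivatives acquire the factor $\tau_n$, and $\tau_n/\tau_k\le r_n^{\rm out}$ for $k<n$.

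\textbf{Indices $k<n$.} For each component at index $k<n$ we begin with the graph inequality \eqref{eq:head-graph-differential}. Multiplying by $\tau_n$, its linear coefficient becomes $C\tau_n/\tau_k\le Cr_n^{\rm out}$. The forcing $\tau_n\Delta A_{k,i}$ is the difference of centered exterior gradients, which we split into three parts.
\begin{enumerate}
\item Sources at indices $j<n$, $j\ne k$, together with the same-index companions. Their differences are linear in $\mathcal E_{<n}$ through centroid, graph and shape differences; the graph part uses \eqref{eq:remotegraph}. With the remote sums of Lemma~\ref{lem:exterior-gradient-controls} this gives $C(r_n^{\rm out}+A_n)\mathcal E_{<n}$.
\item Index $n$ itself. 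Its neutral quadrupole and dipole differences, seen from distance $\sim R_k$, contribute $p_n^3\mathcal E_n$ from the second moment and $p_n^2|\xi|$ from the dipole $\Delta\chi_n\Gamma_n$. The factor $d_np_n$ in $\xi$ is chosen exactly to produce these powers.
\item Indices $>n$. Proposition~\ref{prop:inner-gradient}, re-expanded at the farther point, bounds this part by $\varepsilon_nF_{\rm env}$; the quiet moment at index $n+1$ is of lower order.
\end{enumerate}
The centroid, shape, phase and offset differences at $k$ obey analogous subtracted centroid equations. Each is driven by the same field differences plus averaging differences. The averaging differences are controlled by \eqref{eq:observer-response}, which is proportional to $\mathcal E$, up to residual $\eta_n^2$ terms from the index-$n$ absolute averaging. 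Taking the maximum over $k,i$ gives the first line; the relative centers $c^\Delta_{k,n}$ are handled by the endpoint split used earlier.

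\textbf{The offset $\xi$.} We subtract the exact impulse equations from the proof of Proposition~\ref{prop:geometry-concentration} for the two histories: $\chi_n'=\nabla V_n(\boldsymbol c_n)\chi_n+(\text{Hessian remainder})$. The difference of the linear terms gives $CA_n|\xi|$, plus $\nabla\Delta V_n$ times $\chi_n^{\rm quiet}$, which is small by \eqref{eq:compact-impulse}. The Hessian remainders have size $M_{2,n}^{\rm rem}$ times the second moments. Their difference is linear in the shape differences $\mathcal E_n$; the variance part is handled as an absolute $O(\eta_n^2)$ term, giving $B_n\eta_n^2$. The Hessian differences from indices $<n$ are bounded by $B_n\mathcal E_{<n}$, and those from indices $>n$ by $\varepsilon_nF_{\rm env}$ through Proposition~\ref{prop:inner-gradient}. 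Finally we divide by $d_np_n$, multiply by $\tau_n$, and use $\tau_nR_nM_2\le B_n$.

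\textbf{Expected obstacle.} The main difficulty is the bookkeeping in the first line, and specifically keeping the graph-difference contribution proportional. The $O(\eta_k^2)$ absolute errors must never enter. Only differences via \eqref{eq:response} and \eqref{eq:observer-response} may appear, and the mixed centroid–shape cross terms must be shown to be quadratic-small, absorbed into $A_n\mathcal E_{<n}$.
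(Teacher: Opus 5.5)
\textbf{There is a genuine gap, in the relative-center rows.}

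Your graph rows, the $k<n$ shape, phase and offset rows, and the fine impulse row follow the paper's proof closely. You multiply \eqref{eq:head-graph-differential} by $\tau_n$, split the sources into $j<n$, $j=n$, $j>n$, subtract the exact impulse equations before dividing by $p_n$, and use the from-zero bound on $\widehat\chi_n^{\rm quiet}$.

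The problem is the rows $c_{k,n}^\Delta$, which you dispose of as ``handled by the endpoint split used earlier.'' That split only treats common sources $j\ne k,n$ evaluated at the two centers. The derivative $(c_{k,n}^\Delta)'=(\tau_n/R_k)\Delta(\dot{\boldsymbol c}_k-\dot{\boldsymbol c}_n)$ also contains the self-translation of configuration $n$. This is $\dot{\boldsymbol c}_n^{\rm int,pt}=(d_n/\tau_n)\mathcal M(\mathcal X_n)$, where $\mathcal M$ is the mean of the four internal point velocities, and it contributes $(d_n/R_k)|\Delta\mathcal M|$.

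A Lipschitz bound on $\mathcal M$ gives only $C(d_n/R_k)\mathcal E_n$. At $k=n-1$ this is not $O(p_n^3\mathcal E_n)$ uniformly in $n$: the ratio is $e^{L(4\alpha n^2-2n+3)}$.

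The missing idea is central symmetry. One has $\mathcal M(\mathsf U,\mathsf V,\mathsf A,\vartheta,0)=0$, hence $|\nabla_{\mathsf U,\mathsf V,\mathsf A,\vartheta}\mathcal M|\le C|\widehat\chi|$. Changing first the offset (shape fixed) and then the shape gives $|\Delta\mathcal M|\le C|\Delta\widehat\chi_n|+C|\widehat\chi_n^{\rm quiet}|\mathcal E_n$. Now insert $\Delta\widehat\chi_n=p_n\xi$, the impulse bound from physical time zero $|\widehat\chi_n^{\rm quiet}|\le Cp_nB_nT$, $B_n\le Cp_n^2$ and $p_nT=o_L(1)$. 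This yields exactly $Cp_n^2|\xi|+Cp_n^3\mathcal E_n$. This self-translation, not any field seen at index $k$, is the real source of the $p_n^2|\xi|$ term in the first row.

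\textbf{A second cancellation is needed at the same endpoint.} Consider the observer average $\frac14\sum_iF(\boldsymbol c_n+d_ny_{n,i})$ of a common remote field $F$. If you bound its change by $\|\nabla F\|$, the cost is $\tau_nd_nM_{1,n}^{\rm rem}\mathcal E_n/R_k\le CA_n(d_n/R_k)\mathcal E_n$. At $k=n-1$ this is again unbounded relative to $p_n^3\mathcal E_n$ as $n\to\infty$ (ratio $Cq_{n-1}R_n^{-3\alpha}$).

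You must use $\sum_iy_{n,i}=0$ to write the average as $F(\boldsymbol c_n)+\frac14\sum_iR_F(d_ny_{n,i})$, with $R_F$ the affine-subtracted remainder. A shape change then costs $d_n^2\|\nabla^2F\|\mathcal E_n$, which normalizes to $B_np_n^2\mathcal E_n\le Cp_n^4\mathcal E_n$.

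\textbf{Your item 2 misidentifies the powers.} The index-$n$ dipole difference $2\Gamma_n\Delta\chi_n=2\Gamma_nd_np_n\xi$, seen at distance $R_k$, gives a normalized gradient $\tau_n\Gamma_nd_np_n|\xi|R_k^{-3}\le p_n^4|\xi|$. The $p_n^3\mathcal E_n$ term comes from displacing individual centroids by $d_n\mathcal E_n$, not from the second moment.

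Your bounds there are valid overestimates, so the gradient rows are fine. But the gradient rows never need $p_n^2|\xi|$. The normalization $\xi=\Delta\chi_n/(d_np_n)$ is dictated by the impulse equation, so that its Hessian coefficient is $\tau_nR_nM_{2,n}^{\rm rem}\le B_n$; it is not chosen for item 2.

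Without the two cancellations above, your argument does not establish the first row of \eqref{eq:head-impulse-system} in the stated form.
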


\begin{proof}
Fix the current stopped interval $0\leq s\leq s_{\rm end}\leq S$ and set
\[
 T=s_0+s=\frac t{\tau_n},\qquad
 \nu_k=\frac{\tau_n}{\tau_k},\qquad
 \widehat\chi_k=\frac{\chi_k}{d_k},\qquad
 y_{k,i}=\frac{b_{k,i}-\boldsymbol c_k}{d_k}.
\]
A prime in this proof denotes differentiation in $s$; a dot denotes
physical-time differentiation. In particular,
\[
 \sum_{i=1}^4y_{k,i}=0,\qquad
 \widehat\chi_k=\frac12\sum_{i=1}^4\epsilon_i y_{k,i},\qquad
 \nu_k\leq r_n^{\rm out}\quad(k<n).
\]
We estimate the components in the following order. The coordinate and
kernel bounds first supply uniform coefficients. The graph rows control
patch deformation, the shape and phase rows control the internal
configuration, and the offset rows control its signed impulse. The
relative-center rows remove common translation; only these rows need
the term $p_n^2|\xi|$. The final fine-impulse estimate closes the
receiver equation in its stronger normalization.

\paragraph{\textbf{The coordinate map and centered source kernels.}}
Use the six real coordinates
\[
 \mathcal X_k=(\mathsf U_k,\mathsf V_k,\mathsf A_k,
                    \vartheta_k,\widehat\chi_k)=\Phi(y_{k,1},\ldots,y_{k,4}).
\]
The inverse chart in \eqref{eq:cell-chart-inverse}, together with
\[
 y_{k,1,2}=\widehat\chi_k/2\pm z_k,\qquad
 y_{k,3,4}=-\widehat\chi_k/2\pm w_k,
\]
gives uniform bounds for $\nabla\Phi$, $\nabla^2\Phi$ and
$\nabla\Phi^{-1}$ on the small chart. These bounds also hold for
continuous lifts of $\vartheta$: the derivatives are periodic in the
common rotation. For example, with variations $h_z,h_w$, the defining
formulas give
\[
 \begin{aligned}
 \nabla\mathsf A[h_z,h_w]&=2z\cdot h_z-2w\cdot h_w,\\
 \nabla\mathsf R[h_z,h_w]
 &=2\frac{|w|}{|z|}z\cdot h_z
                    +2\frac{|z|}{|w|}w\cdot h_w,\\
 \nabla\phi[h_z,h_w]
 &=\frac{z^\perp\cdot h_z}{|z|^2}
                         -\frac{w^\perp\cdot h_w}{|w|^2},\\
 \nabla\vartheta[h_z,h_w]
 &=\frac12\left(\frac{z^\perp\cdot h_z}{|z|^2}
                         +\frac{w^\perp\cdot h_w}{|w|^2}\right),\\
 \nabla\mathsf X
 &=\frac{\mathsf R+\mathsf A}
            {2\sqrt{\mathsf A}\mathsf R^{3/2}}\nabla\mathsf R
   -\frac{\mathsf R+\mathsf A}
            {2\mathsf A^{3/2}\sqrt{\mathsf R}}\nabla\mathsf A,\\
 \nabla\mathsf Y
 &=\frac{\mathsf Y}{2\mathsf R}\nabla\mathsf R
    -\frac{\mathsf Y}{2\mathsf A}\nabla\mathsf A
    +\sqrt{\mathsf R/\mathsf A}\cos(\phi-\pi/2)\nabla\phi.
 \end{aligned}
\]
All denominators have fixed positive lower bounds in the chart.
Differentiating these coefficients gives the asserted second-derivative
bound. Consequently,
\begin{equation}\label{eq:head-chart-displacement}
 \max_i|\Delta y_{k,i}|\leq C\mathcal E_{<n}\quad(k<n),
 \qquad \max_i|\Delta y_{n,i}|\leq C\mathcal E_n,
 \qquad \Delta\widehat\chi_n=p_n\xi.
\end{equation}
For a perturbing centered centroid velocity $E$, the exact identity
\[
 \begin{split}
 \Delta\bigl(\nabla\Phi(y)E\bigr)
  &=\nabla\Phi(y^{\rm trig})\Delta E
       +\bigl(\nabla\Phi(y^{\rm trig})
                  -\nabla\Phi(y^{\rm quiet})\bigr)E^{\rm quiet},\\
 \left|\Delta\bigl(\nabla\Phi(y)E\bigr)\right|
  &\leq C|\Delta E|+C|E^{\rm quiet}|\,|\Delta\mathcal X|
 \end{split}
\]
will account for the nonlinear coordinate change in every scalar row.

Let $\rho_{j,i}^\kappa$ be the centered physical probability measure
obtained from $\mu_{j,i}^\kappa$ by translation by $-b_{j,i}^\kappa$.
Its mass is one and its first moment is zero. The area-coordinate
formula for the two centered graphs, including their constrained low
modes, gives
\begin{equation}\label{eq:head-centered-test}
 \left|\int f(y)\idiff(\rho_{j,i}^{\rm trig}
                       -\rho_{j,i}^{\rm quiet})(y)\right|
 \leq Ca_j^2\|\nabla^2f\|_{L^\infty(B_{2a_j})}e_{j,i},
 \qquad j<n.
\end{equation}
Indeed, subtract $f(0)+\nabla f(0)y$ under the integral. The mass and
centroid constraints cancel this affine polynomial, its remainder is
bounded by $Ca_j^2\|\nabla^2f\|$, and the symmetric difference of the
two normalized graph domains has area at most $Ce_{j,i}$. The low modes
have the same difference bound because they are analytic functions of
the reduced graph on the fixed small graph ball.

For a source component put
\[
 u_{j,i}^\kappa(x)=\epsilon_i\Gamma_j
       \int K(x-b_{j,i}^\kappa-y)\idiff\rho_{j,i}^\kappa(y).
\]
Suppose matched source-observer distances are comparable to $\ell$.
Changing positions with the source graph fixed, then changing the
centered graph with positions fixed, yields, for $m=0,1,2$,
\begin{equation}\label{eq:head-source-kernel}
 \begin{aligned}
 |\Delta\nabla^m u_{j,i}|
 &\leq C_m\Gamma_j\left(
    \ell^{-m-2}|\Delta x-\Delta b_{j,i}|
                   +a_j^2\ell^{-m-3}e_{j,i}\right),&&j<n,\\
 |\Delta\nabla^m u_{n,i}|
 &\leq C_m\Gamma_n\left(
    \ell^{-m-2}|\Delta x-\Delta b_{n,i}|
                   +a_n^2\ell^{-m-3}\right).
 \end{aligned}
\end{equation}
Here $|\nabla^mK(x)|\leq C_m|x|^{-m-1}$ is used on the separated
source-observer neighborhoods. For the second row, Taylor-expand each
source about its own centroid and use its variance bound separately.
For the first row use \eqref{eq:head-centered-test}; hence its covariance
term retains the factor $e_{j,i}$.

Align the two histories by their index-$n$ centers. The exact position
relations are
\[
 \begin{aligned}
 \Delta b_{k,i}&=\Delta\boldsymbol c_n+R_kc_{k,n}^\Delta
                                      +d_k\Delta y_{k,i},&&k<n,\\
 \Delta b_{n,i}&=\Delta\boldsymbol c_n+d_n\Delta y_{n,i}.
 \end{aligned}
\]
The common translation cancels from the displacement in
\eqref{eq:head-source-kernel}. With $\ell_{kj}=\max(R_k,R_j)$,
\begin{equation}\label{eq:head-relative-displacements}
 \begin{aligned}
 |\Delta b_{k,i}-\Delta b_{j,l}|
          &\leq C\ell_{kj}\mathcal E_{<n},&&k,j<n,\\
 |\Delta b_{k,i}-\Delta b_{n,l}|
          &\leq CR_k\mathcal E_{<n}+Cd_n\mathcal E_n,&&k<n.
 \end{aligned}
\end{equation}

For the field of the other configurations define, on the fixed
normalized ball containing the compact state,
\[
 \begin{gathered}
 H_k^\kappa(y)=\frac{\tau_n}{d_k}
 \bigl(V_k^\kappa(\boldsymbol c_k^\kappa+d_ky)
                      -V_k^\kappa(\boldsymbol c_k^\kappa)\bigr),\\
 H_k^\kappa(0)=0,\qquad
 \|\nabla H_k^\kappa\|\leq CA_n,\qquad
 \mathcal G_k=\|\nabla(H_k^{\rm trig}-H_k^{\rm quiet})\|.
 \end{gathered}
\]
For $k<n$, split its sources into $j<n$, $j=n$ and $j>n$.
The coefficient sum for the first group is
\begin{equation}\label{eq:head-gradient-sum}
 \begin{split}
 \tau_n\sum_{\substack{j<n\\j\ne k}}
             \frac{\Gamma_j}{\ell_{kj}^2}
 &\leq C\tau_nh^\beta
    \left(\sum_{j<k}R_j^{\nu_\Gamma-2}
                  +R_k^{-2}\sum_{k<j<n}R_j^{\nu_\Gamma}\right)\\
 &\leq C\tau_nh^\beta\leq CA_n.
 \end{split}
\end{equation}
The exponent $\nu_\Gamma-2=3\alpha>0$ controls both sums uniformly
in the cutoff. Graph contributions have the additional factor
$(a_j/\ell_{kj})^2$ and obey the same bound. The index-$n$ source
has the three contributions
\[
 C\frac{\tau_n\Gamma_n}{R_k^2}\mathcal E_{<n},\qquad
 C\frac{\tau_n\Gamma_nd_n}{R_k^3}\mathcal E_n
       =C\left(\frac{d_n}{R_k}\right)^3\mathcal E_n,
 \qquad
 C\frac{\tau_n\Gamma_na_n^2}{R_k^4}\leq C\eta_n^2.
\]
The first coefficient is at most $CA_n$ by the same scale sum;
the second is at most $Cp_n^3$.
The neutral-source expansion and the impulse estimate of the preceding
subsection give the absolute bounds
\begin{equation}\label{eq:head-inner-absolute}
 \tau_nR_n^{m-1}
       \|\nabla^m V_{>n}^\kappa\|\leq
           C\varepsilon_nF_{\rm env}(s),\qquad m=0,1,2.
\end{equation}
These bounds hold near every observing configuration $k\leq n$.
For the gradient and Hessian they follow from the grouped moment
expansion; for the velocity use the same expansion one derivative
earlier. Each source is neutral, its impulse is generated from zero,
and its signed second moment is bounded by
$C(\Gamma_jd_j^2+\Gamma_j^2t)$. The Gaussian sums used in
Proposition~\ref{prop:inner-gradient} therefore give all three orders.
Increasing the fixed $C_{\rm env}$ absorbs their polynomial bounds.
Together these estimates prove
\begin{equation}\label{eq:head-gradient-mismatch}
 \mathcal G_k\leq CA_n\mathcal E_{<n}
       +Cp_n^3\mathcal E_n+C\eta_n^2
                        +C\varepsilon_nF_{\rm env},\qquad k<n.
\end{equation}

\paragraph{\textbf{The graph rows.}}
The exterior for the component $(k,i)$ includes its other three
same-index components. Define its physical gradient discrepancy at
matched component-centered points by
\[
 \Delta A_{k,i}=\sup_{|y|\leq r_{\rm ext}}
 \left|\nabla V_{k,i}^{\rm ext,trig}(b_{k,i}^{\rm trig}+d_ky)
      -\nabla V_{k,i}^{\rm ext,quiet}(b_{k,i}^{\rm quiet}+d_ky)\right|.
\]
The normalized exterior hypotheses in
\eqref{eq:response-exterior-hypothesis} follow from
\eqref{eq:analytic-canonical}; their constants are uniform in $k$.
Moreover $\nu_k(s_0+S)\leq r_n^{\rm out}(s_0+S)<T_*$, so
Lemma~\ref{lem:response} applies from physical time zero.
The same-index sources in \eqref{eq:head-source-kernel} contribute
$C\nu_k\mathcal E_{<n}$ to $\tau_n\Delta A_{k,i}$, since their
separation is comparable to $d_k$. Translating the remote observing
center from $\boldsymbol c_k$ to $b_{k,i}$ adds at most
\[
 C\tau_nd_kM_{2,k}^{\rm rem}\max_l|\Delta y_{k,l}|
                                      \leq CA_n\mathcal E_{<n}.
\]
The last bound follows from the derivative sum: every remote distance
is at least a fixed multiple of $d_k$. Thus
\[
 \begin{aligned}
 \tau_n\Delta A_{k,i}
   &\leq C(\nu_k+A_n)\mathcal E_{<n}+C\mathcal G_k,\\
 \mathrm D^+e_{k,i}
   &\leq C\nu_ke_{k,i}+C\tau_n\Delta A_{k,i}\\
   &\leq C(\nu_k+A_n)\mathcal E_{<n}
      +Cp_n^3\mathcal E_n+C\varepsilon_nF_{\rm env}+C\eta_n^2.
 \end{aligned}
\]
The circular self frequencies have already been handled in
Lemma~\ref{lem:response}; the coefficient in this row is $\nu_k$.

\paragraph{\textbf{The four shape and phase rows.}}
Let $\Pi$ subtract the arithmetic mean of a four-vector, and put
\[
 \begin{gathered}
 F_i(y)=\sum_{l\ne i}\epsilon_lK(y_i-y_l),\qquad
 \mathcal F(\mathcal X)=\nabla\Phi(y(\mathcal X))\Pi F(y(\mathcal X)),\\
 \mathcal P(\mathcal X,H)=\nabla\Phi(y(\mathcal X))
                         \Pi\bigl(H(y_1),\ldots,H(y_4)\bigr).
 \end{gathered}
\]
On the separated chart $\|\nabla\mathcal F\|\leq C$. For fields with
$H(0)=0$, the coordinate-change identity above implies
\[
 \left|\mathcal P(\mathcal X^{\rm trig},H^{\rm trig})
           -\mathcal P(\mathcal X^{\rm quiet},H^{\rm quiet})\right|
 \leq C\|\nabla H^{\rm quiet}\|\,|\Delta\mathcal X|
                         +C\|\nabla\Delta H\|.
\]
The exact averaged centroid equation, after applying the chart, is
\[
 \mathcal X_k'=\nu_k\mathcal F(\mathcal X_k)
                   +\mathcal P(\mathcal X_k,H_k)+R_k^{\rm patch}.
\]
We first compare its averaging corrections. For a same-index
interaction their velocity remainder is
\[
 \delta_{il}=\iint
 \bigl[K(b_i-b_l+x-y)-K(b_i-b_l)
                     -\nabla K(b_i-b_l)(x-y)\bigr]
                                     \idiff\rho_i(x)\idiff\rho_l(y).
\]
The affine term has zero integral. Changing either graph uses
\eqref{eq:head-centered-test}; changing $b_i-b_l$ differentiates
the quadratic remainder once. Since $|b_i-b_l|\asymp d_k$,
\[
 \frac{\tau_n\Gamma_k}{d_k}|\Delta\delta_{il}|
 \leq C\nu_k\eta_k^2
       \left(\max_j e_{k,j}+\max_j|\Delta y_{k,j}|\right)
 \leq C\nu_k\mathcal E_{<n}.
\]
For the remote observer correction, apply
\eqref{eq:observer-response} in the index-$k$ clock and then multiply
by $\nu_k$. Its contribution in the present velocity units is bounded by
\[
 C\nu_k\eta_k^2\max_i e_{k,i}
             +C\eta_k^4\tau_n\max_i\Delta A_{k,i}
 \leq C(\nu_k+A_n)\mathcal E_{<n}+C\mathcal G_k.
\]
The transformations by $\nabla\Phi$ obey the same bound: the
reference averaging remainder is $O(\nu_k\eta_k^2)$ for internal
interactions, and the difference of the coordinate derivatives costs
$C|\Delta\mathcal X_k|$. Source changes in the remote field are
already included in $\mathcal G_k$. It follows that
\[
 |\Delta R_k^{\rm patch}|
        \leq C(\nu_k+A_n)\mathcal E_{<n}+C\mathcal G_k.
\]
Subtracting the exact vector equations now gives, for each
$Z\in\{\mathsf U,\mathsf V,\mathsf A,\vartheta\}$,
\begin{equation}\label{eq:head-scalar-rows}
 \mathrm D^+|\Delta Z_k|
 \leq C(\nu_k+A_n)\mathcal E_{<n}
      +Cp_n^3\mathcal E_n+C\varepsilon_nF_{\rm env}+C\eta_n^2.
\end{equation}
This conclusion uses a small internal coefficient $\nu_k$ at $k<n$;
the index-$n$ expanding equation will instead be factored exactly below.

\paragraph{\textbf{The coarse offset rows.}}
For the signed sum, all internal patch interactions cancel before
estimating. The exact equation is
\[
 \widehat\chi_k'
    =\frac{\tau_n}{2d_k}\sum_{i=1}^4\epsilon_i
                               \int V_k(x)\idiff\mu_{k,i}(x).
\]
The constant value $V_k(\boldsymbol c_k)$ also cancels. The point
part is consequently the $\widehat\chi$ component of
$\mathcal P(\mathcal X_k,H_k)$, and the observer part obeys the
proportional estimate just proved. Hence
\[
 \mathrm D^+|\Delta\widehat\chi_k|
 \leq C(\nu_k+A_n)\mathcal E_{<n}
      +Cp_n^3\mathcal E_n+C\varepsilon_nF_{\rm env}+C\eta_n^2.
\]
For this row one can drop the internal $\nu_k$ contribution; the
coarser displayed form suffices for the maximum norm.

\paragraph{\textbf{The relative-center rows.}}
Define the averaged velocity of a source configuration $j$ at an
observing configuration $i\ne j$ by
\[
 C_{ij}^\kappa=\frac14\sum_{l=1}^4
                         \int u_j^\kappa(x)\idiff\mu_{i,l}^\kappa(x),
 \qquad u_j^\kappa=\sum_{l=1}^4u_{j,l}^\kappa.
\]
Let $\dot{\boldsymbol c}_i^{\rm int}$ be its same-index mean velocity.
For $k<n$, the exact endpoint decomposition is
\begin{equation}\label{eq:head-center-decomposition}
 \begin{split}
 \dot{\boldsymbol c}_k-\dot{\boldsymbol c}_n
  ={}&\dot{\boldsymbol c}_k^{\rm int}
                   -\dot{\boldsymbol c}_n^{\rm int}
       +\sum_{\substack{j<n\\j\ne k}}(C_{kj}-C_{nj})
       -C_{nk}+C_{kn}\\
     &+\sum_{j>n}(C_{kj}-C_{nj}),\\
 (c_{k,n}^\Delta)'={}&\frac{\tau_n}{R_k}
             \Delta(\dot{\boldsymbol c}_k-\dot{\boldsymbol c}_n).
 \end{split}
\end{equation}
The internal point mean has the smooth normalized form
\[
 \dot{\boldsymbol c}_i^{\rm int,pt}
       =\frac{d_i}{\tau_i}\mathcal M(\mathcal X_i),\qquad
 \mathcal M(\mathcal X)=\frac14\sum_{l=1}^4F_l(y(\mathcal X)).
\]
At $k<n$, smoothness and the proportional internal averaging
estimate give
\[
 \frac{\tau_n}{R_k}
                 |\Delta\dot{\boldsymbol c}_k^{\rm int}|
 \leq C\nu_k\frac{d_k}{R_k}\mathcal E_{<n}
 \leq C\nu_k\mathcal E_{<n}.
\]
At index $n$, central symmetry gives
\[
 \mathcal M(\mathsf U,\mathsf V,\mathsf A,\vartheta,0)=0,
 \quad |\nabla_{\mathsf U,\mathsf V,\mathsf A,\vartheta}\mathcal M|
                      \leq C|\widehat\chi|,
 \quad |\nabla_{\widehat\chi}\mathcal M|\leq C.
\]
Change the offset first, keeping the triggered shape fixed, and then
change the four remaining coordinates. The compact impulse estimate
from physical time zero and $\Delta\widehat\chi_n=p_n\xi$ give
\[
 \begin{aligned}
 |\Delta\mathcal M(\mathcal X_n)|
   &\leq C|\Delta\widehat\chi_n|
                    +C|\widehat\chi_n^{\rm quiet}|\mathcal E_n,
 &|\widehat\chi_n^{\rm quiet}|&\leq Cp_nB_nT,\\
 \frac{\tau_n}{R_k}
          |\Delta\dot{\boldsymbol c}_n^{\rm int,pt}|
   &\leq Cp_n^2|\xi|+Cp_n^2B_nT\mathcal E_n
    \leq Cp_n^2|\xi|+Cp_n^3\mathcal E_n.
 \end{aligned}
\]
The last inequality uses $B_n\leq Cp_n^2$ and
$p_n(s_0+S)=o_L(1)$. Its internal averaging errors, estimated
separately in the two histories, add at most
$C(d_n/R_k)\eta_n^2\leq C\eta_n^2$.

Next consider a common larger source $j<k$. Initially replace the
observer averages by values at the configuration means. In the
source variable $b$, its difference kernel satisfies
\[
 \begin{aligned}
 &\left|\nabla_b\{K(\boldsymbol c_k-b)
                         -K(\boldsymbol c_n-b)\}\right|
     \leq C\frac{R_k}{R_j^3},\\
 &\left|\nabla_b^2\{K(\boldsymbol c_k-b)
                         -K(\boldsymbol c_n-b)\}\right|
     \leq C\frac{R_k}{R_j^4}.
 \end{aligned}
\]
The connecting segment stays a distance comparable to $R_j$ from
this source because $j<k$ and the enclosing-disk margin is fixed.
After alignment, its source displacement is at most
$CR_j\mathcal E_{<n}$, while
$\Delta(\boldsymbol c_k-\boldsymbol c_n)=R_kc_{k,n}^\Delta$.
The endpoint and source changes, including the source graphs via
\eqref{eq:head-centered-test}, therefore give
\[
 \frac1{R_k}\left|\Delta\{u_j(\boldsymbol c_k)
                                   -u_j(\boldsymbol c_n)\}\right|
       \leq C\frac{\Gamma_j}{R_j^2}\mathcal E_{<n},\qquad j<k.
\]
For $k<j<n$, estimate the two endpoints separately. Their distances
from the source are respectively comparable to $R_k$ and $R_j$,
so
\[
 \frac1{R_k}\left|\Delta\{u_j(\boldsymbol c_k)
                                   -u_j(\boldsymbol c_n)\}\right|
 \leq C\frac{\Gamma_j}{R_kR_j}\mathcal E_{<n}
 \leq C\frac{\Gamma_j}{R_j^2}\mathcal E_{<n}.
\]
The source $j=k$, present only at the index-$n$ endpoint, has the
same bound $C\Gamma_kR_k^{-2}\mathcal E_{<n}$. Multiplication by
$\tau_n$ and the sum \eqref{eq:head-gradient-sum} bound all these
mean-point terms by $CA_n\mathcal E_{<n}$.

To restore the observing averages, first retain the arithmetic sum
of the four centroids at index $n$. For any common smooth field $F$,
put
\[
 R_F(x)=F(\boldsymbol c_n+x)-F(\boldsymbol c_n)
                                     -\nabla F(\boldsymbol c_n)x.
\]
Then, exactly,
\[
 \frac14\sum_{i=1}^4F(\boldsymbol c_n+d_ny_{n,i})
    =F(\boldsymbol c_n)+\frac14\sum_{i=1}^4R_F(d_ny_{n,i}),
 \qquad |\nabla R_F(x)|\leq\|\nabla^2F\||x|.
\]
Thus changing the observing shape at fixed centered field costs
$Cd_n^2\|\nabla^2F\|\mathcal E_n$. Its normalized coefficient is
\begin{equation}\label{eq:head-observer-cancellation}
 \frac{\tau_nd_n^2M_{2,n}^{\rm rem}}{R_k}\mathcal E_n
       \leq B_np_n^2\mathcal E_n
       \leq Cp_n^4\mathcal E_n\leq Cp_n^3\mathcal E_n.
\end{equation}
Changes of this centered field itself are proportional to
$\mathcal E_{<n}$ and obey the source estimates already proved.
For observing patches at $k<n$, the corresponding terms retain the
graph factors and their coefficient sum is bounded by
\[
 \frac{\tau_nd_k^2}{R_k}
              \sum_{j\ne k}\frac{\Gamma_j}{\ell_{kj}^3}
       \leq C\tau_n\sum_{j\ne k}\frac{\Gamma_j}{\ell_{kj}^2}
       \leq CA_n.
\]
Here $d_k^2\leq CR_k\ell_{kj}$ was used. At $n$, the finite-patch
observer corrections can be estimated absolutely, at cost
$C\tau_na_n^2M_{2,n}^{\rm rem}/R_k\leq C\eta_n^2$.
This accounts for both the shape and patch parts of the observer
average without losing the cancellation of its affine term.

Finally $C_{kn}$ is the index-$n$ source seen at the larger endpoint.
The $m=0$ row of \eqref{eq:head-source-kernel} gives
\[
 \frac{\tau_n}{R_k}|\Delta C_{kn}|
 \leq CA_n\mathcal E_{<n}
       +C\frac{\tau_n\Gamma_nd_n}{R_k^3}\mathcal E_n+C\eta_n^2
 \leq CA_n\mathcal E_{<n}+Cp_n^3\mathcal E_n+C\eta_n^2.
\]
Equation~\eqref{eq:head-inner-absolute} gives
\[
 \frac{\tau_n}{R_k}
       \left|\Delta\sum_{j>n}(C_{kj}-C_{nj})\right|
         \leq C\varepsilon_nF_{\rm env},
\]
since $R_k\geq R_n$. Combining all groups in
\eqref{eq:head-center-decomposition} proves
\begin{equation}\label{eq:head-center-row}
 \mathrm D^+|c_{k,n}^\Delta|
 \leq C(\nu_k+A_n)\mathcal E_{<n}
       +Cp_n^3\mathcal E_n+Cp_n^2|\xi|
                         +C\varepsilon_nF_{\rm env}+C\eta_n^2.
\end{equation}

\paragraph{\textbf{The fine signed-impulse row at index $n$.}}
Write $H=H_n$, $G^\kappa=\nabla H^\kappa(0)$ and
$R^\kappa(y)=H^\kappa(y)-G^\kappa y$. The exterior estimates give
\[
 |G^\kappa|\leq CA_n,\qquad
 \|\nabla^2H^\kappa\|\leq Cp_nB_n,\qquad
 |\nabla R^\kappa(y)|\leq Cp_nB_n|y|.
\]
The exact signed centroid equation, with all internal interactions
already canceled, is
\begin{equation}\label{eq:head-fine-impulse-exact}
 (\widehat\chi_n^\kappa)'
 =G^\kappa\widehat\chi_n^\kappa
       +\frac12\sum_{i=1}^4\epsilon_iR^\kappa(y_{n,i}^\kappa)
       +r_{\rm cov}^\kappa,
 \qquad |r_{\rm cov}^\kappa|\leq Cp_nB_n\eta_n^2.
\end{equation}
For the sources $j<n$, \eqref{eq:head-source-kernel} at the index-$n$
mean gives a gradient coefficient sum bounded by $CA_n$, and a
Hessian coefficient sum bounded by $CB_n$ after multiplication by
$\tau_nR_n$. Add \eqref{eq:head-inner-absolute} for $j>n$ to get
\begin{equation}\label{eq:head-fine-field-difference}
 \begin{aligned}
 |\Delta G|&\leq CA_n\mathcal E_{<n}
                              +C\varepsilon_nF_{\rm env},\\
 \|\nabla^2\Delta H\|&\leq Cp_nB_n\mathcal E_{<n}
                              +Cp_n\varepsilon_nF_{\rm env}.
 \end{aligned}
\end{equation}
Subtract \eqref{eq:head-fine-impulse-exact} and only then divide by
$p_n$:
\[
 \xi'=G^{\rm trig}\xi
       +\Delta G\frac{\widehat\chi_n^{\rm quiet}}{p_n}
       +\frac1{2p_n}\sum_{i=1}^4\epsilon_i
          \bigl(R^{\rm trig}(y_{n,i}^{\rm trig})
                            -R^{\rm quiet}(y_{n,i}^{\rm quiet})\bigr)
       +\frac{\Delta r_{\rm cov}}{p_n}.
\]
Its four terms have the following bounds:
\[
 \begin{aligned}
 |G^{\rm trig}\xi|&\leq CA_n|\xi|,\\
 \left|\Delta G\frac{\widehat\chi_n^{\rm quiet}}{p_n}\right|
 &\leq CA_nB_nT\mathcal E_{<n}
                   +CB_nT\varepsilon_nF_{\rm env}
 \leq CB_n\mathcal E_{<n}+C\varepsilon_nF_{\rm env},\\
 \frac{|R^{\rm trig}(y_{n,i}^{\rm trig})
                   -R^{\rm trig}(y_{n,i}^{\rm quiet})|}{p_n}
 &\leq CB_n\mathcal E_n,\\
 \frac{|(R^{\rm trig}-R^{\rm quiet})(y_{n,i}^{\rm quiet})|}{p_n}
 &\leq CB_n\mathcal E_{<n}+C\varepsilon_nF_{\rm env},\\
 \frac{|\Delta r_{\rm cov}|}{p_n}&\leq CB_n\eta_n^2.
 \end{aligned}
\]
In the fourth line the field difference has zero value and zero
first derivative at the origin, so its Hessian bound in
\eqref{eq:head-fine-field-difference} applies. The second line uses
$A_nT+B_nT=o_L(1)$ and the compact impulse bound from physical time
zero. These estimates prove the second row of
\eqref{eq:head-impulse-system}.

All components entering $\mathcal E_{<n}$ have now been estimated.
For a finite maximum of the corresponding nonnegative functions,
\[
 \mathrm D^+\max_l f_l(s)
   \leq\max_{\{l:f_l(s)=\max_jf_j(s)\}}\mathrm D^+f_l(s).
\]
Use the graph rows, \eqref{eq:head-scalar-rows}, the coarse offset
rows and \eqref{eq:head-center-row}, and then
$\nu_k\leq r_n^{\rm out}$. This proves the first row of
\eqref{eq:head-impulse-system}. The source coefficients were summed
before taking the maximum, so its constant is independent of the
number of configurations. When $n=0$ the first row is void and the
fine impulse proof still applies with $\mathcal E_{<0}=0$.
\end{proof}

\begin{proposition}[Receiver equation and triangular comparison]
\label{prop:head-comparison}
Use the assumptions of Proposition~\ref{prop:release} and let
$[0,s_{\rm end}]$, $s_{\rm end}\leq S$, be a current common stopped
interval before the positive crossing. There is a decomposition
$\mathcal R=\mathcal R_{\rm comp}+\mathcal R_{\rm inner}$ such that
\[
 (\Delta\mathsf U_n)'=\lambda_{\rm eff}\Delta\mathsf U_n
       +\varepsilon_nf_{\sigma_n,\psi_n}+\mathcal R_{\rm comp}
                                      +\mathcal R_{\rm inner},
 \qquad \lambda_{\rm eff}\geq\lambda/2,
\]
and the comparison contribution satisfies
\begin{equation}\label{eq:comparison-remainder}
 \begin{split}
 |\mathcal R_{\rm comp}|\leq{}&
 C(A_n+\eta_n^2)(1+s_0+s)(\mathcal E_n+\mathcal E_{<n})\\
 &+Cp_n^2\bigl(B_n(s_0+s)|\xi|+|\xi|^2\bigr)+C\eta_n^2.
 \end{split}
\end{equation}
The stable, action and phase differences satisfy
\begin{equation}\label{eq:parent-triangular-rows}
 \begin{gathered}
 (\Delta\mathsf V_n)'=-\lambda_{\rm st}\Delta\mathsf V_n+\rho_V,
 \quad\lambda_{\rm st}\geq\lambda/2,\qquad
 (\Delta\mathsf A_n)'=\rho_A,\\
 (\Delta\vartheta_n)'=G_U\Delta\mathsf U_n+G_V\Delta\mathsf V_n
                          +G_A\Delta\mathsf A_n+\rho_{\rm phase},
 \qquad |G_U|+|G_V|+|G_A|\leq C,\\
 |\rho_V|+|\rho_A|+|\rho_{\rm phase}|
 \leq C(A_n+\eta_n^2)(1+s_0+s)(\mathcal E_n+\mathcal E_{<n})\\
 \hfill{}+Cp_n^2\bigl(B_n(s_0+s)|\xi|+|\xi|^2\bigr)
                  +C(\varepsilon_nF_{\rm env}+\eta_n^2).
 \end{gathered}
\end{equation}
The inner contribution is the difference of the two $k>n$ strain
projections after subtracting $\varepsilon_nf_{\sigma_n,\psi_n}$;
its moment, impulse, Taylor and projection errors are estimated in
Proposition~\ref{prop:inner-gradient} and the following weighted argument.
\end{proposition}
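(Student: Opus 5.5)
The plan is to work with the exact averaged centroid equation at index $n$ written in the chart, exactly as in the shape rows of Lemma~\ref{lem:head-graph}:
\[
 \mathcal X_n'=\mathcal F(\mathcal X_n)+\mathcal P(\mathcal X_n,H_n)+R_n^{\rm patch}.
\]
Here the internal coefficient is one, since the clock is $\tau_n$. I will subtract the quiet equation from the triggered one.

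\textbf{Internal part.} For the $\mathsf U$ and $\mathsf V$ rows I will use the exact factorization of Proposition~\ref{prop:intrinsic-saddle}, namely $\mathcal F_U=a\mathsf U$ and $\mathcal F_V=-a\mathsf V$, rather than a Lipschitz bound. This gives
\[
 \Delta(a\mathsf U)=a(\mathcal X^{\rm trig})\Delta\mathsf U+\mathsf U^{\rm quiet}\Delta a .
\]
So I set $\lambda_{\rm eff}=a(\mathsf U^{\rm trig}_n,\mathsf V^{\rm trig}_n,\mathsf A^{\rm trig}_n)$. This is at least $\lambda/2$ on $\mathcal C$, and $\lambda_{\rm st}$ is defined the same way. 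The cross term $\mathsf U^{\rm quiet}\Delta a$ is bounded by $CB_{\rm ref}(\tau_nh^\beta+\eta_n^2)(\mathcal E_n+\mathcal E_{<n})$, which fits inside $C(A_n+\eta_n^2)\mathcal E_n$. The offset dependence of $\mathcal F$ needs care. By Lemma~\ref{lem:phase-offset} the shape rows are even in $\widehat\chi$ with an $O(|\widehat\chi|^2)$ correction, so the difference is at most
\[
 C|\widehat\chi^{\rm quiet}|\,|\Delta\widehat\chi|+C|\Delta\widehat\chi|^2
 \leq Cp_n^2B_nT|\xi|+Cp_n^2|\xi|^2,
\]
using $\Delta\widehat\chi_n=p_n\xi$ and $|\widehat\chi_n^{\rm quiet}|\leq Cp_nB_nT$. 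These are exactly the $p_n^2$ terms in \eqref{eq:comparison-remainder}. For the $\mathsf A$ row, $\mathcal F_A=0$ identically. For the $\vartheta$ row, $\mathcal F_\vartheta$ is smooth, and its linearization supplies bounded $G_U,G_V,G_A$; the offset term is treated as above.

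\textbf{Exterior part.} I will split $H_n$ into the sources $k<n$ and $k>n$. For $k<n$, I use the coordinate-change identity $|\Delta\mathcal P|\leq C\|\nabla H^{\rm quiet}\||\Delta\mathcal X|+C\|\nabla\Delta H\|$ together with \eqref{eq:head-source-kernel} and \eqref{eq:head-relative-displacements}. Summed by \eqref{eq:head-gradient-sum}, this gives $CA_n(\mathcal E_n+\mathcal E_{<n})$, and the graph terms carry $e_{k,i}$. The Hessian remainder across the configuration adds $B_np_n$, with an extra factor $T$ from the $\chi$-weighted terms, which yields the factor $(1+s_0+s)$. For $k>n$, the exterior affine strain projected onto $\mathsf U$ is $-2(\sqrt2\mathcal S_{11}+\mathcal S_{12})$ by \eqref{eq:cell-strain-projection}, taken in the rotating frame of \eqref{eq:cell-moving-frame}.

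Proposition~\ref{prop:inner-gradient} then identifies the triggered minus quiet projection as $\varepsilon_nf_{\sigma_n,\psi_n}$ plus the remainder $\mathcal R_{\rm inner}$, which I define as everything left over. Two frame mismatches are placed in $\mathcal R_{\rm comp}$ or $\mathcal R_{\rm inner}$:
\begin{itemize}
 \item[(a)] The mismatch between the actual quiet frame and the reference rotation $\vartheta_n^{\rm quiet}$. This is $O(|\mathsf U^{\rm quiet}|+|\mathsf V^{\rm quiet}|)$ times the strain, which is $O(\varepsilon_nF_{\rm env})$ times a small factor.
 \item[(b)] The evaluation of the strain at the quiet versus triggered center. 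This is $\Delta\boldsymbol c$ times the Hessian, and is controlled by $c^\Delta$ and \eqref{eq:head-inner-absolute}.
\end{itemize}
The same absolute bound \eqref{eq:head-inner-absolute} gives the $\varepsilon_nF_{\rm env}$ term in $\rho_V,\rho_A,\rho_{\rm phase}$; these rows do not need the signed coefficient.

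\textbf{Patch averaging.} At index $n$ the patches are not compared proportionally. Instead, each history's averaging error is bounded absolutely by $C\eta_n^2(1+\tau_nd_nM^{\rm rem}_{2,n})$, using \eqref{eq:concentration-centroid}. This produces the bare $C\eta_n^2$. The variation of these errors with the chart coordinates produces $\eta_n^2(\mathcal E_n+\mathcal E_{<n})$.

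\textbf{Main obstacle.} The hard step is the $\mathsf U$-row bookkeeping. The difference of the exterior strain projections must be split so that the leading term is exactly $\varepsilon_nf_{\sigma_n,\psi_n}$ with the \emph{actual} $\psi_n$ from the statement of Proposition~\ref{prop:release}, while every frame and chart nonlinearity lands in a term carrying a small factor times $\mathcal E$. In particular, no term of the form $O(\mathcal E_n)$ with an $O(1)$ coefficient may appear other than $\lambda_{\rm eff}\Delta\mathsf U_n$. I would first verify this at $\mathcal X=(0,0,1,\vartheta,0)$ and then control the variation in $\mathcal X$ by the quiet-box bounds.
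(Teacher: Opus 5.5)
Your proposal follows the paper's proof essentially step by step:
\begin{itemize}
\item the exact saddle factorization for the $\mathsf U$ and $\mathsf V$ rows;
\item the split of the exterior field into $j<n$ and $j>n$ sources, with the coordinate-change identity used for the former;
\item evenness of the offset correction, producing the $p_n^2$ terms;
\item absolute $C\eta_n^2$ patch errors at index $n$;
\item $\mathcal R_{\rm inner}$ defined simply as the leftover.
\end{itemize}
The paper takes $\lambda_{\rm eff}=a(q^{\rm trig})+\mathsf U_n^{\rm quiet}B_U$ with a segment-averaged $B_U$, whereas you leave $\mathsf U_n^{\rm quiet}\Delta a$ in the remainder; this difference is immaterial.

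There are two small corrections.
\begin{enumerate}
\item Your offset bound is incomplete. Varying the shape and phase at fixed nonzero offset adds a further term,
\[
C|\widehat\chi_n^{\rm quiet}|^2\mathcal E_n\leq Cp_n^2B_n^2(s_0+s)^2\mathcal E_n\leq CA_n(s_0+s)\mathcal E_n .
\]
This term, rather than a Hessian remainder, is exactly the source of the factor $(1+s_0+s)$ in \eqref{eq:comparison-remainder}.
\item Your frame mismatches (a) and (b) must go into $\mathcal R_{\rm inner}$, not $\mathcal R_{\rm comp}$, since \eqref{eq:comparison-remainder} contains no $\varepsilon_nF_{\rm env}$ term.
\end{enumerate}
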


\begin{proof}
Suppress $n$ in the shape variables and put
\[
 q^\kappa=(\mathsf U^\kappa,\mathsf V^\kappa,\mathsf A^\kappa),
 \qquad S^\kappa=(q^\kappa,\vartheta^\kappa),
 \qquad \widehat\chi^\kappa=\chi_n^\kappa/d_n,
 \qquad T=s_0+s.
\]
For $j\in\{\mathsf U,\mathsf V,\mathsf A\}$ define the segment averages
\[
 B_j=\int_0^1\partial_j a\bigl(q^{\rm quiet}
                  +u(q^{\rm trig}-q^{\rm quiet})\bigr)\idiff u.
\]
The fundamental theorem of calculus gives
\[
 a(q^{\rm trig})-a(q^{\rm quiet})
           =B_U\Delta\mathsf U+B_V\Delta\mathsf V+B_A\Delta\mathsf A.
\]
Hence the expanding internal difference factors exactly as
\begin{equation}\label{eq:receiver-exact-factorization}
 \begin{split}
 a(q^{\rm trig})\mathsf U^{\rm trig}
              -a(q^{\rm quiet})\mathsf U^{\rm quiet}
 ={}&\bigl(a(q^{\rm trig})+\mathsf U^{\rm quiet}B_U\bigr)
                                                  \Delta\mathsf U\\
    &+\mathsf U^{\rm quiet}
                         (B_V\Delta\mathsf V+B_A\Delta\mathsf A).
 \end{split}
\end{equation}
Set
\[
 \lambda_{\rm eff}=a(q^{\rm trig})+\mathsf U^{\rm quiet}B_U.
\]
All $B_j$ are bounded on the fixed chart. The reference estimate and
smallness of that chart give
\[
 |\mathsf U^{\rm quiet}|\leq C(A_n+\eta_n^2),\qquad
 \lambda_{\rm eff}\geq\lambda/2,\qquad
 |\mathsf U^{\rm quiet}(B_V\Delta\mathsf V+B_A\Delta\mathsf A)|
                       \leq C(A_n+\eta_n^2)\mathcal E_n.
\]
No time derivative of $a$ or of a segment average is taken in this
factorization.

We next compare the exterior terms. For either source set
$J=\{j<n\}$ or $J=\{j>n\}$, put
\[
 H_J^\kappa(y)=\frac{\tau_n}{d_n}
 \bigl(V_J^\kappa(\boldsymbol c_n^\kappa+d_ny)
                         -V_J^\kappa(\boldsymbol c_n^\kappa)\bigr),
\]
and let $\mathcal P_U(\mathcal X,H)$ be the $\mathsf U$ component
of the centered-field functional in the proof of
Lemma~\ref{lem:head-graph}. The exact coordinate equation reads
\[
 (\mathsf U^\kappa)'=a(q^\kappa)\mathsf U^\kappa
       +O_U(S^\kappa,\widehat\chi^\kappa)
       +\mathcal P_U(\mathcal X^\kappa,H_{<n}^\kappa)
       +\mathcal P_U(\mathcal X^\kappa,H_{>n}^\kappa)
       +R_U^{\rm patch,\kappa}.
\]
For the larger sources, the kernel comparisons give
\[
 \|\nabla H_{<n}^\kappa\|\leq CA_n,\qquad
 \|\nabla\Delta H_{<n}\|\leq CA_n\mathcal E_{<n}.
\]
Change the observing configuration first and then the centered field:
\[
 \begin{split}
 \Delta\mathcal P_U={}&
  \mathcal P_U(\mathcal X^{\rm trig},H_{<n}^{\rm trig})
     -\mathcal P_U(\mathcal X^{\rm quiet},H_{<n}^{\rm trig})\\
 &+\mathcal P_U(\mathcal X^{\rm quiet},H_{<n}^{\rm trig})
     -\mathcal P_U(\mathcal X^{\rm quiet},H_{<n}^{\rm quiet}),\\
 |\Delta\mathcal P_U|\leq{}&CA_n(\mathcal E_n+\mathcal E_{<n}).
 \end{split}
\]
The centered-field functional includes the coordinate derivative.
Thus its bound includes both changes in the evaluated velocity and
the change of $\nabla\Phi$ multiplying the small reference velocity.

For the internal offset correction, the exact point equations are
even in $\widehat\chi$. In particular,
\[
 O_U(S,0)=0,\qquad \nabla_{\widehat\chi}O_U(S,0)=0,
\]
and Taylor's formula gives
\[
 O_U(S,\widehat\chi)
   =\int_0^1(1-u)\nabla_{\widehat\chi}^2
          O_U(S,u\widehat\chi)[\widehat\chi,\widehat\chi]\idiff u.
\]
Its derivatives consequently satisfy
\[
 |\nabla_{\widehat\chi}O_U|\leq C|\widehat\chi|,
 \qquad |\nabla_SO_U|\leq C|\widehat\chi|^2.
\]
Changing $\widehat\chi$ with $S=S^{\rm trig}$ fixed, then changing
$S$ with $\widehat\chi=\widehat\chi^{\rm quiet}$ fixed, proves
\[
 |\Delta O_U|\leq C\bigl(
 |\widehat\chi^{\rm quiet}|\,|\Delta\widehat\chi|
       +|\Delta\widehat\chi|^2
       +|\widehat\chi^{\rm quiet}|^2|\Delta S|\bigr).
\]
Using the compact impulse estimate and the fine difference variable,
\[
 |\widehat\chi^{\rm quiet}|\leq Cp_nB_nT,
 \qquad \Delta\widehat\chi=p_n\xi,
\]
we obtain
\[
 |\Delta O_U|\leq Cp_n^2\bigl(B_nT|\xi|+|\xi|^2\bigr)
                            +Cp_n^2B_n^2T^2\mathcal E_n.
\]
The coefficient of the last term is absorbed as follows:
\[
 p_n^2B_n^2T^2=(p_n^2B_nT)(B_nT)
       \leq CA_nT,
\]
since $p_n^2B_nT=o_L(1)$ and $B_n\leq CA_n$. At index $n$ the
internal averaging errors and exterior observer errors may be
estimated absolutely:
\[
 |R_U^{\rm patch,\kappa}|
 \leq C\eta_n^2(1+\tau_nd_nM_{2,n}^{\rm rem})
 \leq C\eta_n^2.
\]
This is permissible because
$(1+S)^m\eta_n^2/\varepsilon_n\to0$ for every fixed $m$.

The two remainders are now explicitly
\[
 \begin{aligned}
 \mathcal R_{\rm comp}={}&
 \mathsf U^{\rm quiet}(B_V\Delta\mathsf V+B_A\Delta\mathsf A)
       +\Delta O_U+\Delta\mathcal P_U(\mathcal X,H_{<n})
       +\Delta R_U^{\rm patch},\\
 \mathcal R_{\rm inner}={}&
 \mathcal P_U(\mathcal X^{\rm trig},H_{>n}^{\rm trig})
       -\mathcal P_U(\mathcal X^{\rm quiet},H_{>n}^{\rm quiet})
       -\varepsilon_nf_{\sigma_n,\psi_n}.
 \end{aligned}
\]
The preceding four comparison estimates prove
\eqref{eq:comparison-remainder}. The inner term retains the signed
moment forcing identified in Proposition~\ref{prop:inner-gradient};
its rotating projection is compared with $f_{\sigma_n,\psi_n}$ in
the next proposition.

For the remaining rows, factoring the stable internal difference gives
\[
 \begin{split}
 -a(q^{\rm trig})\mathsf V^{\rm trig}
                +a(q^{\rm quiet})\mathsf V^{\rm quiet}
  &=-\lambda_{\rm st}\Delta\mathsf V
       -\mathsf V^{\rm quiet}(B_U\Delta\mathsf U+B_A\Delta\mathsf A),\\
 \lambda_{\rm st}&=a(q^{\rm trig})+\mathsf V^{\rm quiet}B_V
                                                \geq\lambda/2.
 \end{split}
\]
The second term has coefficient $C(A_n+\eta_n^2)$.
The isolated action is constant. If $g(q)$ denotes the phase velocity,
the phase difference is
\[
 g(q^{\rm trig})-g(q^{\rm quiet})
            =G_U\Delta\mathsf U+G_V\Delta\mathsf V+G_A\Delta\mathsf A,
 \qquad G_j=\int_0^1\partial_jg(q^{\rm quiet}+u\Delta q)\idiff u.
\]
Its three coefficients are bounded, while its offset and exterior
remainders have the bounds just proved. Adding the absolute inner
source bound $C\varepsilon_nF_{\rm env}$ gives
\eqref{eq:parent-triangular-rows}. Thus the order-one phase coefficients
enter the other rows only through their displayed small feedback
coefficients.
\end{proof}

\subsection{Exponential weighting and the positive crossing}

\begin{proposition}[Exponential growth and positive crossing]\label{prop:weighted-crossing}
Use the assumptions of Proposition~\ref{prop:release}. On a current
common stopped interval $[0,s_{\rm end}]$, $s_{\rm end}\leq S$, set
\[
 \begin{gathered}
 \mathfrak w(s)=\exp\!\left(-\int_0^s\lambda_{\rm eff}(v)\idiff v\right),
 \qquad \|F\|_{\mathfrak w}
           =\sup_{0\leq s\leq s_{\rm end}}\mathfrak w(s)|F(s)|,\\
 E_U^*=\varepsilon_n^{-1}\|\Delta\mathsf U_n\|_{\mathfrak w},\qquad
 E_\vartheta^*=\|\Delta\vartheta_n\|_{\mathfrak w},\\
 E_{\rm aux}^*=\|\Delta\mathsf V_n\|_{\mathfrak w}
       +\|\Delta\mathsf A_n\|_{\mathfrak w}+\|\xi\|_{\mathfrak w}
       +\|\mathcal E_{<n}\|_{\mathfrak w}.
 \end{gathered}
\]
The scalar equation and its weighted remainder are
\[
 (\Delta\mathsf U_n)'=\lambda_{\rm eff}\Delta\mathsf U_n
             +\varepsilon_nf_{\sigma_n,\psi_n}+\mathcal R,
 \qquad\lambda_{\rm eff}\geq\lambda/2,
\]
\begin{equation}\label{eq:weighted-remainder-bound}
 \varepsilon_n^{-1}\int_0^{s_{\rm end}}
              \mathfrak w(s)|\mathcal R(s)|\idiff s
            \leq C\zeta+\delta_n(1+E_U^*).
\end{equation}
For fixed $c_{\rm decay}>0$, a fixed integer $M_{\rm loss}$ and a
fixed $C_{\rm led}$ chosen in the proof, one can take
\begin{equation}\label{eq:full-error-ledger}
 \begin{split}
 \delta_n=C_{\rm led}(1+S)^{M_{\rm loss}}\bigl(
      &D_n^{-c_{\rm decay}}+r_n+r_n^{\rm out}+A_n+B_n+p_n^2\\
      &+\mathfrak q_{\rm deep}+I_{\rm deep}
                +\eta_{n+1}^2+\eta_n^2/\varepsilon_n\bigr),
 \qquad \sup_n\delta_n\longrightarrow0.
 \end{split}
\end{equation}
The auxiliary comparison satisfies
\begin{equation}\label{eq:weighted-comparison}
 \big\||\Delta\mathsf V_n|+|\Delta\mathsf A_n|
       +|\Delta\vartheta_n|+|\xi|+\mathcal E_{<n}\big\|_{\mathfrak w}
       \leq C(\varepsilon_n+\eta_n^2)(1+E_U^*).
\end{equation}
There are fixed $0<c_w<C_w$, $C_E,C_\vartheta$ and $s_{\rm ent}$
for which
\[
 \begin{gathered}
 E_U^*\leq C_w,\qquad
 c_w\varepsilon_n\leq\mathfrak w(s)\Delta\mathsf U_n(s)
               \leq C_w\varepsilon_n\quad(s_{\rm ent}\leq s\leq s_{\rm end}),\\
 E_{\rm aux}^*\leq C_E(\varepsilon_n+\eta_n^2)(1+E_U^*),\qquad
 E_\vartheta^*\leq C_\vartheta(\varepsilon_n+\eta_n^2)(1+E_U^*),\\
 \sup_{0\leq s\leq s_{\rm end}}|\xi(s)|
 \leq C\bigl((\varepsilon_n+\eta_n^2)(1+S)^5
                         +B_n(1+S)^4h_*\bigr)=o_L(1).
 \end{gathered}
\]
These bounds strictly improve the comparison conditions of
Definition~\ref{def:common-bootstrap}. Once the geometric estimates
are closed jointly and the quiet trajectory stays in its box, the
positive crossing obeys
\[
 \Delta\mathsf U_n(s_{\rm exit})=h_*,\qquad
                   s_{\rm exit}\leq C(1+\ell_n).
\]
\end{proposition}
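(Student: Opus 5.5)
The plan is to run a weighted variation-of-constants argument on the scalar equation of Proposition~\ref{prop:head-comparison}. The remainder splits as $\mathcal R=\mathcal R_{\rm comp}+\mathcal R_{\rm inner}$. The auxiliary system of Lemma~\ref{lem:head-graph} and \eqref{eq:parent-triangular-rows} is closed in the weighted norm $\|\cdot\|_{\mathfrak w}$, and the resulting weighted bound is fed back into the $\Delta\mathsf U_n$ row. Multiplying by $\mathfrak w$ gives
\[
 \mathfrak w(s)\Delta\mathsf U_n(s)=\Delta\mathsf U_n(0)
 +\varepsilon_n\int_0^s\mathfrak w f_{\sigma_n,\psi_n}\idiff v
 +\int_0^s\mathfrak w\mathcal R\idiff v .
\]
Here $\Delta\mathsf U_n(0)$ is controlled because the data at index $n$ agree at physical time zero. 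The quantity $\Delta\mathsf U_n(0)$ at $s=0$, i.e.\ at $t_{n+1}^{\rm out}$, is therefore only the accumulation over $[0,s_0]$, which I bound by $C\varepsilon_nF_{\rm env}(0)s_0$ using \eqref{eq:head-inner-absolute}; this is $O(\varepsilon_n r_n L(n+2)^2)$, inside $\delta_n$. Since $\lambda_{\rm eff}=\lambda+O(|\mathsf U^{\rm trig}|+|\mathsf V|+|\mathsf A-1|)$ and these are small on the chart, the weighted forcing integral is compared with $\mathcal H_\sigma(\psi_n)$ from \eqref{eq:coefficient}, up to $O(h_*)$. I would absorb this $O(h_*)$ by choosing $h_*$ small relative to $m_*$, and then use $\mathcal H\geq m_*$ on $\mathcal I$. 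The constant $s_{\rm ent}$ is fixed so that the partial integral up to $s_{\rm ent}$ already exceeds $m_*/2$, using the tail bound $C(1+S+r)e^{-\lambda S}$; this gives $c_w$. The constant $C_w$ comes from $|f|\leq C(s+r)$.

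\textbf{Auxiliary comparison.} Integrate the rows of \eqref{eq:head-impulse-system} and \eqref{eq:parent-triangular-rows} with weight $\mathfrak w$. The stable row has rate $-\lambda_{\rm st}$, so the weighted $\Delta\mathsf V$ is bounded by a convolution against $e^{-(\lambda_{\rm st}+\lambda_{\rm eff})(s-v)}$. The action, phase, $\xi$ and $\mathcal E_{<n}$ rows have small or zero linear coefficients, while $\mathfrak w$ decays at rate at least $\lambda/2$. Thus $\int_0^s\mathfrak w(s)/\mathfrak w(v)\idiff v\leq C$, and each row gains nothing worse than a constant. The $\Delta\mathsf U$ input enters only through coefficients $C$ in the phase row and small coefficients elsewhere. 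Using $\|\Delta\mathsf U\|_{\mathfrak w}=\varepsilon_nE_U^*$, one obtains \eqref{eq:weighted-comparison} after absorbing the small feedback terms $(A_n+\eta_n^2)(1+S)$, $p_n^3$ and $B_n(1+S)$ into the left side. The unweighted $\sup|\xi|$ bound follows by integrating the $\xi$ row directly with $|\Delta\mathsf U|\leq h_*$; this produces the stated $B_n(1+S)^4h_*$ term.

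\textbf{Remainder ledger, the main obstacle.} Inserting \eqref{eq:weighted-comparison} into \eqref{eq:comparison-remainder} gives $\int\mathfrak w|\mathcal R_{\rm comp}|\leq\varepsilon_n\delta_n(1+E_U^*)$; the quadratic term $p_n^2|\xi|^2$ is handled via $\sup|\xi|=o_L(1)$. The hard part is $\mathcal R_{\rm inner}$. I would use \eqref{eq:inner-gradient-decomposition}--\eqref{eq:inner-gradient-remainder} for the trigger history, and the $O(\varepsilon_n r)$ quadrupole bound for the quiet history. The $\mathsf U$-projection of the leading term must then be related to $\varepsilon_nf_{\sigma_n,\psi_n}$. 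This involves replacing $\nabla^3K(X(s))$ by $D^{-4}\nabla^3K(e_1)$ rotated by the actual line angle, with error $(1+\sigma+vs)/D$ and line drift $o_L(1)$. It also involves replacing the actual receiver rotation $\vartheta_n^{\rm trig}(s)$ by the reference one $\vartheta_n^0-s$. That second replacement costs $|\Delta\vartheta_n|+|\vartheta^{\rm quiet}-\vartheta^0+s|$, and it is the delicate point: $\Delta\vartheta_n$ is only weighted-small, $\|\Delta\vartheta\|_{\mathfrak w}\leq C\varepsilon_n(1+E_U^*)$, and it multiplies a forcing of size $\varepsilon_n s$. The product is therefore $O(\varepsilon_n^2 s)$ unweighted after multiplying back, which is harmless. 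Every polynomial factor $(1+s_0+s)^6$ is against $e^{-\lambda s/2}$ or against $(1+S)^{M_{\rm loss}}$, and these losses are killed by \eqref{eq:polynomial-margin} and Lemma~\ref{lem:scale-margins}; this yields \eqref{eq:full-error-ledger}. The $\zeta$ term remains separate.

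\textbf{Closure.} With $\delta_n$ and $\zeta_0$ small, the representation gives $E_U^*\leq C_w$ and $c_w\varepsilon_n\leq\mathfrak w\Delta\mathsf U_n\leq C_w\varepsilon_n$ for $s\geq s_{\rm ent}$. These strictly improve the provisional constants $2C_w$ and $c_w/2$. Finally, $\mathfrak w(s)\leq e^{-\lambda s/2}$ forces $\Delta\mathsf U_n\geq c_w\varepsilon_n e^{\lambda s/2}$, which reaches $h_*$ before $s=2\lambda^{-1}\log(h_*/(c_w\varepsilon_n))\leq C(1+\ell_n)$. Choosing $C_{\rm sched}$ larger than this constant makes $S$ unattainable first.
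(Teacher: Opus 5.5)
Your proposal is correct and follows the paper's architecture closely. The shared steps are:
\begin{itemize}
\item weighted variation of constants;
\item the convolution bound $\mathfrak w(s)/\mathfrak w(v)\leq e^{-\lambda(s-v)/2}$ for the triangular auxiliary rows, where the order-one phase coupling returns only through small coefficients;
\item an unweighted bound on $\xi$ under the stops $|\Delta\mathsf U_n|\leq h_*$, $|\xi|\leq1$, which disposes of $p_n^2|\xi|^2$;
\item a remainder ledger killed by the polynomial margins;
\item positivity from $\mathcal H\geq m_*$ with a fixed $s_{\rm ent}$, and the crossing from $\mathfrak w\leq e^{-\lambda s/2}$.
\end{itemize}

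The one real difference is how you replace $\mathfrak w$ by $e^{-\lambda s}$ in the model forcing. The paper splits at $s_{\rm cut}=c_{\rm cut}\log D_n$. Before $s_{\rm cut}$, $|\Delta\mathsf U_n|\leq C\varepsilon_ne^{\lambda_+s}$ is still tiny, so $J(s)=\int_0^s(\lambda_{\rm eff}-\lambda)\idiff v$ and the projection errors are $O(D_n^{-1/4}(1+S)^{M_{\rm loss}})$. After $s_{\rm cut}$, the weighted forcing itself is $O(D_n^{-\lambda c_{\rm cut}/2})$. The replacement error therefore vanishes as $L\to\infty$.

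Your chart bound $|J(s)|\leq Ch_*s+o_L(1)$ gives only an $O(h_*)$ error. That is enough for the lower bound $c_w\varepsilon_n$ in this proposition, provided you record the extra constraint $Ch_*\leq m_*/16$ in the order of choices. This is legitimate, since $m_*=e^{-12}$ is absolute and $h_*$ is a free small constant. However, it does not give the vanishing estimate $|B_{\rm ent}-\mathcal H_{\sigma_n}(\psi_n)|\leq C\zeta+\delta_n$ used later in Lemma~\ref{lem:section-clock}. It yields only an additional $O(h_*/m_*)$ error in that timing law.

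Conversely, your treatment of the projection error directly through the weighted comparison is valid and avoids the split for that term:
\[
 \varepsilon_n^{-1}\int_0^{s_{\rm end}}\mathfrak w\,\varepsilon_n(1+s)|\Delta\mathcal X|\idiff s\leq C(1+S)^2(\varepsilon_n+\eta_n^2)(1+E_U^*).
\]
This bound covers the dependence of the projection map on $\mathsf U^{\rm trig}$ as well as on $\Delta\vartheta_n$.

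Two small omissions are harmless, since both fit inside $\delta_n$:
\begin{itemize}
\item The value at $s=0$ also carries $C\eta_n^2s_0$ and the coupled initial differences $\mathcal E_n(0)+\mathcal E_{<n}(0)$.
\item The unweighted bound on $\xi$ must be obtained jointly with the unweighted phase and $\mathcal E_{<n}$ rows, not from the $\xi$ row alone.
\end{itemize}
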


\begin{proof}
We estimate the stable, action, phase and impulse differences at index
$n$, together with all differences at $k<n$, before determining
the sign of $\Delta\mathsf U_n$.
All these differences vanish at physical time zero. Before
$t_{n+1}^{\rm out}$ the second moment at index $n+1$ remains bounded
in its own units. Integration over that interval gives
\[
 \mathcal E_n(0)+\mathcal E_{<n}(0)
 \leq C\varepsilon_n\!
       \left(rs_0+(\mathfrak q_{\rm deep}+B_nI_{\rm deep})(s_0+s_0^2)\right)
       +C\eta_n^2s_0 .
\]
The argument $0$ here means $t=t_{n+1}^{\rm out}$, whereas the
vanishing initial differences are at physical time zero.
The homogeneous comparison factor on this preliminary interval tends
uniformly to one. Dividing by the transmitted signal gives explicitly
\[
 \frac{|\Delta\mathsf U_n(0)|}{\varepsilon_n}
 \leq C\left(rs_0+(\mathfrak q_{\rm deep}+B_nI_{\rm deep})
                 (s_0+s_0^2)
              +\frac{\eta_n^2}{\varepsilon_n}s_0\right)=o_L(1),
\]
uniformly in $n$ and $N$, by \eqref{eq:tailclock} and
\eqref{eq:polynomial-margin}. This inherited contribution remains in
the variation-of-constants formula; it is not reset at event time zero.

The reference phase over the subsequent interval differs from
$\vartheta_n^{\mathrm{quiet}}(t_{n+1}^{\rm out})-s$ by at most
$C(A_n+\eta_n^2)(1+s_0+s)^2$.
This follows by integrating
$|\vartheta_n'+1|\leq
C(|\mathsf U_n^{\rm quiet}|+|\mathsf V_n^{\rm quiet}|
   +|\mathsf A_n^{\rm quiet}-1|)+C(A_n+\eta_n^2)$,
where the action error at time $s$ is bounded by
$C(A_n+\eta_n^2)(s_0+s)$.
The endpoint calculation in Section~\ref{sec:geometry} bounds the
change of line-of-centers angle by a fixed polynomial in $1+S$
times $A_n+B_np_n^2$, plus
$Cp_n(\eta_n^2+\eta_{n+1}^2)(1+S)^C$ from covariance errors.
Thus the prescribed angle $\psi_n$ gives the leading term
$\varepsilon_n f_{\sigma_n,\psi_n}$ in \eqref{eq:trigger}.
All changes in projection enter $\mathcal R$.
For the center-line angle this also has the elementary direct check
\[
 \left|\frac{\diff}{\diff s}\arg(c_n-c_{n+1})\right|
 \leq\tau_n\frac{|\dot c_n-\dot c_{n+1}|}{|c_n-c_{n+1}|}.
\]
The endpoint split bounds the common remote contribution by $CA_n$.
The separated smaller configuration has internal mean speed at most
$Cv_{n+1}$, giving $Cv/D$ in the displayed derivative; its contribution
over the current interval is at most $CS/D$, since $v\leq1$.
The compact mean and covariance contributions have the small bounds
stated above. Thus the actual line direction can be frozen at the
initial line at a cost covered by a fixed polynomial times
$A_n+B_n+p_n^2+D^{-1}+\eta_n^2+\eta_{n+1}^2$.

The radial factor in the strain can be frozen with the same uniformity.
Here both means belong to the perturbed evolution. Averaging the bound
$|\dot b_{k,i}-\dot Z|\leq C(v_k+h^\beta R_k)$ from
Section~\ref{sec:geometry} at $k=n,n+1$ cancels $Z'$ and gives
$|\dot c_n-\dot c_{n+1}|\leq C(v_n+h^\beta R_n)$, since
$v_{n+1}\leq v_n$. Define, also on the preliminary interval,
\[
 \boldsymbol z(s)=\frac{c_n(t)-c_{n+1}(t)}{R_n-R_{n+1}},
 \qquad -s_0\leq s\leq S,
 \qquad D_{\rm act}(s)=\frac{|c_n(t)-c_{n+1}(t)|}{d_n}.
\]
Initially $\boldsymbol z(-s_0)=e_1$. The identities
$\tau_n v_n/R_n=p_n$ and
$\tau_n h^\beta\leq CA_n$ imply
$|\boldsymbol z'(s)|\leq C(A_n+p_n)$, hence
$|\boldsymbol z(s)-e_1|\leq C(s_0+s)(A_n+p_n)$.
Since $p_n=(1-q_n)/D$ and this last bound is uniformly small,
$|\boldsymbol z|\geq1/2$. Thus
\begin{equation}\label{eq:radial-strain-freezing}
 \left|\left(\frac{D}{D_{\rm act}(s)}\right)^4-1\right|
 \leq C(s_0+s)(A_n+D^{-1}),\qquad
 \left|\frac{\diff}{\diff s}|\boldsymbol z(s)|^{-4}\right|
 =\left|\frac{-4\boldsymbol z\cdot\boldsymbol z'}{|\boldsymbol z|^6}\right|
 \leq C(A_n+D^{-1}).
\end{equation}
The normalized leading moment is bounded by $C(1+s)$, so the first
inequality contributes at most
$C(1+s)(s_0+s)(A_n+D^{-1})$ to the forcing divided by $\varepsilon_n$.
Its integral against any weight bounded by $e^{-\lambda s/2}$ is
$O((1+s_0)(A_n+D^{-1}))$. This accounts for the distance at the
section as well as its subsequent change, using the prescribed
initial centers at physical time zero.

Fix the current stopped endpoint $s_{\rm end}\leq S$ and use the
weighted quantities defined in the proposition.
The following elementary estimate explains why neutral equations do
not produce a factor $S$ in the weighted comparison. For
$0\leq v\leq s$,
$\mathfrak w(s)/\mathfrak w(v)\leq e^{-\lambda(s-v)/2}$; hence, if
$\mu\leq\lambda/4$,
\begin{equation}\label{eq:weighted-convolution}
 \left\|\int_0^s e^{\mu(s-v)}F(v)\idiff v\right\|_{\mathfrak w}
 \leq\frac{\|F\|_{\mathfrak w}}{\lambda/2-\mu}
 \leq\frac4\lambda\|F\|_{\mathfrak w}.
\end{equation}
The stable integrating factor is smaller still. For the rows with
$k<n$, use $\mu=C(r_n^{\rm out}+A_n)<\lambda/4$.
Also $\|F_{\rm env}\|_{\mathfrak w}\leq C$, since $s_0=o_L(1)$
and exponential decay bounds each fixed polynomial.

First control the quadratic term in $\xi$ without weights. On the
stops $|\Delta\mathsf U_n|\leq h_*$ and $|\xi|\leq1$, integrate
the stable row, then the action and phase rows, and finally the
impulse rows and the rows with $k<n$. To exhibit the absorption, put
$T=1+S$, $e=\varepsilon_n+\eta_n^2$ and let $X,P,Y$ be the
unweighted suprema on the current interval of, respectively,
$|\xi|$, $|\Delta\vartheta_n|$, and
$|\Delta\mathsf V_n|+|\Delta\mathsf A_n|+\mathcal E_{<n}$.
The stable integral kernel has integral at most $2/\lambda$;
the neutral kernels have integral at most $T$. The kernel for
$\mathcal E_{<n}$ has an extra factor
$e^{C(r_n^{\rm out}+A_n)T}\leq2$ for large $L$.
The polynomial source has integral $CeT^5$.
Using $X^2\leq X$ on the provisional stop, the displayed differential
equations therefore imply
\[
 \begin{split}
 Y&\leq CeT^5+\delta_0(h_*+Y+P+X),\\
 P&\leq CT(h_*+Y)+CeT^5+\delta_0(Y+P+X),\\
 X&\leq CeT^5+CB_nT(h_*+Y+P)+\delta_0X,
 \end{split}
\]
where one may take
$\delta_0=CT^2(A_n+\eta_n^2+p_n^2)+Cr_n^{\rm out}T$.
Every fixed power of $T$ times $\delta_0$ tends uniformly to zero.
Absorb $\delta_0Y$ and $\delta_0P$ in the first two rows and
substitute them into the third. Their order-one coupling is only
$CTY$ in the phase row, so its feedback is bounded by $C\delta_0T$.
The source contribution to $P$ is at most $CeT^6$; after insertion
in the last row its additional factor $B_nT$ makes it at most
$CeT^5$, since $B_nT^2=o_L(1)$. The $h_*$ contribution to that
row is at most $CB_nT^2h_*$. Absorbing the remaining coefficient
of $X$ gives, in particular, the conservative bound
\[
 \sup_{0\leq s\leq s_{\rm end}}|\xi(s)|
 \leq C\bigl((\varepsilon_n+\eta_n^2)(1+S)^5
                         +B_n(1+S)^4h_*\bigr)=o_L(1).
\]
For example, the direct term $G_U\Delta\mathsf U_n$ contributes at
most $C(1+S)h_*$ to phase, and its subsequent contribution to $\xi$
has the additional coefficient $B_n$ and one time integration.
The remaining integrations and their small feedback terms are bounded
by the displayed larger powers. The estimate improves the stop
$|\xi|\leq1$ and bounds $|\xi|^2$ by $o_L(1)|\xi|$.

Let $\delta_{\rm abs}$ be a sufficiently large fixed polynomial in
$1+S$ times $A_n+B_n+p_n^2+\eta_n^2$. It tends uniformly to zero.
Applying \eqref{eq:weighted-convolution} to
\eqref{eq:head-impulse-system} and \eqref{eq:parent-triangular-rows}
gives
\[
 \begin{split}
 E_{\rm aux}^*&\leq C(\varepsilon_n+\eta_n^2)
       +C\delta_{\rm abs}
            (\varepsilon_nE_U^*+E_{\rm aux}^*+E_\vartheta^*),\\
 E_\vartheta^*&\leq
 C(\varepsilon_n+\eta_n^2+\varepsilon_nE_U^*+E_{\rm aux}^*)
                         +C\delta_{\rm abs}E_\vartheta^* .
 \end{split}
\]
For example, the impulse row contributes
\[
 \|\xi\|_{\mathfrak w}
 \leq C(e+|\xi(0)|)
       +CB_n(\varepsilon_nE_U^*+E_{\rm aux}^*+E_\vartheta^*),
\]
after absorbing its $A_n$ coefficient. The row for larger indices
has the corresponding terms
$C(p_n^3\varepsilon_nE_U^*+p_n^2\|\xi\|_{\mathfrak w})$.
The stable and action rows contribute the small comparison coefficients
from \eqref{eq:parent-triangular-rows}; the phase row has in addition
$C\varepsilon_nE_U^*+C\|\Delta\mathsf V_n\|_{\mathfrak w}
+C\|\Delta\mathsf A_n\|_{\mathfrak w}$.
The unweighted bound just proved replaces $|\xi|^2$ by
$o_L(1)|\xi|$ in all these rows. These are exactly the two displayed
inequalities, with no smallness requirement on $G_U,G_V,G_A$.
First absorb the last term in the second row. Substitution into the
first row leaves a coefficient $C'\delta_{\rm abs}$ multiplying
$E_{\rm aux}^*$, which can also be absorbed. Substituting back gives
\eqref{eq:weighted-comparison}.
This argument permits order-one dependence of phase on shape.
Smallness is needed only for its contribution back to the other rows.

Integrating \eqref{eq:comparison-remainder} against $\mathfrak w$
and dividing by $\varepsilon_n$ now gives a bound
\[
 C(1+S)^C
     (A_n+B_n+p_n^2+\eta_n^2/\varepsilon_n)(1+E_U^*).
\]
The other errors come from the second-moment estimate
\eqref{eq:material-donor}, the impulse, the higher Taylor terms
and the change of the reference projection. To make the last
comparison uniform on the entire interval, split it at
$s_{\rm cut}=c_{\rm cut}\log D_n$.
We do this while the provisional bound $E_U^*\leq C_*$ is still
in place, without using positivity of $\Delta\mathsf U_n$.
On the early interval, the bound on $\lambda_{\rm eff}$ and the
definition of $E_U^*$ give
\[
 |\Delta\mathsf U_n(s)|\leq C_*\varepsilon_ne^{\lambda_+s},
 \qquad
 |\lambda_{\rm eff}(s)-\lambda|
 \leq C\left(\varepsilon_ne^{\lambda_+s}
           +(A_n+\eta_n^2)(1+s_0+s)
           +p_n^2B_n^2(s_0+s)^2\right).
\]
The second inequality follows by the mean-value formula for $a$,
the absolute stable/action estimates, and the even offset dependence.
Its integral is bounded by the expression below after increasing
$C_{\rm Gr}\geq\lambda_+$ and the fixed integer $M_{\rm loss}$.
The projection comparisons have the same bound for an explicit
reason. Equation~\eqref{eq:weighted-comparison}, on the provisional
$E_U^*$ stop, gives
$\mathcal E_n+\mathcal E_{<n}\leq
C(\varepsilon_n+\eta_n^2)e^{\lambda_+s}$.
The map from an affine gradient and the index-$n$ centroid coordinates
to its $\mathsf U$ derivative has bounded first derivatives on the
fixed chart. The reference shape and phase differ from their rigidly
rotating values by $C(A_n+\eta_n^2)(1+s_0+s)^2$; the center-line
change has the bound established above. Applying the mean-value
formula to this map, and using the polynomial bound on
$P_{\sigma,\theta}$, bounds its normalized projection error by
a fixed polynomial times the sum of these coordinate and angle errors.
Together with the integrated rate bound, this gives
\[
 Ce^{C_{\rm Gr}s_{\rm cut}}(1+s_{\rm cut})^{M_{\rm loss}}
       (\varepsilon_n+A_n+\eta_n^2+D_n^{-1}).
\]
Here $C_{\rm Gr}$ is fixed and $M_{\rm loss}$ is one fixed integer
large enough for all polynomial factors in this proof.
The parentheses are $O_q(D_n^{-1/2})$: indeed
$A_n=C_{\rm evt}h^{3/2}(1-q_n)^{1/2}D_n^{-1/2}$,
$\varepsilon_n\leq D_n^{-4}$ and $\eta_n^2\leq\varepsilon_n$.
Choose
$0<c_{\rm cut}<\min\{1/(4C_{\rm Gr}),4/\lambda_+\}$,
where $\lambda_+$ bounds $\lambda_{\rm eff}$ on the stopped chart.
The early comparison then costs
$CD_n^{-1/4}(1+S)^{M_{\rm loss}}$.
In particular, if
$J(s)=\int_0^s(\lambda_{\rm eff}(v)-\lambda)\idiff v$, then
$\sup_{s\leq s_{\rm cut}}|J(s)|$ has this bound and
\[
 |\mathfrak w(s)-e^{-\lambda s}|
 =e^{-\lambda s}|e^{-J(s)}-1|
 \leq Ce^{-\lambda s}D_n^{-1/4}(1+S)^{M_{\rm loss}}.
\]
Multiplying by $|f_{\sigma_n,\psi_n}(s)|\leq C(1+s)$ and
integrating gives the same allowance for the weight replacement.
On the remainder of the interval the normalized forcing is bounded
by $C(1+s)^4$, and its weighted integral is at most
$CD_n^{-\lambda c_{\rm cut}/2}(1+S)^{M_{\rm loss}}$.
Indeed repeated integration by parts, or comparison of the polynomial
with its value at the lower endpoint times a polynomial in
$s-s_{\rm cut}$, gives
$\int_b^\infty e^{-\lambda s/2}(1+s)^4\idiff s
\leq Ce^{-\lambda b/2}(1+b)^4$.
The constant weight has an even smaller tail. This proves the
replacement on every stopped interval, including intervals ending
before $s_{\rm cut}$.
If the current interval ends before $s_{\rm cut}$, only the first
estimate is used. On a bounded $E_U^*$ stop any positive crossing
satisfies $h_*\leq C\varepsilon_ne^{\lambda_+s_{\rm exit}}$.
Since $\ell_n\geq4\log D_n$, our choice also puts $s_{\rm cut}$
before that crossing for large $L$.

Set $c_{\rm decay}=\min\{1/4,\lambda c_{\rm cut}/2\}>0$ and choose a fixed
large $C_{\rm led}$. To assemble the errors, use
$\int_0^\infty e^{-\lambda s/2}(1+s)^j\idiff s\leq C_j$ for
each fixed integer $j$. The separate normalized contributions have the bounds
\[
 \begin{aligned}
 \varepsilon_n^{-1}\int_0^{s_{\rm end}}
       \mathfrak w|\mathcal R_{\rm moment}|\idiff s
 &\leq C\zeta+C\eta_{n+1}^2+Cr_n+C(A_n+B_n)(1+s_0)^C,\\
 \varepsilon_n^{-1}\int_0^{s_{\rm end}}
       \mathfrak w|\mathcal R_{\rm impulse}|\idiff s
 &\leq CB_n(1+s_0)^C,\\
 \varepsilon_n^{-1}\int_0^{s_{\rm end}}
       \mathfrak w|\mathcal R_{\rm deep}|\idiff s
 &\leq C(\mathfrak q_{\rm deep}+B_nI_{\rm deep})(1+s_0)^C,\\
 \varepsilon_n^{-1}\int_0^{s_{\rm end}}
       \mathfrak w|\mathcal R_{\rm Taylor}|\idiff s
 &\leq CD_n^{-1}(1+S)^C,\\
 \varepsilon_n^{-1}\int_0^{s_{\rm end}}
       \mathfrak w|\mathcal R_{\rm comp}|\idiff s
 &\leq C(1+S)^C
       (A_n+B_n+p_n^2+\eta_n^2/\varepsilon_n)(1+E_U^*),\\
 \varepsilon_n^{-1}\int_0^{s_{\rm end}}
       \mathfrak w|\mathcal R_{\rm projection}|\idiff s
 &\leq CD_n^{-c_{\rm decay}}(1+S)^{M_{\rm loss}}.
 \end{aligned}
\]
Here the moment term includes the reference second moment; the impulse
term includes its change of origin. The last allowance also bounds
the replacement of the exponential weight in the model coefficient.
Choose the integer $M_{\rm loss}$ at least as large as every polynomial
power in this list, and use $s_0\leq CL(n+2)^2r_n$.
The finite sum is bounded by $\delta_n$ in
\eqref{eq:full-error-ledger}. This proves the full integral estimate
\eqref{eq:weighted-remainder-bound} on $[0,s_{\rm end}]$.
The replacement of $\mathfrak w f_{\sigma_n,\psi_n}$ by
$e^{-\lambda s}f_{\sigma_n,\psi_n}$ in the integral has the same
vanishing allowance, without the $C\zeta$ term.
The discrepancy $\zeta$ in \eqref{eq:material-donor} multiplies a
fixed polynomial in $s$. For each of its powers $m$,
\[
 C\zeta\int_0^S e^{-\lambda s/2}(1+s)^m\idiff s
 \leq C_m\zeta,
\]
uniformly in $S$.

Every term of $\delta_n$ tends to zero uniformly in $n$. In particular,
the deeper-scale impulse factor must be estimated as a product:
\[
 I_{\rm deep}=q_n^{\nu_\Gamma-3}q_{n+1}^{2\nu_\Gamma}
   =e^{-L[(6\nu_\Gamma-6)n+7\nu_\Gamma-3]}.
\]
Although its first factor can grow, the product decays uniformly,
since $\nu_\Gamma>2$. This decay absorbs every fixed polynomial in
$1+S\leq C L(n+1)^2$. For $\eta_n^2/\varepsilon_n$, the completed
square in \eqref{eq:scale-quadratic} yields exactly the uniform
polynomial margin \eqref{eq:polynomial-margin}. The remaining scale
factors in \eqref{eq:full-error-ledger} have the same uniform margins
from Lemma~\ref{lem:scale-margins}. All polynomial degrees and
comparison constants have been fixed independently of $n,N$ before
increasing $L$.
Variation of constants in \eqref{eq:trigger} consequently gives
\[
 E_U^*\leq C_{\rm boot}+C\zeta+\delta_n(1+E_U^*).
\]
Choose a provisional bound larger than $2(C_{\rm boot}+1)$,
then choose $\zeta_0$ small and $L$ large. This inequality
strictly improves that bound.

For the sign, write $f(v)=f_{\sigma_n,\psi_n}(v)$ and
$\mathcal H_{\sigma_n}(\psi_n)=\int_0^\infty e^{-\lambda v}f(v)\idiff v$.
Variation of constants separates all five errors:
\begin{equation}\label{eq:positive-input-budget}
 \begin{aligned}
 \frac{\mathfrak w(s)\Delta\mathsf U_n(s)}{\varepsilon_n}
 ={}&\mathcal H(\psi_n)
       +\frac{\Delta\mathsf U_n(0)}{\varepsilon_n}
       +(\mathcal H_{\sigma_n}-\mathcal H)(\psi_n)\\
 &-\int_s^\infty e^{-\lambda v}f(v)\idiff v
       +\int_0^s(\mathfrak w(v)-e^{-\lambda v})f(v)\idiff v\\
 &+\varepsilon_n^{-1}\int_0^s\mathfrak w(v)\mathcal R(v)\idiff v.
 \end{aligned}
\end{equation}
On the selected phase interval, $\mathcal H(\psi_n)\geq m_*$.
Choose the fixed entrance time $s_{\rm ent}$ so that the model tail
has absolute value at most $m_*/16$ for every $s\geq s_{\rm ent}$;
the polynomial bound for $f$ makes this uniform for small $r_n$.
Equation~\eqref{eq:coefficient} bounds the coefficient error by $Cr_n$.
The inherited initial term has the normalized bound established above.
The early/late weight comparison controls the fourth term, even though
$f$ can change sign. Finally, \eqref{eq:weighted-remainder-bound}
controls the last term using the current provisional upper bound for
$E_U^*$. First decrease $\zeta_0$ and then increase $L$ so that each
of these four remaining errors also has absolute value at most
$m_*/16$, uniformly in the index and cutoff. Consequently,
\[
 \mathfrak w(s)\Delta\mathsf U_n(s)
       \geq\frac{11m_*}{16}\varepsilon_n
       >\frac{m_*}{4}\varepsilon_n
       \qquad(s_{\rm ent}\leq s\leq s_{\rm end}).
\]
Before $s_{\rm ent}$, fixed-time smallness excludes both exits.
This proves \eqref{eq:certificate} together with its upper bound.
Since $\mathfrak w(s)\leq e^{-\lambda s/2}$, it forces
$\Delta\mathsf U_n=h_*$ by time $C(1+\ell_n)$ unless another
stop occurs. Section~\ref{sec:geometry} improves all the other
bounds on that same interval; the joint continuation argument in
Section~\ref{sec:finite} therefore supplies the crossing.
\end{proof}

\subsection{Section shape, elapsed time and outgoing direction}
\label{subsec:outgoing-laws}

The initial discrepancy $\zeta$ can change the time of the positive
crossing by $O(\zeta)$. For iteration, the shape error at the next
section must nevertheless tend to zero as $L$ grows, uniformly for
$0\leq\zeta\leq\zeta_0$. A bound of the form
$\zeta_{\rm next}\leq C\zeta+o_L(1)$ would permit amplification through
arbitrarily many steps and would not suffice. We instead obtain an
absolute outgoing shape error $\zeta_{\rm out}(L)\to0$, while keeping
the dependence on the incoming discrepancy in the separate timing law.
Absolute estimates for the coordinates
$\mathsf V_n^{\mathrm{trig}}$, $\mathsf A_n^{\mathrm{trig}}-1$
and $\chi_n^{\mathrm{trig}}/d_n$ give the required vanishing shape error.

\begin{lemma}[Absolute shape at the positive crossing]\label{lem:section-shape}
Use the assumptions of Proposition~\ref{prop:release} and let
$s_{\rm exit}$ be the positive crossing supplied by the joint closure.
There is $\epsilon_{\rm out,n}$ such that
\[
 \begin{split}
 &\int_0^{s_{\rm exit}}\bigl(|\mathsf V_n^{\rm trig}|
                         +|\mathsf A_n^{\rm trig}-1|\bigr)\,\mathrm ds\\
 &\quad+\sup_{s\leq s_{\rm exit}}
 \bigl(|\mathsf V_n^{\rm trig}|+|\mathsf A_n^{\rm trig}-1|
                       +|\chi_n^{\rm trig}|/d_n\bigr)
 \leq\epsilon_{\rm out,n},\qquad
 \sup_n\epsilon_{\rm out,n}\longrightarrow0.
 \end{split}
\]
At the crossing,
\[
 \Delta\mathsf U_n(s_{\rm exit})=h_*,\qquad
 (\mathsf U_n^{\rm trig},\mathsf V_n^{\rm trig},
       \mathsf A_n^{\rm trig},\chi_n^{\rm trig}/d_n)(s_{\rm exit})
                         =(h_*,0,1,0)+o_L(1).
\]
The centered full centroid state therefore converges to the isolated
state after matching its rotation.
The convergence is uniform for all incoming discrepancies
$0\leq\zeta\leq\zeta_0$.
\end{lemma}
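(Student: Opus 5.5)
The plan is to estimate the triggered coordinates absolutely on $[0,s_{\rm exit}]$. We do not use the difference equations for this, because an incoming discrepancy $\zeta$ must not survive into the outgoing shape. The starting point is the absolute perturbed equations \eqref{eq:actual-quiet-equations} for the triggered history at index $n$, which remain valid while the configuration is in $\mathcal C$. In the event time $s$, with the index-$n$ clock, their errors are not merely $O(\tau_nh^\beta+\eta_n^2)$. The adjacent source also contributes a forcing $\varepsilon_nf_{\sigma_n,\psi_n}+\mathcal R_{\rm inner}$. By Proposition~\ref{prop:inner-gradient} this forcing is bounded by $C\varepsilon_n(1+s_0+s)^6$ on $s\leq S$. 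The first step is therefore to write
\[
 \mathsf V_n'=-a\mathsf V_n+F_V,\qquad \mathsf A_n'=F_A,
\]
with
\[
 |F_V|+|F_A|\leq C\bigl(A_n+\eta_n^2+\varepsilon_n(1+s_0+s)^6\bigr).
\]
In this bound the $A_n$ term absorbs $\tau_nM_{1,n}^{\rm rem}$ from all larger indices. The zero initial data $\mathsf V_n(0)=0$ and $\mathsf A_n(0)=1$ hold at physical time zero, and the preliminary interval has length $s_0=o_L(1)$.

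The second step uses variation of constants with $a\geq\lambda/2$. This gives
\[
 \sup|\mathsf V_n|\leq C(A_n+\eta_n^2+\varepsilon_n(1+S)^6),
\]
and integrating over $[0,s_{\rm exit}]$ costs at most one more factor $S$. For the action, direct integration gives
\[
 \sup|\mathsf A_n-1|+\int|\mathsf A_n-1|\leq C(1+S)^2(A_n+\eta_n^2+\varepsilon_n(1+S)^7).
\]
Since $s_{\rm exit}\leq S=C_{\rm sched}(1+\ell_n)$, Lemma~\ref{lem:scale-margins} makes $(1+S)^m(A_n+\eta_n^2+\varepsilon_n)$ tend to zero uniformly in $n$. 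This uses \eqref{eq:polynomial-margin}, the bound $A_n\leq Ch^2R_n^\alpha$ together with $\ell_n\leq10L(n+1)^2$, and $\varepsilon_n\leq D_n^{-4}$. The offset bound comes from the compact impulse estimate \eqref{eq:compact-impulse} of Proposition~\ref{prop:geometry-concentration}, whose right side already vanishes as $L\to\infty$. Together these define $\epsilon_{\rm out,n}$.

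For the crossing values, the relation $\Delta\mathsf U_n(s_{\rm exit})=h_*$ is the definition of the crossing supplied by Proposition~\ref{prop:weighted-crossing}. The reference value satisfies $|\mathsf U_n^{\rm quiet}|\leq B_{\rm ref}(\tau_nh^\beta+\eta_n^2)=o_L(1)$ by the box condition, so $\mathsf U_n^{\rm trig}(s_{\rm exit})=h_*+o_L(1)$. Inverting the chart via \eqref{eq:cell-chart-inverse} and \eqref{eq:chart-derivative-bounds} then shows that the centered normalized centroids lie within $o_L(1)$ of the isolated state $(\mathsf U,\mathsf V,\mathsf A,\xi)=(h_*,0,1,0)$. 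This holds after rotating by the actual $\vartheta_n^{\rm trig}(s_{\rm exit})$, which absorbs the rotation.

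Uniformity in $\zeta\leq\zeta_0$ holds because $\zeta$ enters only through the size of the forcing. Proposition~\ref{prop:inner-gradient} bounds that forcing by $C\varepsilon_n(1+s_0+s)^6$ with a constant independent of $\zeta\leq\zeta_0$.

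The main obstacle is the size of the forcing in the $\mathsf V$ and $\mathsf A$ rows. The strain forcing itself is only $O(\varepsilon_n s)$ per unit time, but the remote gradient from larger indices is $A_n$ per unit time over a time $\ell_n$. We need $A_n\ell_n\to0$ uniformly in $n$. This holds because $A_n$ carries the factor $h^2=e^{-2L}$ while $\ell_n$ grows only like $L(n+1)^2$, and $R_n^\alpha$ beats the polynomial factor in $n$. I would check this margin explicitly, including the $S^2$ loss in the action row.
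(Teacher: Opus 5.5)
Your proposal is correct and follows the paper's proof essentially step for step. The paper likewise uses \eqref{eq:actual-quiet-equations} absolutely from physical time zero, applying variation of constants with $a\geq\lambda/2$ for $\mathsf V_n^{\rm trig}$ and direct integration for $\mathsf A_n^{\rm trig}-1$. It then uses the compact impulse bound for $\chi_n^{\rm trig}/d_n$, the scale margins to absorb the $(1+S)^2$ loss, and at the crossing $\mathsf U_n^{\rm trig}=h_*+\mathsf U_n^{\rm quiet}=h_*+o_L(1)$ together with the bounded inverse chart.

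Your separate inner-forcing term $\varepsilon_n(1+s_0+s)^6$ is harmless, and the paper also mentions an absolute inner bound. It is, however, already dominated by the remote error $C\tau_nM_{1,n}^{\rm rem}\leq CA_n$ in \eqref{eq:actual-quiet-equations}, since that error counts every index $k\neq n$. So your claim that these errors are ``not merely'' $O(\tau_nh^\beta+\eta_n^2)$ is inaccurate, though it does not affect the argument. If you want an absolute bound for the inner sources, the right citation is \eqref{eq:head-inner-absolute}, not the trig-minus-quiet difference estimate of Proposition~\ref{prop:inner-gradient}.
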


\begin{proof}
The absolute equations \eqref{eq:actual-quiet-equations} apply to the
perturbed evolution throughout the small shape chart. In the variable
$s=(t-t_{n+1}^{\rm out})/\tau_n$ they imply
\[
 |\mathsf V_n^{\mathrm{trig}}(s)|\leq C(A_n+\eta_n^2),\qquad
 |\mathsf A_n^{\mathrm{trig}}(s)-1|
             \leq C(A_n+\eta_n^2)(s_0+s).
\]
The first estimate uses the negative internal rate and the initial
value $\mathsf V_n(0)=0$ at physical time zero; the second is a direct
time integral from the initial value $\mathsf A_n=1$.
Their integrated cost is
$C(A_n+\eta_n^2)(1+s_0+S)^2=o_L(1)$.
The compact impulse equation, also integrated from zero, gives
$|\chi_n^{\mathrm{trig}}|/d_n\leq Cp_nB_n(s_0+s)$.
Together with the absolute inner forcing bound
$C\varepsilon_n(1+s)^4$, these inequalities yield
\[
 \begin{split}
 &\int_0^{s_{\rm exit}}
    (|\mathsf V_n^{\mathrm{trig}}|
               +|\mathsf A_n^{\mathrm{trig}}-1|)\idiff s\\
 &\quad+\sup_{0\leq s\leq s_{\rm exit}}
    \left(|\mathsf V_n^{\mathrm{trig}}|
       +|\mathsf A_n^{\mathrm{trig}}-1|
       +|\chi_n^{\mathrm{trig}}|/d_n\right)
 \leq\epsilon_{\rm out,n},\qquad
 \sup_n\epsilon_{\rm out,n}\longrightarrow0 .
 \end{split}
\]
Here $s_{\rm exit}$ is the first crossing
$\Delta\mathsf U_n=h_*$.
One may take $\epsilon_{\rm out,n}$ to be a fixed polynomial in
$1+S$ times $\varepsilon_n+\eta_n^2+A_n+B_n+p_n^2$.
It also bounds the integrated centroid and phase remainders.
At the crossing $\mathsf U_n^{\rm trig}=h_*+\mathsf U_n^{\rm quiet}
=h_*+o_L(1)$. The inverse chart has bounded derivatives on the
fixed separated set, so these absolute coordinate estimates give
the claimed full centroid discrepancy.
\end{proof}

\begin{lemma}[Transverse continuation to the outgoing section]
\label{lem:transverse-section}
Start at the positive crossing $t_n^*$ with the entry state from
Lemma~\ref{lem:section-shape}. Suppose the patch bounds of
Proposition~\ref{prop:geometry-continuation} hold on every current stopped
interval in the $2\delta_{\rm arc}$ tube of
Definition~\ref{def:common-bootstrap}. Then the state cannot reach
the tube boundary before crossing $\mathsf R=\mathsf R_{\rm out}$.
There is a unique such crossing near the isolated crossing, it is
transverse, and
\[
 (t_n^{\rm out}-t_n^*)/\tau_n=T_{\rm arc}+o_L(1),\qquad
 \zeta_{\rm out}(L)\longrightarrow0.
\]
The latter is the maximum discrepancy of all four centered
normalized centroids from the rotated isolated outgoing state.
All errors are uniform in $n,N$ and $\zeta\leq\zeta_0$.
\end{lemma}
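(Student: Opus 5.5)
The plan is to compare the actual centered normalized centroid state $Y_n(t)$ with the rotated isolated orbit $\operatorname{Rot}(\theta_*)Y^{\rm arc}(u)$ on a bounded normalized time window $u=(t-t_n^*)/\tau_n\in[0,T_+]$, and to show that the discrepancy stays $o_L(1)$ uniformly. I would match $\theta_*$ at $t_n^*$ using Lemma~\ref{lem:section-shape}. That lemma gives an entry discrepancy $o_L(1)$, uniform in $\zeta\leq\zeta_0$. On the stopped interval inside the $2\delta_{\rm arc}$ tube, the patch bounds of Proposition~\ref{prop:geometry-continuation} together with the centroid law \eqref{eq:physical-centroid-error} show that, after multiplying by $\tau_n/d_n$ and subtracting the mean velocity, $Y_n$ obeys the centered four-point field $F$ up to a forcing. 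This forcing is bounded by
\[
 C\bigl(\tau_nM_{1,n}^{\rm rem}+\eta_n^2(1+\tau_nd_nM_{2,n}^{\rm rem})+|\chi_n/d_n|^2\bigr),
\]
using the offset bound of Lemma~\ref{lem:phase-offset} and the arc offset bound of Definition~\ref{def:common-bootstrap}. By Lemma~\ref{lem:exterior-gradient-controls} and Lemma~\ref{lem:scale-margins}, each of these terms is $O(h^2+\eta_n^2+h^{2\beta+2})=o_L(1)$, uniformly in $n,N$.

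The next step is a Gronwall estimate. In the tube, $F$ is smooth with Lipschitz constant bounded by a constant depending only on the fixed tube, since component distances are at least $2c_*$ and the diameter is at most $2C_*$. Because the window length $T_+$ is fixed, Gronwall gives
\[
 \max_i|Y_n(t)-\operatorname{Rot}(\theta_*)Y^{\rm arc}(u)|\leq e^{CT_+}\bigl(o_L(1)+T_+o_L(1)\bigr)=o_L(1).
\]
For $L$ large this is below $\delta_{\rm arc}$, so the tube boundary cannot be reached before $u=T_+$. The rotation is treated as a parameter throughout; $F$ commutes with rotations, so no phase drift enters.

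For the crossing itself, I would use the choice in Definition~\ref{def:common-bootstrap} that $\nabla\mathsf R\cdot F\geq g_*/2$ in the tube where $|\mathsf R-\mathsf R_{\rm out}|\leq\delta_{\rm arc}$. The actual $\mathsf R_n$ then satisfies $\mathsf R_n'\geq g_*/2-o_L(1)>0$ there, because $\mathsf R$ is rotation invariant and smooth on the tube. The isolated orbit crosses at $T_{\rm arc}<T_+$. Uniform closeness therefore puts $\mathsf R_n-\mathsf R_{\rm out}$ at a value of order $-\delta_{\rm arc}$ shortly before $T_{\rm arc}$ and of order $+\delta_{\rm arc}$ shortly after it. Strict monotonicity in the band then gives existence and uniqueness of a transverse crossing $t_n^{\rm out}$, together with $|(t_n^{\rm out}-t_n^*)/\tau_n-T_{\rm arc}|\leq C o_L(1)/g_*$. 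Finally I would set $\zeta_{\rm out}$ equal to the state discrepancy at $t_n^{\rm out}$ plus the isolated-orbit displacement over the time error. Both are $o_L(1)$, and $Y^{\rm arc}$ has bounded velocity.

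The main obstacle is justifying that the exterior forcing is uniformly small on the arc in the normalized units. In particular, the internal offset $\chi_n$ and the averaging errors must be controlled by the same stopped-interval bootstrap without circularity. I would handle this exactly as in Proposition~\ref{prop:bootstrap-closure}: take the minimum of the stopping times, and note that every coefficient carries a positive power of $h$ or $\eta_*$, so the strict improvement closes for one large $L$.
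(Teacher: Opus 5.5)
Your proposal is correct and follows the paper's proof essentially step for step. The paper also runs a fixed-window Gr\"onwall comparison of the centered state with the matched rotated isolated orbit, with forcing $C\bigl(\tau_nM_{1,n}^{\rm rem}+\eta_n^2(1+\tau_nd_nM_{2,n}^{\rm rem})\bigr)$, then the transverse section argument from $\nabla\mathsf R\cdot F\geq g_*/2$ near $\mathsf R_{\rm out}$, giving the crossing-time estimate and $\zeta_{\rm out}$ via the Lipschitz bound on the isolated orbit. Your extra $|\chi_n/d_n|^2$ forcing term is unnecessary but harmless, since the paper treats $\chi_n/d_n$ as part of the full four-point state evolving under the full centered field $F$.
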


\begin{proof}
Use $u=(t-t_n^*)/\tau_n$ and the full centered state
$Y=(y_1,\ldots,y_4)$, $y_j=(b_{n,j}-\boldsymbol c_n)/d_n$.
Its point field is
\[
 F_j(Y)=\sum_{\ell\ne j}\epsilon_\ell K(y_j-y_\ell)
 -\frac14\sum_{m=1}^4\sum_{\ell\ne m}
                          \epsilon_\ell K(y_m-y_\ell).
\]
The exact averaged centroid equations imply
\[
 Y'=F(Y)+E(u),\qquad
 |E(u)|\leq C_{\rm arc}\left(
 \tau_nM_{1,n}^{\rm rem}
 +\eta_n^2(1+\tau_nd_nM_{2,n}^{\rm rem})\right)=:\rho_n(L).
\]
Indeed the constant remote velocity is removed by centering, its
variation costs $\tau_nM_1$, and the averaging errors cost the
variance times the relevant Hessian. The normalized offset
$\chi_n/d_n$ is part of $Y$. The remote estimates
and scale margins give $\sup_n\rho_n(L)\to0$.

Let $Y^0(u)$ be the isolated extended orbit with matched entry
rotation. On the closed separated tube, $F$ has a fixed Lipschitz
constant $L_{\rm arc}$. Put $\delta_{\rm in,n}=|Y(0)-Y^0(0)|$.
For every current stopped $u\leq T_+$, Gr\"onwall gives
\[
 |Y(u)-Y^0(u)|\leq
 e^{L_{\rm arc}T_+}\bigl(\delta_{\rm in,n}+T_+\rho_n(L)\bigr)
 =:\delta_{\rm arc,n}(L),\qquad
 \sup_n\delta_{\rm arc,n}(L)\to0.
\]
The exponential factor is fixed; it does not involve $\ell_n$.
Thus the state stays within $\delta_{\rm arc}$ of its matched
orbit and cannot reach the boundary of the larger tube.

For an explicit section argument, set
$g(Y)=\mathsf R(Y)-\mathsf R_{\rm out}$. At $u=T_{\rm arc}$,
$\nabla g(Y^0)\cdot F(Y^0)=g_*>0$. Fix a sufficiently small
$a>0$ with $0<T_{\rm arc}-a<T_{\rm arc}+a<T_+$ such that
$\nabla g\cdot F\geq g_*/2$ throughout the corresponding tube
near the section. Then
\[
 g(Y^0(T_{\rm arc}-a))\leq-g_*a/2,\qquad
 g(Y^0(T_{\rm arc}+a))\geq g_*a/2.
\]
Uniform closeness gives the same strict signs for the actual state
for all large $L$. A crossing exists by continuity. On this section
neighborhood,
\[
 \frac{\mathrm d}{\mathrm du}g(Y(u))
 =\nabla g(Y(u))\cdot(F(Y(u))+E(u))\geq g_*/4,
\]
so the crossing is unique there and transverse. The isolated
trajectory has a strictly negative maximum of $g$ on every earlier
closed subinterval separated from its crossing; hence closeness
also excludes an earlier outgoing crossing. If $u_n^{\rm out}$
is the actual crossing time, evaluating $g$ at $T_{\rm arc}$ and
using the last lower derivative bound gives
\[
 |u_n^{\rm out}-T_{\rm arc}|
 \leq C\delta_{\rm arc,n}(L)/g_*.
\]
The Lipschitz bound on $Y^0$ then gives
$|Y(u_n^{\rm out})-Y^0(T_{\rm arc})|\leq
C\delta_{\rm arc,n}(L)$, which proves the stated outgoing shape
estimate. The geometry proposition supplies concentration on the
stopped tube before this argument; this argument supplies the
crossing needed to end that tube interval.
\end{proof}

\begin{lemma}[Section time]\label{lem:section-clock}
Under the assumptions of Proposition~\ref{prop:release}, for $s_k^{\rm out}=t_k^{\rm out}/\tau_k$, the absolute normalized
outgoing times satisfy
\begin{equation}\label{eq:clock}
 s_n^{\rm out}=r_ns_{n+1}^{\rm out}+\frac{\ell_n}{\lambda}
       +C_{\rm time}-\frac1\lambda\log\mathcal H(\psi_n)
                         +O_{m_*}(\zeta)+o_L(1).
\end{equation}
The final error tends to zero uniformly in $n,N$ and
$0\leq\zeta\leq\zeta_0$.
\end{lemma}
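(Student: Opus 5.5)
The time law \eqref{eq:clock} splits into three pieces: the absolute offset $s_0=t_{n+1}^{\rm out}/\tau_n$, the duration $s_{\rm exit}$ of the growth phase from event time zero to the positive crossing $\Delta\mathsf U_n=h_*$, and the arc duration from Lemma~\ref{lem:transverse-section}. The plan is to evaluate each piece separately and add them. By definition $s_0=r_ns_{n+1}^{\rm out}$ exactly, since $r_n=\tau_{n+1}/\tau_n$. Lemma~\ref{lem:transverse-section} gives $(t_n^{\rm out}-t_n^*)/\tau_n=T_{\rm arc}+o_L(1)$, uniformly in $\zeta\le\zeta_0$. So
\[
 s_n^{\rm out}=r_ns_{n+1}^{\rm out}+s_{\rm exit}+T_{\rm arc}+o_L(1),
\]
and everything reduces to an asymptotic formula for $s_{\rm exit}$.

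For $s_{\rm exit}$ I would use the variation-of-constants identity \eqref{eq:positive-input-budget} at $s=s_{\rm exit}$, which lies beyond $s_{\rm ent}$ by the lower certificate. Its right side is $\mathcal H(\psi_n)+O(\zeta)+o_L(1)$, plus the model tail $\int_{s_{\rm exit}}^\infty e^{-\lambda v}f\,\diff v$. Because $s_{\rm exit}\ge c\ell_n\to\infty$, the polynomial bound on $f$ makes this tail $o_L(1)$, so no fixed allowance $m_*/16$ is needed here. The $O(\zeta)$ term comes from \eqref{eq:weighted-remainder-bound} together with the bound $E_U^*\le C_w$. Since $\mathcal H(\psi_n)\ge m_*$, taking logarithms gives
\[
 -\int_0^{s_{\rm exit}}\lambda_{\rm eff}\,\diff v+\log(h_*/\varepsilon_n)
 =\log\mathcal H(\psi_n)+O_{m_*}(\zeta)+o_L(1).
\]

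It then remains to replace $\int_0^{s_{\rm exit}}\lambda_{\rm eff}$ by $\lambda s_{\rm exit}$ plus a constant. I expect this to be the main obstacle. The difference $\lambda_{\rm eff}-\lambda$ is not small once $\Delta\mathsf U_n$ is of order one, because $a(\mathsf U,0,1)-\lambda=O(\mathsf U)$. The plan is to mimic Lemma~\ref{lem:phase-offset} and define the primitive
\[
 Q(\mathsf U)=\int_0^{\mathsf U}\Bigl(\frac{\lambda}{a(v,0,1)}-1\Bigr)\frac{\diff v}{v},
\]
which is smooth because $a(v,0,1)-\lambda=O(v)$. Along the actual trajectory, $\lambda_{\rm eff}\,\diff s=\lambda\,\diff\log\Delta\mathsf U_n-\lambda\,\diff Q+(\text{error})$. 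The error consists of:
\begin{itemize}
\item the forcing divided by $\Delta\mathsf U_n$;
\item the contributions of $\mathsf U^{\rm quiet}$, $\mathsf V$, $\mathsf A-1$ and $\chi$ to the rate;
\item $\mathcal R$.
\end{itemize}
The rate contributions are integrable and $o_L(1)$ by Lemma~\ref{lem:section-shape} and the reference bounds. The forcing term is harmless on the late part of the interval: there $\Delta\mathsf U_n\ge c\varepsilon_ne^{\lambda s/2}$ while the forcing is $O(\varepsilon_n(1+s)^4)$, so its quotient is integrable with tail $o(1)$. Before $s_{\rm cut}$, I would instead use the early comparison $|J|\le CD_n^{-1/4}(1+S)^{M}$ from the proof of Proposition~\ref{prop:weighted-crossing}, which needs no division.

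Combining, $\lambda s_{\rm exit}=\ell_n+\log h_*-\lambda Q(h_*)-\log\mathcal H(\psi_n)+O_{m_*}(\zeta)+o_L(1)$. Dividing by $\lambda$ and adding $T_{\rm arc}$ yields \eqref{eq:clock} with $C_{\rm time}=T_{\rm arc}+\lambda^{-1}\log h_*-Q(h_*)$, a constant fixed by the chart and the section. The uniformity in $n$, $N$ and $\zeta\le\zeta_0$ is inherited from $\delta_n$ in \eqref{eq:full-error-ledger}, from $\epsilon_{\rm out,n}$ in Lemma~\ref{lem:section-shape}, and from Lemma~\ref{lem:transverse-section}.
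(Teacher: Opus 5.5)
Your decomposition $s_n^{\rm out}=r_ns_{n+1}^{\rm out}+s_{\rm exit}+(t_n^{\rm out}-t_n^*)/\tau_n$ matches the paper. So does reading off $\int_0^{s_{\rm exit}}\lambda_{\rm eff}=\ell_n+\log h_*-\log\mathcal H(\psi_n)+O_{m_*}(\zeta)+o_L(1)$ from the variation-of-constants identity at the crossing, and then reducing everything to a fixed value of $\int_0^{s_{\rm exit}}(\lambda_{\rm eff}-\lambda)$. The step you flag as the main obstacle, however, is wrong as written. Put $a_*(v)=a(v,0,1)$. Along the model $u'=a_*(u)u$ one has $\diff\log u=a_*\,\diff s$ and $\diff Q(u)=(\lambda-a_*)\,\diff s$. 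Hence $\lambda\,\diff\log u-\lambda\,\diff Q=(2\lambda a_*-\lambda^2)\,\diff s$, which is not $a_*\,\diff s$; at $u=0$ it equals $\lambda^2$ rather than $\lambda$. The ``error'' in your identity is therefore of order one per unit time and integrates to $O(\ell_n)$. Correspondingly, your final formula carries $-\lambda Q(h_*)$ where $+Q(h_*)$ belongs, and $C_{\rm time}=T_{\rm arc}+\lambda^{-1}\log h_*-Q(h_*)$ is the wrong constant.

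The correct identity, with $u=\Delta\mathsf U_n$, follows from $Q'(u)u=\lambda/a_*(u)-1$ and the scalar equation:
\[
 (\lambda_{\rm eff}-\lambda)\,\diff s
 =-\diff Q(u)+Q'(u)\bigl(\varepsilon_nf_{\sigma_n,\psi_n}+\mathcal R\bigr)\diff s
 +\frac{\lambda}{a_*(u)}\bigl(\lambda_{\rm eff}-a_*(u)\bigr)\diff s .
\]
Integrating gives $\int_0^{s_{\rm exit}}(\lambda_{\rm eff}-\lambda)\,\diff s=Q(\Delta\mathsf U_n(0))-Q(h_*)+o_L(1)=-Q(h_*)+o_L(1)$. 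Hence $C_{\rm time}=T_{\rm arc}+\lambda^{-1}(\log h_*+Q(h_*))$. This is exactly the paper's constant, since its $C_{\rm delay}=\int_0^{h_*}(a_*-\lambda)/(a_*v)\,\diff v$ equals $-Q(h_*)$.

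Because $Q'$ is bounded, the corrected identity never divides by $\Delta\mathsf U_n$. Your split at $s_{\rm cut}$ and the forcing$/\Delta\mathsf U_n$ estimate become unnecessary. Two facts suffice:
\begin{itemize}
\item $\int_0^{s_{\rm exit}}(\varepsilon_n|f_{\sigma_n,\psi_n}|+|\mathcal R|)\,\diff s=o_L(1)$. This follows from the unweighted bounds in the proof of Proposition~\ref{prop:weighted-crossing}: a fixed polynomial in $1+S$ times $\varepsilon_n+A_n+p_n^2+\eta_n^2$.
\item $\int_0^{s_{\rm exit}}(|\mathsf U_n^{\rm quiet}|+|\mathsf V_n^{\rm trig}|+|\mathsf A_n^{\rm trig}-1|)=o_L(1)$. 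This controls the rate term and comes from the quiet box and Lemma~\ref{lem:section-shape}.
\end{itemize}
The paper reaches the same value by a two-limit argument. It splits at the level $\Delta\mathsf U_n=\rho_{\rm split}$ and uses $\Delta\mathsf U_n$ as the time variable above that level, which amounts to integrating $-\diff Q$. Below the level it uses $\int|\Delta\mathsf U_n|\leq C\rho_{\rm split}$ from the certificate, and then lets $L\to\infty$ and $\rho_{\rm split}\downarrow0$. Your primitive, once corrected, yields the same limit in a single step.
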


\begin{proof}
To compute the elapsed time more accurately, set
$a_*(v)=a(v,0,1)$ and
\[
 C_{\rm delay}
       =\int_0^{h_*}\frac{a_*(v)-\lambda}{a_*(v)v}\idiff v.
\]
This integral is finite because $a_*(v)-\lambda=O(v)$.
Split the orbit at $\Delta\mathsf U_n=\rho_{\rm split}>0$.
Before its first such crossing, the lower and upper bounds in
\eqref{eq:certificate}, applied backward from that endpoint, give
$\int|\Delta\mathsf U_n|\idiff s\leq C\rho_{\rm split}$.
The absolute estimates above control all other coordinates.
After that crossing the scalar equation converges to
$u'=a_*(u)u$ on the compact interval
$[\rho_{\rm split},h_*]$, where its velocity is positive.
For precision, put $u=\Delta\mathsf U_n$. The reference coordinate
tends uniformly to zero, the stable and action errors have both
vanishing suprema and vanishing integrals, and the unweighted
comparison bounds give at most a fixed polynomial in $1+S$ for the
remaining coordinates. Each coefficient multiplying that polynomial
in \eqref{eq:comparison-remainder} tends to zero after the same
polynomial loss. The moment forcing is bounded by
$C\varepsilon_n(1+S)^4(1+\zeta)$, which tends to zero uniformly
also for $\zeta\leq\zeta_0$. It follows that
\[
 u'=a_*(u)u+o_L(1),\qquad
 \lambda_{\rm eff}=a_*(u)+o_L(1)
\]
uniformly on this part of the orbit. For fixed $\rho_{\rm split}$,
$a_*(u)u\geq c\rho_{\rm split}$, so $u$ is increasing and its
inverse can be used as a time coordinate for all large $L$.
Thus
\[
 \int_{u=\rho_{\rm split}}^{u=h_*}
       (\lambda_{\rm eff}-\lambda)\idiff s
 =\int_{\rho_{\rm split}}^{h_*}
       \frac{a_*(v)-\lambda}{a_*(v)v}\idiff v+o_L(1).
\]
On the preceding part, smoothness gives
$|\lambda_{\rm eff}-\lambda|
\leq C|u|+C(|\mathsf U_n^{\rm quiet}|
                   +|\mathsf V_n^{\rm trig}|
                   +|\mathsf A_n^{\rm trig}-1|)$.
Its integral is $O(\rho_{\rm split})+o_L(1)$, including the fixed
initial interval, where all differences are $O(\varepsilon_n+\eta_n^2)$.
The missing part of the last integral, from zero to
$\rho_{\rm split}$, is also $O(\rho_{\rm split})$ because
$a_*(v)-\lambda=O(v)$.
First let $L\to\infty$, then $\rho_{\rm split}\downarrow0$.
It follows that
\[
 \int_0^{s_{\rm exit}}(\lambda_{\rm eff}(s)-\lambda)\idiff s
                    =C_{\rm delay}+o_L(1),
\]
uniformly for $\zeta\leq\zeta_0$.

The coefficient inside the logarithmic time formula must also be
identified. Define
\[
 B_{\rm ent}
     =\varepsilon_n^{-1}h_*
       \exp\!\left(-\int_0^{s_{\rm exit}}\lambda_{\rm eff}(s)\idiff s\right).
\]
Variation of constants expresses this as
$\varepsilon_n^{-1}\Delta\mathsf U_n(0)$, plus the integral of
$\mathfrak w f_{\sigma_n,\psi_n}$, plus
$\varepsilon_n^{-1}\int\mathfrak w\mathcal R$.
The initial estimate, \eqref{eq:weighted-remainder-bound}, and the
bounded $E_U^*$ control the first and third terms. The comparison
up to $c_{\rm cut}\log D_n$ and the exponential tail estimate
control replacement of the second by $\mathcal H_{\sigma_n}(\psi_n)$.
Thus, increasing the same vanishing error allowance if necessary,
\begin{equation}\label{eq:endpoint-coefficient}
 |B_{\rm ent}-\mathcal H_{\sigma_n}(\psi_n)|
                           \leq C\zeta+\delta_n.
\end{equation}
The endpoint tends uniformly to infinity, since
$h_*\leq C\varepsilon_ne^{\lambda_+s_{\rm exit}}$.
Consequently its omitted exponential tail tends to zero as well.

On $\mathcal I$, the lower bound
$\mathcal H_{\sigma_n}\geq m_*/2$ makes the logarithm Lipschitz
with a constant depending only on $m_*$. The identity
\[
 \lambda s_{\rm exit}
   +\int_0^{s_{\rm exit}}(\lambda_{\rm eff}(s)-\lambda)\idiff s
       =\ell_n+\log h_*-\log B_{\rm ent}
\]
therefore determines the crossing time. Let $C_{\rm time}$ be
$(\log h_*-C_{\rm delay})/\lambda$ plus the fixed time, in the
normalized variable, along the isolated orbit from $\mathsf U=h_*$
to $\mathsf R=\mathsf R_{\rm out}$. Since
$s_0=r_ns_{n+1}^{\rm out}$, adding the time already elapsed gives
\eqref{eq:clock}.

\end{proof}

\begin{lemma}[Outgoing direction]\label{lem:section-angle}
Under the assumptions of Proposition~\ref{prop:release}, the outgoing direction and shape discrepancy satisfy
\begin{equation}\label{eq:angle}
 \Theta_n=\vartheta_n^0-s_n^{\rm out}+C_{\rm out}+o_L(1),
 \qquad \zeta_{\rm out}(L)\longrightarrow0.
\end{equation}
The errors are uniform in $n,N$ and $0\leq\zeta\leq\zeta_0$.
The $O(\zeta)$ dependence of the actual time is retained in
$s_n^{\rm out}$.
\end{lemma}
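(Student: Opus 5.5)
The outgoing direction is obtained by integrating the mean phase through the three stages that follow the source crossing. These are the chart interval up to the positive crossing $s_{\rm exit}$, the bounded arc from $t_n^*$ to $t_n^{\rm out}$, and the asymptotic pair motion that defines $\Theta_n$ through the map $\mathcal E$ of Proposition~\ref{prop:isolated-outgoing}. The invariant I would use is the phase primitive of Lemma~\ref{lem:phase-offset}: along the isolated branch $\mathsf V=0$, $\mathsf A=1$, the quantity $\vartheta+s-P(\mathsf U)$ is constant. In absolute normalized time $T=t/\tau_n$, the goal on the chart interval is
\[
 \vartheta_n^{\rm trig}(T)+T-P(\mathsf U_n^{\rm trig}(T))=\vartheta_n^0+o_L(1).
\]
The identity holds at $T=0$ because $\mathsf U_n^{\rm trig}(0)$ is in the reference box, so $P(\mathsf U_n^{\rm trig}(0))=O(A_n+\eta_n^2)$.

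\textbf{Step 1 (chart stage).} Differentiate $\vartheta_n^{\rm trig}+T-P(\mathsf U_n^{\rm trig})$ using the actual shape equations \eqref{eq:actual-quiet-equations} and the phase row.

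\begin{itemize}
\item The resulting derivative is bounded by
\[
 C\bigl(|\mathsf V_n^{\rm trig}|+|\mathsf A_n^{\rm trig}-1|+|\chi_n^{\rm trig}/d_n|^2\bigr)+C\tau_nM_{1,n}^{\rm rem}+C\eta_n^2 ,
\]
together with $P'(\mathsf U)$ times the forcing in the $\mathsf U$-equation.
\item Boundedness of $P'$ is essential here, because division by the tiny $\mathsf U$ is never needed.
\item The first group integrates to $\epsilon_{\rm out,n}$ by Lemma~\ref{lem:section-shape}. The exterior gradient costs $CA_n(1+S)$.
\item The forcing term has size $\varepsilon_n(1+s)^4$ for the moment part plus the comparison remainders. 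Its unweighted integral is at most $C\varepsilon_n(1+S)^5+o_L(1)$.
\item All these errors are $o_L(1)$ by the polynomial margins of Lemma~\ref{lem:scale-margins}, uniformly in $\zeta\le\zeta_0$. At $s_{\rm exit}$ this gives $\vartheta_n^{\rm trig}=\vartheta_n^0-T_{\rm exit}+P(h_*)+o_L(1)$.
\end{itemize}

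\textbf{Step 2 (arc stage).} Lemma~\ref{lem:transverse-section} shows that the centered state stays within $\delta_{\rm arc,n}(L)\to0$ of the isolated arc orbit with matched entry rotation. The outgoing section is reached after normalized time $T_{\rm arc}+o_L(1)$. Since the isolated orbit is rotation covariant, the rotation of the section state equals the entry rotation plus a fixed increment $\theta_{\rm arc}$ of the isolated orbit. Hence the reference-rotated section state is determined, up to $o_L(1)$, by
\[
 \vartheta_n^0-s_n^{\rm out}+P(h_*)+T_{\rm arc}+\theta_{\rm arc}.
\]
Here I use $s_n^{\rm out}=T_{\rm exit}+T_{\rm arc}+o_L(1)$, with the $O(\zeta)$ timing dependence kept inside $s_n^{\rm out}$ as Lemma~\ref{lem:section-clock} allows. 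The shape discrepancy $\zeta_{\rm out}(L)\to0$ is then the conclusion of Lemma~\ref{lem:transverse-section}.

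\textbf{Step 3 (direction).} $\Theta_n$ is the direction coordinate $\mathcal E$ of the actual section state. Rotation covariance gives $\mathcal E(\operatorname{Rot}(\theta)y)=\theta+\mathcal E(y)$, and $\mathcal E$ is Lipschitz on $\mathcal O_{\rm out}(\zeta_0)$. Therefore $\Theta_n$ equals the section rotation plus the fixed angle $\arg e_{\rm out}$ of the reference orbit, up to $C\zeta_{\rm out}(L)$. Collecting the fixed constants gives
\[
 C_{\rm out}=P(h_*)+T_{\rm arc}+\theta_{\rm arc}+\arg e_{\rm out}-\pi/4\text{-type offset}.
\]
The offset is a pure labeling constant fixed by the reference shape \eqref{eq:cell-reference}. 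This proves \eqref{eq:angle}.

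\textbf{Main obstacle.} The hardest step will be Step 1, namely showing that the phase error is $o_L(1)$ rather than $O(\zeta)$ or growing like $\ell_n$. The interval has length about $\ell_n$, so any $O(1)$ instantaneous phase error would be fatal.

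\begin{itemize}
\item The exact factorization of the phase row in Proposition~\ref{prop:head-comparison} gives order-one coefficients $G_U$ multiplying $\Delta\mathsf U_n$. I plan to absorb exactly these coefficients into $P$: on the branch, $P'(\mathsf U)a_*\mathsf U=g_*(\mathsf U)$.
\item What is left are coefficients multiplied by $|\mathsf V|$, $|\mathsf A-1|$, and $\mathsf U\cdot|\mathsf V|$. Their integrals are controlled absolutely by Lemma~\ref{lem:section-shape}.
\item The $\zeta$-dependence enters only through the forcing. Since $\varepsilon_n(1+S)^5\to0$, it disappears from the angle law and survives only in the timing.
\end{itemize}
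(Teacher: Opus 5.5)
Your proposal is correct and follows essentially the paper's route: integrate the invariant $\vartheta_n^{\rm trig}+T-P(\mathsf U_n^{\rm trig})$ from physical time zero using the cancellation $g(u,0,1)+1=P'(u)a_*(u)u$ and the boundedness of $P'$, carry the phase across the arc with Lemma~\ref{lem:transverse-section}, and read off $\Theta_n$ from rotational covariance and Lipschitz continuity of the direction coordinate; your $C_{\rm out}$ is the paper's fixed isolated-section constant written out in pieces. Your aside about absorbing the coefficients $G_U$ of the \emph{difference} phase row is not needed, since the absolute phase identity you use in Step~1 is what the argument requires.
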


\begin{proof}
For the angle, the smooth primitive \eqref{eq:phase-primitive}
and the exact perturbed equations give
\[
 \left|(\vartheta_n^{\mathrm{trig}}+s
                         -P(\mathsf U_n^{\mathrm{trig}}))'\right|
 \leq C\left(|\mathsf V_n^{\mathrm{trig}}|
       +|\mathsf A_n^{\mathrm{trig}}-1|
       +(|\chi_n^{\mathrm{trig}}|/d_n)^2+A_n+\eta_n^2\right).
\]
Indeed, if $g(\mathsf U,\mathsf V,\mathsf A)$ denotes the phase
velocity, then $g(u,0,1)=-1+g_*(u)$ and
$P'(u)a_*(u)u=g_*(u)$. In
$\vartheta'+1-P'(\mathsf U)\mathsf U'$, these two terms cancel.
The mean-value theorem in $\mathsf V,\mathsf A$ bounds the remaining
internal terms by $C(|\mathsf V|+|\mathsf A-1|)$, and boundedness
of $P'$ multiplies the exterior and offset remainders by a fixed
constant. The equation is valid at $\mathsf U=0$, since $P'$ has
already been extended smoothly there.
Its integral tends to zero. The same estimates on the earlier
interval from physical time zero show that at the crossing
\[
 \vartheta_n^{\mathrm{trig}}
        +s_0+s_{\rm exit}-P(\mathsf U_n^{\mathrm{trig}})
                        =\vartheta_n^0+o_L(1),
\]
since the initial unstable coordinate, and hence its value of $P$,
tends to zero.

At this crossing the normalized shape and offset converge to
$(\mathsf U,\mathsf V,\mathsf A,\chi/d_n)=(h_*,0,1,0)$.
Lemma~\ref{lem:transverse-section} supplies the outgoing crossing,
with its normalized time and phase converging to the isolated values.
In particular its shape discrepancy is
$\zeta_{\rm out}(L)\to0$, uniformly for $\zeta\leq\zeta_0$.
Define $C_{\rm out}$ on the isolated section to be the outgoing
direction angle minus the mean phase plus $P(\mathsf U)$.
Rotational covariance makes this value independent of the rotation.
The continuous outgoing-direction coordinate applied to the actual
section state now gives \eqref{eq:angle}.
\end{proof}

Proposition~\ref{prop:release} follows from
Propositions~\ref{prop:inner-gradient}, \ref{prop:head-comparison},
\ref{prop:weighted-crossing}, and Lemmas~\ref{lem:section-shape}--\ref{lem:section-angle},
using the joint continuation of Proposition~\ref{prop:bootstrap-closure}.

The finite induction uses the positive weighted bound, the
vanishing outgoing shape discrepancy, and the section time and
angle laws. All estimates above hold up to the current stopping time;
Proposition~\ref{prop:bootstrap-closure} continues the geometry and
comparison conditions together.
\section{Completion of the construction}\label{sec:completion}

Fix the exponent $q=\max(p,3)$ and the scales of
Section~\ref{sec:scales}.  This section combines three independent
steps.  First we establish a compactness criterion which retains exact
transport and also supplies continuous dependence for every fixed finite
family of patches.  We then select a quiet solution by an exit-map
argument and a triggered solution by nested intervals of one initial
rotation angle.  Finally we pass from their index-zero shape gap to one
fixed smooth observable and to a common strong initial datum. The finite
selection uses the analytic facts established first.

\subsection{Compactness with exact vorticity transport}\label{sec:limit}

We prove that a common strong initial limit and a fixed smooth-test
difference at times $t_N\geq t_0>0$ yield the required Euler solutions.
The needed convergence is
\[
 \omega_N^\kappa\longrightarrow\omega^\kappa
       \quad\hbox{in }C([0,1];L^1\cap L^q).
\]
Writing $F_N^\kappa(t)$ for the inverse flow, the identity
$\omega_N^\kappa(t)=\omega_{0,N}^\kappa\circ F_N^\kappa(t)$ reduces this issue to
stability of $F_N^\kappa$ uniformly in the terminal time $t$.

\begin{proposition}[From finite instability to Lagrangian nonuniqueness]
\label{prop:criterion}
Let $q=\max(p,3)$ as above. For each $N$, let $\omega_N^\kappa$,
$\kappa\in\{\mathrm{quiet},\mathrm{trig}\}$, be the canonical
bounded-vorticity Euler evolutions of real bounded initial data
$\omega_{0,N}^\kappa$. Assume that the initial supports lie in one fixed ball,
that
\begin{equation}\label{eq:criterion-data}
 \sup_{N,\kappa}\bigl(\|\omega_{0,N}^\kappa\|_1+\|\omega_{0,N}^\kappa\|_q\bigr)<\infty,
 \qquad \omega_{0,N}^\kappa\longrightarrow\omega_0
                      \quad\hbox{in }L^1\cap L^q,
\end{equation}
and that there are a real $\varphi\in C_c^\infty$, constants
$t_0,\gamma>0$, and times $t_N\in[t_0,1]$ such that
\begin{equation}\label{eq:criterion-gap}
 \int\varphi\bigl(\omega_N^{\mathrm{trig}}(t_N)
                  -\omega_N^{\mathrm{quiet}}(t_N)\bigr)\idiff x\geq\gamma.
\end{equation}
Then a subsequence converges to two canonical Lagrangian Euler evolutions
from $\omega_0$, with
\[
 \omega_N^\kappa\longrightarrow\omega^\kappa
       \quad\hbox{in }C([0,1];L^1\cap L^q),\qquad
 u_N^\kappa\longrightarrow u^\kappa
       \quad\hbox{in }C([0,1];L^2_{\rm loc}).
\]
Their flows are
almost-everywhere invertible and measure-preserving, with exact transport
from time zero, renormalization, and conservation of all initially finite
vorticity norms and absolutely integrable Casimirs. For some $t_*\in[t_0,1]$,
\[
 \int\varphi(\omega^{\mathrm{trig}}(t_*)
             -\omega^{\mathrm{quiet}}(t_*))\idiff x\geq\gamma.
\]
\end{proposition}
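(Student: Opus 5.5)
We first obtain compactness of the velocities, then stability of the inverse flows, and finally pass to the limit in the transport identity and in the gap.

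\textbf{Step 1: velocity compactness.} For each fixed $\kappa$, the finite evolutions conserve every $L^r$ norm, $1\le r\le q$, by exact transport. Hence $\|\omega_N^\kappa(t)\|_{L^1\cap L^q}$ is bounded uniformly in $N$ and $t$. Since $q\ge3>2$, Calder\'on--Zygmund and Hardy--Littlewood--Sobolev bounds give $u_N^\kappa$ uniformly bounded in $L^\infty$ and in $W^{1,q}_{\rm loc}$, with $\nabla u_N^\kappa$ bounded in $L^q$. In particular the $u_N^\kappa$ are uniformly H\"older continuous in space. Uniform boundedness of $u_N^\kappa$ keeps the supports in a fixed ball on $[0,1]$. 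The vorticity equation in distributions gives a time modulus for $\omega_N^\kappa$ in a negative Sobolev norm, and hence for $u_N^\kappa$ in $L^2_{\rm loc}$. Arzel\`a--Ascoli/Aubin--Lions then yields a subsequence with $u_N^\kappa\to u^\kappa$ in $C([0,1];L^2_{\rm loc})$, and also locally uniformly because of the uniform H\"older bound. We also obtain $\omega_N^\kappa\rightharpoonup\omega^\kappa$ weakly-$*$ in $L^\infty_tL^q$, with $u^\kappa=K*\omega^\kappa$.

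\textbf{Step 2: stability of the inverse flows.} We apply the quantitative estimates of Crippa--De Lellis \cite{CrippaDeLellis2008} for $W^{1,q}$ velocities, $q>1$, to the backward flows $F_N^\kappa(t)$ from terminal time $t$. For two fields with uniformly bounded $\|\nabla u\|_{L^1_tL^q}$, the functional $\int\log(1+|F_N-F_M|/\delta)$ over a fixed ball is controlled by $C+\delta^{-1}\|u_N-u_M\|_{L^1_tL^1_{\rm loc}}$. Consequently $F_N^\kappa(t)$ is Cauchy in measure on compact sets, uniformly in $t\in[0,1]$. The limit $F^\kappa(t)$ is measure preserving, being a limit of measure-preserving maps, and it is the inverse of a regular Lagrangian flow $X^\kappa$ of $u^\kappa$. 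This flow is unique by Crippa--De Lellis/DiPerna--Lions since $u^\kappa\in L^1_tW^{1,q}$. The same stability argument gives $X_N^\kappa\to X^\kappa$ in measure, so $X^\kappa$ is almost-everywhere invertible with $X^\kappa(t)\circ F^\kappa(t)=\mathrm{id}$ a.e.

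\textbf{Step 3: strong convergence of the vorticity.} This is the main obstacle. We write $\omega_N^\kappa(t)=\omega_{0,N}^\kappa\circ F_N^\kappa(t)$ and split
\[
 \|\omega_{0,N}\circ F_N-\omega_0\circ F\|_{L^r}
 \le\|\omega_{0,N}-\omega_0\|_{L^r}
 +\|g\circ F_N-g\circ F\|_{L^r}
 +2\|\omega_0-g\|_{L^r},
\]
where $g\in C_c$ approximates $\omega_0$ and $r\in\{1,q\}$. The first and third terms are handled by measure preservation. The middle term tends to zero uniformly in $t$, because $g$ is uniformly continuous and bounded and the convergence in measure is uniform in $t$, on a fixed support ball. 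This gives $\omega_N^\kappa\to\omega^\kappa:=\omega_0\circ F^\kappa$ in $C([0,1];L^1\cap L^q)$. Time continuity of the limit is inherited from the uniform limit of continuous maps. Exact transport $\omega^\kappa(t,X^\kappa(t,x_0))=\omega_0(x_0)$ then follows at every time. Renormalization and conservation of norms and Casimirs follow from the composition with measure-preserving maps. The momentum equation passes to the limit: $u_N\otimes u_N\to u\otimes u$ in $L^1_{\rm loc}$ by Step 1, and the pressure is given by the Riesz transforms of $u\otimes u$, which is bounded in $L^{q/2}$ or better, and we take its distributional limit. Since all data agree at $t=0$, both limits start from $\omega_0$.

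\textbf{Step 4: the gap.} We pass to a further subsequence with $t_N\to t_*\in[t_0,1]$. Then
\[
 \Bigl|\int\varphi\,\omega_N^\kappa(t_N)-\int\varphi\,\omega^\kappa(t_*)\Bigr|
 \le\|\varphi\|_\infty\Bigl(\sup_t\|\omega_N^\kappa(t)-\omega^\kappa(t)\|_{L^1}+\|\omega^\kappa(t_N)-\omega^\kappa(t_*)\|_{L^1}\Bigr)\to0,
\]
so the inequality \eqref{eq:criterion-gap} survives in the limit with the same $\gamma$.
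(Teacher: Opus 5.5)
Your proposal is correct and follows the paper's proof essentially step by step: compactness of the canonical velocities in $C([0,1];L^2_{\rm loc})$; the Crippa--De Lellis logarithmic estimate for the backward flows, with constants uniform in the terminal time; the composition splitting of $\omega_{0,N}\circ F_N-\omega_0\circ F$ through a smooth approximant; and passage of the gap to the limit along $t_N\to t_*$. The differences are minor. You obtain the time modulus from the vorticity equation, since $K*\operatorname{div}(u_N\omega_N)$ is an order-zero operator applied to a function bounded in $L^q$, whereas the paper mollifies the momentum equation with its $L^3$ stress tensor. You also show that $F_N$ is Cauchy in measure and then identify the limit as the inverse regular Lagrangian flow; the paper instead builds the limit flow first from Crippa--De Lellis existence and uniqueness and compares $F_N$ with it directly, which spares the identification step you leave compressed.
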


We prove the criterion in three steps: obtain a compact canonical
velocity family, control the inverse flows uniformly over terminal times,
and use composition to recover strong vorticity convergence. We first
record the bounded-vorticity solution properties used above. At the end
we deduce the finite-data continuity invoked in Sec.~\ref{sec:finite}.

\subsubsection*{I. Finite solutions and the canonical velocity equation}
\label{sec:finite-framework}

\begin{lemma}[Finite solutions and uniform analytic bounds]
\label{lem:finite-framework}
Every real compactly supported datum $f\in L^1\cap L^\infty$ has a
unique global bounded-vorticity solution, with canonical velocity
$u=K*\omega$.  Its flow maps are measure-preserving homeomorphisms,
and $\omega(t)=f\circ X(t)^{-1}$.  If, in addition,
$\operatorname{supp}f\subset B_{R_0}$ and
$\|f\|_1+\|f\|_q\leq M_0$ for $q>2$, then on $[0,1]$
\begin{equation}\label{eq:uniform-analytic-bounds}
 \begin{gathered}
 \|u\|_{L^\infty_{t,x}}\leq U_{\max}(q,M_0),\qquad
 \operatorname{supp}\omega(t)\subset B_{R_0+U_{\max}},\\
 \|u\|_{L_t^\infty L^6}+\|\nabla u\|_{L_t^\infty L^q}
       +\|P\|_{L_t^\infty L^3}\leq C(q,M_0,R_0).
 \end{gathered}
\end{equation}
With $\mathbb P$ the whole-plane solenoidal projection and $\mathcal R_a$
the Riesz transforms, the pressure and velocity satisfy
\begin{equation}\label{eq:canonical-euler}
 \partial_tu=-\mathbb P\operatorname{div}(u\otimes u),\qquad
 P=\sum_{a,b=1}^2\mathcal R_a\mathcal R_b(u_a u_b)\in L^3.
\end{equation}
\end{lemma}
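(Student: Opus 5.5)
The plan is to treat this as a collection of classical facts about bounded-vorticity planar Euler flow, adapted to the normalization $K(x)=x^\perp/|x|^2$. Existence and uniqueness for compactly supported $f\in L^1\cap L^\infty$ will be taken from Yudovich's theorem \cite{Yudovich1963}; see also \cite{MajdaBertozzi2002,MarchioroPulvirenti1994}. For such $f$ the kernel bound gives $\|K*\omega\|_{L^\infty}\leq C(\|\omega\|_1+\|\omega\|_\infty)$, and the velocity is log-Lipschitz with constant controlled by the same norms. Osgood's lemma then yields a unique flow of homeomorphisms. These maps are measure preserving because $\operatorname{div}u=0$, which one checks on mollified velocities and passes to the limit using the uniform log-Lipschitz modulus. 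Yudovich's construction also gives $\omega(t)=f\circ X(t)^{-1}$ directly, and uniqueness holds in that class.

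Next, the uniform bounds in \eqref{eq:uniform-analytic-bounds} should depend only on $\|f\|_1+\|f\|_q$ with $q>2$, not on $\|f\|_\infty$. Transport by a measure-preserving map conserves every $L^r$ norm, so $\|\omega(t)\|_1+\|\omega(t)\|_q\leq M_0$ for all $t$. Splitting $K=K\ind_{B_1}+K\ind_{B_1^c}$ and applying H\"older gives $\|u\|_\infty\leq C(\|K\ind_{B_1}\|_{q'}\|\omega\|_q+\|\omega\|_1)$. This uses $q'<2$, hence $q>2$, and defines $U_{\max}(q,M_0)$. Integrating trajectories over $[0,1]$ then keeps the supports inside $B_{R_0+U_{\max}}$.

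The remaining norms in \eqref{eq:uniform-analytic-bounds} come from standard singular-integral estimates. Hardy--Littlewood--Sobolev with $\|\omega\|_{3/2}\leq M_0$ gives $u\in L^6$. Calder\'on--Zygmund gives $\|\nabla u\|_q\leq C\|\omega\|_q$ \cite{Stein1970}. For the pressure, the Riesz transforms are bounded on $L^3$ and $u\otimes u\in L^3$ since $u\in L^6$, so $P=\sum\mathcal R_a\mathcal R_b(u_au_b)$ lies in $L^3$.

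The step needing the most care is the canonical velocity equation \eqref{eq:canonical-euler}, with no extra harmonic field or spatially constant term. I would argue as follows.
\begin{itemize}
\item The vorticity equation $\partial_t\omega+\operatorname{div}(u\omega)=0$ holds in distributions because of exact transport.
\item Since $\omega$ is compactly supported in space, the time derivative $\partial_t u=K*\partial_t\omega$ is well defined.
\item The identity $u\omega=\operatorname{div}(u\otimes u)^\perp$-type structure, namely $\operatorname{curl}\operatorname{div}(u\otimes u)=\operatorname{div}(u\,\operatorname{curl}u)$ up to the $2\pi$ normalization, shows that $\partial_tu+\mathbb P\operatorname{div}(u\otimes u)$ is curl free and divergence free.
\item That field lies in $L^{3}+\dot W^{-1,3}$-type spaces. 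Decay at infinity, from $u\in L^6$ and $u\otimes u\in L^3$, then forces it to vanish, since a harmonic field in such a space is zero.
\end{itemize}
The delicate point is confirming that $\partial_tu$ has sufficient decay, for example $\partial_t u\in L^\infty_tW^{-1,3}$, so that no constant or linear harmonic term survives. Writing $\partial_t u$ as $\nabla^\perp\Delta^{-1}$ applied to $-\operatorname{div}(u\omega)$ handles this directly. The momentum equation with $\nabla P=(\mathrm{I}-\mathbb P)\operatorname{div}(u\otimes u)$ then follows.
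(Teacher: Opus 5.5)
Your proposal is correct and follows essentially the same route as the paper. The shared steps are:
\begin{itemize}
\item Yudovich-type well-posedness with Osgood flows, with measure preservation obtained by mollification;
\item a kernel split for the $q$-dependent speed bound;
\item Calder\'on--Zygmund and Riesz-transform bounds for $\nabla u$ and $P$;
\item the canonical equation from $\partial_tu=-2\pi\nabla^\perp\Delta^{-1}\operatorname{div}(u\omega)$ together with $\operatorname{curl}\operatorname{div}(u\otimes u)=2\pi\operatorname{div}(u\omega)$, which removes any harmonic ambiguity.
\end{itemize}
The differences are cosmetic. The paper cites the localized Yudovich theory of Crippa--Stefani rather than the classical theorem. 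It also gets $u\in L^6$ from the far-field decay $|u(x)|\lesssim\|\omega\|_1/|x|$ plus the $L^\infty$ bound, whereas you use Hardy--Littlewood--Sobolev with $\|\omega\|_{3/2}\leq M_0$.
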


\begin{proof}
Existence and uniqueness follow from
\cite[Thms.~3.3 and 4.1]{CrippaStefani2024}. The kernel assumptions there
follow from
\[
 |K(x)|=\frac1{|x|},\qquad
 |K(x)-K(y)|=\frac{|x-y|}{|x||y|}.
\]
Its divergence vanishes.
Bounded vorticity belongs to the localized Yudovich class with constant
growth function. The uniqueness integral is $\int_1^\infty\diff r/r$,
and the associated log-Lipschitz modulus is concave.

For these finite solutions, the velocity is bounded and has modulus
$Cr(1+|\log r|)$ at distances $r\leq1$. Osgood uniqueness applies
both forward and backward, so the flow maps are homeomorphisms. Spatial
mollification, Osgood comparison, and passage in the change-of-variables
formula show that the maps and their inverses preserve measure. Hence
each initial patch is transported exactly, every vorticity norm is
conserved, and compact support remains compact. Approximation of the
datum by a smooth compactly supported function also gives strong
continuity in each finite $L^r$ space.

The uniform bounds needed for the limiting argument require only $L^1$
and $L^q$. Splitting the kernel at a radius $r>0$ and then optimizing gives
\begin{equation}\label{eq:velocity}
 \begin{split}
 \|K*f\|_\infty
 &\leq C_q\bigl(\|f\|_q r^{1-2/q}+\|f\|_1/r\bigr)\\
 &\leq C_q\|f\|_1^{(q-2)/(2q-2)}\|f\|_q^{q/(2q-2)}.
 \end{split}
\end{equation}
For nonzero $f$, take $r=(\|f\|_1/\|f\|_q)^{q/(2q-2)}$;
if either norm vanishes, $f=0$. Thus \eqref{eq:criterion-data} and
conservation give one speed bound $U_{\max}$ and one support ball for
both families throughout $[0,1]$.

We also need the velocity equation with its harmonic normalization fixed.
For vorticity in that ball, the far-field bound
$|u(x)|\leq C\|\omega\|_1/|x|$, together with \eqref{eq:velocity}, gives
$u\in L^6$. The singular-integral estimates
\cite[Ch.~III, Sec.~1]{Stein1970} give $\nabla u\in L^q$.
The vorticity equation implies \eqref{eq:canonical-euler}, with
$\mathbb P$ and $\mathcal R_a$ as defined in the statement.
With the normalization of this paper,
$u=2\pi\nabla^\perp\Delta^{-1}\omega$ and
$\operatorname{curl}u=2\pi\omega$.  Thus the two identities needed
for the implication are
\begin{equation}\label{eq:normalized-curl-identities}
 \begin{gathered}
 \partial_tu=-2\pi\nabla^\perp\Delta^{-1}
                         \operatorname{div}(u\omega),\\
 \operatorname{curl}\operatorname{div}(u\otimes u)
                         =2\pi\operatorname{div}(u\omega).
 \end{gathered}
\end{equation}
The first follows from the transport equation and the canonical
Biot--Savart formula; the second follows by spatial smoothing and the
product rule for divergence-free fields.  The order-zero operator
$\nabla^\perp\Delta^{-1}\operatorname{div}$ is bounded on finite
Lebesgue spaces.  Since $u\omega\in L^q$, its application has no
polynomial ambiguity.  Applying the solenoidal multiplier gives the
first identity in \eqref{eq:canonical-euler}; the complementary
multiplier is the displayed Riesz-transform pressure.  Equivalently,
\[
 \partial_tu+\operatorname{div}(u\otimes u)+\nabla P=0.
\]
The factors $2\pi$ cancel in this momentum equation.  In particular
\eqref{eq:uniform-analytic-bounds} uses the normalized vorticity
$\omega$, while the physical curl is $2\pi\omega$.
\end{proof}

\subsubsection*{II. Compactness and uniform inverse-flow stability}

\begin{lemma}[Compactness of canonical velocities]
\label{lem:velocity-compactness}
Assume the common initial support bound and the uniform bounds and
strong initial convergence in \eqref{eq:criterion-data}. Along one
subsequence for both branches there are $u^\kappa$ and
$\overline\omega^\kappa\in L^\infty([0,1];L^q)$ such that
\begin{equation}\label{eq:velocity-compactness}
 u_N^\kappa\longrightarrow u^\kappa\quad\hbox{in }C([0,1];L^2_{\rm loc}),
\end{equation}
and
\[
 \omega_N^\kappa\rightharpoonup^*\overline\omega^\kappa
       \quad\hbox{in }L^\infty([0,1];L^q),\qquad
 u^\kappa=K*\overline\omega^\kappa.
\]
The limits solve unforced Euler with initial velocity $K*\omega_0$,
and satisfy
\[
 \|u^\kappa\|_{L^\infty_{t,x}}\leq U_{\max},\qquad
 \nabla u^\kappa\in L^\infty([0,1];L^q),\qquad
 \operatorname{div}u^\kappa=0.
\]
\end{lemma}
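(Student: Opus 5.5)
The plan is to obtain compactness of the velocities from the uniform bounds of Lemma~\ref{lem:finite-framework}, then pass to the limit in the momentum equation \eqref{eq:canonical-euler}.

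\emph{Weak limits of the vorticity.} By conservation of norms and \eqref{eq:criterion-data}, the families $\omega_N^\kappa$ are bounded in $L^\infty([0,1];L^1\cap L^q)$ with supports in a fixed ball $B_{R_0+U_{\max}}$. Banach--Alaoglu gives a weak-$*$ convergent subsequence in $L^\infty([0,1];L^q)$ with limits $\overline\omega^\kappa$. The same subsequence is taken for both branches by extracting twice.

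\emph{Spatial compactness.} By \eqref{eq:uniform-analytic-bounds}, $u_N^\kappa$ is bounded in $L^\infty_tW^{1,q}_{\rm loc}$ with $q\ge3>2$, uniformly in $L^\infty_{t,x}$ and in $L^\infty_tL^6$. By Rellich, for each fixed $t$ the family is precompact in $L^2_{\rm loc}$.

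\emph{Time equicontinuity.} From \eqref{eq:canonical-euler}, $\partial_tu_N=-\mathbb P\operatorname{div}(u_N\otimes u_N)$. Since $u_N\otimes u_N$ is bounded in $L^\infty_tL^3$ and $\mathbb P$ is bounded on $L^3$, $\partial_tu_N$ is bounded in $L^\infty_tW^{-1,3}$. This gives a uniform Lipschitz modulus in $W^{-1,3}_{\rm loc}$. Interpolating against the uniform $W^{1,q}_{\rm loc}$ bound, or mollifying in space, yields a uniform modulus of continuity in $C([0,1];L^2_{\rm loc})$.

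\emph{Extracting the velocity limit.} Arzel\`a--Ascoli in $C([0,1];L^2(B_R))$ for each $R$, together with a diagonal argument, gives \eqref{eq:velocity-compactness}.

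The main step I expect to need care is the \emph{identification $u^\kappa=K*\overline\omega^\kappa$ with no harmonic remainder}. I would test against $\phi\in C_c^\infty$ and write
\[
\int u_N\cdot\phi=\int\omega_N\,(\check K*\phi).
\]
The function $\check K*\phi$ is bounded and decays like $|x|^{-1}$, so it lies in $L^{q'}$ on the fixed support ball; the supports are uniform, so no mass escapes. The weak-$*$ convergence then passes to the limit on the right, while the strong $L^2_{\rm loc}$ convergence passes the left side.

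\emph{Limit properties.} The bounds $\|u^\kappa\|_\infty\le U_{\max}$, $\nabla u^\kappa\in L^\infty_tL^q$ and $\operatorname{div}u^\kappa=0$ pass by lower semicontinuity under weak limits.

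\emph{The Euler equation in the limit.} The nonlinear term converges because $u_N\otimes u_N\to u\otimes u$ in $L^1_{\rm loc}$, using strong $L^2_{\rm loc}$ convergence. The pressures $P_N=\sum\mathcal R_a\mathcal R_b(u_au_b)$ are bounded in $L^3$. They converge in distributions: strong local convergence of $u_N\otimes u_N$ combined with the uniform $L^\infty$ and $L^6$ bounds, which control tails uniformly, gives convergence of $u_N\otimes u_N$ in $L^3$-weak globally. The Riesz transforms preserve this weak convergence.

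\emph{Initial velocity.} The initial datum $u_N(0)=K*\omega_{0,N}\to K*\omega_0$ follows by the same splitting estimate \eqref{eq:velocity}, applied to the difference $\omega_{0,N}-\omega_0$.
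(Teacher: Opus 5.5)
Your proposal is correct and follows essentially the paper's own route. The steps match: the uniform $L^\infty_tL^6$, $L^\infty_tW^{1,q}_{\rm loc}$ and $L^\infty_tL^3$ pressure bounds of Lemma~\ref{lem:finite-framework}; a time modulus in $L^2_{\rm loc}$ from $\partial_tu_N=-\operatorname{div}(u_N\otimes u_N+P_NI)$ via spatial mollification; Rellich plus Arzel\`a--Ascoli with a diagonal extraction over balls and both branches; and identification of $u^\kappa=K*\overline\omega^\kappa$ by pairing with $y\mapsto\int K(x-y)\cdot\Phi(x)\,\mathrm{d}x$. That pairing should also carry a time test, since the vorticity limit is only weak-$*$ in $L^\infty_tL^q$.

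The only blemish is minor. Weak $L^3$ convergence of $u_N\otimes u_N$, hence of the pressures, needs just the uniform $L^3$ bound and the local strong convergence, so no uniform tail control is required; the $L^6$ bound would not supply it anyway, whereas the far-field decay $|u_N(x)|\leq C|x|^{-1}$ for large $|x|$ would.
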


\begin{proof}
Lemma~\ref{lem:finite-framework} gives, uniformly in $N,\kappa$,
\[
 \|u_N^\kappa\|_{L_t^\infty L^6}
 +\|\nabla u_N^\kappa\|_{L_t^\infty L^q}
 +\|P_N^\kappa\|_{L_t^\infty L^3}\leq C.
\]
For a spatial mollifier $\rho_\epsilon$, put
$T_N^\kappa=u_N^\kappa\otimes u_N^\kappa+P_N^\kappa I$.
Then $\|T_N^\kappa\|_{L_t^\infty L^3}\leq C$ and
$\partial_tu_N^\kappa=-\operatorname{div}T_N^\kappa$.  On every
fixed ball $B_R$, Hölder's inequality and $q\geq3$ give
\[
 \sup_t\|u_N^\kappa-\rho_\epsilon*u_N^\kappa\|_{L^2(B_R)}
       \leq C_R\epsilon,\qquad
 \|\partial_t(\rho_\epsilon*u_N^\kappa)\|_{L^\infty_tL^2(B_R)}
       \leq C_R\epsilon^{-1}.
\]
The second estimate follows by convolving $T_N^\kappa$ with
$\nabla\rho_\epsilon$ and restricting its $L^3$ estimate to $B_R$.
Taking $\epsilon=|t-s|^{1/2}$ yields the explicit common modulus
\begin{equation}\label{eq:velocity-time-modulus}
 \|u_N^\kappa(t)-u_N^\kappa(s)\|_{L^2(B_R)}
                        \leq C_R|t-s|^{1/2}.
\end{equation}
At each time the local $W^{1,q}$ bound is compact in $L^2(B_R)$.
A finite time mesh, \eqref{eq:velocity-time-modulus}, and a diagonal
subsequence over $R\in\mathbb N$ and the two branches give
\eqref{eq:velocity-compactness}.
The quadratic products converge in local spacetime $L^1$, so weak
pressure convergence passes the unforced Euler equation. A weak-*
vorticity subsequence is bounded in $L_t^\infty L^q$ and supported in
the common ball. Pairing with $K$ convolved against a compact smooth
test identifies $u^\kappa=K*\overline\omega^\kappa$.  More explicitly,
for $\Phi\in C_c^\infty(\mathbb R^2;\mathbb R^2)$,
\[
 \int u_N^\kappa(x)\cdot\Phi(x)\idiff x
  =\int\omega_N^\kappa(y)
        \left(\int K(x-y)\cdot\Phi(x)\idiff x\right)\idiff y.
\]
The inner integral is bounded on the common vorticity support; the
same identity with a smooth time test passes to the weak-* limit. The initial velocity is $K*\omega_0$ by
\eqref{eq:velocity}. The limits retain bounded speed, zero divergence,
and $\nabla u^\kappa\in L_t^\infty L^q$.
\end{proof}

\begin{lemma}[Inverse-flow stability uniform in terminal time]
\label{lem:uniform-inverse-stability}
Let $u_N,u$ be bounded divergence-free drifts on $[0,1]\times\mathbb R^2$
with speed at most $U_{\max}$ and locally Sobolev spatial regularity.
Suppose, for some $q>1$,
\[
 \sup_N\|\nabla u_N\|_{L_t^1L^q}+\|\nabla u\|_{L_t^1L^q}<\infty,
 \qquad u_N\longrightarrow u\quad\hbox{in }L^1([0,1]\times B_R)
                       \quad\hbox{for every }R>0.
\]
Their measure-preserving regular Lagrangian flows have almost-everywhere
inverses $F_N(t)$ and $F(t)$, and for every $R,\epsilon>0$,
\begin{equation}\label{eq:inverse}
 \sup_{0\leq t\leq1}
 |\{x\in B_R:|F_N(t,x)-F(t,x)|>\epsilon\}|
 \longrightarrow0.
\end{equation}
\end{lemma}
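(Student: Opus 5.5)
The plan is to run the Crippa--De Lellis quantitative stability theory \cite{CrippaDeLellis2008} on the backward flows, using the uniform $W^{1,q}$ bounds with $q>1$. Existence, uniqueness and measure preservation of the regular Lagrangian flows for drifts that are bounded, divergence-free and have $\nabla u\in L^1_tL^q$ are taken from DiPerna--Lions/Ambrosio \cite{DiPernaLions1989,Ambrosio2004}. For each terminal time $t\in[0,1]$, the a.e. inverse $F(t)$ is the time-$t$ map of the flow of the reversed drift $-u(t-\sigma,\cdot)$, $\sigma\in[0,t]$. Reversal keeps the drift bounded and divergence-free and leaves the $L^1_tL^q$ gradient bound unchanged. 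Measure preservation of $X(t)$ and $F(t)$ together with $F(t)\circ X(t)=\mathrm{id}$ a.e. follows from uniqueness of the regular Lagrangian flow. The key point is that every reversed family obeys the same hypotheses with the same constants, independently of $t$.

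Next, fix $R$ and $t$, and estimate the Crippa--De Lellis functional
\[
 \Phi_\delta(\sigma)=\int_{B_R}\log\Bigl(1+\frac{|Y_N(\sigma,x)-Y(\sigma,x)|}{\delta}\Bigr)\idiff x,
\]
where $Y_N,Y$ are the reversed flows, so that $Y_N(t)=F_N(t)$ and $Y(t)=F(t)$. Both trajectories stay in $B_{R+U_{\max}}$ because the speed is at most $U_{\max}$. Differentiating in $\sigma$ and splitting $u_N(Y_N)-u(Y)=[u_N(Y_N)-u_N(Y)]+[u_N(Y)-u(Y)]$ gives two terms. The first is handled by the maximal-function estimate $|u_N(a)-u_N(b)|\leq C|a-b|(M\nabla u_N(a)+M\nabla u_N(b))$ on $B_{R+U_{\max}+1}$. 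Measure preservation and $L^q$-boundedness of $M$ for $q>1$ then bound it by $C\sup_N\|\nabla u_N\|_{L^1_tL^q}$, independently of $\delta$. The second term is at most $\delta^{-1}\|u_N-u\|_{L^1([0,1]\times B_{R+U_{\max}})}$, again using measure preservation of $Y$. This gives
\[
 \Phi_\delta(t)\leq C_R+\delta^{-1}\|u_N-u\|_{L^1([0,1]\times B_{R'})},
\]
with $C_R$ and the right-hand side independent of $t\in[0,1]$.

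To conclude, apply Chebyshev: $|\{|F_N(t)-F(t)|>\epsilon\}\cap B_R|\leq \Phi_\delta(t)/\log(1+\epsilon/\delta)$. Choose $\delta=\delta_N=\|u_N-u\|_{L^1}^{1/2}$, which tends to $0$; if $\|u_N-u\|_{L^1}=0$, take any $\delta\to0$ instead. Then the numerator stays bounded uniformly in $t$ while the denominator tends to infinity, which gives \eqref{eq:inverse}.

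The main obstacle is the uniformity in the terminal time. The reversed drift depends on $t$, so this uniformity has to be verified rather than assumed. I would check that the reversed $L^1$ distance equals $\int_0^t\|u_N(\tau)-u(\tau)\|_{L^1(B_{R'})}\idiff\tau\le\|u_N-u\|_{L^1([0,1]\times B_{R'})}$, and that the reversed gradient norms are bounded by the full-interval ones. A secondary technical point is that the maximal-function estimate needs a localization: multiply $u_N$ by a cutoff equal to one on $B_{R'+1}$, which changes neither the relevant trajectories nor the constants. Only local Sobolev regularity is needed for this, as the statement permits.
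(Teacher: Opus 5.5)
Your proposal is correct and follows essentially the same route as the paper: backward paths for each fixed terminal time, the Crippa--De Lellis logarithmic functional controlled by the maximal-function Lipschitz bound and measure preservation, which gives an estimate uniform in $t$, and then Chebyshev with a vanishing $\delta$. The differences are cosmetic: you take $\delta_N=\|u_N-u\|_{L^1}^{1/2}$ where the paper takes $N^{-1}+\|u_N-u\|_{L^1}$, and you cite DiPerna--Lions/Ambrosio rather than Crippa--De Lellis for well-posedness of the flows.
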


\begin{proof}
For each limiting drift, the Sobolev-flow theorem
\cite[Cor.~3.7 and Cor.~3.10]{CrippaDeLellis2008} supplies a unique
regular Lagrangian flow. Its hypotheses hold because $u^\kappa$ is locally
$W^{1,q}$, has zero divergence, and
$u^\kappa/(1+|x|)\in L_t^1(L^1+L^\infty)$ by bounded speed.
Zero divergence gives measure preservation. Apply the same construction
to time-reversed drifts; uniqueness and composition give inverse time
maps, denoted $F^\kappa(t,\cdot)$. Write $F_N^\kappa$ for the finite inverse maps.

The key point is that inverse convergence must be uniform in the chosen
terminal time. On a ball $B_R$ of terminal positions, backward paths
stay in $B_{R+U_{\max}}$. Fix a terminal time $t$ and use $a\in[0,t]$
for elapsed backward time. Write $Y_N^t(a,x)$ and $Y^t(a,x)$ for
the two backward paths, suppressing the superscript $\kappa$. They satisfy
\begin{equation}\label{eq:backward-paths}
 \begin{gathered}
 \partial_aY_N^t(a,x)=-u_N(t-a,Y_N^t(a,x)),\qquad Y_N^t(0,x)=x,\\
 \partial_aY^t(a,x)=-u(t-a,Y^t(a,x)),\qquad Y^t(0,x)=x,\\
 Y_N^t(t,x)=F_N(t,x),\qquad Y^t(t,x)=F(t,x).
 \end{gathered}
\end{equation}
Let $\mathcal M_{\rm HL}$ be the Hardy--Littlewood maximal operator
and set $G_N(s)=\mathcal M_{\rm HL}(|\nabla u_N(s)|)$.
The Sobolev pointwise inequality gives, outside negligible sets,
\[
 |u_N(s,y)-u_N(s,z)|
 \leq C|y-z|\bigl(G_N(s,y)+G_N(s,z)\bigr).
\]
Subtract \eqref{eq:backward-paths}, insert and subtract
$u_N(t-a,Y^t)$, and put $d(a,x)=|Y_N^t(a,x)-Y^t(a,x)|$.
For every $\delta_f>0$, absolute continuity gives, for almost every
backward time and almost every terminal label,
\[
 \begin{split}
 \frac{\diff}{\diff a}\log(1+d/\delta_f)
 &\leq C\bigl(G_N(t-a,Y_N^t)+G_N(t-a,Y^t)\bigr)\\
 &\quad+\delta_f^{-1}|u_N-u|(t-a,Y^t).
 \end{split}
\]
The pointwise inequality is applicable along these paths by measure
preservation and Fubini. Integrate over $a\in[0,t]$ and $x\in B_R$.
At each backward time both maps preserve measure, so their maximal-function
terms have integrals at most
$C_R\|G_N(s)\|_q\leq C_{R,q}\|\nabla u_N(s)\|_q$.
The last term is bounded by the $L^1$ difference of the drifts on
the common trajectory ball. All constants are independent of the
terminal time, and the initial logarithm is zero. The
maximal-function $L^q$ estimate and measure preservation therefore give,
restoring the superscript $\kappa$,
\begin{equation}\label{eq:inverse-log}
 \begin{split}
 \sup_{0\leq t\leq1}\int_{B_R}
 \log\!\left(1+\frac{|F_N^\kappa(t,x)-F^\kappa(t,x)|}{\delta_f}\right)\idiff x
 &\leq C_R\int_0^1\|\nabla u_N^\kappa(s)\|_q\idiff s\\
 &\quad+\delta_f^{-1}
 \|u_N^\kappa-u^\kappa\|_{L^1([0,1]\times B_{R+U_{\max}})}.
 \end{split}
\end{equation}
This is the logarithmic argument of
\cite[Thm.~3.8 and Cor.~3.9]{CrippaDeLellis2008}, with constants uniform
on all intervals $[0,t]$.

Choose $\delta_f=N^{-1}+\|u_N^\kappa-u^\kappa\|_{L^1([0,1]\times B_{R+U_{\max}})}$.
The right-hand side stays bounded, whereas $\delta_f\to0$.
For every $\epsilon>0$, division by
$\log(1+\epsilon/\delta_f)$ proves \eqref{eq:inverse} for each branch.
The analogous forward estimate holds starting at time zero.
\end{proof}

\begin{lemma}[Strong convergence by exact transport]
\label{lem:strong-transport}
Under the assumptions of Lemma~\ref{lem:velocity-compactness}, every
subsequence selected there satisfies, after identifying its weak
vorticity limit,
\begin{equation}\label{eq:strong}
 \omega_N^\kappa\longrightarrow\omega^\kappa=\omega_0\circ F^\kappa
 \quad\hbox{in }C([0,1];L^1\cap L^q).
\end{equation}
Here $F^\kappa(t)$ is the almost-everywhere inverse of the
measure-preserving flow of $u^\kappa$. The limit obeys the renormalized
transport equation and conserves every initially finite vorticity norm
and absolutely integrable Casimir.
\end{lemma}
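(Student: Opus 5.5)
The approach is to compose the strongly convergent initial data with inverse flows that converge in measure uniformly in time. Exact transport of the finite solutions gives $\omega_N^\kappa(t)=\omega_{0,N}^\kappa\circ F_N^\kappa(t)$, where $F_N^\kappa(t)$ is the inverse of the measure-preserving homeomorphism of Lemma~\ref{lem:finite-framework}. We then set $\omega^\kappa(t)=\omega_0\circ F^\kappa(t)$ and split
\[
 \omega_N^\kappa(t)-\omega^\kappa(t)
 =(\omega_{0,N}^\kappa-\omega_0)\circ F_N^\kappa(t)
 +\bigl(\omega_0\circ F_N^\kappa(t)-\omega_0\circ F^\kappa(t)\bigr).
\]
Measure preservation makes the $L^r$ norm of the first term equal to $\|\omega_{0,N}^\kappa-\omega_0\|_r$ for $r\in\{1,q\}$, uniformly in $t$, and this tends to zero by \eqref{eq:criterion-data}. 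All supports stay in one ball $B_R$ with $R=R_0+U_{\max}$, so only $x\in B_R$ matters.

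For the second term, given $\epsilon>0$ choose $g\in C_c(\mathbb R^2)$ with $\|\omega_0-g\|_1+\|\omega_0-g\|_q<\epsilon$. Replacing $\omega_0$ by $g$ in both compositions costs at most $2\epsilon$, again uniformly in $t$ by measure preservation of $F_N^\kappa(t)$ and $F^\kappa(t)$. What remains is $g\circ F_N^\kappa(t)-g\circ F^\kappa(t)$ on $B_R$. It is bounded by $2\|g\|_\infty$, and by $\omega_g(\epsilon')$ off the set $\{|F_N^\kappa-F^\kappa|>\epsilon'\}$, where $\omega_g$ is the uniform modulus of $g$. By Lemma~\ref{lem:uniform-inverse-stability}, whose hypotheses come from Lemma~\ref{lem:velocity-compactness} (local $L^1$ convergence follows from $C([0,1];L^2_{\rm loc})$, plus the uniform $\nabla u$ bounds), the measure of that set tends to zero uniformly in $t$. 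Hence the $L^1$ and $L^q$ norms of this difference on $B_R$ go to zero uniformly in $t\in[0,1]$, which gives \eqref{eq:strong}.

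Continuity of $t\mapsto\omega^\kappa(t)$ in $L^1\cap L^q$ follows because it is a uniform limit of the continuous finite solutions. Strong convergence then identifies $\overline\omega^\kappa=\omega^\kappa$, so $u^\kappa=K*\omega^\kappa$.

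For renormalization and the Casimirs: for any Borel $\beta$, $\beta(\omega^\kappa(t))=\beta(\omega_0)\circ F^\kappa(t)$, and measure preservation gives $\int\beta(\omega^\kappa(t))=\int\beta(\omega_0)$ whenever the latter is absolutely integrable. This yields conservation of every finite norm and Casimir. The renormalized transport equation follows by testing: for $\varphi\in C_c^\infty$, write $\int\varphi\,\beta(\omega^\kappa(t))=\int\varphi(X^\kappa(t,y))\beta(\omega_0(y))\idiff y$ and differentiate in $t$ along the regular Lagrangian flow. Dominated convergence is justified by the bounded speed and $\beta(\omega_0)\in L^1_{\rm loc}$.

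The main obstacle is the uniformity in the terminal time. It rests entirely on the inverse-flow estimate \eqref{eq:inverse} being a supremum over $t$, and that is precisely what Lemma~\ref{lem:uniform-inverse-stability} supplies. A second point to check is that the same subsequence of Lemma~\ref{lem:velocity-compactness} serves both branches, which holds because it was chosen diagonally.
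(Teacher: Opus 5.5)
Your proposal is correct and follows the paper's proof essentially step for step: the same splitting using measure preservation of the finite and limiting inverse maps, approximation of $\omega_0$ by a compactly supported continuous $g$ combined with the terminal-time-uniform estimate \eqref{eq:inverse}, identification of the weak-* limit, and renormalization and Casimir conservation by composition and by testing along the flow. The only detail to tidy is the comparison ball for $g\circ F_N^\kappa(t)$ and $g\circ F^\kappa(t)$: it must contain $\operatorname{supp}g$ enlarged by $U_{\max}$ (the paper's $B_{R_g}$), not just $B_{R_0+U_{\max}}$, which is harmless if you choose $g$ supported in, say, $B_{R_0+1}$.
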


\begin{proof}
The drifts in Lemma~\ref{lem:velocity-compactness} satisfy all the
hypotheses of Lemma~\ref{lem:uniform-inverse-stability}, branch by
branch. For $r=1,q$, measure preservation first gives
\[
 \|(\omega_{0,N}^\kappa-\omega_0)\circ F_N^\kappa(t)\|_r
 =\|\omega_{0,N}^\kappa-\omega_0\|_r.
\]
Approximate $\omega_0$ in both norms by one compact smooth function.
Its compositions converge in $L^r$, uniformly in time, by
\eqref{eq:inverse}, uniform continuity, and bounded displacement.
The approximation errors retain their norms under all the maps.
For an explicit estimate let that smooth function be $g$, and choose
$B_{R_g}$ containing the supports of all $g\circ F_N^\kappa(t)$ and
$g\circ F^\kappa(t)$; bounded displacement permits a single such ball.
For $\epsilon>0$ set
$E_{N,t,\epsilon}=\{x\in B_{R_g}:|F_N^\kappa(t,x)-F^\kappa(t,x)|>\epsilon\}$.
Then
\[
 \sup_t\|g\circ F_N^\kappa(t)-g\circ F^\kappa(t)\|_r^r
 \leq |B_{R_g}|(\|\nabla g\|_\infty\epsilon)^r
    +(2\|g\|_\infty)^r\sup_t|E_{N,t,\epsilon}|.
\]
First let $N\to\infty$ with $g,\epsilon$ fixed, then let
$\epsilon\to0$. The full difference is bounded by this composition
norm plus
$\|\omega_{0,N}^\kappa-\omega_0\|_r+2\|\omega_0-g\|_r$.
Finally approximate $\omega_0$ arbitrarily well. This order of limits
proves uniform-in-time convergence for $r=1$ and $r=q$ simultaneously.
This proves \eqref{eq:strong}.
This strong limit agrees with the weak-* limit
$\overline\omega^\kappa$ of Lemma~\ref{lem:velocity-compactness}, so
$u^\kappa=K*\omega^\kappa$.  In particular the limit has a strong
trace and, by invertibility, exact transport from time zero. Equation~\eqref{eq:velocity} also gives
$C_tL^\infty$ convergence of the velocities.

Let $B\in C^1$ be bounded with $B(0)=0$ and compactly
supported derivative. Apply the pathwise chain rule to
$\Phi(t,X^\kappa(t,x_0))$, multiply by $B(\omega_0(x_0))$, and integrate.
For a compact smooth spacetime test vanishing at time one, this gives
\[
 \int_0^1\!\int B(\omega^\kappa)
 (\partial_t\Phi+u^\kappa\cdot\nabla\Phi)\idiff x\idiff t
 +\int B(\omega_0)\Phi(0,x)\idiff x=0.
\]
Bounded speed and integrability justify Fubini. Smooth truncations give
the usual renormalized identities; using $\omega_0$ directly is also
legitimate since $u^\kappa\omega^\kappa\in L_t^\infty L^1$. Change of variables
conserves every initially finite vorticity norm and every absolutely
integrable Casimir. The same composition argument applies to any fixed
material component.

\end{proof}

\begin{proof}[Proof of Proposition~\ref{prop:criterion}]
Lemmas~\ref{lem:velocity-compactness} and \ref{lem:strong-transport}
give the asserted limits, Euler equation and transport properties.
Take a further subsequence with $t_N\to t_*\in[t_0,1]$.
Strong convergence in \eqref{eq:strong} and continuity of the limits give
\[
 \int\varphi(\omega_N^{\mathrm{trig}}(t_N)
                -\omega_N^{\mathrm{quiet}}(t_N))\idiff x
 \longrightarrow
 \int\varphi(\omega^{\mathrm{trig}}(t_*)
                -\omega^{\mathrm{quiet}}(t_*))\idiff x.
\]
This preserves \eqref{eq:criterion-gap} and proves the criterion.
\end{proof}

\subsubsection*{III. Continuity with respect to finite patch data}
\label{sec:finite-continuity}

The parameter choices in Section~\ref{sec:finite} vary finitely many
disk centers. We verify that their trajectories, centroids and
transverse crossing times depend continuously on those centers.

\begin{corollary}\label{cor:finite-continuity}
For a fixed finite family of disks with fixed radii and signed strengths,
whose centers vary in a compact set with disjoint initial disks, the
canonical solutions depend continuously on the centers in
$C_t(L^1\cap L^q)$, and the velocities in $C_tL^\infty$.
Forward and inverse maps converge uniformly on compact sets. The tagged
supports converge in Hausdorff distance, and their centroids, moments,
and centroid velocities vary continuously. Isolated transverse section
crossings consequently depend continuously on the initial centers.
\end{corollary}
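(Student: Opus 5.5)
The plan is to reuse the machinery of Proposition~\ref{prop:criterion}, applied not along a cutoff sequence but along a sequence of center parameters $c_m\to c$ in the fixed compact set. Since the radii and signed strengths are fixed, the data $f_m=\sum_i\epsilon_i\Gamma_i\ind_{B_{a_i}(c_{m,i})}/(\pi a_i^2)$ converge to $f$ in every $L^r$, $r<\infty$, by translation continuity. They are also uniformly bounded in $L^1\cap L^\infty$, with supports in one ball. Lemma~\ref{lem:finite-framework} then gives uniform speed and support bounds on $[0,1]$ (and on any finite interval by the same argument). Lemma~\ref{lem:velocity-compactness} gives $u_m\to\bar u$ in $C_tL^2_{\rm loc}$ along a subsequence, with $\bar u$ an Euler velocity from $K*f$. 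Lemmas~\ref{lem:uniform-inverse-stability} and~\ref{lem:strong-transport} give $\omega_m\to f\circ\bar F$ in $C_t(L^1\cap L^q)$.

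Because the vorticity is bounded uniformly in $L^\infty$, the limit is a bounded-vorticity Euler solution from $f$. By the Yudovich uniqueness in Lemma~\ref{lem:finite-framework}, it is the canonical solution from $f$. Hence the whole sequence converges, not merely a subsequence, which is exactly continuity in $C_t(L^1\cap L^q)$. Velocity convergence in $C_tL^\infty$ follows from the interpolation bound \eqref{eq:velocity} applied to $\omega_m-\omega$, using the uniform $L^\infty$ bound and $L^1$ convergence.

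\textbf{Uniform flow convergence.} For forward and inverse maps I will use the Yudovich modulus instead of the measure estimate \eqref{eq:inverse}. The velocities are uniformly log-Lipschitz, with modulus $Cr(1+|\log r|)$, because the vorticity is uniformly bounded. Subtracting trajectory equations gives
\[
 \tfrac{\diff}{\diff t}|X_m-X|\leq C\phi(|X_m-X|)+\|u_m-u\|_{L^\infty},
 \qquad \phi(r)=r(1+|\log r|).
\]
The Osgood comparison gives $\sup_{t,x}|X_m-X|\to0$. The same argument for the backward equations gives the statement for the inverses. This is the step I expect to need the most care: one must check that the Osgood bound is uniform in the terminal time, which holds since the modulus constant depends only on $\|\omega\|_{L^1\cap L^\infty}$, which is conserved.

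\textbf{Tagged supports and moments.} Each tagged support is $X(t)(\overline{B_{a_i}(c_i)})$. Hausdorff convergence follows from uniform convergence of the forward maps together with Hausdorff convergence of the initial closed disks, using equicontinuity of $X(t)$, which is uniformly Hölder by Osgood. Centroids and moments are $\int g(X(t,x_0))\,\diff\mu_i(0,x_0)$ with polynomial $g$ on a fixed ball. Since the initial densities converge in $L^1$ after translation and $X_m\to X$ uniformly, these integrals converge uniformly in $t$. Centroid velocities are $\int u(t,X(t,x_0))\,\diff\mu_i(0)$, and they converge because $u_m\to u$ uniformly while $u$ is uniformly continuous.

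\textbf{Crossing times.} Let $g(t,c)$ be a continuous section function of the centroid state, for example $\mathsf R-\mathsf R_{\rm out}$ or $\Delta\mathsf U$. It is continuous in $(t,c)$ with $\partial_tg$ continuous, because $\partial_tg$ is a smooth function of the centroids and centroid velocities, which vary continuously. At an isolated transverse crossing $t_0$ with $\partial_tg>0$, $g$ changes sign on $[t_0-a,t_0+a]$ with a positive derivative there. These strict inequalities persist for nearby $c$. This yields a unique nearby crossing depending continuously on $c$, by the same intermediate-value argument as in Lemma~\ref{lem:transverse-section}.
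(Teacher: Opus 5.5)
Your proposal is correct and follows the paper's own route. It uses compactness and inverse-flow stability from Lemmas~\ref{lem:velocity-compactness}--\ref{lem:strong-transport}, then bounded-vorticity uniqueness to upgrade subsequential convergence to convergence of the whole sequence. It applies the Osgood comparison $D^+d\leq\|u^{(m)}-u\|_\infty+Cd(1+|\log d|)$ to forward and backward trajectories, and the intermediate-value argument to transverse crossings, exactly as the paper does.

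The differences are cosmetic. You obtain moments and centroid velocities by integrating along the Lagrangian maps against the initial densities, where the paper uses $C_tL^1$ convergence of each tagged density. You also spell out the equicontinuity of $X(t)$ needed for Hausdorff convergence, which the paper leaves implicit.
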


\begin{proof}
Translation continuity gives $L^1$ continuity of the data. The fixed
finite family has a common $L^\infty$ bound and compact support, so
interpolation gives $L^q$ continuity. Lemmas~\ref{lem:velocity-compactness} and \ref{lem:strong-transport}
apply to every parameter sequence without \eqref{eq:criterion-gap}. Every limiting
solution retains the $L^\infty$ bound and equals the solution of the
limiting datum by bounded-vorticity uniqueness. Thus the whole sequence
converges in the claimed norms. A common Osgood modulus gives uniform
forward and backward trajectory convergence: if two trajectories have
separation $d(t)$, then
\[
 D^+d(t)\leq\|u^{(m)}(t)-u(t)\|_\infty
                    +C d(t)(1+|\log d(t)|)
\]
as long as $0<d(t)\leq1/2$, and Osgood comparison sends the uniform
separation to zero with the initial-center and velocity errors.
This implies Hausdorff convergence of the transported closed disks.
For each fixed tag the composition argument also gives
$\mu_j^{(m)}\to\mu_j$ in $C_tL^1$. Bounded support then gives
convergence of all tagged polynomial moments, and
\[
 \dot b_j(t)=\int u(t,x)\mu_j(t,x)\idiff x,\qquad
 |\dot b_j^{(m)}-\dot b_j|
     \leq\|u^{(m)}-u\|_\infty
                       +U_{\max}\|\mu_j^{(m)}-\mu_j\|_1.
\]
Consequently the centroid curves converge in $C^1$.
For a $C^1$ section function $H$, an isolated crossing satisfying
$H(b(t_*))=0$ and
$\nabla H(b(t_*))\cdot\dot b(t_*)\ne0$ has a unique nearby crossing
for all sufficiently close parameters.  The latter crossing times
converge to $t_*$, by the uniform sign of the derivative on a small
interval and the intermediate value theorem.
\end{proof}

The two outputs have different uses. The criterion gives strong
$C_t(L^1\cap L^q)$ convergence and preserves a fixed test-function gap
at a positive limiting time. The corollary gives continuity of finite
parameter-dependent section crossings. The former closes the theorem
below; the latter justifies the finite exit maps constructed next.

\subsection{Selecting finite data with a fixed shape difference}\label{sec:finite}

We now construct, for every $N$, two finite Euler solutions whose initial
patches agree at indices $0,\ldots,N-1$, but for which
$\Delta\mathsf U_0=h_*$ at a time below $1/4$. The initial difference
is confined to index $N$, so its norm will tend to zero. The construction
has two choices: a point in a finite-dimensional box gives the reference
data, and nested intervals of one rotation parameter give the perturbed
data. Both choices use actual bounded-vorticity solutions and their
continuous dependence on disk centers, proved independently in
Sec.~\ref{sec:finite-continuity}.

We use the signs $\epsilon_j=(1,1,-1,-1)_j$ and make the cutoff
explicit only where needed.  Thus every finite solution below has the
material decomposition
\begin{equation}\label{eq:finite-material-decomposition}
 \begin{gathered}
 \omega_N^\kappa(t,x)=\sum_{n=0}^{N}\sum_{j=1}^{4}
          \epsilon_j\Gamma_n\mu_{n,j,N}^\kappa(t,x),\\
 \mu_{n,j,N}^\kappa(t,x)=\frac{\ind_{\Omega_{n,j,N}^\kappa(t)}(x)}
                                      {\pi a_n^2},\qquad
 \int\mu_{n,j,N}^\kappa(t,x)\idiff x=1.
 \end{gathered}
\end{equation}
Here $\Omega_{n,j,N}^\kappa(t)$ is the image of its initial disk under
the complete Euler flow, and
$b_{n,j,N}^\kappa(t)=\int x\mu_{n,j,N}^\kappa(t,x)\idiff x$.
Every tag remains in the velocity after its configuration crosses an
outgoing section.

\begin{proposition}[Finite amplification]\label{prop:finite}
For one sufficiently large fixed $L$ in \eqref{eq:scales} and every
$N\geq1$, there are two canonical global bounded-vorticity solutions
$\omega_N^{\mathrm{quiet}}$ and $\omega_N^{\mathrm{trig}}$ whose initial
data consist of four signed disks at each index $0,\ldots,N$.
Their initial patches agree at all indices $n<N$.
For a fixed $B_{\rm ref}$, the reference solution satisfies
\begin{equation}\label{eq:quietbox}
 |\mathsf U_{n,N}^{\mathrm{quiet}}(t)|,
 |\mathsf V_{n,N}^{\mathrm{quiet}}(t)|
 \leq B_{\rm ref}(\tau_n h^\beta+\eta_n^2),\qquad 0\leq t\leq1.
\end{equation}
There are fixed $h_*,c_0,\delta_{\rm loc}>0$ and a time
$t_N^{\rm exit}\in(0,1/4)$ such that
\begin{equation}\label{eq:exit}
 \mathsf U_{0,N}^{\mathrm{trig}}(t_N^{\rm exit})
 -\mathsf U_{0,N}^{\mathrm{quiet}}(t_N^{\rm exit})=h_*.
\end{equation}
For $0\leq n\leq N$, $1\leq j\leq4$, and
$0\leq t\leq t_N^{\rm exit}$,
\begin{equation}\label{eq:finite-support-confinement}
 \begin{gathered}
 \operatorname{supp}\mu_{n,j,N}^\kappa(t)
    \subset B(b_{n,j,N}^\kappa(t),Cd_n\eta_n^{1/45})
       \cap B(Z_N^\kappa(t)+R_ne_1,\delta_{\rm loc}R_n),\\
 Z_N^\kappa(0)=0,\qquad
 (Z_N^\kappa)'=u_N^\kappa(t,Z_N^\kappa),\qquad
 |(Z_N^\kappa)'|\leq Ch^\beta,\qquad \delta_{\rm loc}<1/16.
 \end{gathered}
\end{equation}
At $t=t_N^{\rm exit}$ both index-zero centroid configurations satisfy
\begin{equation}\label{eq:observation-chart}
 \begin{gathered}
 |\mathsf U_0^\kappa|,|\mathsf V_0^\kappa|\leq2h_*,\qquad
 |\mathsf A_0^\kappa-1|\leq\tfrac14,\qquad
 |\chi_0^\kappa|/d_0\leq\tfrac1{32},\\
 \min_{j\ne k}|b_{0,j,N}^\kappa-b_{0,k,N}^\kappa|\geq c_0d_0,
 \end{gathered}
\end{equation}
where $h_*\leq1/16$.
All constants are independent of $N$; the other centroid coordinates
are defined by the chart of Section~\ref{sec:cell}.
\end{proposition}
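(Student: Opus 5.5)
The plan is to build the two data in two stages: first the reference data by a topological shooting argument, then the triggered data by a nested-interval selection of one rotation parameter at index $N$. Throughout, Proposition~\ref{prop:bootstrap-closure} supplies joint continuation of geometry and growth, and Corollary~\ref{cor:finite-continuity} supplies continuity in the finitely many disk centers.

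\textbf{Reference data.} Fix $L$ large as required by Lemma~\ref{lem:scale-margins} and Proposition~\ref{prop:bootstrap-closure}. At each index $n\le N$ place the four disks at the reference shape, with $\mathsf V_n=0$, $\mathsf A_n=1$, $\chi_n=0$ and phase $\vartheta_n^0$. The remaining parameters are the initial unstable coordinates $\mathsf U_n(0)\in[-b_n,b_n]$, where $b_n=\tau_nh^\beta+\eta_n^2$. By Proposition~\ref{prop:geometry-concentration}, the only possible first exit from the reference box is an unstable face $|\mathsf U_n|=B_{\rm ref}b_n$, and it is crossed strictly outward. Suppose no choice stays in the box on $[0,1]$. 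Then the map sending a parameter to the normalized exit value $(\mathsf U_n/(B_{\rm ref}b_n))_n$ at the first exit time is continuous, by Corollary~\ref{cor:finite-continuity} together with transversality of the exit. It is a retraction of the cube $\prod_n[-1,1]$ onto its boundary, since boundary parameters exit immediately. This contradicts the Brouwer no-retraction theorem (Wa\.zewski). So some choice gives \eqref{eq:quietbox}.

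\textbf{Triggered data.} Keep every disk at indices $n<N$ identical to the reference data. At index $N$, move the four centers onto the rotated isolated outgoing section state of Proposition~\ref{prop:isolated-outgoing} with direction $\Theta_N$, so $\zeta=0$ and $t_N^{\rm out}=0$. Then apply Proposition~\ref{prop:release} downward for $n=N-1,\dots,0$. A step from $n+1$ to $n$ requires the relative phase $\psi_n$ to lie in $\mathcal I$ modulo $\pi$. The law $\Theta_n=\vartheta_n^0-s_n^{\rm out}+C_{\rm out}+o_L(1)$ and the time law \eqref{eq:clock} make $\psi_{n-1}$ a continuous function of $\psi_n$. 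Its derivative is dominated by $-2\,\partial_{\psi_n}s_n^{\rm out}$ plus $o_L(1)$, that is, by $2\lambda^{-1}\mathcal H'/\mathcal H$, which is bounded away from zero on $\mathcal I$. Hence each admissible interval of $\psi_n$ maps onto an arc covering a full period of $\mathcal I$.

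I then choose nested closed intervals of the single parameter $\Theta_N$. Their images under the successive composed maps are exactly the subintervals where each $\psi_n\in\mathcal I$, and continuity comes from Corollary~\ref{cor:finite-continuity} at the transverse crossings. Since $N$ is finite, the intersection is nonempty, and any point of it activates the whole cascade. The step $n=0$ gives \eqref{eq:exit} at $t_N^{\rm exit}$. By \eqref{eq:totalclock}, $t_N^{\rm exit}\le C\sum_n\tau_n(1+\ell_n)<1/4$. The confinement \eqref{eq:finite-support-confinement} follows from Propositions~\ref{prop:geometry-concentration}, \ref{prop:geometry-continuation} and \ref{prop:geometry-impulse-confinement}. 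Finally, \eqref{eq:observation-chart} follows at index zero. Before its positive crossing it stays in $\mathcal C$, and Lemma~\ref{lem:section-shape} controls $\mathsf V_0,\mathsf A_0,\chi_0$. We have $\mathsf U_0^{\rm trig}=h_*+o_L(1)$, and the chart bounds give the component separation $c_0d_0$.

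\textbf{Main obstacle.} I expect the hardest step to be the surjectivity of the phase maps across steps. The composed map must cover $\mathcal I$ with the $O_{m_*}(\zeta)+o_L(1)$ errors controlled, and the incoming discrepancy $\zeta$ must stay below $\zeta_0$ at every step. The second point is handled by the absolute outgoing bound $\zeta_{\rm out}(L)\to0$. For the first, I will try a degree or intermediate-value argument on the lifted angle. This works because $\ell_n/\lambda$ does not depend on the phase, while $-\lambda^{-1}\log\mathcal H(\psi_n)$ varies monotonically by $\lambda^{-1}\log(1/m_*)=12/\lambda$, which is more than $\pi$ in $2s$ once it is doubled. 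The nested intervals therefore survive with all errors uniform in $N$.
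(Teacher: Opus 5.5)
Your plan follows the paper's proof. The reference data come from a no-retraction argument on the box of initial unstable coordinates. The triggered data use a single rotation parameter at index $N$ placed on the outgoing section, with nested closed intervals selected through the time and angle laws of Proposition~\ref{prop:release}. Two steps need repair.

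\textbf{The parameter cube.} In the reference step, the parameter cube must be the whole box $|\mathsf U_n(0)|\leq B_{\rm ref}(\tau_nh^\beta+\eta_n^2)$, as in the paper's $\mathcal B_N$, not $|\mathsf U_n(0)|\leq\tau_nh^\beta+\eta_n^2$. With your smaller cube, boundary parameters do not exit at time zero, so the first-exit map is not a retraction. A continuous map of a cube into its boundary gives no contradiction by itself.

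\textbf{The covering step.} This is the more substantive point, and it is the step you flag as the main obstacle. You cannot argue through a derivative of $\psi_n\mapsto\psi_{n-1}$. The laws \eqref{eq:clock} and \eqref{eq:angle} control only values, with errors $O_{m_*}(\zeta)+o_L(1)$ that carry no derivative bounds, and Corollary~\ref{cor:finite-continuity} gives only continuity. The paper uses the endpoint and intermediate-value argument of your last paragraph.
\begin{itemize}
\item Compare the values of \eqref{eq:covermap} at $\psi_n=\psi_\pm$.
\item Restrict to the parameters between the first one at which $\psi_{n-1}$ reaches the upper endpoint of $\mathcal I+m\pi$ and the last preceding one at which it equals the lower endpoint.
\item On that subinterval $\psi_{n-1}$ stays in $\mathcal I+m\pi$, so Proposition~\ref{prop:release} applies to every parameter there.
\end{itemize}

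\textbf{No factor of $2$.} The line-of-centers angle stays within $o_L(1)$ of zero, so $-2\theta_{\rm line}$ contributes nothing to leading order. The reference at index $n-1$ rotates on its own slower clock, by $-t_n^{\rm out}/\tau_{n-1}=-r_{n-1}s_n^{\rm out}$. Hence
\[
\psi_{n-1}=A_{n-1}^0-(1+r_{n-1})s_n^{\rm out}+o_L(1).
\]
The endpoint increase is therefore at least $12(1+r_{n-1})/\lambda\geq 3\sqrt2$, not $24/\lambda$. You must then verify that $3\sqrt2$ exceeds $\pi+|\mathcal I|$ plus the two endpoint errors. This holds because $|\mathcal I|<1/6$ and each error is at most $1/8$ once $C_{\rm phase}(m_*)\zeta_0\leq1/16$ and $L$ is large. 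The margin is thin, which is exactly why $m_*=e^{-12}$ was fixed.

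With these corrections your argument coincides with the paper's.
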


\begin{proof}
\emph{Reference data: an exit map from a box.}
Set the initial configuration centers to $R_ne_1$ and prescribe
$\chi_n=0$, $\mathsf A_n=1$, $\mathsf V_n=0$, and $\vartheta_n=0$.
Vary only the initial unstable coordinates in
\[
 \mathcal B_N=\prod_{n=0}^N
 [-B_{\rm ref}(\tau_nh^\beta+\eta_n^2),
    B_{\rm ref}(\tau_nh^\beta+\eta_n^2)].
\]
The equations \eqref{eq:actual-quiet-equations} have positive coefficient
for $\mathsf U_n$ and negative coefficient for $\mathsf V_n$.
Their error is $O(\tau_nh^\beta+\eta_n^2)$. In time
$s=t/\tau_n$, the derivative on either unstable face therefore
satisfies
\[
 \operatorname{sgn}(\mathsf U_n)\mathsf U_n'
 \geq (\lambda B_{\rm ref}/2-C)(\tau_nh^\beta+\eta_n^2)>0.
\]
Choose, for example, $B_{\rm ref}\geq4(C+1)/\lambda$.
The stable inequality
$D^+|\mathsf V_n|\leq-\lambda|\mathsf V_n|/2
+C(\tau_nh^\beta+\eta_n^2)$ preserves its corresponding bound.
Section~\ref{sec:geometry} improves the support, separation and other
coordinate bounds through time one as long as $\mathsf U$ stays in
$\mathcal B_N$.

To make the topological argument precise, denote the initial vector of
unstable coordinates by $x\in\mathcal B_N$, and its subsequent vector
by $\mathsf U(t;x)$. If every $x$ leaves the box before time one,
let $T(x)$ be its first exit time and set
\[
 \mathcal R_N(x)=\mathsf U(T(x);x)\in\partial\mathcal B_N.
\]
For an interior starting point, a compact subinterval before $T(x)$
has positive distance from every unstable face. Continuous dependence
therefore prevents a nearby datum from exiting much earlier. At the
first contact at least one face is reached, and its strictly outward
derivative forces nearby data to cross that face within an arbitrarily
short further time. These two observations give continuity of $T$.
The same one-sided argument applies at a corner where several faces
are reached: the first of their crossings still occurs in that short
time interval. At a boundary starting point the outward inequality
gives $T(x)=0$ and $T(x')\to0$ as interior $x'\to x$.
Consequently $\mathcal R_N$ is continuous and equals the identity on
$\partial\mathcal B_N$. This is a retraction of a closed box onto its
boundary, which the no-retraction theorem excludes. All other stopping
conditions have already been strictly improved, so the alternative is
an actual solution staying in the box through time one. Fix its initial data for this $N$.

\emph{Perturbed data: one rotation parameter.}
Replace only the four disks at index $N$ by disks centered on the fixed
section $\mathsf R=\mathsf R_{\rm out}$ of
$\mathsf V=0$, $\mathsf A=1$, $\mathsf U>0$.
Rotate that configuration by one free angle. Its signed impulse is zero,
and all initial patches at indices $n<N$ remain the reference patches.
Parametrize the rotation by $\theta_{\rm in}$ so that its direction
coordinate equals
$\theta_{\rm in}-\vartheta_{N-1}^{\mathrm{quiet}}(0)-\pi/4$.
The initial line between the centers is horizontal, so
$\theta_{\rm in}$ is exactly the angle used in the forcing of
$\Delta\mathsf U_{N-1}$. By construction $t_N^{\rm out}=0$.
Each value of this parameter specifies one complete Euler solution.
The subsequent choices restrict this initial parameter set.

Choose an interval $\mathcal I=[\psi_-,\psi_+]$ on which
$\mathcal H$ increases and
\[
 \mathcal H(\psi_-)=m_*:=e^{-12},\qquad \mathcal H(\psi_+)=1.
\]
The formula \eqref{eq:coefficient} permits $|\mathcal I|<1/6$.
For example, write $\theta_H=\arctan(1/\sqrt2)$, so that
$\mathcal H(\psi)=2\sqrt3\cos(2\psi-\theta_H)$, and put
$\psi_c=(\theta_H-\pi/2)/2$. On the interval immediately to the
right of $\psi_c$,
$\mathcal H(\psi)=2\sqrt3\sin(2(\psi-\psi_c))$ is increasing.
We may choose
\[
 \psi_-=\psi_c+\tfrac12\arcsin(m_*/(2\sqrt3)),\qquad
 \psi_+=\psi_c+\tfrac12\arcsin(1/(2\sqrt3)).
\]
Both arguments lie in $(0,1)$, and
$|\mathcal I|<\frac12\arcsin(1/(2\sqrt3))<1/6$.
Initially restrict $\theta_{\rm in}$ to $\mathcal I$.

\emph{From a section crossing at $n+1$ to one at $n$.}
Suppose index $n+1$ has crossed its separated section at
$t_{n+1}^{\rm out}$ and every index $k>n+1$ satisfies the separated-pair
bounds. The induction retains the original tagged patches, all elapsed
rescaled times, zero initial signed impulses, and identical initial data
at indices $k\leq n$. The section times will satisfy
\begin{equation}\label{eq:cascade-schedule}
 t_n^{\rm out}\leq C_{\rm sched}
       \sum_{k=n}^{N-1}\tau_k(1+\ell_k),\qquad t_N^{\rm out}=0.
\end{equation}
At this step only its instance for $n+1$ is assumed. Indeed,
\eqref{eq:tailclock} then gives
$t_{n+1}^{\rm out}/\tau_n
\leq CL(n+2)^2r_n=o_L(1)$.
For $k<n$, the total rescaled time from zero during the present interval
is at most $r_{n-1}(s_0+S)=o_L(1)$. These are the time hypotheses of
Proposition~\ref{prop:release}; at $n=0$ there is no such larger index.

Apply that proposition on the interval stopped at the first possible
failure of any stated geometric, support or comparison bound. For
$\psi_n\in\mathcal I$ it gives \eqref{eq:certificate}.
In particular, if $s_a$ is the fixed initial allowance and
$s_a\leq v\leq s$, its upper and lower bounds imply
\begin{equation}\label{eq:finite-integral}
 \Delta\mathsf U_n(v)
 \leq\frac Cc e^{-\lambda(s-v)/2}\Delta\mathsf U_n(s),\qquad
 \int_{s_a}^s|\Delta\mathsf U_n(v)|\idiff v
             \leq\frac{2C}{c\lambda}h_*.
\end{equation}
The bounded initial interval is estimated separately. Thus the departure
from rigid rotation has bounded integral on each current interval.
For clarity, the quotient of the two bounds in \eqref{eq:certificate}
is
$\Delta\mathsf U_n(v)/\Delta\mathsf U_n(s)
\leq(C/c)\mathfrak w(s)/\mathfrak w(v)$, and
$\mathfrak w(s)/\mathfrak w(v)
=\exp(-\int_v^s\lambda_{\rm eff})\leq e^{-\lambda(s-v)/2}$.
The lower bound also shows that the difference is positive on this
interval. Integrating the exponential and using
$\Delta\mathsf U_n(s)\leq h_*$ before the crossing proves
\eqref{eq:finite-integral}. In particular, this estimate holds at each current stopped endpoint.
Section~\ref{sec:geometry} consequently improves the patch support,
configuration-position and separated-pair bounds. The difference equations
in Sec.~\ref{sec:release} improve the remaining coordinate and analytic
bounds on that same interval. None can be the first failing bound before
$\Delta\mathsf U_n=h_*$.

Because the exponential lower bound forces this crossing by
$s=C(1+\ell_n)$, the only remaining continuation is along the fixed
separated point-vortex arc. It gives a transverse crossing of
$\mathsf R=\mathsf R_{\rm out}$, with centroid discrepancy
$\zeta_{\rm out}(L)\to0$ uniformly for $0\leq\zeta\leq\zeta_0$.
Increasing one fixed
$C_{\rm sched}$ to include the arc proves
\eqref{eq:cascade-schedule} for $n$. The time assumption is therefore
propagated by induction on completed crossings.

\emph{Selecting all angles from the original parameter interval.}
Put $s_n^{\rm out}=t_n^{\rm out}/\tau_n$ and let $\Theta_n$
be the direction coordinate at the separated section. The time and
angle laws \eqref{eq:clock}--\eqref{eq:angle} determine
\[
 \psi_{n-1}=\Theta_n+\vartheta_{n-1}^{\mathrm{quiet}}(t_n^{\rm out})
 +\pi/4-2\arg(\boldsymbol c_{n-1}^{\mathrm{trig}}(t_n^{\rm out})
                   -\boldsymbol c_n^{\mathrm{trig}}(t_n^{\rm out})).
\]
This angle depends continuously on the original parameter. Write
$\vartheta_n^0=\vartheta_n(0)$ and
$A_n^0=\vartheta_n^0+\vartheta_{n+1}^0+\pi/4+C_{\rm out}$.
The bounds on $t/\tau_{n-1}$ and the centers give
$\psi_{n-1}=A_{n-1}^0-(1+r_{n-1})s_n^{\rm out}+o_L(1)$.
The coefficient $1+r_{n-1}$ has two distinct sources:
$\Theta_n=\vartheta_n^0-s_n^{\rm out}+C_{\rm out}+o_L(1)$,
while the reference configuration at index $n-1$ accumulates the
rotation
$-t_n^{\rm out}/\tau_{n-1}=-r_{n-1}s_n^{\rm out}$.
The line-of-centers angle tends to zero uniformly by the geometric
estimates, and its contribution is included in the displayed error.
If $n+1<N$, its earlier section crossing also gives
$\psi_n=A_n^0-(1+r_n)s_{n+1}^{\rm out}+o_L(1)$.
For these indices the elimination can be written in two steps. Put
$T_n^0=\ell_n/\lambda+C_{\rm time}$ and write the time error as
$d_n^{\rm time}=O_{m_*}(\zeta)+o_L(1)$. Then
\[
 \begin{split}
 s_n^{\rm out}
 &=r_ns_{n+1}^{\rm out}+T_n^0
       -\lambda^{-1}\log\mathcal H(\psi_n)+d_n^{\rm time},\\
 r_ns_{n+1}^{\rm out}
 &=\frac{r_n}{1+r_n}(A_n^0-\psi_n)+o_L(1).
 \end{split}
\]
Substitution into the expression for $\psi_{n-1}$ yields, for $n\geq1$,
\begin{equation}\label{eq:covermap}
 \begin{gathered}
 \psi_{n-1}=B_n^{\rm phase}+\alpha_n\psi_n
                  +c_n\log\mathcal H(\psi_n)+e_n^{\rm phase},\\
 \alpha_n=\frac{r_n(1+r_{n-1})}{1+r_n}\geq0,\qquad
 c_n=\frac{1+r_{n-1}}\lambda,\qquad |e_n^{\rm phase}|\leq1/8,\\
 B_n^{\rm phase}=A_{n-1}^0-\alpha_n A_n^0
       -(1+r_{n-1})(\ell_n/\lambda+C_{\rm time}).
 \end{gathered}
\end{equation}
The time error is multiplied by $1+r_{n-1}\leq2$; all other
errors are multiplied by uniformly bounded ratios, with no factor
$r_n^{-1}$. Quantitatively, write the resulting allowance as
$C_{\rm phase}(m_*)\zeta+\rho_{{\rm phase},n}(L)$, where
$\sup_n\rho_{{\rm phase},n}(L)\to0$. Since the fixed positive margin
$m_*=e^{-12}$ can enter the constant in the logarithmic timing law,
choose the admitted discrepancy and then $L$ so that
\[
 C_{\rm phase}(m_*)\zeta_0\leq1/16,\qquad
 \sup_n\rho_{{\rm phase},n}(L)\leq1/16.
\]
This gives $|e_n^{\rm phase}|\leq1/8$ with an allowance uniform in
the depth. The coefficient $c_n\geq1/\lambda$ retains the required
logarithmic variation even when $r_n$ is extremely small.
All angles are lifted continuously to the real line on the interval
under consideration. The offset $B_n^{\rm phase}$ is constant on the
original parameter family. At $n=N-1$, the initial value
$s_N^{\rm out}=0$ replaces the preceding crossing law: take
$\alpha_n=0$ and omit $\alpha_nA_n^0$.
For $N=1$ there is only the crossing at index zero and no next-angle map.

Suppose a closed interval of the original parameter has its current
angle in $\mathcal I$ and covers that interval, with its endpoints
mapping to the two endpoints in order. A translated interval
$\mathcal I+m\pi$ is treated by subtracting $m\pi$ from the
continuous lift and absorbing the resulting constant into
$B_n^{\rm phase}$. At endpoints with values $\psi_-$ and
$\psi_+$, the non-error part of \eqref{eq:covermap} increases by at least
\[
 \frac{12}{\lambda}=3\sqrt2>\pi+|\mathcal I|+4/8.
\]
The two endpoint errors have total size at most $2/8$. The image of the
next angle therefore contains $\mathcal I+m\pi$ for some integer $m$.
To see why this length suffices, choose the first point of the lattice
$\psi_-+\pi\mathbb Z$ above the lower endpoint of the next-angle
range. It lies less than $\pi$ above that endpoint; an additional
length $|\mathcal I|$ contains its entire translated interval.
Take its first crossing of the upper endpoint and its last preceding
crossing of the lower endpoint. Between those parameters the angle stays
in $\mathcal I+m\pi$ and covers that interval. Continuity suffices for this construction. The smaller closed parameter interval
retains all previously verified crossings. Applying this step successively
selects one initial $\theta_{\rm in}$ for which every required crossing
through index zero occurs.

\emph{Uniformity in $N$.}
All lower bounds on $L$ in this section, including those for the
fixed observable below, are imposed before making any finite selection.
First fix $\mathcal I$, $h_*$, the separated section, its connecting arc
and the pair-separation margins. Fix $s_{\rm ent},c_w,C_w$ from the
leading weighted input and use \eqref{eq:rotation-cost-allowance},
including its provisional factor $4C_w/c_w$, for the small-chart cost.
This allowance and the arc and pair coefficient integrals determine
the constants in the patch estimates. Set the provisional bounds above
these conclusions; then choose $\zeta_0$ small and finally $L$ large.
Require $\zeta_{\rm out}(L)\leq\zeta_0/2$, the weighted remainder below
the positive forcing margin, and every required full elapsed interaction
time for Lemma~\ref{lem:response} below its fixed $T_*$. Also require
\[
 C_{\rm sched}490Le^{-\delta L}<1/4.
\]
Lemma~\ref{lem:scale-margins} gives all these conditions simultaneously.
For the later observation argument also impose
\[
 B_{\rm ref}\sup_{n\geq0}(\tau_nh^\beta+\eta_n^2)\leq h_*/4,
 \qquad
 C\eta_0^{1/45}<\frac18\min\{c_0/16,h_*/(32C_{\rm gap}),1/8\}.
\]
Here $C_{\rm gap}$ is the dimensionless chart constant in
Lemma~\ref{lem:shape-statistic}, which is independent of $L$.
The constants in the variance bounds multiply $\eta_n^2$, so increasing
$L$ suffices without changing the already chosen section. Each index uses
the same bounds. The last display and \eqref{eq:cascade-schedule} place
the index-zero crossing before time $1/4$; the geometric estimates give
the asserted support and separation bounds.
\end{proof}

We have obtained \eqref{eq:quietbox} and the fixed difference
\eqref{eq:exit}, with all four patches at each index within the specified
balls. The only changed initial disks have index $N$. These two facts
will give, respectively, a nonzero smooth-test difference and a common
strong initial limit in Subsection~\ref{subsec:common-datum-observable}.

\subsection{The common datum and a fixed vorticity observable}\label{subsec:common-datum-observable}

We verify the remaining hypotheses of Proposition~\ref{prop:criterion}:
both initial families converge in $L^1\cap L^q$ to the same compactly
supported $\omega_0$, and one smooth test has a positive pairing with
their difference at the times $t_N^{\rm exit}$. The norm tails give the
first claim. For the second, \eqref{eq:exit} concerns a shape coordinate;
we must show that it implies a difference in the signed vorticity itself.

\subsubsection*{I. A common strong initial limit}

\begin{proposition}[A common strong datum]
\label{prop:common-strong-datum}
Along a subsequence $N\to\infty$, the two initial families of
Proposition~\ref{prop:finite} converge in $L^1\cap L^q$ to the same
compactly supported datum
\begin{equation}\label{eq:datum}
 \omega_0(x)=\sum_{n\geq0}\sum_{j=1}^4
                 \epsilon_j\Gamma_n\mu_{n,j}^0(x),\qquad
 \mu_{n,j}^0(x)=\frac1{\pi a_n^2}\ind_{D_{n,j}^0}(x),\qquad
 D_{n,j}^0=B(b_{n,j}^0,a_n),
\end{equation}
where $b_{n,j}^0$ are limiting initial centers. The limiting disks are
pairwise disjoint modulo null sets, and their only possible accumulation
point is the origin.
\end{proposition}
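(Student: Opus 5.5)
The proof is a compactness argument on the finitely many disk centers at each fixed index, combined with the uniform norm tails \eqref{eq:norm-tail}. At a cutoff $N$, the initial datum of either branch in Proposition~\ref{prop:finite} is $\omega_{0,N}^\kappa=\sum_{n\le N}\sum_j\epsilon_j\Gamma_n(\pi a_n^2)^{-1}\ind_{B(b_{n,j,N}^\kappa(0),a_n)}$. For every fixed $n$ the centers $b_{n,j,N}^\kappa(0)=R_ne_1+d_ny_{n,j,N}^\kappa$ lie in a fixed compact set, since $|y_{n,j,N}^\kappa|\le C_{\rm geom}$ uniformly in $N$. Moreover they satisfy $|y_{n,i}-y_{n,j}|\ge c_{\rm geom}$. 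The quiet shape is in the reference box and the triggered one coincides with it for $n<N$. A Cantor diagonal extraction over $n\in\mathbb N$ therefore gives one subsequence along which $b_{n,j,N}^{\rm quiet}(0)\to b_{n,j}^0$ for every $n,j$. Since $b^{\rm trig}_{n,j,N}(0)=b^{\rm quiet}_{n,j,N}(0)$ for $n<N$, the triggered centers converge to the same limits. The index-$N$ triggered disks change only the $N$th term, and that term lies in the tail.

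The limits inherit the separation inequalities, because these are closed conditions. The disjointness argument after \eqref{eq:initial-component-disks} then shows that the limiting disks $D_{n,j}^0$ are pairwise disjoint; in fact the enclosing balls of different indices are at positive distance. Also $|b_{n,j}^0-R_ne_1|\le C_{\rm geom}d_n$, so every disk lies in $B(R_ne_1,(C_{\rm geom}+1)d_n)$. All disks therefore lie in a fixed ball around the origin, and any accumulation point must be a limit of the points $R_ne_1\to0$. By the disjointness and \eqref{eq:norms}, the series \eqref{eq:datum} converges absolutely in $L^1\cap L^q$. Its sum $\omega_0$ is compactly supported.

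For the convergence, fix $m$ and split each difference $\omega_{0,N}^\kappa-\omega_0$ into the indices $n<m$ and the indices $n\ge m$. For $n<m$ we have finitely many disks of fixed radius whose centers converge. Translation continuity gives $L^1$ convergence, and the uniform $L^\infty$ bound on these finitely many terms interpolates this to $L^q$. For $n\ge m$, in either branch and also for $\omega_0$, the disks at a given index are disjoint. Hence the $L^1$ norm of the tail is at most $4\sum_{n\ge m}\Gamma_n$ and its $q$th power in $L^q$ is at most $4\pi^{1-q}\eta_*^{-2(q-1)}\sum_{n\ge m}d_n^{\kappa_q}$. By \eqref{eq:norm-tail} both bounds are $O(d_m^{r})$ with $r>0$, uniformly in $N$. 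The index-$N$ triggered term is included in this tail once $N\ge m$. Letting first $N\to\infty$ and then $m\to\infty$ proves convergence of both families to $\omega_0$.

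The step needing the most care is making sure no subtlety hides in the tail when disks from different indices overlap. The norm bounds above use only disjointness within each branch, which is supplied by the geometric constants, so this is a bookkeeping point rather than a real obstacle. One must also check that the triggered outgoing-section shape at index $N$ satisfies the same $C_{\rm geom},c_{\rm geom}$ bounds; the statement before Lemma~\ref{lem:concentration} says the constants accommodate it.
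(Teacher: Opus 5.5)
Your proposal is correct and follows essentially the paper's argument: a diagonal extraction at each fixed index, disjointness of the limiting disks, and a split into finitely many converging translated disks plus a tail bounded uniformly in $N$ by \eqref{eq:norm-tail}. The differences are cosmetic. The paper extracts along the single coordinate $\mathsf U_{n,N}^{\rm quiet}(0)$ and rebuilds the centers through the inverse chart. It also handles the triggered family by the explicit bound \eqref{eq:seed-norm} on the index-$N$ difference, whereas you absorb that term into the tail.
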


\begin{proof}
For each fixed $n$, the reference coordinates
$\mathsf U_{n,N}^{\mathrm{quiet}}(0)$ lie in the compact interval
\eqref{eq:quietbox}. A diagonal subsequence gives limits $\mathsf U_n^0$.
The inverse chart \eqref{eq:chart}, with
$\boldsymbol c_n=R_ne_1$, $\chi_n=0$, $\mathsf A_n=1$,
$\mathsf V_n=0$, and $\vartheta_n=0$, then defines the centers $b_{n,j}^0$ and the datum in \eqref{eq:datum}.
The disks are disjoint up to null sets: an intersection of two limiting
interiors would persist at finite cutoffs. The centers accumulate only
at the origin, and the union of the disks has compact closure. Equation~\eqref{eq:norms}
gives $\omega_0\in L^1\cap L^q$.

Convergence of the centers of finitely many disks gives convergence of
their indicator functions in both norms. The summable tails
\eqref{eq:norm-tail} control all remaining disks, uniformly in the
cutoff. Finally the change of the four initial disks at index $N$ has vanishing norm:
\begin{equation}\label{eq:seed-norm}
 \|\omega_{0,N}^{\mathrm{trig}}-\omega_{0,N}^{\mathrm{quiet}}\|_1
       \leq8\Gamma_N,\qquad
 \|\omega_{0,N}^{\mathrm{trig}}-\omega_{0,N}^{\mathrm{quiet}}\|_q^q
       \leq C_q\eta_*^{-2(q-1)}d_N^{\kappa_q}.
\end{equation}
More explicitly, fix an integer $J$ and separate the indices $n<J$
from the others. On the chosen diagonal subsequence every disk among
these first $4J$ disks converges by translation continuity. For the tails,
disjointness gives
\[
 \begin{split}
 \sup_{N\geq J}\|\omega_{0,N}^{\rm quiet}\big|_{n\geq J}\|_1
   &\leq4\sum_{n\geq J}\Gamma_n,\\
 \sup_{N\geq J}\|\omega_{0,N}^{\rm quiet}\big|_{n\geq J}\|_q^q
   &\leq4\pi^{1-q}\eta_*^{-2(q-1)}
                       \sum_{n\geq J}d_n^{\kappa_q}.
 \end{split}
\]
Here the restriction denotes the sum of the tagged components with
the indicated indices.
The limiting datum has the same two tail bounds. First let $N$
tend to infinity with $J$ fixed, then let $J$ tend to infinity.
This proves convergence of the reference data in both norms.
For the perturbed data, the $L^1$ difference of the two choices of four
disks is at most their combined unsigned mass $8\Gamma_N$.
For the $L^q$ estimate use
$|f-g|^q\leq2^{q-1}(|f|^q+|g|^q)$ and the four-disk norm
at index $N$. These translation-independent bounds prove
\eqref{eq:seed-norm}.
Both families therefore converge strongly to \eqref{eq:datum} in
$L^1\cap L^q$. Their complete supports and norms have the uniform bounds
required by the criterion.  Since $\beta-2-2E=-11\delta<0$,
\[
 |\omega_0|\big|_{D_{n,j}^0}
     =\pi^{-1}\eta_*^{-2}d_n^{-11\delta}\longrightarrow\infty.
\]
Thus the common datum lies outside $L^\infty$.
\end{proof}

\subsubsection*{II. From a shape gap to a fixed spatial test}\label{sec:separation}

The signed measure is invariant under interchange of the two components of each sign.
We therefore use a statistic invariant under precisely those permutations.

\begin{lemma}[A permutation-invariant shape statistic]
\label{lem:shape-statistic}
For index-zero centroid configurations satisfying
\eqref{eq:observation-chart}, put
\[
 \mathcal M=|\mathsf X|+|\mathsf Y|
          =\max(|\mathsf U|,|\mathsf V|).
\]
With $z=(b_1-b_2)/(2d_0)$ and $w=(b_3-b_4)/(2d_0)$, it equals
\begin{equation}\label{eq:shape-statistic}
 \mathcal M=
 \frac{|\mathsf R-\mathsf A|}{\sqrt{\mathsf A\mathsf R}}
 +\sqrt{\frac{\mathsf R}{\mathsf A}}\,
        \frac{|z\cdot w|}{|z||w|},
 \qquad
 \mathsf A=|z|^2-|w|^2,\quad \mathsf R=2|z||w|.
\end{equation}
This formula is unchanged by translations, rotations and permutations
of the two positive or the two negative labels.  There is a constant
$C_{\rm gap}>0$, independent of $N,L$, such that for any two such
configurations $b,\widetilde b$ and any permutation $\pi$ preserving
each sign,
\begin{equation}\label{eq:shape-statistic-lipschitz}
 |\mathcal M(b)-\mathcal M(\widetilde b)|
       \leq\frac{C_{\rm gap}}{d_0}
                         \max_{1\leq j\leq4}|b_j-\widetilde b_{\pi(j)}|.
\end{equation}
\end{lemma}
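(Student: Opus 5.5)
The proof has three parts: the algebraic identity \eqref{eq:shape-statistic}, the invariances, and the Lipschitz bound \eqref{eq:shape-statistic-lipschitz}.

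\emph{The identity.} Since $\mathsf U=\mathsf X-\mathsf Y$ and $\mathsf V=\mathsf X+\mathsf Y$, the elementary identity $\max(|a-b|,|a+b|)=|a|+|b|$ gives $\max(|\mathsf U|,|\mathsf V|)=|\mathsf X|+|\mathsf Y|$. By \eqref{eq:chart}, $|\mathsf X|=|\mathsf R-\mathsf A|/\sqrt{\mathsf A\mathsf R}$. Also $\mathsf Y=\sqrt{\mathsf R/\mathsf A}\sin(\phi-\pi/2)=-\sqrt{\mathsf R/\mathsf A}\cos\phi$, and $\cos\phi=z\cdot w/(|z||w|)$ because $\phi$ is the angle from $w$ to $z$. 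This yields \eqref{eq:shape-statistic}. On the chart \eqref{eq:observation-chart}, $\mathsf A\geq3/4$, and $\mathsf R$ is bounded above and below, since $|\mathsf U|,|\mathsf V|\leq2h_*$ force $\mathsf R/\mathsf A$ near $1$ through the inverse chart \eqref{eq:cell-chart-inverse}. Hence every quantity appearing in the formula is well defined.

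\emph{The invariances.} Translations leave the differences $b_1-b_2$ and $b_3-b_4$ unchanged. A rotation rotates $z$ and $w$ together, so it preserves $|z|$, $|w|$ and $z\cdot w$. Swapping the two positive labels sends $z\mapsto -z$, and swapping the two negative labels sends $w\mapsto -w$. Each swap leaves $|z|$ and $|w|$ fixed and changes only the sign of $z\cdot w$, which the absolute value removes. So the right side of \eqref{eq:shape-statistic} is unchanged. Notice that this formula uses no lifted arguments, so no branch issue arises.

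\emph{The Lipschitz bound.} By sign-preserving permutation invariance, $\mathcal M(\widetilde b)=\mathcal M(\widetilde b\circ\pi)$, so it suffices to treat $\pi=\mathrm{id}$. Then $|z-\widetilde z|$ and $|w-\widetilde w|$ are at most $\max_j|b_j-\widetilde b_j|/d_0$. Regard $\mathcal M$ as a function $G(z,w)$ on the compact set $\mathcal K$ of pairs $(z,w)$ allowed by \eqref{eq:observation-chart}. On $\mathcal K$, $|z|$ and $|w|$ are bounded above and away from zero; the inverse chart gives $|w|^2\approx(\sqrt2-1)/2$, and the separation bound $c_0d_0$ rules out degeneration. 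Likewise $\mathsf A$ and $\mathsf R$ are bounded away from zero. Therefore $|\cdot|$, $\sqrt{\cdot}$ and the quotients in $G$ are Lipschitz on a neighborhood, and the two absolute values $|\mathsf R-\mathsf A|$ and $|z\cdot w|$ are globally $1$-Lipschitz, so $G$ is Lipschitz with some constant $C_{\rm gap}$.

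\emph{The main obstacle} is the case where the segment between the two configurations leaves $\mathcal K$. I would handle it in two cases. If $\max_j|b_j-\widetilde b_j|/d_0$ exceeds a fixed $\rho_0$, use the crude bound $|\mathcal M|\leq C$ on $\mathcal K$, which gives the inequality after enlarging $C_{\rm gap}$. Otherwise the straight segment stays in a slightly larger compact set where the same lower bounds hold with halved constants, and the fundamental theorem of calculus applies along it. Since $\mathcal K$ is defined by dimensionless chart data fixed independently of $N$ and $L$, so is $C_{\rm gap}$.
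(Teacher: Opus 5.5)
Your proposal is correct and follows essentially the same route as the paper: the identity via $\sin(\phi-\pi/2)=-\cos\phi$ and $\cos\phi=z\cdot w/(|z||w|)$; invariance because a same-sign swap only replaces $z$ or $w$ by its negative; and a Lipschitz bound from fixed chart ranges for $\mathsf A$, $\mathsf R$, $|z|$, $|w|$, all bounded away from zero. The one difference is that the paper composes elementary endpoint inequalities such as $\bigl||z|-|\widetilde z|\bigr|\leq|z-\widetilde z|$ and Lipschitz bounds for square roots and reciprocals on those scalar ranges, so it only needs both configurations' values to lie in the ranges; your segment argument with two cases is therefore valid but not needed.
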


\begin{proof}
Use $z=(b_1-b_2)/(2d_0)$ and $w=(b_3-b_4)/(2d_0)$.
The definition \eqref{eq:chart} gives \eqref{eq:shape-statistic}.
We used $\sin(\phi-\pi/2)=-\cos\phi$ and
$\cos\phi=(z\cdot w)/(|z||w|)$. Interchanging either same-sign
pair replaces $z$ or $w$ by its negative, which leaves this expression
unchanged. Translation cancels in the half-differences, and rotation
preserves their lengths and scalar product. In the permitted compact
chart, the explicit inverse formulas imply the numerical bounds
\[
 \tfrac12\leq\mathsf A\leq\tfrac32,\qquad
 \tfrac18\leq\mathsf R\leq6,\qquad
 \tfrac1{32}\leq|z|,|w|\leq3.
\]
For example, $|\mathsf X|\leq2h_*\leq1/8$ implies
$1/2\leq q_0\leq2$ in \eqref{eq:cell-chart-inverse}, and
$|w|^2=\mathsf R^2/(2(\sqrt{\mathsf A^2+\mathsf R^2}+\mathsf A))$
gives the displayed lower bound.  On these scalar ranges, norms, scalar
products, square roots and reciprocals in \eqref{eq:shape-statistic}
are Lipschitz with uniform constants.  The inequalities
$||z|-|\widetilde z||\leq|z-\widetilde z|$ and
$||a|-|b||\leq|a-b|$ therefore give a uniform Lipschitz estimate
directly, without an angle branch.  Reconstructing the half-differences
introduces $1/d_0$.  The formula is unchanged by $z\mapsto-z$ or
$w\mapsto-w$, so the same constant works for every sign-preserving
permutation. This proves \eqref{eq:shape-statistic-lipschitz}.
\end{proof}

\begin{proposition}[One fixed observable at positive times]
\label{prop:fixed-observable}
There are a real $\varphi_*\in C_c^\infty(\mathbb R^2)$, a number
$t_0>0$, and an infinite subsequence of the finite pairs such that
\begin{equation}\label{eq:fixed-observable-gap}
 t_N^{\rm exit}\in[t_0,1/4],\qquad
 \int\varphi_*(\omega_N^{\rm trig}(t_N^{\rm exit})
              -\omega_N^{\rm quiet}(t_N^{\rm exit}))\idiff x
                 \geq\Gamma_0.
\end{equation}
The constants and the test are fixed before passing to a limit.
\end{proposition}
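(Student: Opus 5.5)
We must produce a fixed $\varphi_*$ detecting the index-zero shape gap. The plan is to convert the shape gap into a gap in a quadratic signed moment of the index-zero vorticity, then localize that moment with a smooth cutoff.

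\textbf{Time bounds.} The lower bound $t_N^{\rm exit}\geq t_0$ comes from the growth law. At $t=0$ we have $\Delta\mathsf U_0=0$. On the quiet box and the chart, $|(\Delta\mathsf U_0)'|\leq C/\tau_0$ in physical time, by \eqref{eq:actual-quiet-equations}. Reaching $h_*$ therefore takes time at least $t_0=h_*\tau_0/C$, which is independent of $N$. The upper bound $t_N^{\rm exit}<1/4$ is part of Proposition~\ref{prop:finite}.

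\textbf{Shape gap to statistic gap.} At $t_N^{\rm exit}$, the bound \eqref{eq:quietbox} and the choice $B_{\rm ref}\sup(\tau_nh^\beta+\eta_n^2)\leq h_*/4$ give $\mathcal M^{\rm quiet}\leq h_*/4$. In the trig history, $\mathsf U^{\rm trig}_0\geq 3h_*/4$, so $\mathcal M^{\rm trig}\geq3h_*/4$, and the gap is at least $h_*/2$. By Lemma~\ref{lem:shape-statistic}, no sign-preserving relabeling can bring the two centroid configurations within $h_*d_0/(2C_{\rm gap})$ of each other.

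\textbf{The observable.} The natural choice is the signed quadratic moment about the configuration's own center, with test function of the form $\chi(x)\,Q(x-\boldsymbol c)$, $Q$ quadratic. The obstacle is that $\boldsymbol c_0^\kappa(t_N^{\rm exit})$, the phase $\vartheta_0$, and the shape vary with $N$.

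To handle this, I would pass to a subsequence along which both index-zero centroid configurations converge. They lie in a compact set, since by \eqref{eq:finite-support-confinement} $|Z_N^\kappa|\leq Ch^\beta$. The limits $\bar b^\kappa$ still satisfy the permutation-invariant separation gap.

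Let $\chi$ be a cutoff equal to $1$ on $B(\bar c+R_0e_1,\delta_{\rm loc}R_0\cdot 2)$ and supported away from all other indices. This is possible because indices $n\geq1$ lie within $2\delta_{\rm loc}R_n\ll R_0$ of $Z+R_ne_1$, near the origin. Also let $\psi_j$ be bumps of radius $c_0d_0/8$ around $\bar b^{\rm trig}_j$.

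Since the supports have radius $Cd_0\eta_0^{1/45}\ll c_0d_0$, we get $\int\psi_j\omega^{\rm trig}\approx\epsilon_j\Gamma_0$. For the quiet history, each quiet component either lies inside $\operatorname{supp}\psi_j$ or is far away. The sign-sensitive test $\varphi_*=\sum_j\epsilon_j\psi_j$ gives exactly $4\Gamma_0$ on the trig side. On the quiet side it gives at most $4\Gamma_0$ minus $\Gamma_0$ times the number of mismatched tags.

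The separation gap forces at least one tag mismatch. To see this, suppose every quiet component of sign $\epsilon_j$ sat within $c_0d_0/8$ of a trig one of the same sign. This would define a sign-preserving permutation with displacement below $h_*d_0/(2C_{\rm gap})$, using the imposed inequality $C\eta_0^{1/45}<h_*/(32C_{\rm gap})$ together with the bump radius. We may arrange this by choosing the bump radius to be $\min(c_0,h_*/C_{\rm gap})d_0/16$. That contradicts the gap.

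A missed positive or a stray negative component loses at least $\Gamma_0$, because each component has mass one fully inside or fully outside the bump. The other indices contribute zero, since $\varphi_*$ vanishes there. This yields a gap of at least $\Gamma_0$.

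\textbf{Main obstacle.} The main obstacle is the choice of bump radius. It must be small enough that mismatches are forced by the statistic gap, yet large enough to contain each transported patch. It is also delicate to handle convergence of the trig centroids so that one fixed $\varphi_*$ works for all large $N$. I would settle both by the subsequence and by halving the radius, to absorb the $o(1)$ centroid drift along it.
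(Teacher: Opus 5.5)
Your argument is correct in substance, but it fixes the test by a different route from the paper.

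\textbf{How the paper fixes the test.} It does not take a limit of centroid configurations. At each $N$ it uses the permutation-invariant gap to find one triggered centroid $b_*$, of some sign $\varepsilon_*$, lying at distance at least $\delta_{\rm iso}$ from both reference centroids of that sign. It then takes a single signed bump $\varepsilon_*\chi_{\rm bump}((x-\xi)/\delta_{\rm iso})$, with $\xi$ drawn from a fixed finite $\delta_{\rm iso}/32$-net of $B(e_1,1/4)$, and fixes $\varphi_*$ by pigeonhole. That bump contains the selected triggered patch and excludes every same-sign reference patch entirely. Any reference patch it does meet has the opposite sign and only increases the pairing, so no counting is needed.

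\textbf{How yours differs.} You use compactness of the triggered index-zero centroids and four signed bumps $\sum_j\epsilon_j\psi_j$. This gives $4\Gamma_0$ on the triggered side and avoids having to locate the unmatched centroid, at the cost of a counting argument on the reference side.

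\textbf{Lower time bound.} The paper uses only the speed bound $U_{\max}$ and \eqref{eq:shape-statistic-lipschitz} at the single time $t_N^{\rm exit}$, which gives $t_0=h_*d_0/(4C_{\rm gap}U_{\max})$. Your bound $|(\Delta\mathsf U_0)'|\leq C/\tau_0$ is also valid. However, it requires the triggered index-zero configuration to stay, on all of $[0,t_N^{\rm exit}]$, in the chart where \eqref{eq:actual-quiet-equations} holds. The bootstrap does supply this.

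\textbf{A step that must be repaired.} The claim that each reference component lies ``fully inside or fully outside'' a bump is false. The reference centroids are arbitrary relative to the triggered ones, so a reference patch can straddle the transition region of some $\psi_j$. Repair it as follows.
\begin{itemize}
\item Call a reference patch matched if its support meets $\operatorname{supp}\psi_j$ for some $j$ of the same sign.
\item Each reference patch meets at most one bump, since the bump supports are separated by much more than $2Cd_0\eta_0^{1/45}$. Together with $0\leq\int\psi_j\mu\leq1$, this bounds the reference pairing by $\Gamma_0$ times the number of matched patches.
\item Suppose all four were matched. Then each reference centroid lies within $r_b+Cd_0\eta_0^{1/45}+o(1)$ of a same-sign triggered centroid. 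The $c_0d_0$ separation makes this assignment a sign-preserving permutation. Its displacement is below $h_*d_0/(2C_{\rm gap})$, which contradicts the gap.
\item Hence the reference pairing is at most $3\Gamma_0$, and the difference is at least $\Gamma_0$ exactly.
\end{itemize}
Also take $\psi_j\equiv1$ on $B(\bar b_j^{\rm trig},r_b/2)$. Then, along your subsequence, each triggered patch lies where $\psi_j=1$ and the triggered pairing is exactly $4\Gamma_0$.
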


\begin{proof}
The final choice of $L$ in Proposition~\ref{prop:finite} makes the
reference bound \eqref{eq:quietbox} at most $h_*/4$.
Hence $\mathsf U_0^{\rm trig}\geq3h_*/4$ and
$\mathcal M_0^{\rm quiet}\leq h_*/4$ at $t_N^{\rm exit}$, so
\eqref{eq:exit} gives
\begin{equation}\label{eq:gap}
 \mathcal M_N^{\mathrm{trig}}(t_N^{\rm exit})
 -\mathcal M_N^{\mathrm{quiet}}(t_N^{\rm exit})\geq h_*/2.
\end{equation}

The corresponding index-zero centers agree initially and move at speed
at most $U_{\max}$ in each solution. Thus the observation times cannot
approach zero:
\begin{equation}\label{eq:positive-observation-time}
 t_N^{\rm exit}\geq
 t_0:=\frac{h_*d_0}{4C_{\rm gap}U_{\max}}>0.
\end{equation}
Indeed each corresponding centroid has moved by at most
$U_{\max}t$ from its common starting point, so the maximum displacement
between the two configurations is at most $2U_{\max}t$.
Applying the Lipschitz bound to \eqref{eq:gap} gives
$h_*/2\leq2C_{\rm gap}U_{\max}t/d_0$, which is the displayed lower bound.
The case $U_{\max}=0$ is excluded by \eqref{eq:gap}.

To detect this difference, choose
\[
 \delta_{\rm iso}=\min\{c_0d_0/16,
                   h_*d_0/(32C_{\rm gap}),1/8\}.
\]
The index-zero support radius $Cd_0\eta_0^{1/45}$ is less than
$\delta_{\rm iso}/8$ for large $L$. This condition is included in the
single final choice of $L$; the ratio $\delta_{\rm iso}/d_0$ is eventually
a fixed positive chart fraction.

There is a centroid of the perturbed solution $b_*$ of sign $\varepsilon_*$ at distance at
least $\delta_{\rm iso}$ from both reference centroids of that sign.
Otherwise the internal separation makes the close same-sign matchings
bijective, and the Lipschitz bound contradicts \eqref{eq:gap}.
To verify the matching assertion, assign to each of the two perturbed
centroids of a given sign a reference centroid within
$\delta_{\rm iso}$. Two perturbed centroids cannot be assigned to
the same reference centroid: their mutual distance would be less than
$2\delta_{\rm iso}<c_0d_0$. Thus the assignment is a permutation
for each sign. In this ordering the maximum displacement is less than
$\delta_{\rm iso}$, and permutation invariance gives
$|\mathcal M^{\rm trig}-\mathcal M^{\rm quiet}|
\leq C_{\rm gap}\delta_{\rm iso}/d_0\leq h_*/32$,
contradicting \eqref{eq:gap}.
The choice of this centroid may depend on $N$; only the finite family
of tests constructed next must be fixed.

The support balls in Proposition~\ref{prop:finite}, together with
$|Z_N^\kappa(t)|\leq Ch^\beta$, place the patches with $n\geq1$ in
$B(0,1/4)$ and those with $n=0$ in $B(e_1,1/4)$ at the crossing times.
Choose a fixed smooth function $0\leq\chi_{\rm bump}\leq1$ that is one on
$B_{1/4}$ and supported in $B_{1/2}$, and a finite
$\delta_{\rm iso}/32$-net $\mathcal G$ of $B(e_1,1/4)$. For
$\xi\in\mathcal G$ and $\varsigma\in\{-1,1\}$ put
\begin{equation}\label{eq:test-family}
 \varphi_{\xi,\varsigma}(x)
       =\varsigma\chi_{\rm bump}((x-\xi)/\delta_{\rm iso}).
\end{equation}
A node near $b_*$ and the sign $\varsigma=\varepsilon_*$ give a test equal
to $\varepsilon_*$ on the entire selected patch. Its support excludes
opposite-sign patches of the perturbed solution, same-sign patches of
the reference solution, and all patches with $n\geq1$.
For the first statement,
$|\xi-b_*|\leq\delta_{\rm iso}/32$ and the selected support radius
is less than $\delta_{\rm iso}/8$, so the patch lies in
$B(\xi,5\delta_{\rm iso}/32)\subset B(\xi,\delta_{\rm iso}/4)$.
Every reference centroid of sign $\varepsilon_*$ is at distance at least
$\delta_{\rm iso}$ from $b_*$; subtracting the same two allowances
still leaves distance greater than $\delta_{\rm iso}/2$ from $\xi$.
Opposite-sign perturbed centroids are even farther away, by
$c_0d_0\geq16\delta_{\rm iso}$. These inequalities exclude their
entire supports from the bump. Finally the two fixed balls
$B(0,1/4)$ and $B(e_1,1/4)$, and
$\delta_{\rm iso}\leq1/8$, exclude every index $n\geq1$.
Each remaining
reference contribution has sign $-\varepsilon_*$; subtracting it in the
vorticity difference therefore adds a nonnegative amount to the pairing. Hence one of this fixed
finite family satisfies
\[
 \int\varphi_{\xi,\varsigma}
  (\omega_N^{\mathrm{trig}}(t_N^{\rm exit})
   -\omega_N^{\mathrm{quiet}}(t_N^{\rm exit}))\idiff x\geq\Gamma_0.
\]
Retain an infinite subsequence on which this spatial test is one fixed
$\varphi_*$. The observation times remain in $[t_0,1/4]$.
A further subsequence satisfies $t_N^{\rm exit}\to t_*\in[t_0,1/4]$.
Keep $L$ and all resulting scales fixed during this cutoff limit.
For each branch, Proposition~\ref{prop:criterion} gives
\[
 \begin{split}
 \|\omega_N^\kappa(t_N^{\rm exit})-\omega^\kappa(t_*)\|_1
 &\leq\sup_{t\in[0,1]}\|\omega_N^\kappa(t)-\omega^\kappa(t)\|_1\\
 &\quad+\|\omega^\kappa(t_N^{\rm exit})-\omega^\kappa(t_*)\|_1
 \longrightarrow0.
 \end{split}
\]
Pairing with the fixed bounded test preserves the gap at $t_*>0$.
This is the observable distinction represented schematically in
Figure~\ref{fig:material-histories}.
\end{proof}

\subsection{Completion of the theorem}

Apply Proposition~\ref{prop:criterion} with the common datum
\eqref{eq:datum}, the test $\varphi_*$, and $\gamma=\Gamma_0$.
It supplies two canonical unforced Euler evolutions, their exact
invertible measure-preserving flows, and a time $t_*>0$ at which the
observable gap is at least $\Gamma_0$. Choose
$\gamma_*=\Gamma_0/2$. Their vorticities are strongly continuous in
$L^1\cap L^q$ and their velocities in $L^2_{\rm loc}$, with the
common initial traces required in the theorem.

If $p<q$, set
\begin{equation}\label{eq:final-interpolation}
 \theta_{\rm int}=\frac{q-p}{p(q-1)},\qquad
 \|f\|_p\leq\|f\|_1^{\theta_{\rm int}}
                    \|f\|_q^{1-\theta_{\rm int}}.
\end{equation}
Apply this to time increments and convergence differences. It gives
the required strong $L^p$ continuity; when $p=q$ no interpolation is
needed. The material decomposition is also retained explicitly.  Define
\begin{equation}\label{eq:infinite-material-decomposition}
 \begin{gathered}
 E_{n,j}^\kappa(t)=\{x:F^\kappa(t,x)\in D_{n,j}^0\},\qquad
 \mu_{n,j}^\kappa(t,x)=\mu_{n,j}^0(F^\kappa(t,x))
                =\frac{\ind_{E_{n,j}^\kappa(t)}(x)}{\pi a_n^2},\\
 \omega^\kappa(t,x)=\sum_{n\geq0}\sum_{j=1}^{4}
              \epsilon_j\Gamma_n\mu_{n,j}^\kappa(t,x),\qquad
 |E_{n,j}^\kappa(t)|=\pi a_n^2.
 \end{gathered}
\end{equation}
The sets $E_{n,j}^\kappa(t)$ are pairwise disjoint modulo null sets.
The series converges in $L^1\cap L^q$, uniformly in time, because
measure preservation gives exactly the same tail norms as at time zero.
Thus both solutions transport every one of the same initial material
components, with the same signs, values and areas.  Their arrangements
are separated by the fixed observable at $t_*$.
All quantities were chosen for the prescribed exponent before the
cutoff limit. This proves Theorem~\ref{thm:main}.

\section{Proofs of the three patch estimates}
\label{sec:technical-proofs}\label{sec:concentration}\label{sec:local}\label{app:concentration}\label{sec:response}\label{app:response}

This section proves Lemmas~\ref{lem:concentration}, \ref{lem:neutral-pair}
and~\ref{lem:response}. Throughout, the kernel is
$K(x)=x^\perp/|x|^2$, and all interaction times are normalized by
$d^2/\Gamma$. The densities have mass one; their signed circulation
is supplied by the coefficient multiplying them in the velocity.

\subsection{Concentration in a translating and rotating frame}
\label{subsec:proof-concentration}
\begin{proof}[Proof of Lemma~\ref{lem:concentration}]
Initially the claimed support radius tends to zero with $\eta$, even
when $s_{\rm pre}$ has its largest allowed value. We may therefore stop
the argument when a centered support first reaches the boundary of a
ball $B_{r_*}$, where $0<r_*\leq r/2$ is chosen so that
$C(\ell_-,\ell_+,C_0)r_*^2\leq1/100$; here $r$ is the tube radius
in the statement of the lemma. On this
interval we first control the centroid error through the variance,
then prove the variance bound by a corrected energy. A high moment
and a scalar comparison will finally keep the support strictly inside
$B_{r_*}$ and remove the stop.

\subsubsection*{The centroid error and the logarithmic deficit}

The exact centroid equation indicates the estimate we need. Split its
averaging error into interactions between patches and the exterior:
\[
 \begin{aligned}
 \delta_i^{\rm int}&=\sum_{j\ne i}\epsilon_j
 \left(\iint K(x-y)\idiff\mu_i(x)\idiff\mu_j(y)-K(b_i-b_j)\right),\\
 \delta_i^{\rm ext}&=\int(V(x)-V(b_i))\idiff\mu_i(x).
 \end{aligned}
\]
Oddness of $K$ makes the self average zero. In each remaining integral,
Taylor expansion at the centroids has no linear contribution because
$\int(x-b_i)\idiff\mu_i=0$. The fixed separation bounds the Hessians
of the interaction kernels. More explicitly, put $z=b_i-b_j$ and
$h=(x-b_i)-(y-b_j)$. Then
\[
 K(z+h)-K(z)-\nabla K(z)h
 =\int_0^1(1-v)\nabla^2K(z+vh)[h,h]\idiff v.
\]
All segments in this formula remain away from zero on the stopped
support neighborhoods, and
$\iint|h|^2\idiff\mu_i\idiff\mu_j=I_i+I_j$.
The same formula for $V(b_i+h)$ has remainder at most $H|h|^2/2$.
Thus
\begin{equation}\label{eq:centroid-taylor}
 b_i'=V_i^{\rm pt}+\delta_i^{\rm int}+\delta_i^{\rm ext},\qquad
 |\delta_i^{\rm int}|\leq C\sum_jI_j,\qquad
 |\delta_i^{\rm ext}|\leq H I_i.
\end{equation}

It therefore suffices to prove $\sum_i I_i\leq C\eta^2$ to obtain an
$O(\eta^2)$ internal averaging error on this interval.

The rearrangement deficit supplies the required control of $I_i$.
For a probability rearrangement of $f_\eta$, put
$A_\mu(r)=\iint\ind_{\{|x-y|<r\}}\idiff\mu(x)\idiff\mu(y)$.
The rearrangement inequality for the ball kernel
\cite{BrascampLiebLuttinger1974} gives $A_\mu(r)\leq A_{f_\eta}(r)$.
Its three nonnegative factors here are $\mu(x)$, $\mu(y)$ and
$\ind_{B_r}(x-y)$; their linear arguments have scalar coefficients
$(1,0)$, $(0,1)$ and $(1,-1)$ in the two vector variables.
The first two rearrange to $f_\eta$, and the ball indicator is
already radially decreasing.
Apply the layer representation first to bounded logarithmic kernels and
then pass to the limit. Since $A_{f_\eta}(r)=1$ for $r\geq2\eta$,
\begin{equation}\label{eq:layer-cake}
 \mathcal D(\mu)=\int_0^\infty
   \frac{A_{f_\eta}(r)-A_\mu(r)}r\idiff r
 \geq\iint\log_+\frac{|x-y|}{2\eta}\idiff\mu(x)\idiff\mu(y).
\end{equation}
On $0\leq r\leq2r_*$, elementary estimate of
$x^2 \leq 1 + C\log_+ x$ for $0 < x = r/(2\eta) < r_*/\eta$ gives
\[
 r^2\leq4\eta^2+\frac{4r_*^2}{\log(r_*/\eta)}
                   \log_+\frac r{2\eta},\qquad 0 < \eta < r_*.
\]
Using $2I_i=\iint|x-y|^2\idiff\mu_i(x)\idiff\mu_i(y)$, we obtain
\begin{equation}\label{eq:coercivity}
 \sum_iI_i\leq C\eta^2+\epsilon_\eta\sum_i\mathcal D_i,
 \qquad \epsilon_\eta=\frac{4r_*^2}{\log(r_*/\eta)}\longrightarrow0.
\end{equation}
The coefficient $\epsilon_\eta\to0$ is decisive: it will allow all
quadratic interaction errors to be absorbed after estimating the
logarithmic deficit by an energy identity.

\subsubsection*{Canceling the rigid rotation in the energy identity}

Set
$G_0=\log|x|$, write $V=(\nabla\Psi)^\perp$, and define
\[
 E_{\rm cl}=\frac1{4\pi}\left\|K*\sum_i\epsilon_i\mu_i\right\|_2^2,
 \qquad H_{\rm pt}=-\sum_{i<j}\epsilon_i\epsilon_jG_0(b_i-b_j).
\]
Neutrality makes $E_{\rm cl}$ finite. To compare this energy with the
energy of the centroids, subtract the point Hamiltonian, subtract the
fixed disk self energies, and center the exterior stream terms. With
$m$ components, the resulting quantity is
\[
 \mathcal Q=E_{\rm cl}-\sum_i\epsilon_i\int\Psi\idiff\mu_i-H_{\rm pt}
       +\sum_i\epsilon_i\Psi(b_i)-\frac{m}{2}\mathcal S(f_\eta).
\]
Here is the sign computation. Put $\omega_{\rm cl}=\sum_i\epsilon_i\mu_i$
and $\psi_{\rm cl}=G_0*\omega_{\rm cl}$. Since $\Delta\psi_{\rm cl}
=2\pi\omega_{\rm cl}$ and $K*\omega_{\rm cl}=(\nabla\psi_{\rm cl})^\perp$,
integration by parts gives
\begin{equation}\label{eq:material-energy-expansion}
 E_{\rm cl}=-\frac12\int\psi_{\rm cl}\omega_{\rm cl}
 =\frac12\sum_i\mathcal S(\mu_i)
 -\sum_{i<j}\epsilon_i\epsilon_j
       \iint G_0(x-y)\idiff\mu_i(x)\idiff\mu_j(y).
\end{equation}
The boundary term at infinity vanishes because the total circulation
is zero: $\nabla\psi_{\rm cl}=O(|x|^{-2})$. Subtracting the fixed disk
self energies in this identity gives exactly
$-\tfrac12\sum_i\mathcal D_i$.
Every other difference is a quadratic Taylor remainder, since the
first moments about $b_i$ vanish. Thus
\[
 \mathcal Q=-\frac12\sum_i\mathcal D_i+O\left(\sum_iI_i\right).
\]
An affine change of a local stream does not change its centered
remainder, so this definition uses only streams on the stated
neighborhoods.

Write $\widehat\mu_i(x)=\mu_i(b_i+x)$ and
$\mathscr R_i f(x)=f(b_i+x)-f(b_i)-\nabla f(b_i)\cdot x$.
We next differentiate before estimating any velocity. Exact transport
by $u_{\rm cl}+V$, where $u_{\rm cl}=(\nabla\psi_{\rm cl})^\perp$, gives
\[
 E_{\rm cl}'=-\int\nabla\psi_{\rm cl}\cdot V\,\omega_{\rm cl},\qquad
 \left(\int\Psi\omega_{\rm cl}\right)'
 =\int\Psi_s\omega_{\rm cl}
      +\int\nabla\Psi\cdot u_{\rm cl}\,\omega_{\rm cl}.
\]
The self contribution to the first identity and the $V$ contribution
to the second vanish by $a\cdot a^\perp=0$; the two remaining transport
terms cancel because
$\nabla\Psi\cdot (\nabla\psi_{\rm cl})^\perp
=-\nabla\psi_{\rm cl}\cdot (\nabla\Psi)^\perp$.
For the point expression, use $K^\perp=-\nabla G_0$ and $V^\perp=-\nabla\Psi$.
We obtain
\[
 \left(E_{\rm cl}-\sum_i\epsilon_i\int\Psi\idiff\mu_i\right)'
       =-\sum_i\epsilon_i\int\Psi_s\idiff\mu_i,
 \qquad
 \nabla_{b_i}\left(H_{\rm pt}-\sum_j\epsilon_j\Psi(b_j)\right)
       =\epsilon_i(V_i^{\rm pt})^\perp.
\]
In particular, its derivative along the actual centroids is
\[
 \left(H_{\rm pt}-\sum_i\epsilon_i\Psi(b_i)\right)'
 =\sum_i\epsilon_i(V_i^{\rm pt})^\perp\cdot
                  (\delta_i^{\rm int}+\delta_i^{\rm ext})
                       -\sum_i\epsilon_i\Psi_s(b_i),
\]
since $(V_i^{\rm pt})^\perp\cdot V_i^{\rm pt}=0$. Centering each stream
remainder removes its linear part as well as its constant part.
The exterior error combines with the stream derivative because
$(V_i^{\rm pt})^\perp\cdot\delta_i^{\rm ext}
=V_i^{\rm pt}\cdot\int(\nabla\Psi(b_i+x)-\nabla\Psi(b_i))\idiff\widehat\mu_i$.
Consequently
\begin{equation}\label{eq:cluster-work}
 \mathcal Q'=-\sum_i\epsilon_i\int
 \mathscr R_i\bigl((\partial_s+V_i^{\rm pt}\cdot\nabla)\Psi\bigr)
       \idiff\widehat\mu_i
       -\sum_i\epsilon_i(V_i^{\rm pt})^\perp\cdot\delta_i^{\rm int}.
\end{equation}
The last term of \eqref{eq:cluster-work} contains the full point
velocities, including the rigid rotation. Its translation and rotation
parts can be canceled exactly. Internal linear and angular impulse
conservation give
\[
 \sum_i\epsilon_i\delta_i^{\rm int}=0,\qquad
 \sum_i\epsilon_i b_i\cdot\delta_i^{\rm int}
       =-\frac12\sum_i\epsilon_i(I_i')_{\rm int}.
\]
Indeed the angular impulse of the components is
$\sum_i\epsilon_i(|b_i|^2+I_i)$; subtract the point angular-impulse identity
from its internal derivative. Both internal derivatives vanish, since
pairing the terms with indices $i,j$ leaves
$(x-y)\cdot K(x-y)=0$ in the material identity and
$(b_i-b_j)\cdot K(b_i-b_j)=0$ in the point identity.
In the last term of \eqref{eq:cluster-work}, translation contributes
$-(\boldsymbol c')^\perp\cdot\sum_i\epsilon_i\delta_i^{\rm int}=0$,
whereas rotation contributes
\[
 -\sum_i\epsilon_i\bigl(\Omega (b_i-\boldsymbol c)^\perp\bigr)^\perp
                   \cdot\delta_i^{\rm int}
 =\Omega\sum_i\epsilon_i b_i\cdot\delta_i^{\rm int}
 =-\frac\Omega2\sum_i\epsilon_i(I_i')_{\rm int}.
\]
This identity determines the sign of the
required rotation correction:
$\widetilde{\mathcal Q}=\mathcal Q+(\Omega/2)\sum_i\epsilon_iI_i$.
Substitution of the rigid-frame decomposition into \eqref{eq:cluster-work}
cancels both the translation and the internal rotation terms:
\begin{align}\label{eq:rotation-work}
 \widetilde{\mathcal Q}'={}&-\sum_i\epsilon_i\int
 \mathscr R_i\bigl((\partial_s+V_i^{\rm pt}\cdot\nabla)\Psi\bigr)
       \idiff\widehat\mu_i-\sum_i\epsilon_iW_i^\perp\cdot\delta_i^{\rm int}\notag\\
 &+\frac{\Omega'}2\sum_i\epsilon_iI_i
       +\frac\Omega2\sum_i\epsilon_i(I_i')_{\rm ext}.
\end{align}
For the first term in \eqref{eq:rotation-work}, the Hessian of the
stream being centered is
$-\bigl((\partial_s+V_i^{\rm pt}\cdot\nabla)\nabla V\bigr)^\perp$,
where the perpendicular operation acts on each output column.
The frozen vector $V_i^{\rm pt}$ is constant when taking this Hessian,
so its norm is bounded by $B$. Also
\[
 (I_i')_{\rm ext}
 =2\int (x-b_i)\cdot(V(x)-V(b_i))\idiff\mu_i(x),
 \qquad |(I_i')_{\rm ext}|\leq2AI_i.
\]
Together with \eqref{eq:centroid-taylor}, these bounds give
\[
 |\widetilde{\mathcal Q}'|
 \leq C(B+\max_i|W_i|+|\Omega'|+|\Omega|A)\sum_iI_i.
\]
Also $\sum_i\mathcal D_i\leq C\sum_iI_i+C|\widetilde{\mathcal Q}|$.
To see the absorption explicitly, denote $I=\sum_iI_i$, $D=\sum_i\mathcal D_i$
and $Q=|\widetilde{\mathcal Q}|$. The two inequalities are
$I\leq C\eta^2+\epsilon_\eta D$ and $D\leq C(I+Q)$.
Choose $\eta$ so that $C\epsilon_\eta\leq1/2$; substitution gives
$I\leq C\eta^2+C\epsilon_\eta Q$. If
$k=B+\max_i|W_i|+|\Omega'|+|\Omega|A$, then
\[
 \mathrm D^+Q\leq Ck\eta^2+C\epsilon_\eta kQ,\qquad
 Q(s)\leq\left(Q(0)+C\eta^2\int_0^s k\right)
                   \exp\left(C\epsilon_\eta\int_0^s k\right).
\]
The integral of $k$ is bounded by \eqref{eq:cost}, including the
assumed bound on $|\Omega|$. Since $Q(0)\leq C\eta^2$, first $Q$,
then $I$, and finally $D$ are bounded by $C\eta^2$. Hence
\begin{equation}\label{eq:variance-deficit}
 \sum_i(I_i+\mathcal D_i)+|\widetilde{\mathcal Q}|\leq C\eta^2.
\end{equation}
This proves the variance estimate independently of $S$: the integral
of $|\Omega|$ never occurs in the bound. Inserting it in
\eqref{eq:centroid-taylor} also bounds the actual centroid errors by
$C\eta^2(1+H)$.

\subsubsection*{Adapted moments control the tails}

Small variance alone permits a small amount of vorticity far from the
centroid. We next bound a higher moment, using level sets along which
the frozen point velocity has zero normal component. Choose
$\theta'=\Omega$ and put $x=b_i+\operatorname{Rot}(\theta)y$.
Along a material trajectory this change of coordinates satisfies
\begin{equation}\label{eq:material-moving-coordinates}
 y'=\operatorname{Rot}(-\theta)(u(x)-b_i')-\Omega y^\perp.
\end{equation}
Put $R=\operatorname{Rot}(\theta)$ temporarily, and let
$P_i(x)=\sum_{j\ne i}\epsilon_jG_0(x-b_j)+\Psi(x)$ be the regular
point stream near $b_i$. Freezing the other patches at their centroids
and centering their velocity at $b_i$ gives
\[
 v_i^{\rm fr}(y)=\epsilon_iK(y)
       +R^{-1}\bigl(\nabla P_i(b_i+Ry)-\nabla P_i(b_i)\bigr)^\perp
       -\Omega y^\perp.
\]
Rotations commute with the perpendicular operation, and the chain rule gives
$\nabla_y[\mathscr R_iP_i(Ry)]=R^{-1}(\nabla P_i(b_i+Ry)-\nabla P_i(b_i))$.
Subtract the constant and linear parts of the regular stream, and include
the stream of the rotating coordinates. Define
\begin{equation}\label{eq:adapted-radius}
 \log\rho_i(y,s)=\log|y|+\frac1{\epsilon_i}
 \left[\mathscr R_i\left(\sum_{j\ne i}\epsilon_jG_0(\cdot-b_j)+\Psi\right)
       (\operatorname{Rot}(\theta)y)-\frac\Omega2|y|^2\right].
\end{equation}
In these coordinates the resulting frozen velocity is
$\epsilon_i(\nabla\log\rho_i)^\perp$. Its scalar product with
$\nabla\rho_i$ is zero. Thus differentiating a moment of $\rho_i$
eliminates this part of the velocity exactly.
The quadratic correction is chosen from this identity: the regular
field generally has a nonzero component in the direction $y$,
whereas its contribution to $v_i^{\rm fr}\cdot\nabla\rho_i$ cancels.
For a fixed even integer $\nu\geq4$, set
$F_{i,\nu}=\rho_i^\nu-|y|^\nu$. The regular term in
\eqref{eq:adapted-radius} is quadratic, so on the fixed small ball
\begin{equation}\label{eq:adapted-derivatives}
 \rho_i\asymp|y|,\quad |\nabla\rho_i|\leq C,\quad
 |\nabla F_{i,\nu}|\leq C_\nu|y|^{\nu+1},\quad
 |\nabla^2F_{i,\nu}|\leq C_\nu|y|^\nu.
\end{equation}
For example, write $\log\rho_i=\log|y|+q_i$ with
$q_i=O(|y|^2)$, $\nabla q_i=O(|y|)$ and $\nabla^2q_i=O(1)$.
Then $F_{i,\nu}=|y|^\nu(e^{\nu q_i}-1)$; differentiating this product
once and twice gives the powers $\nu+1$ and $\nu$ in
\eqref{eq:adapted-derivatives}.
We must also control the time dependence of these level sets. The bound is
\[
 |\partial_s\log\rho_i|\leq CK_0|y|^2,\qquad
 K_0=\max_i|W_i|+B+|\Omega|A+|\Omega'|
                   +C\eta^2(1+H+H^2).
\]
Indeed, the rotated relative centroid velocities equal
$\operatorname{Rot}(\theta)^{-1}(W_i-W_j+\delta_i-\delta_j)$,
where $\delta_i=\delta_i^{\rm int}+\delta_i^{\rm ext}$.
For clarity, put $a_{ij}=R^{-1}(b_i-b_j)$. Then
$a_{ij}'=R^{-1}(W_i-W_j+\delta_i-\delta_j)$, because the
translation cancels and differentiation of $R^{-1}$ cancels
$\Omega (b_i-b_j)^\perp$. The interaction remainder can be written as
\[
 G_0(a_{ij}+y)-G_0(a_{ij})-\nabla G_0(a_{ij})\cdot y
 =\int_0^1(1-v)\nabla^2G_0(a_{ij}+vy)[y,y]\idiff v.
\]
Its time derivative is at most $C|a_{ij}'||y|^2$ by the fixed
separation. The rigid rotation itself therefore makes no contribution
to the interaction part of $\partial_s\log\rho_i$.
For the exterior remainder the exact derivative is
\[
 \mathscr R_i((\partial_s+b_i'\cdot\nabla)\Psi)(\operatorname{Rot}(\theta)y)
 +\Omega\nabla\mathscr R_i\Psi(\operatorname{Rot}(\theta)y)
                      \cdot (\operatorname{Rot}(\theta)y)^\perp.
\]
Replacing $b_i'$ by $V_i^{\rm pt}$ costs $C\eta^2(H+H^2)|y|^2$;
the last displayed term costs $C|\Omega|A|y|^2$.
All terms of $K_0$ therefore have bounded integrals under the hypotheses:
the only additional constant term contributes at most
$C\eta^2S\leq C\eta^{3/2}$.

Let $\mu_i^{\rm rot}$ denote the probability density in these centered,
rotating coordinates. The adapted and ordinary moments are
$m_{i,\nu}=\int\rho_i^\nu\idiff\mu_i^{\rm rot}$,
$M_{i,\nu}=\int|y|^\nu\idiff\mu_i^{\rm rot}$.
By \eqref{eq:centroid-taylor} and the variance bound, the actual centered
velocity differs from the frozen tangent field by
$\epsilon_i(K*\mu_i^{\rm rot}-K)$ and a remainder bounded by
$C\eta^2(1+H)$. Indeed, replacing another patch by its centroid at
$b_i+Ry$ costs its variance times a bounded kernel Hessian, and
replacing $b_i'$ by $V_i^{\rm pt}$ costs \eqref{eq:centroid-taylor}.
It remains to estimate the self-interaction contribution
to $m_{i,\nu}'$. For its radial part, exchange $y$ and $z$ in half of
the double integral. The resulting kernel is
\[
 \frac\nu2(|y|^{\nu-2}y-|z|^{\nu-2}z)\cdot K(y-z),
\]
bounded by $C_\nu\min(|y|,|z|)\max(|y|,|z|)^{\nu-3}$.
For $|z|\leq|y|/2$, the scalar product, apart from its fixed factor,
is exactly
\[
 -\frac{(|y|^{\nu-2}-|z|^{\nu-2})y\cdot z^\perp}{|y-z|^2}.
\]
Its absolute value is at most
$C_\nu |y|^{\nu-2}|y||z|/|y|^2
=C_\nu|z||y|^{\nu-3}$.
For comparable radii use both
\[
 \bigl||y|^{\nu-2}-|z|^{\nu-2}\bigr|
 \leq C_\nu\max(|y|,|z|)^{\nu-3}|y-z|,
 \quad |y\cdot z^\perp|\leq\max(|y|,|z|)|y-z|.
\]
The correction $F=F_{i,\nu}$ must include the subtracted point velocity
at both variables. Its exact symmetric kernel is
\[
 \frac12\bigl[(\nabla F(y)-\nabla F(z))\cdot K(y-z)
             -\nabla F(y)\cdot K(y)-\nabla F(z)\cdot K(z)\bigr].
\]
It is bounded by $C_\nu\min(|y|,|z|)\max(|y|,|z|)^{\nu-1}$.
For $|z|\leq|y|/2$, rewrite twice this kernel as
\[
 \nabla F(y)\cdot(K(y-z)-K(y))
       -\nabla F(z)\cdot K(y-z)-\nabla F(z)\cdot K(z).
\]
The three absolute bounds are, respectively,
$C|y|^{\nu+1}|z|/|y|^2$,
$C|z|^{\nu+1}/|y|$, and $C|z|^\nu$.
Each is at most $C|z||y|^{\nu-1}$. Values at $z=0$ are understood
by continuity, since $\nabla F(z)=O(|z|^{\nu+1})$.
For comparable radii the Hessian bound in \eqref{eq:adapted-derivatives}
cancels the diagonal singularity: along the segment from $z$ to $y$,
$|\nabla F(y)-\nabla F(z)|\leq C\max(|y|,|z|)^\nu|y-z|$.
The other two terms are $O(\max(|y|,|z|)^\nu)$, which is the required
bound because the two radii are comparable.

To integrate the radial bound, split the product space into
$|z|\leq|y|$ and its transpose. On the first part enlarge the inner
integration to all $z$, obtaining
\[
 \iint\min(|y|,|z|)\max(|y|,|z|)^{\nu-3}
                 \idiff\mu_i^{\rm rot}(y)\idiff\mu_i^{\rm rot}(z)
 \leq2M_{i,1}M_{i,\nu-3}
 \leq C\eta m_{i,\nu}^{1-3/\nu}.
\]
Here $M_{i,1}\leq\sqrt{I_i}\leq C\eta$, probability interpolation
gives $M_{i,\nu-3}\leq M_{i,\nu}^{1-3/\nu}$, and
$M_{i,\nu}\asymp m_{i,\nu}$. The correction kernel is bounded the
same way after using $\max(|y|,|z|)^2\leq r_*^2$.
The regular velocity remainder contributes at most
$C\eta^2(1+H)M_{i,\nu-1}$; use
$M_{i,\nu-1}\leq r_*^2M_{i,\nu-3}$ and
$\eta^2(1+H)\leq\eta(1+\eta H)$ for $\eta\leq1$.
Finally $\partial_s(\rho_i^\nu)=\nu\rho_i^\nu\partial_s\log\rho_i$
contributes at most $C_\nu K_0m_{i,\nu}$.
Thus
\begin{equation}\label{eq:adapted-moment}
 m_{i,\nu}'\leq C_\nu\eta(1+\eta H)m_{i,\nu}^{1-3/\nu}
                              +C_\nu K_0m_{i,\nu}.
\end{equation}
For $\nu=24$, the power on the right is $7/8$. Multiplication by
$m_{i,24}^{-7/8}$, justified first with a positive regularization,
therefore gives a linear inequality for $m_{i,24}^{1/8}$. Its initial
value is bounded by $C\eta(1+s_{\rm pre})$. The bounded coefficient
integrals can be used without a factor exponential in $S$: with
$z=m_{i,24}^{1/8}$, the inequality reads
\[
 z'\leq C\eta(1+\eta H)+CK_0z,\qquad
 z(s)\leq C\left(z(0)+\eta s+\eta^2\int_0^sH\right)
                       \exp\left(C\int_0^sK_0\right)
 \leq C\eta(1+s_{\rm pre}+s).
\]
Together with $\rho_i\asymp|y|$, this yields
\begin{equation}\label{eq:aged-moment}
 M_{i,24}(s)\leq C[\eta(1+s_{\rm pre}+s)]^8.
\end{equation}

\subsubsection*{The largest support radius}

To pass from a moment bound to the full support, let $R(s)$ be the
maximum of $\rho_i$ on the transported support. At a maximizing point
$y$, split the self integral into $|z|\leq|y|/2$ and its complement.
In the first region subtracting $K(y)$ leaves at most
$C\eta|y|^{-2}$. The complementary mass is bounded by
$CM_{i,24}|y|^{-24}$. For any nonnegative density, splitting the singular
integral into a disk of radius $r$ and its complement and minimizing in
$r$ gives
\[
 \int|y-z|^{-1}f(z)\idiff z
       \leq C\|f\|_\infty^{1/2}\|f\|_1^{1/2}\qquad(f\geq0).
\]
In detail, the near and far terms are at most
$2\pi r\|f\|_\infty+r^{-1}\|f\|_1$; choose
$r=(\|f\|_1/\|f\|_\infty)^{1/2}$ when $f\ne0$.
The preserved density bound
$\|\mu_i^{\rm rot}\|_\infty=(\pi\eta^2)^{-1}$ then controls the
contribution of this complementary mass. Inserting
\eqref{eq:aged-moment} and the regular errors gives
\begin{equation}\label{eq:support-dini}
 \mathrm D^+R\leq C\eta R^{-2}
       +C\eta^3(1+s_{\rm pre}+s)^4R^{-12}
       +C\eta^2(1+H)+CK_0R^3.
\end{equation}
Indeed, the outer mass $m_{\rm tail}$ satisfies
$m_{\rm tail}\leq CM_{i,24}R^{-24}$, and its singular integral is
$C\eta^{-1}m_{\rm tail}^{1/2}
 \leq C\eta^3(1+s_{\rm pre}+s)^4R^{-12}$.
The outer subtracted point field obeys the same bound: its magnitude is
at most $Cm_{\rm tail}/|y|\leq C\eta^{-1}m_{\rm tail}^{1/2}$, since
$m_{\rm tail}\leq1$ and the contact radii below satisfy $|y|\geq\eta$.

The powers in the following comparison function are chosen so that its
derivative dominates both inverse powers of $R$ in
\eqref{eq:support-dini} throughout the permitted interval:
\[
 R_{\rm bar}(s)=C_{\rm bar}\eta^{2/9}(1+s_{\rm pre}+s)^{2/5}
       \exp\left(C_{\rm exp}\int_0^s(K_0+H)\idiff r\right).
\]
Choose $C_{\rm exp}$ to absorb the coefficient terms in
\eqref{eq:support-dini}. Relative to the derivative of the polynomial
factor, namely $(2/5)R_{\rm bar}/(1+s_{\rm pre}+s)$, the remaining
terms are bounded, respectively, by
\[
 CC_{\rm bar}^{-3}\eta^{1/3}T^{-1/5},\qquad
 CC_{\rm bar}^{-13}\eta^{1/9}T^{-1/5},\qquad
 CC_{\rm bar}^{-1}\eta^{16/9}T^{3/5},\quad T=1+s_{\rm pre}+s.
\]
All three ratios tend uniformly to zero for
$s_{\rm pre}+s\leq\eta^{-1/2}$. Choose $C_{\rm bar}$ above the initial
support allowance and then choose $\eta$ small. The maximal adapted
radius is absolutely continuous: the transported support is a compact
image of the initial support, its trajectories have a common integrable
speed bound, and the time dependence of $\rho_i$ has the same property.
Scalar comparison, first with a strict margin and then by taking its
limit, gives $R\leq R_{\rm bar}$. It applies also when the coefficients
are merely integrable. Since the exponential factor is bounded, the
largest resulting radius is
$O(\eta^{2/9-(1/2)(2/5)})=O(\eta^{1/45})$. For small $\eta$ this is
strictly less than $r_*$, so the original support stop cannot occur.

To justify the identities for transported patches, regularize the
logarithmic kernel, apply exact transport to the integrals, and then
remove the regularization. The first
unsymmetrized singularity is $|x-y|^{-1}$, which is integrable against
bounded, compactly supported densities. The symmetrized moment kernels
have the bounds proved above. The continuous flow of the finite
bounded-vorticity solution supplies the compact support images used in
the scalar comparison. These facts justify the argument for the actual
transported patches and complete the proof.
\end{proof}

\subsection{Concentration for a translating neutral pair}
\label{subsec:proof-pair}
\begin{proof}[Proof of Lemma~\ref{lem:neutral-pair}]
Put $\boldsymbol d=b_+-b_-$ and define
\[
 \delta v=-\iint K(x-y)\idiff\mu_+(x)\idiff\mu_-(y)+K(\boldsymbol d),
 \qquad \delta V_\pm=\int(V(x)-V(b_\pm))\idiff\mu_\pm(x).
\]
Oddness of $K$ shows that the same $\delta v$ occurs in both centroid
equations. Subtracting them gives
\[
 b_\pm'=-K(\boldsymbol d)+V(b_\pm)+\delta v+\delta V_\pm,
 \qquad
 \boldsymbol d'=V(b_+)-V(b_-)+\delta V_+-\delta V_-.
\]
In particular, the mutual interaction does not change $\boldsymbol d$,
even when the patches are not disks. Taylor expansion gives
$|\delta v|\leq C|\boldsymbol d|^{-3}(I_++I_-)$ and
$|\delta V_\pm|\leq HI_\pm$. The two-patch specialization of the energy is
\[
 \mathcal Q_{\rm pair}=\frac1{4\pi}\|K*(\mu_+-\mu_-)\|_2^2
 -\int\Psi\idiff(\mu_+-\mu_-)
 -\left(\log|\boldsymbol d|-\Psi(b_+)+\Psi(b_-)\right)
 -\mathcal S(f_\eta).
\]
The last term is the self energy of the original disks and is fixed in
time. Differentiating as in \eqref{eq:cluster-work} gives
\[
 \mathcal Q_{\rm pair}'=-\sum_{i=\pm}\epsilon_i\int
 \mathscr R_i((\partial_s+V_i^{\rm pt}\cdot\nabla)\Psi)\idiff\widehat\mu_i
 -(V(b_+)-V(b_-))^\perp\cdot\delta v,
 \quad V_i^{\rm pt}=-K(\boldsymbol d)+V(b_i).
\]
The common velocity $-K(\boldsymbol d)$ has canceled from this identity;
only the exterior difference multiplies $\delta v$. Include the quotient
$|V(b_+)-V(b_-)|/|\boldsymbol d|$ in $A$, since gradients on disjoint
neighborhoods alone need not bound that quotient.
Stop provisionally at the first time that
$|\boldsymbol d|\notin(\ell_-/2,2\ell_+)$ or a centered support reaches
$r_*\leq r/2$. On this stopped interval all kernel derivatives used
above are bounded in terms of $\ell_-$ and $r$. In particular, with
$I=I_++I_-$ and $D=\mathcal D_++\mathcal D_-$,
\[
 \left|\mathcal Q_{\rm pair}+\tfrac12D\right|\leq CI,
 \qquad |\mathcal Q_{\rm pair}'|\leq C(A+B)I,
 \qquad I\leq C\eta^2+\epsilon_\eta D.
\]
The first inequality is the same centered Taylor expansion as in
\eqref{eq:material-energy-expansion}; the second uses
$|V(b_+)-V(b_-)|\leq A|\boldsymbol d|$ and the bound for $\delta v$.
Choose $\eta$ so that $C\epsilon_\eta\leq1/2$ and apply the scalar
Gronwall calculation following \eqref{eq:rotation-work}. Since
$\int_0^S(A+B)\leq\delta$, it gives
\begin{equation}\label{eq:pair-energy-bound}
 I+D+|\mathcal Q_{\rm pair}|\leq C\eta^2.
\end{equation}
No integral of the translation speed $|K(\boldsymbol d)|$ occurs.

The exact difference equation and \eqref{eq:pair-energy-bound} give
\[
 |\boldsymbol d'|\leq A|\boldsymbol d|+C\eta^2H,
 \qquad
 |\boldsymbol d(s)-\boldsymbol d(0)|
 \leq e^\delta\bigl(|\boldsymbol d(0)|+C\eta^2\bigr)\delta.
\]
For example, first apply Gronwall to $|\boldsymbol d|$ and then
integrate its derivative. Choosing $\delta_0$ so that
$e^{\delta_0}(\ell_++C)\delta_0\leq\ell_-/4$ improves the separation
stop to $3\ell_-/4\leq|\boldsymbol d|\leq\ell_++\ell_-/4$.

For the moment estimate, take $\Omega=0$. The adapted radius at the
positive center is explicitly
\[
 \log\rho_+(x)=\log|x|-\log|\boldsymbol d+x|+\log|\boldsymbol d|
       +\frac{\boldsymbol d\cdot x}{|\boldsymbol d|^2}
        +\mathscr R_+\Psi(x),
\]
At the negative center the corresponding expression is
\[
 \log\rho_-(x)=\log|x|-\log|\boldsymbol d-x|+\log|\boldsymbol d|
       -\frac{\boldsymbol d\cdot x}{|\boldsymbol d|^2}
       -\mathscr R_-\Psi(x).
\]
The frozen pair field is tangent to its levels. Differentiating its
regular correction costs
$C(A+B+H+H^2)|x|^2$ and the already controlled variance errors.
More precisely, its coefficient $K_{\rm pair}$ satisfies
\[
 K_{\rm pair}\leq C(A+B+H+H^2)+C\eta^2(1+H+H^2),
 \qquad \int_0^S K_{\rm pair}\leq C(\delta+\eta^{3/2}).
\]
Use this coefficient in \eqref{eq:adapted-moment} with $\nu=24$.
It gives \eqref{eq:aged-moment}, and the comparison function following
\eqref{eq:support-dini} improves the support stop to
$C\eta^{1/45}<r_*/2$. The two strict improvements remove both stops.
The moment and support argument applies with $s_{\rm pre}$
equal to the full preceding elapsed time and with the same normalizing
length $d$. Passing from four
patches to two pairs changes the coordinates used for the estimate;
the patches continue to be transported by their original Euler flow.

\end{proof}

\subsection{Comparison of initially identical disks}
\label{subsec:proof-response}
\begin{proof}[Proof of Lemma~\ref{lem:response}]
We write $\sigma=s=t/\tau$ for the interaction time in
Lemma~\ref{lem:response}, and $\zeta=\sigma/\eta^2=\Gamma t/a^2$
for the fast core time; $\tau=d^2/\Gamma$ remains the physical
interaction scale. The sign is $\epsilon\in\{-1,1\}$.
The proof first derives the boundary equation and identifies the
nonzero frequencies of its linear part. Its nonlinear terms lose one
analytic derivative. A decreasing analytic radius absorbs that loss.
The change of variable $z=g+\mathcal L^{-1}f$, defined below, removes
the leading exterior forcing from each separate equation and gives
$\|g^j\|=O(\eta^2)$. With this bound established, subtraction of the
original equations gives \eqref{eq:response} using only $\Delta_A$.
We also justify that the analytic solutions constructed in this way are
the actual transported patches.

\subsubsection*{Centering the exact boundary equation}

A radial graph makes the boundary velocity and its constraints explicit.
Write
$Z_r(\theta)=(1+r(\theta))e(\theta)$, with $e(\theta)=(\cos\theta,\sin\theta)$,
and let $D_r$ be its enclosed domain. In the coordinates $(x-db)/a$ and
fast time $\zeta$, the patch has normalized density $\epsilon/\pi$,
curl $2\epsilon$, and area $\pi$. Indeed, its physical-coordinate
normalized density is
$\epsilon\Gamma(\pi a^2)^{-1}\ind_{D^{\rm phys}}$.
Under $x=db+a y$ and $\diff t=(a^2/\Gamma)\diff\zeta$,
homogeneity of $K$ changes its self velocity to
\[
 \frac{a}{\Gamma}\frac{\epsilon\Gamma}{\pi a^2}
       \int_{D^{\rm phys}}K(db+ay-x')\idiff x'
 =\frac{\epsilon}{\pi}\int_{D_r}K(y-z)\idiff z.
\]
The disk therefore has velocity $\epsilon y^\perp$ inside it and
$\epsilon K(y)$ outside it; both equal $\epsilon y^\perp$ on its
unit boundary.
The divergence theorem gives its exact
self velocity on the boundary:
\begin{equation}\label{eq:boundary-velocity}
 V_r(Z_r(\theta))=-\frac\epsilon\pi\int_0^{2\pi}
       \log|Z_r(\theta)-Z_r(\beta)|Z_r'(\beta)\idiff\beta.
\end{equation}
Indeed $(x-y)^\perp/|x-y|^2=-(\nabla_y\log|x-y|)^\perp$, and the positively oriented
tangent is the outward normal rotated by $\pi/2$. The logarithm is integrable.

In these coordinates, before subtracting the centroid velocity, the
exterior field is
\begin{equation}\label{eq:core-exterior-scaling}
 U(\zeta,y)=\eta V_{\rm ext}(\eta^2\zeta,b(\eta^2\zeta)+\eta y).
\end{equation}
Its mean $\overline U_r=\pi^{-1}\int_{D_r}U$ equals
$\eta b'(\sigma)$, because the self average is zero. The centered radial
equation is therefore
\begin{equation}\label{eq:radial-equation}
 r_\zeta=\left(e(\theta)-\frac{r_\theta}{1+r}e(\theta)^\perp\right)
       \cdot\bigl(V_r+U-\overline U_r\bigr)(Z_r(\theta)).
\end{equation}
In fact, differentiation of the coordinate map gives
$y_\zeta=V_r(y)+U(\zeta,y)-\eta b'(\sigma)$:
the centroid term is $(d/a)(\diff\sigma/\diff\zeta)b'=\eta b'$.
The vector $e-r_\theta(1+r)^{-1}e^\perp$ has scalar product zero with
$Z_r'=r_\theta e+(1+r)e^\perp$ and scalar product one with $e$.
Taking its scalar product with the boundary velocity thus gives
exactly $r_\zeta$, independently of tangential parametrization.
The factor containing $r_\theta$ converts the normal velocity of the
curve to the derivative of its radial graph. Through this factor, the
tangential circular self velocity contributes to the linearization.

The area and centroid constraints remove exactly the modes on which
the linearized self equation will have zero frequency. In radial
coordinates they read
\begin{equation}\label{eq:radial-constraints}
 r_0=-\tfrac12(r^2)_0,\qquad
 r_{\pm1}=-(r^2)_{\pm1}-\tfrac13(r^3)_{\pm1}.
\end{equation}
More precisely, the polar formulas for area and first moment are
\[
 |D_r|=\frac12\int_0^{2\pi}(1+r)^2\idiff\theta=\pi,
 \qquad \int_{D_r}y\idiff y
    =\frac13\int_0^{2\pi}e(\theta)(1+r)^3\idiff\theta=0.
\]
The zero coefficient of the first equation and the coefficients
$\pm1$ of the second give \eqref{eq:radial-constraints}.
For $v=\Pi_{\geq2}r$ small in $\|\cdot\|_{\varrho,1}$, these three
equations give $r=v+\ell(v)$ by contraction in the three low modes.
Writing their right sides as $\Phi(v,\ell)$, one has
$\Phi(0,0)=0$ and $\nabla\Phi(0,0)=0$; differentiating
$\ell=\Phi(v,\ell)$ gives
$\nabla\ell=(1-\nabla_\ell\Phi)^{-1}\nabla_v\Phi$.
For example, on a sufficiently small fixed ball the derivative of their
right side in those modes has norm less than $1/4$. The map $\ell$ is
analytic and quadratic, with
\[
 \|\ell(v)\|_{\varrho,2}\leq C\|v\|_{\varrho,1}^2,
 \qquad \|\nabla\ell(v)w\|_{\varrho,1}
          \leq C\|v\|_{\varrho,1}\|w\|_{\varrho,1}.
\]
Incompressibility preserves the area, and subtracting the actual
centroid velocity preserves the centered first moments. Hence a solution
for the modes $|m|\geq2$, with these three constraints imposed, determines
the full contour. The modes $0,\pm1$ need no separate evolution equation.

\subsubsection*{The circular spectrum and the nonlinear derivative estimate}

We now isolate the singularity of the boundary kernel so that both the
linear frequencies and the number of derivatives in the nonlinear
terms can be checked. For functions of two angles, use
\[
 \|F\|_{\mathscr B_\varrho^k}
 =\sum_{m,n}(1+|m|+|n|)^k e^{\varrho(|m|+|n|)}|F_{m,n}|,
\]
and the divided differences
\[
 T_\pm r=\frac{e^{\pm\iunit\theta}r(\theta)-e^{\pm\iunit\beta}r(\beta)}
                    {e^{\pm\iunit\theta}-e^{\pm\iunit\beta}}.
\]
Applied to $e^{\iunit m\theta}$, each divided difference is a sum of
$|m\pm1|$ monomials, each of total absolute degree $|m|$. Summing their
Fourier weights gives
\[
 \|T_\pm r\|_{\mathscr B_\varrho^0}\leq\|r\|_{\varrho,1},\qquad
 \|T_\pm r\|_{\mathscr B_\varrho^1}\leq\|r\|_{\varrho,2}.
\]
For example, with $z=e^{\iunit\theta}$ and $w=e^{\iunit\beta}$,
\[
 T_+(e^{\iunit m\cdot})=
 \begin{cases}
 \displaystyle\sum_{j=0}^{m}z^{m-j}w^j,&m\geq0,\\
 0,&m=-1,\\
 \displaystyle-\sum_{j=0}^{-m-2}z^{-1-j}w^{m+1+j},&m\leq-2.
 \end{cases}
\]
Reflection of both angles gives the formula for $T_-$. Every summand
has Fourier weight $e^{\varrho|m|}$ and polynomial weight
$(1+|m|)^k$, so counting the summands proves the displayed estimates.
The logarithm factors exactly as
\begin{equation}\label{eq:log-factorization}
 \log|Z_r(\theta)-Z_r(\beta)|
 =L_0(\theta-\beta)+\tfrac12\log(1+T_+r)+\tfrac12\log(1+T_-r),
\end{equation}
where $L_0(\theta)=\log|e^{\iunit\theta}-1|$ has coefficients
$(L_0)_0=0$, $(L_0)_m=-1/(2|m|)$ for $m\ne0$.
For a real graph the last two terms are conjugates. On a small ball of
the Wiener space, their power series converge in the two-angle norm,
which also defines their analytic extension to complex graphs.

The circle velocity is $\iunit\epsilon e^{\iunit\theta}$. Keeping the linear
terms in \eqref{eq:boundary-velocity} gives
\[
 V_{\rm lin}(r)=\iunit\epsilon\sum_m\sgn{(m+1)}r_m
                       e^{\iunit(m+1)\theta}-\iunit\epsilon r_{-1}.
\]
To verify the two terms separately, convolve $L_0$ with
$Z_r'-Z_0'=\sum_m\iunit(m+1)r_m e^{\iunit(m+1)\beta}$ and multiply
by $-\epsilon/\pi$. For $m+1\ne0$ the result is
$\iunit\epsilon\sgn(m+1)r_m e^{\iunit(m+1)\theta}$; the zero mode
of $L_0$ makes the $m=-1$ term vanish. For the regular logarithm,
the finite sums above give
\[
 \int_0^{2\pi}\tfrac12(T_++T_-)(e^{\iunit m\cdot})
                         \iunit e^{\iunit\beta}\idiff\beta
 =\begin{cases}\iunit\pi,&m=-1,\\0,&m\ne-1.\end{cases}
\]
For $m\leq-2$ the two possible constant terms in $\beta$ cancel;
for $m\geq0$ neither contributes. This proves the remaining term
$-\iunit\epsilon r_{-1}$ in $V_{\rm lin}$.
If $|m|\geq2$, the $m$th coefficient of its radial projection is
\[
 \bigl(\operatorname{Re}(e^{-\iunit\theta}V_{\rm lin})\bigr)_m
 =\frac{\iunit\epsilon}{2}
       (\sgn(m+1)-\sgn(1-m))r_m
 =\iunit\epsilon\sgn(m)r_m.
\]
The circle velocity contributes $-\epsilon r_\theta$ through the
normal factor. Adding these terms in \eqref{eq:radial-equation} yields
\begin{equation}\label{eq:circular-spectrum}
 (\mathcal L v)_m=-\iunit\epsilon(m-\sgn m)v_m,\qquad |m|\geq2.
\end{equation}
For $m=\pm1$ the constant correction in $V_{\rm lin}$ instead gives
radial coefficient $\iunit\epsilon m r_m$, which cancels
$-\epsilon(r_\theta)_m$. The constant radial coefficient is zero as
well. Thus the modes $0,\pm1$ have eigenvalue zero, as required by
the area and translation constraints. On $|m|\geq2$ every eigenvalue is purely
imaginary and nonzero. Therefore $e^{\zeta\mathcal L}$ is an isometry,
and $|m-\sgn m|=|m|-1$ implies
\[
 \|\mathcal L^{-1}f\|_{\varrho,1}
       +\|\mathcal L^{-1}f\|_{\varrho,2}\leq C\|f\|_{\varrho,1}.
\]

This is the cancellation needed on an interval of length
$O(\eta^{-2})$ in $\zeta$. For example, the linear equation
$g_\zeta=\mathcal Lg+f$ with constant $f$ and $g(0)=0$ has the exact solution
\[
 g(\zeta)=\mathcal L^{-1}(e^{\zeta\mathcal L}-1)f,
 \qquad \|g(\zeta)\|_{\varrho,1}\leq C\|f\|_{\varrho,1}.
\]
A force of size $\eta^2$ thus produces a deformation of size
$O(\eta^2)$ uniformly in elapsed core time.
We next estimate the nonlinear terms; later the same cancellation will
be used for the actual time-dependent forcing.

Put $E_k=\|r\|_{\varrho,k}$ and split the regular part of
\eqref{eq:log-factorization} into its linear term $\Lambda_1$ and its
remainder $\Lambda_{\geq2}$. The zero-order two-angle norm is an algebra
norm and the first-order norm satisfies the product rule. The logarithmic
series therefore gives
\[
 \|\Lambda_{\geq2}\|_{\mathscr B_\varrho^0}\leq CE_1^2,\qquad
 \|\Lambda_{\geq2}\|_{\mathscr B_\varrho^1}\leq CE_1E_2.
\]
For example, the Fourier convolution inequality is
\[
 \|FG\|_{\mathscr B_\varrho^1}
 \leq\|F\|_{\mathscr B_\varrho^1}\|G\|_{\mathscr B_\varrho^0}
       +\|F\|_{\mathscr B_\varrho^0}\|G\|_{\mathscr B_\varrho^1}.
\]
It follows from the triangle inequalities for the exponential weight
and for $1+|m|+|n|$. In the series
$\log(1+T)-T=\sum_{j\geq2}(-1)^{j+1}T^j/j$, apply this product
bound to one factor and the algebra bound to the other $j-1$ factors.
The geometric sum is $O(E_1E_2)$ when $E_1$ is small.
Since $Z_r'$ is affine in $r$, the nonlinear velocity is the integral of
\[
 \Lambda_{\geq2}Z_0'+\Lambda_1(Z_r'-Z_0')
                         +\Lambda_{\geq2}(Z_r'-Z_0').
\]
Its zero- and first-order bounds are $CE_1^2$ and $CE_1E_2$.
The Fourier coefficients of $L_0$ are proportional to $|m|^{-1}$,
so its convolution cancels the derivative in $Z_r'$. The geometric series for
$r_\theta/(1+r)-r_\theta$ has the same quadratic bounds.
Subtracting the linearized equation and using the low-mode chart gives
the reduced radial remainder $\mathcal N_{\rm rad}$ with
\begin{align}\label{eq:contour-tame}
 \|\mathcal N_{\rm rad}(v)\|_{\varrho,1}
 &\leq C\|v\|_{\varrho,1}\|v\|_{\varrho,2},\notag\\
 \|\mathcal N_{\rm rad}(v_1)-\mathcal N_{\rm rad}(v_2)\|_{\varrho,1}
 &\leq C(\|v_1\|_{\varrho,1}+\|v_2\|_{\varrho,1})
                                      \|v_1-v_2\|_{\varrho,2}\notag\\
 &\quad+C(\|v_1\|_{\varrho,2}+\|v_2\|_{\varrho,2})
                                      \|v_1-v_2\|_{\varrho,1}.
\end{align}
Both bounds lose exactly one angular derivative. In the difference
estimate, that derivative can fall on $v_1-v_2$ or on one of the two
individual graphs, which accounts for its two terms. To check this
subtraction for the logarithmic series, use for every power
\[
 T_1^j-T_2^j=\sum_{a=0}^{j-1}T_1^a(T_1-T_2)T_2^{j-1-a}.
\]
The first-order weight can fall on the difference factor, costing
$\|v_1-v_2\|_{\varrho,2}$, or on a background factor, costing
$\|v_1\|_{\varrho,2}+\|v_2\|_{\varrho,2}$.
All other factors sum in a convergent geometric series. Integration
in $\beta$ only selects its zero Fourier coefficient and cannot
increase these norms. The same product subtraction applies to
$Z_r'$ and to the geometric series for $(1+r)^{-1}$.

The comparison of physical domain integrals will be simplest in the area
variable $\mathfrak h=r+r^2/2$. On the modes $|m|\geq2$ this is an
analytic change of coordinates tangent to the identity, with an inverse
obeying the same product bounds. To check the derivative count after
this change, write the full radial self equation as
$r_\zeta=\mathcal Lr+\mathcal N_{\rm full}(r)$. The area remainder is
\[
 (1+r)\mathcal N_{\rm full}(r)+(1+r)\mathcal Lr
                              -\mathcal L(r+r^2/2).
\]
In the final two terms the derivative part of $\mathcal L$ cancels by
the product rule; the remaining sign multiplier is bounded in each
Wiener norm. Thus \eqref{eq:contour-tame} also holds for the reduced
area remainder $\mathcal N$ and the graph $g$.
The area low modes satisfy $\mathfrak h_0=0$ and
$\int e(\theta)(1+2\mathfrak h)^{3/2}\idiff\theta=0$, giving the claimed
quadratic centroid chart. This also verifies that using area coordinates
introduces no extra derivative loss.
For reference, the leading centroid relation is
$\mathfrak h_{\pm1}=-\tfrac12(\mathfrak h^2)_{\pm1}
 +O(\|\mathfrak h\|_{\varrho,1}^3)$, obtained by expanding
$(1+2\mathfrak h)^{3/2}=1+3\mathfrak h+\tfrac32\mathfrak h^2
 +O(\mathfrak h^3)$ in the centroid constraint. The exact centroid constraint is imposed throughout.

\subsubsection*{Size and time variation of the exterior forcing}

The exterior gradient is of order one in the time $\sigma$ and the
length-$d$ coordinates, but of order $\eta^2$ in the boundary equation.
To prove this and the needed difference estimates, identify the plane
with $\mathbb C$. Divergence and curl zero imply that the complex
velocity is antiholomorphic. On a disk larger than the normalized patch,
write
$U_1+\iunit U_2=\sum_{n\geq0}c_n\bar z^n$. Its centroid mean is exactly
\[
 \overline U_r=c_0+\sum_{n\geq2}c_n C_n(r),\qquad
 C_n(r)=\frac2{n+2}\bigl((1+r)^{n+2}\bigr)_n,
\]
where $C_0=1$ and $C_1=0$ by area and centering. Subtracting this mean
therefore removes the constant field for every graph. With
\[
 W_r=e^{-\iunit\theta}\left(U((1+r)e^{\iunit\theta})-\overline U_r\right),
\]
the radial exterior operator is
$\operatorname{Re}W_r-r_\theta(1+r)^{-1}\operatorname{Im}W_r$.
The coefficient formula for $C_n$ follows from
\[
 \frac1\pi\int_{D_r}\bar z^n\idiff z
 =\frac1{\pi(n+2)}\int_0^{2\pi}
       e^{-\iunit n\theta}(1+r(\theta))^{n+2}\idiff\theta.
\]
For the disk, all these integrals vanish when $n\geq1$.
Consequently the disk forcing before projection is exactly
$\operatorname{Re}\sum_{n\geq1}c_ne^{-\iunit(n+1)\theta}$;
its modes already satisfy $|m|\geq2$. Multiplication by $1+r$
when passing to the area variable does not change this value at $r=0$.
Take $0\leq\varrho\leq1/4$. If
$E_U=\sum_{n\geq1}2^n|c_n|$, then for every fixed $k$
\[
 \sum_{n\geq1}(n+1)^k|c_n|
       \bigl(e^\varrho(1+\|r\|_{\varrho,0})\bigr)^n\leq C_kE_U,
\]
because the parenthesized base is less than $3/2$ for small graphs.
The strict inequality between these bases proves convergence of the
series after any fixed number of coefficient derivatives. In particular,
its first two graph derivatives converge, with at most the one angular
derivative already present in the radial equation.

The scaling \eqref{eq:core-exterior-scaling} gives two factors of
$\eta$ when differentiating $U$ in space, and harmonic interior
estimates then give $E_U\leq C\eta^2M_{\rm ext}$.
In coefficient form, if the centered exterior in the length-$d$
variable is
$\widetilde V_1+\iunit\widetilde V_2=\sum_{n\geq1}a_n(\sigma)\bar z^n$,
then
\begin{equation}\label{eq:response-coefficient-scaling}
 c_n(\zeta)=\eta^{n+1}a_n(\eta^2\zeta),\qquad n\geq1.
\end{equation}
For a fixed disk of radius $R_0$ in the larger neighborhood, Cauchy's
estimate for its antiholomorphic gradient gives
$n|a_n|\leq C M_{\rm ext}R_0^{1-n}$ on a slightly smaller disk.
The sum $\sum_{n\geq1}2^n\eta^{n+1}|a_n|$ is therefore
$O(\eta^2M_{\rm ext})$ when $\eta$ is small compared with $R_0$.
Differentiation in
$\zeta$ adds a further factor $\eta^2$, since
$\sigma=\eta^2\zeta$. The derivative of the centered exterior gradient
is evaluated along $b(\sigma)$: equivalently, the Hessian of the
corresponding stream derivative is a rotation of
$(\partial_\sigma+b'\cdot\nabla)\nabla V_{\rm ext}$.
Constant velocity and affine stream terms disappear after centering
and projection, so a bound on $b''$ is unnecessary.
More explicitly, at fixed centered $z$,
\[
 \partial_\sigma\nabla_z\widetilde V(\sigma,z)
 =\bigl[(\partial_\sigma+b'\cdot\nabla)
            \nabla V_{\rm ext}\bigr](\sigma,b(\sigma)+z).
\]
Apply the same Cauchy estimate to this gradient and differentiate
\eqref{eq:response-coefficient-scaling}; this gives
$\sum_{n\geq1}2^n|\partial_\zeta c_n|\leq C\eta^4M_{\rm ext}$.
No derivative of the subtracted constant is needed in this calculation.
We have now derived
the exact reduced equation
\begin{equation}\label{eq:contour}
 g_\zeta=\mathcal Lg+\mathcal N(g)+f(\zeta)+\mathcal E(\zeta,g),
 \qquad \mathcal E(\zeta,0)=0.
\end{equation}
Here $f$ is the exterior term evaluated at the disk, $\mathcal N$ is
the nonlinear self term, and $\mathcal E$ is the change in the exterior
term when the disk is deformed. The preceding bounds give
\begin{equation}\label{eq:exterior-core}
 \|f\|_{\varrho,1}\leq C\eta^2,\qquad
 \|f_\zeta\|_{\varrho,1}\leq C\eta^4,\qquad
 \|\mathcal E(\zeta,g)\|_{\varrho,1}\leq C\eta^2\|g\|_{\varrho,2}.
\end{equation}
We will compare two solutions by separating changes of field from changes
of graph. Holding the graph fixed, linearity in the exterior field and
the same convergent series give
\begin{equation}\label{eq:exterior-difference}
 \|f^1-f^2+\mathcal E^1(\zeta,g)-\mathcal E^2(\zeta,g)\|_{\varrho,1}
       \leq C\eta^2\Delta_A(1+\|g\|_{\varrho,2}).
\end{equation}
Holding the field fixed and subtracting the graph products gives
\begin{equation}\label{eq:exterior-same-field}
 \begin{split}
 \|\mathcal E(\zeta,g_1)-\mathcal E(\zeta,g_2)\|_{\varrho,1}
 \leq C\eta^2\bigl[&\|g_1-g_2\|_{\varrho,2}\\
 &+(\|g_1\|_{\varrho,2}+\|g_2\|_{\varrho,2})
                                  \|g_1-g_2\|_{\varrho,1}\bigr].
 \end{split}
\end{equation}
Thus every field-difference term contains the instantaneous quantity
$\Delta_A$. Its time derivative is not required.

\subsubsection*{The analytic graph is the actual patch}

The derivative hypothesis in the lemma is stated along the actual
centroid. We must therefore identify the analytic solution with the
transported patch before applying that hypothesis. First construct an
analytic solution on a short interval using only instantaneous bounds
on the exterior gradient. Truncate the modes $|m|\geq2$ in
\eqref{eq:contour}, evolve the centroid by its exact average-velocity
equation, and reconstruct the other modes by the constraints. On a
decreasing analytic radius $\varrho_b-v\zeta$, the instantaneous estimates give
\[
 E_1'\leq\left(-\tfrac23v+C(E_1+\eta^2M_{\rm loc})\right)E_2
                                      +C\eta^2M_{\rm loc},
\]
where $M_{\rm loc}$ bounds the exterior gradient on a fixed larger
neighborhood, and $E_k$ denotes the graph norm of order $k$. Choose
$v$ so that the negative coefficient absorbs the term containing $E_2$
while $E_1$ is small, and then choose a short interval on which $E_1$
remains small. The estimate is independent of the Fourier cutoff. On
smaller positive radii the equation also bounds time derivatives, so
compactness produces an exact analytic contour. Differentiating the
constraints reconstructs the full radial equation.
Here the compactness is seen directly from the Fourier weights: if
$\varrho_1<\varrho_2$, a bounded set in $\|\cdot\|_{\varrho_2,1}$
has uniformly summable tails in $\|\cdot\|_{\varrho_1,2}$ because
$(1+|m|)e^{-(\varrho_2-\varrho_1)|m|}\to0$.
The equation gives equicontinuity on a further smaller strip.
Modewise compactness and these tail bounds give strong convergence
in a norm in which all the displayed nonlinear products converge.
The exact radial vector field preserves the three integral constraints.
Its high modes agree with the limit equation, so its remaining three
modes agree as well: the derivative of the constraints with respect
to the low modes is invertible in the small graph chart.

Now prescribe the exterior field of the given Euler solution. Both its
actual patch and the constructed contour solve the same active patch
equation with that prescribed exterior. We verify uniqueness directly
on the common tube containing their supports. Extend the exterior stream
from a smaller tube and subtract the common translation. For two patch
solutions with the same initial datum, let
$w=u_{{\rm self},1}-u_{{\rm self},2}$ in length-$d$ coordinates and time
$\sigma$, and put
$\zeta_{\rm ext}=\operatorname{curl}V_{\rm ext}$. Their equal circulation
makes $w\in L^2$. Taking curl of the cross-advection terms and applying
the whole-plane solenoidal projection gives
\[
 \partial_\sigma w+\mathcal P\bigl((u_{{\rm self},1}+V_{\rm ext})\cdot\nabla w
              +w\cdot\nabla(u_{{\rm self},2}+V_{\rm ext})\bigr)
       =\frac1{2\pi}K*\operatorname{div}(\zeta_{\rm ext}w).
\]
The extension may have nonzero curl outside the original tube; this
accounts for the last term. Its operator has order zero on $L^2$.
To check that term and its sign, put
$\delta\omega=\operatorname{curl}w$. The difference vorticity equation is
\[
 \partial_\sigma\delta\omega
 +(u_{{\rm self},1}+V_{\rm ext})\cdot\nabla\delta\omega
             +w\cdot\nabla\omega_{{\rm self},2}=0.
\]
The curl of the two transport terms in the projected velocity equation
has one additional term $w\cdot\nabla\zeta_{\rm ext}$.
Because $\operatorname{div}w=0$, it equals
$\operatorname{div}(\zeta_{\rm ext}w)$, exactly the curl of the
displayed right side. The Fourier multiplier of $K*\operatorname{div}$
is uniformly bounded at nonzero frequencies, so
$\|K*\operatorname{div}(\zeta_{\rm ext}w)\|_2
 \leq C\|\zeta_{\rm ext}\|_\infty\|w\|_2$.
Testing the equation with $w$, all exterior terms are bounded by
$C(\|\nabla V_{\rm ext}\|_\infty+\|\zeta_{\rm ext}\|_\infty)\|w\|_2^2$.
For the self term, Calder\'on--Zygmund bounds on the bounded compact
vorticity give $\|\nabla u_{{\rm self},2}\|_r\leq Cr$, $r\geq2$.
The potentially unbounded self-gradient term is estimated by
\[
 \int|\nabla u_{{\rm self},2}||w|^2
 \leq Cr\|w\|_{2r/(r-1)}^2
 \leq Cr\|w\|_2^{2-2/r}\|w\|_\infty^{2/r}
 \leq Cr E^{1-1/r},\qquad E=\|w\|_2^2.
\]
The bounded velocity norm and its fixed-radius dependence have been
absorbed into $C$.
Taking $r=\max(2,\log(C/E))$ yields
\[
 E'\leq CE(1+\log(C/E)).
\]
Osgood's inequality gives $E=0$ from $E(0)=0$, identifying the analytic
contour with the actual patch. The constants in this uniqueness argument
may depend on the fixed disk radius. Uniformity in $\eta$ will come from
the analytic estimates below, which now apply to that actual patch.

\subsubsection*{Uniform deformation bound on a fixed interval in $\sigma$}

The exterior derivative bound is now available along the centroid of
the analytic graph. If one integrated the estimate $\|f\|=O(\eta^2)$
directly over $\zeta=O(\eta^{-2})$, it would only give a bound of order
one. Instead use the invertible imaginary operator $\mathcal L$ to set
$z=g+\mathcal L^{-1}f$. The exact equation becomes
\[
 z_\zeta=\mathcal Lz+\mathcal N(g)+\mathcal E(\zeta,g)
                                  +\mathcal L^{-1}f_\zeta.
\]
The new forcing $\mathcal L^{-1}f_\zeta$ is $O(\eta^4)$ and gains one
derivative. On the radius
$\varrho_{\rm big}=\varrho_*-\Lambda\eta^2\zeta$, put
$Q_k=\|z\|_{\varrho_{\rm big},k}$. The imaginary diagonal part does
not increase any Fourier absolute value. Moreover,
$|m|(1+|m|)\geq(2/3)(1+|m|)^2$ for $|m|\geq2$; differentiating the strip
weight therefore gives
\[
 Q_1'\leq\left(-\tfrac23\Lambda\eta^2+C(Q_1+\eta^2)\right)Q_2
                                       +C\eta^2(Q_1+\eta^2).
\]
For the nonlinear terms this follows by writing
$g=z-\mathcal L^{-1}f$, whence
$\|g\|_{\varrho_{\rm big},1}\leq Q_1+C\eta^2$ and
$\|g\|_{\varrho_{\rm big},2}\leq Q_2+C\eta^2$.
The self remainder is consequently bounded by
$C(Q_1+\eta^2)(Q_2+\eta^2)$, the exterior remainder by
$C\eta^2(Q_2+\eta^2)$, and the new forcing by $C\eta^4$.
Collecting the $Q_2$ terms gives precisely the displayed inequality.
Initially $g=0$, so $Q_1(0)\leq C_{\rm init}\eta^2$.
Choose $K>2C_{\rm init}$ and impose provisionally $Q_1\leq K\eta^2$.
Fix $\Lambda>3C(K+1)$ to absorb $Q_2$, enlarging this same constant
if needed for the difference estimate below. Choose $T_*>0$, depending
only on the fixed lemma data, so that
\[
 \Lambda T_*\leq\varrho_*/4,\qquad
 C(K+1)T_*<K-C_{\rm init}.
\]
For the prescribed horizon $T_0$, set $T=\min\{T_0,T_*\}$ as in
Lemma~\ref{lem:response}. On $0\leq\zeta\leq T/\eta^2$, the
remaining derivative bound changes $Q_1$ by at most
$C(K+1)T_*\eta^2$, strictly improving the provisional bound.
The larger analytic radius remains at least $3\varrho_*/4$.
The local construction therefore continues the actual patch through
$0\leq\sigma\leq T$ and proves its $O(\eta^2)$ deformation bound.
The prescribed horizon $T_0$ has not been changed.

\subsubsection*{Subtracting two equations from the original disks}

For the difference estimate use the smaller radius
$\varrho=\varrho_*/2-\Lambda\eta^2\zeta$. Its fixed gap from
$\varrho_{\rm big}$ absorbs an additional factor $1+|m|$ in the
Fourier sum, giving $\|g^j\|_{\varrho,2}\leq C\eta^2$.
Explicitly, that gap is $\varrho_*/2$ and
\[
 \|g^j\|_{\varrho,2}
 \leq\sup_{m\geq2}(1+m)e^{-\varrho_*m/2}
                          \|g^j\|_{\varrho_{\rm big},1}
 \leq C\eta^2.
\]
Subtract the original equations \eqref{eq:contour}, and set
$D_k=\|g^1-g^2\|_{\varrho,k}$.
In the exterior terms make the exact split
\[
 \begin{split}
 f^1-f^2+\mathcal E^1(g^1)-\mathcal E^2(g^2)
 ={}&\mathcal E^1(g^1)-\mathcal E^1(g^2)\\
 &+f^1-f^2+\mathcal E^1(g^2)-\mathcal E^2(g^2).
 \end{split}
\]
The first row is a graph difference with one field fixed; the second
is a field difference with one graph fixed. In particular its only
inhomogeneous term is bounded by $C\eta^2\Delta_A$.
The tame bounds and
\eqref{eq:exterior-difference}--\eqref{eq:exterior-same-field} yield
\[
 D_1'\leq\left(-\tfrac23\Lambda\eta^2+C\eta^2\right)D_2
                   +C\eta^2D_1+C\eta^2\Delta_A(\eta^2\zeta).
\]
With the final $\Lambda$ and $T_*$ chosen above, the smaller radius
is at least $\varrho_*/4$ on $0\leq\sigma\leq T$. Absorb $D_2$, and use
$\partial_\sigma=\eta^{-2}\partial_\zeta$. The resulting inequality
is \eqref{eq:response}. Gronwall gives its integrated form
\begin{equation}\label{eq:proved-response}
 \|g^1(\sigma)-g^2(\sigma)\|_{\varrho(\sigma),1}
 \leq Ce^{C\sigma}\left(\|g^1(0)-g^2(0)\|_{\varrho_*/2,1}
                           +\int_0^\sigma\Delta_A(s)\idiff s\right).
\end{equation}
For the initially identical disks the initial difference is zero.
Since $\sigma\leq T\leq T_*$, the exponential is bounded by
$e^{CT_*}$ and is part of the constant depending only on the fixed
lemma data.
The change to $z$ establishes the two separate small graph bounds;
subtraction in the original variable $g$ preserves dependence on
$\Delta_A$ alone.

\subsubsection*{Consequences for the centroid equations}

First compare the velocities produced by the two patches after
translating them to the same physical center $db$. In polar area
coordinates, put $\xi=(|y|^2-1)/2$ and
$Y(\xi,\theta)=\sqrt{1+2\xi}\,e(\theta)$, so that
$\idiff y=\idiff\xi\idiff\theta$. If $D_{\mathfrak h_j}$ is the
centered unit-scale domain with graph $\mathfrak h_j$, then for every
smooth $F$ on their union
\begin{equation}\label{eq:polar-area-difference}
 \frac1\pi\left(\int_{D_{\mathfrak h_1}}F(y)\idiff y
                     -\int_{D_{\mathfrak h_2}}F(y)\idiff y\right)
 =\frac1\pi\int_0^{2\pi}\int_{\mathfrak h_2(\theta)}^{\mathfrak h_1(\theta)}
                 F(Y(\xi,\theta))\idiff\xi\idiff\theta.
\end{equation}
The inner integral is signed when the graphs cross. Its absolute value
is bounded by $C\|F\|_\infty\|\mathfrak h_1-\mathfrak h_2\|_{\varrho,1}$,
and the low-mode chart bounds the last graph difference by
$C\|g^1-g^2\|_{\varrho,1}$.
For the velocity use
$F(y)=\nabla^\nu K(x-db-ay)$. The constant and linear Taylor terms of
$\nabla^\nu K(x-db-ay)$ in $ay$ integrate to zero, because both
patches have equal area and zero centered first moments. For
$|x-db|\geq4a$ and graphs in the fixed small ball, every Taylor
segment has distance at least $|x-db|/2$ from the origin. Its remainder is
\[
 a^2\int_0^1(1-v)\nabla^{\nu+2}K(x-db-vay)[y,y]\idiff v,
\]
bounded by
$C_\nu a^2|x-db|^{-\nu-3}$. The area between the graphs is bounded by
$C\|g^1-g^2\|_{\varrho,1}$, including their constrained low modes.
Multiplication by the physical circulation gives
\eqref{eq:remotegraph}.

Next compare the averaging errors of the exterior fields over the two
patches. The average of a harmonic field over an undeformed disk is
exactly its value at the center. Subtract this identity before comparing
the deformed patches. For the centered field
$W(y)=V_{\rm ext}(b+y)-V_{\rm ext}(b)$ set
\[
 T_W(y)=W(\eta y)-\eta\nabla W(0)y,
 \qquad |T_W(y)|\leq C\eta^2\|\nabla^2W\|.
\]
The exact observer error for a graph $\mathfrak h$ is therefore
\begin{equation}\label{eq:observer-area-identity}
 \delta_{\rm obs}(W,\mathfrak h)
 =\frac1\pi\int_0^{2\pi}\int_0^{\mathfrak h(\theta)}
                   T_W(Y(\xi,\theta))\idiff\xi\idiff\theta.
\end{equation}
Indeed the disk integral of $W(\eta y)$ is zero by harmonicity;
the linear term has zero integral over both the disk and the centered
patch. Thus this remainder for one graph is bounded by
$C\eta^2\|\nabla^2V_{\rm ext}\|\|g\|_{\varrho,1}$.
In the difference of \eqref{eq:observer-area-identity}, first change
$\mathfrak h_1$ to $\mathfrak h_2$ with $W_1$ fixed, and then change
$W_1$ to $W_2$ with $\mathfrak h_2$ fixed. The change of graph contributes
$C\eta^2M_{\rm ext}\|g^1-g^2\|_{\varrho,1}$, and the change of field contributes
$C\eta^2\Delta_A\|g^2\|_{\varrho,1}\leq C\eta^4\Delta_A$.
Interior harmonic estimates on the larger neighborhood bound the
required second derivatives by the given gradient norms. This proves
\eqref{eq:observer-response} and the lemma.
\end{proof}

\vspace{0.2in}
\noindent{\bf Acknowledgments.}
This work was supported in part by NSF Grant DMS-2508463,
the Choi Family Gift Fund, and the Mike Yan Gift Fund.
Before posting this manuscript on arXiv, we communicated
with Hyungjun Choi regarding the independent development
of our respective works and the substantial overlap
between their main constructions. We appreciate his
thoughtful and collegial response. Following this exchange,
we agreed to proceed with separate papers to recognize
the independent development of our respective results.
We have acknowledged the close relationship between
the two works in the Introduction.

\bibliographystyle{unsrtnat}
\bibliography{reference}
\end{document}